\numberwithin{equation}{section}
\DeclareMathOperator{\Scal}{S}
\DeclareMathOperator{\Ricci}{Ric}
\DeclareMathOperator{\Riemann}{R}
\DeclareMathOperator{\Weyl}{W}
\DeclareMathOperator{\Schouten}{P}
\DeclareMathOperator{\Sym}{Sym}
\DeclareMathOperator{\Bach}{B}
\DeclareMathOperator{\bigO}{O}
\DeclareMathOperator{\smallo}{o}
\newcommand{\ua}{u_{\alpha}}
\newcommand{\Ba}{B_{\alpha}}
\newcommand{\va}{v_{\alpha}}
\newcommand{\hua}{\hat{u}_{\alpha}}
\newcommand{\hva}{\hat{v}_{\alpha}}
\newcommand{\tva}{\tilde{v}_{\alpha}}
\newcommand{\ga}{g_\alpha}
\newcommand{\hga}{\hat{g}_\alpha}
\newcommand{\tga}{\tilde{g}_\alpha}
\newcommand{\xa}{x_{\alpha}}
\newcommand{\ya}{y_{\alpha}}
\newcommand{\za}{z_{\alpha}}
\newcommand{\rhoa}{\rho_{\alpha}}
\newcommand{\ra}{r_{\alpha}}
\newcommand{\da}{d_{\alpha}}
\newcommand{\ma}{\mu_{\alpha}}
\newcommand{\na}{\nu_{\alpha}}
\newcommand{\ve}{\varepsilon}
\newcommand{\vp}{\varphi}
\newcommand{\R}{\mathbb{R}}
\newcommand{\N}{\mathbb{N}}
\renewcommand{\[}{\left[}
\renewcommand{\]}{\right]}
\renewcommand{\(}{\left(}
\renewcommand{\)}{\right)}
\newcommand{\ds}{\displaystyle}
\newtheorem{theorem}{Theorem}[section]
\newtheorem{proposition}{Proposition}[section]
\newtheorem{lemma}{Lemma}[section]
\newtheorem{claim}{Claim}[section]
\newtheorem{remark}{Remark}[section]
\begin{document}

\title[Compactness for GJMS]{Compactness of conformal metrics with constant $Q$-curvature of higher order}

\author{Saikat Mazumdar}

\address{Saikat Mazumdar, Department of Mathematics, Indian Institute of Technology, Bombay, Powai, Mumbai, Maharashtra 400076, India}
\email{saikat.mazumdar@iitb.ac.in, saikat@math.iitb.ac.in}

\author{Bruno Premoselli}

\address{Bruno Premoselli, D\'epartement De Math\'ematiques, Universit\'e Libre de Bruxelles}
\email{bruno.premoselli@ulb.ac.be}

\date\today

\begin{abstract}
Let $k\ge1$ be a positive integer and let $P_g$ be the GJMS operator $P_{g}$  of order $2k$ on a closed Riemannian manifold $(M,g)$ of dimension $n>2k$. We investigate the compactness of the set of conformal metrics to $g$ with prescribed constant positive $Q$-curvature of order $2k$ -- or, equivalently, of the set of positive solutions for the $2k$-th order $Q$-curvature equation. % $P_{g}u=u^{2^{*}_{k}-1}$ in $M$, where $2^{*}_{k}:=2n/\(n-2k\)$. 
Under a natural positivity-preserving condition on $P_{g}$ we establish compactness, for an arbitrary $1 \le k < \frac{n}{2}$, under the following assumptions: 
\begin{itemize}
\item
$(M,g)$ is locally conformally flat and $P_g$ has positive mass in $M$. 
\item
$2k+1 \le n \le 2k+5$ and $P_g$ has positive mass in $M$. 
\item 
$n \ge 2k+4$ and $|\Weyl_g|_g >0$ in $M$.
\end{itemize}
For an arbitrary $1 \le k < \frac{n}{2}$ the expression of $P_g$ is not explicit, which is an obstacle to proving compactness. We overcome this by relying on Juhl's celebrated recursive formulae for $P_g$ to perform a refined blow-up analysis for solutions of the $Q$-curvature equation and to prove a Weyl vanishing result for $P_g$. This is the first compactness result for an arbitrary $1 \le k < \frac{n}{2}$ and the first successful instance where Juhl's formulae are used to yield compactness. Our result also hints that the threshold dimension for compactness for the $2k$-th order $Q$-curvature equation diverges as $k \to + \infty$. 
\end{abstract}

\maketitle

\section{Introduction}

Let $k\geq 1$ be an integer and let $(M,g)$ be a compact and smooth Riemannian manifold without boundary, that is closed, of dimension $n\geq 2k+1$.  We will denote by $P_{g}$ the conformally covariant GJMS operator of order $2k$ introduced by Graham, Jenne, Mason and Sparling in the seminal work \cite{GJMS} and by $Q_g$ the associated $Q$-curvature of order $2k$ in $(M,g)$. The purpose of this paper is to prove compactness results for the set of conformal metrics to $g$ with constant $Q$-curvature, for any order $k \ge 1$. We first recall a few properties of $P_g$. Its construction is based on the Fefferman-Graham ambient metric \cite{FeffermanGraham, FeffermanGraham2}. It is an elliptic self-adjoint differential operator with leading order term $\Delta_{g}^{k}$, where $\Delta_g = - \text{div}_g(\nabla \cdot)$ is the Laplace-Beltrami operator, and is conformally covariant, that is,  if $u \in C^\infty(M)$, $u>0$ and $\hat{g}:= u^{\frac{4}{n-2k}}g$, then 
\begin{equation} \label{conf.inv.Pg} 
P_{\hat g} (f)= u^{- \frac{n+2k}{n-2k}} P_g(u f)~\hbox{ for all } f \in C^\infty(M).
\end{equation}
When $k=1$, $P_g$ is the celebrated conformal laplacian
$$ P_g = \Delta_g + \frac{n-2}{4(n-1)} S_g,$$
where $S_g$ denotes the scalar curvature of $(M,g)$, while for $k=2$, $P_g$ is the Paneitz-Branson operator \cite{Branson1, Paneitz}. 
Explicit formulas for $P_{g}$ on any manifold $(M,g)$ are only known for low values of $k$, see e.g. Branson \cite{Branson1,Branson2,Branson3}, Gover-Peterson \cite{GoverPeterson} or Paneitz \cite{Paneitz}, and inductive algebraic formulas expressing $P_{g}$ a as linear combination of compositions of second-order differential operator were obtained by Juhl \cite{JuhlGJMS} (see also Fefferman-Graham \cite{FeffermanGraham2}). When $k \ge 5$, and on any manifold $(M,g)$, there is no explicit expression of $P_g$ yet, since the complexity of its coefficients increases drastically with $k$. However, in special cases such as Einstein manifolds (see Gover \cite{Gover} or again \cite{FeffermanGraham2}), $P_{g}$ factorizes  as a product of second-order operators:
\begin{align*}
P_{g}= \prod_{i=1}^{k}\(\Delta_{g}+\frac{(n+2i-2)(n-2i)}{4n(n-1)}S_{g}\).
\end{align*}
The $Q$-curvature $Q_{g}$ associated to $P_{g}$ is defined via the zeroth-order terms of $P_{g}$ as $Q_{g}:=\frac{2}{n-2k}P_{g}(1)$.  When $k=1$, $Q_{g}=S_{g}$ (up to a positive constant), and for $k=2$, $Q_{g}$ was introducted by Branson and \O rsted \cite{BransonOrsted} and generalized by Branson \cite{Branson3}. For more details regarding the $Q$-curvature and its significance, we refer to Juhl \cite{JuhlBook}. See also Case-Gover \cite{CaseGover} for a very recent survey. 
\smallskip

The question of the existence of a metric conformal to $g$ with constant scalar curvature has attracted a lot of attention in recent years and was strongly motivated by the original resolution of the Yamabe problem. In analytical terms, if $u$ is a smooth positive function in $M$, a conformal metric $\tilde{g} = u^{\frac{4}{n-2k}}g$ has constant $Q$-curvature if and only if $u$, possibly up to scaling by a constant depending on $n$ and $k$, is a positive solution of the $2k$-th order $Q$-curvature equation
\begin{align}\label{eq:zero}
P_{g}u=u^{2^{*}_{k}-1} \quad \text{ in } M,
\end{align}
where  $2^{*}_{k}:=2n/\(n-2k\)$ is the critical Sobolev exponent. The case $k=1$ is the celebrated Yamabe problem \cite{Yamabe} and was solved in full generality by Trudinger \cite{Trudinger}, Aubin \cite{AubinYamabe} and Schoen \cite{SchoenYamabe}. For $k=2$, existence of a conformal metric with constant $Q$-curvature was proven in Gursky-Malchiodi \cite{GurskyMalchiodi} assuming $S_g \ge 0$
 and $Q_g \ge 0, Q_g \not \equiv 0$, and the assumption $S_g \ge 0$ was later relaxed in Hang-Yang  \cite{HangYang} by assuming the positivity of the Yamabe invariant. Solutions of the constant $Q$-curvature equation are not unique in general: multiplicity results have been obtained by Schoen \cite{SchoenNumberMetrics} when $k=1$ and in Alarcon-Petean-Rey \cite{AlarconPeteanRey}, Andrade-Case-Piccione-Wei \cite{AndradeCasePiccioneWei}, Batalla-Petean \cite{BatallaPetean}, Bettiol-Piccione-Sire \cite{BettiolPiccioneSire} when $k\ge2$ (see also the discussion after Theorem 1.3 in \cite{CaseMalchiodi}). On the other side, solutions to \eqref{eq:zero} are unique if $(M,g)$ is Einsten as proven by Obata \cite{Obata} when $k=1$ and recently generalised by V\'etois \cite{VetoisUniqueness} for $k=2$. In the case $k=3$ partial existence results for \eqref{eq:zero} were obtained in Chen-Hou \cite{ChenHou}. For an arbitrary $k<\frac{n}{2}$, the first existence result of constant $Q$-curvature metrics in $[g]$ is due to Qing and Raske in \cite{QingRaske} when $(M,g)$ is locally conformally flat. For an arbitrary $k < \frac{n}{2}$ the more general existence result for \eqref{eq:zero} to this day was recently obtained in \cite{MazumdarVetois} assuming a positive mass assumption if $2k+1 \le n \le 2k+3$ and that the Weyl tensor has positive norm somewhere when $n \ge 2k+4$. 

\smallskip

In this paper we consider the question of compactness of the full set of conformal metrics with constant positive $Q$-curvature for an arbitrary $1 \le k < \frac{n}{2}$. Analytically speaking, this amounts to showing the compactness in strong spaces of the full set of positive solutions for the $2k$-th order $Q$-curvature equation \eqref{eq:zero}. Throughout this paper we will assume that $P_g$ satisfies the following positivity-preserving condition: 
\begin{align}\label{positivity}
\text{Ker$(P_{g})=\{0\}$ and the Green's function $G_{g}$ of $P_{g}$ is positive in $M$.}
\end{align}
We recall that the Green's function of $P_g$ is the unique function $G_g \in C^\infty(M \times M \backslash \{x=y\})$ satisfying $P_g G_g(x, \cdot) = \delta_x$ for all $x \in M$. We say that $G_g$ is positive if $G_g(x,y) >0$ for all $x \neq y$. Assumption \eqref{positivity} is natural when working with polyharmonic problems: it implies that $P_g$ is coercive and satisfies the maximum principle, that is, if $u\geq 0$ satisfies $P_{g}u>0$ in $M$ then either $u>0$ or $u\equiv 0$ in $M$. Unlike in the case $k=1$, the polyharmonic operator $P_g$ does not satisfy the maximum principle in general when $k \ge 2$. Assumption \eqref{positivity} is obviously satisfied for the standard round sphere and its finite quotients and examples of product manifolds for which $P_g$ satisfies \eqref{positivity} are in Case-Malchiodi \cite{CaseMalchiodi}. Note also that if the $k$-th  Yamabe invariant satisfies $Y_{2k}(M,g)>0$ then Ker$(P_{g})=\{0\}$. For $k=2$, \eqref{positivity} was shown to hold in \cite{HangYang} (see also \cite{GurskyMalchiodi}) provided $(M, g)$ is of positive Yamabe type and $Q_g \geq 0$ in $M$, $Q_g\not\equiv 0$.

\medskip

Our compactness theorem for the constant $Q$-curvature equation \eqref{eq:zero} states as follows: 

\begin{theorem} \label{theo.main}
Let $(M,g)$ be a closed Riemannian manifold of dimension $n \ge 3$ and let $k$ be a positive integer such that $2k < n$. Let $P_g$ be the GJMS operator of order $2k$ and assume that it satisfies the positivity preserving condition \eqref{positivity}. Suppose one of the following three assumptions holds: 
\begin{itemize}
\item
$(M,g)$ is locally conformally flat and $P_g$ has positive mass at every point. 
\item
$2k+1 \le n \le 2k+5$ and $P_g$ has positive mass at every point. 
\item 
$n \ge 2k+4$ and $\min\limits_{M} |\Weyl_g|_g >0$, where $\Weyl_{g}$ is the Weyl curvature of $(M,g)$. 
\end{itemize}
Let $2 < p_0 < 2^{*}_{k}$. Then there exists a constant $C >0$ depending only on $n,k,g,p_0$ such that every positive solution $u\in C^{2k}(M)$ of 
$$ P_g u = u^{p }~\text{ in } M, $$
with $p_0 \le p \le 2^{*}_{k}$, satisfies 
\begin{align*}
\Vert u \Vert_{C^{2k}(M)} + \Vert 1/u \Vert_{C^{2k}(M)} \le C.
\end{align*}
\end{theorem}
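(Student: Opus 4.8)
The plan is to argue by contradiction via a blow-up analysis: suppose there is a sequence $\ua \in C^{2k}(M)$ of positive solutions of $P_g \ua = \ua^{p_\alpha}$ with $p_0 \le p_\alpha \le 2^{*}_{k}$ and $\|\ua\|_{C^{2k}(M)} + \|1/\ua\|_{C^{2k}(M)} \to +\infty$. Since \eqref{positivity} gives coercivity of $P_g$ and the subcritical range $p_0 > 2$ is handled by elliptic regularity and a standard lower-bound argument (the Green's representation $\ua(x) = \int_M G_g(x,y) \ua(y)^{p_\alpha}\,dy$ together with positivity of $G_g$ forces $\ua$ bounded away from $0$ once it is bounded above), the only way the bound can fail is $\max_M \ua \to +\infty$ with $p_\alpha \to 2^{*}_{k}$. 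First I would run the standard bubbling decomposition for polyharmonic critical equations: after passing to a subsequence, $\ua$ decomposes as a sum of a bounded part plus finitely many rescaled standard bubbles $U(x) = \big(1+c_{n,k}|x|^2\big)^{-\frac{n-2k}{2}}$ concentrating at points $\xa^{(i)} \to x^{(i)}$, with concentration rates $\ma^{(i)} \to 0$, the interaction between bubbles being controlled. This uses \eqref{positivity} crucially to get pointwise control and to rule out negative parts. The heart of the matter is to show that no blow-up point can occur.

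Second, around an isolated (simple) blow-up point I would derive a sharp pointwise estimate, of the form $\ua(x) \le C \ma^{\frac{n-2k}{2}} d_g(x,\xa)^{-(n-2k)}$ on a fixed ball, and a Pohozaev-type identity for $P_g$ obtained by testing the equation against $x^j \partial_j \ua + \frac{n-2k}{2}\ua$ (the generator of dilations), integrated over a small ball. In the conformal normal coordinates centered at $\xa$, the boundary terms of the Pohozaev identity, after inserting the pointwise estimate and letting the radius and then $\alpha$ go to a limit, produce a sign condition: the local geometry must force a nonnegative quantity to be nonpositive. In the locally conformally flat case and in low dimensions this quantity is (a positive multiple of) the \emph{mass} of $P_g$ at $x^{(i)}$, whose positivity is assumed; in high dimensions $n \ge 2k+4$ with $|\Weyl_g|_g > 0$ the leading contribution is instead a positive multiple of $|\Weyl_g(x^{(i)})|_g^2$ — this is the \emph{Weyl vanishing} phenomenon. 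Either way one reaches a contradiction, so blow-up cannot occur and the $C^{2k}$ bound follows by elliptic regularity.

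The key technical obstacle, and the reason the result was open for general $k$, is that there is no closed-form expression for $P_g$: one cannot simply write down the local expansion of $P_g$ in conformal normal coordinates and read off the Weyl term. To overcome this I would use Juhl's recursive formulae, which express $P_g$ as an explicit polynomial in compositions of second-order (Schouten-type) operators. Expanding these recursions in conformal normal coordinates — where the metric expansion is governed by the Weyl tensor and its covariant derivatives at leading order — one can identify, order by order, the coefficient of the relevant curvature invariant in the error term $P_g - \Delta_g^k$. The combinatorial bookkeeping needed to show that this coefficient is nonzero (and has the right sign) when $n \ge 2k+4$ is the crux; this is the "Weyl vanishing result for $P_g$" advertised in the abstract. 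A secondary difficulty is establishing the sharp pointwise bubble estimate for a polyharmonic operator without a maximum principle for each iterated factor — here one leans on \eqref{positivity} and on Green's function comparison arguments adapted from the $k=2$ literature, bootstrapped through the recursive structure of $P_g$.
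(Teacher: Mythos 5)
Your overall skeleton is the right one and matches the paper's: argue by contradiction, construct concentration points, prove sharp bubble-shaped pointwise estimates, invoke a Pohozaev identity in conformal normal coordinates to force a sign, and use the positive mass (or positive Weyl norm) to reach a contradiction; Juhl's formulae are indeed how the paper accesses $P_g$. But there is a genuine gap at what you yourself identify as the crux. You propose to ``identify, order by order, the coefficient of the relevant curvature invariant in the error term $P_g - \Delta_g^k$'' and then show via ``combinatorial bookkeeping'' that this coefficient has the right sign for $n \ge 2k+4$. This is precisely the step the paper deliberately \emph{avoids}, and with good reason: expanding Juhl's recursion does deliver the \emph{structure} of the fourth-order terms (a linear combination of $|\Weyl_{g_\xi}(\xi)|^2$, $(\Scal_{g_\xi})_{;ab}(\xi)x^ax^b$, $\Delta(\Ricci_{g_\xi})_{ab}(\xi)x^ax^b$, and $\Weyl*\Weyl$ contractions, as in Proposition \ref{GJMS.exp0}), but the numerical constants $C_{i,j}$ grow rapidly in complexity with $k$ and are not explicitly computable for general $k$, so their signs cannot be read off. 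Asserting that one can do the bookkeeping is not an argument that one can.

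What the paper does instead, and what your plan misses, is the identity relating the Pohozaev bulk term to a derivative of the bubble energy in the concentration parameter: writing $W_{\xi,\mu}$ for the cutoff bubble, the self-adjointness of $P_{g_\xi}$ and the radial scaling identity $\mu\,\partial_\mu W_{\xi,\mu} = -\bigl(\tfrac{n-2k}{2}W_{\xi,\mu} + x^i\partial_i W_{\xi,\mu}\bigr)$ give
\[
\int_{B(0,\delta)}\Bigl(\tfrac{n-2k}{2}\widetilde{W}_\mu + x^i\partial_i\widetilde{W}_\mu\Bigr)\bigl(P_{\mathfrak g}-\Delta_0^k\bigr)\widetilde{W}_\mu\,dx = -\tfrac12\,\mu\frac{d}{d\mu}\int_M W_{\xi,\mu}\,P_{g_\xi}W_{\xi,\mu}\,dv_{g_\xi} + \bigO(\mu^{n-2k}),
\]
and the right-hand side is then evaluated using the already-known energy expansion from \cite{MazumdarVetois}, which supplies the positive constant in front of $|\Weyl_g(\xi)|^2$ without ever computing the $C_{i,j}$. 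This observation, together with the parity argument that kills the fifth-order terms in \eqref{GJMS.exp1.eq} after radial integration, is the key input that makes the strategy work for general $k$. Your plan, lacking both ingredients, stalls at the Weyl vanishing step for $n \ge 2k+4$ and also for establishing that the radius of influence $\ra$ shrinks (Lemma \ref{weyl.es}, Lemma \ref{radius.influence}), which is needed before one can even reduce to the mass argument in low dimensions.
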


The positive mass assumption in results like Theorem \ref{theo.main} has been known to be crucial to ensure compactness. The mass of $P_g$ at a point $\xi \in M$ is defined, as usual, as the constant term in the expansion of $G_g$ in conformal normal coordinates at $\xi$. We investigate in detail the Green's function $G_g$ of $P_{g}$ in Appendix \ref{sec.green.expan}, and we refer to Proposition \ref{expansion.Green.local} for the definition of the mass and to \eqref{positive:mass} for the meaning of our positive mass assumption. Theorem \ref{theo.main} strongly suggests that, when $P_g$ satisfies  \eqref{positivity}, \emph{the threshold dimension for the compactness of solutions of \eqref{eq:zero} for an arbitrary $k < \frac{n}{2}$ diverges as $k \to + \infty$.} To effectively prove this one would need a positive mass theorem for $P_g$ (at least in dimensions $2k+1 \le n \le 2k+5$) which, to the best of our knowledge, is not yet available when $k \ge 3$. When $k=1$ the mass of $P_g$ is positive everywhere by the positive mass theorem of Schoen-Yau \cite{SchoenYau}, and when $k=2$ the mass of $P_g$ is positive everywhere provided $(M, g)$ is of positive Yamabe type and $Q_g \ge 0$ on $M$, $Q_g\not\equiv 0$ as proven in Gong-Kim-Wei \cite{GongKimWei}. When $k \ge 3$ examples of manifolds where \eqref{positivity} is satisfied and $P_g$ has positive mass are given by non-trivial finite quotients of the round sphere (see Michel \cite[Section 4]{Michel}). Obviously, Theorem \ref{theo.main} does not apply to the round sphere where the mass of $P_g$ vanishes everywhere and the set of solutions of \eqref{eq:zero} is non-compact by the results in Wei-Xu \cite{WeiXu}. 

\smallskip

Compactness results for constant $Q$-curvature equations like \eqref{eq:zero} have originated a vast amount of work in the last 30 years. They are structural results that are important on their own but also allow to compute the Leray-Schauder degree of \eqref{eq:zero} and to prove strong Morse inqualities, as was e.g. done in Khuri-Marques-Schoen \cite{KhuMaSc} for the Yamabe equation. We briefly review these compactness results to put Theorem \ref{theo.main} in perspective. When $k=1$, compactness for the Yamabe equation was gradually established over several years through the works of Schoen \cite{SchoenVariational}, Li-Zhu  \cite{LiZhu}  when $n = 3$, Druet \cite{DruetYlowdim}  when $n\le5$, Marques \cite{Marques} when $n\le7$, Li-Zhang \cite{LiZhang1,LiZhang2} when $n\le11$ and finally Khuri-Marques-Schoen \cite{KhuMaSc} when $n\le24$. A key ingredient in all these proofs is the positive mass theorem of Schoen-Yau \cite{SchoenYau}. Dimension $24$ is the threshold dimension for compactness, and examples where non-compactness holds were constructed when $n\ge25$ by Brendle \cite{Brendle}, and Brendle-Marques \cite{BrendleMarques}. The case $k=2$ was addressed more recently. The first compactness results for \eqref{eq:zero} when $k=2$ were independently obtained by Qing-Raske \cite{QingRaske2} and Hebey-Robert \cite{HebeyRobert} in the locally conformally flat case. Assuming \eqref{positivity} and a positive mass assumption, compactness was then shown in Li-Xiong \cite{LiXiong} for $5\le n\le9$. The full compactness of \eqref{eq:zero} when $k=2$ was recently extended by Gong-Kim-Wei in \cite{GongKimWei} to all dimensions $5\le n \le 24$. In \cite{GongKimWei} the authors also establish a positive mass theorem in the case $k=2$ assuming only that $(M, g)$ is of positive Yamabe type and $Q_g \geq 0$ on $M$, $Q_g\not\equiv 0$, building up for this on previous work by Humbert-Raulot \cite{HumbertRaulot} and Avalos-Laurain-Lira \cite{AvalosLaurainLira}. When $k=2$, remarkably, $n=24$ again emerges as the threshold dimension for compactness: examples of non-compactness when $n \ge 25$ have indeed been known since Wei-Zhao \cite{WeiZhao}. In the case $k=3$, the compactness of \eqref{eq:zero} has been very recently announced in \cite{GongKimWei} in all dimensions $7 \le n \le 26$, assuming the validity of the positive mass theorem. The author also constructed in \cite{GongKimWei} counter-examples to compactness when $n \ge 27$, thus establishing $n=26$ as the threshold dimension for compactness when $k=3$. This series of works thus almost entirely settles the question of compactness for \eqref{eq:zero} under assumption \eqref{positivity} (assuming the positive mass theorem when $k=3$). A key aspect of these works is that determining the threshold dimension for compactness requires to know the explicit algebraic structure of the linearised equation of \eqref{eq:zero} at a positive bubble: in \cite{KhuMaSc, GongKimWei} this is achieved via the explicit expression of, respectively, the Yamabe, Paneitz and sixth-order GJMS operator.

\medskip

 In Theorem \ref{theo.main}, by contrast, we establish the compactness of \eqref{eq:zero} for an arbitrary $k < \frac{n}{2}$, hence in a setting where an explicit expression of $P_g$ is not available. We make a crucial use of Juhl's recursive formulae \cite{JuhlGJMS} for this. As is classical with compactness results like Theorem \ref{theo.main}, our proof goes by contradiction and relies first on a local a priori blow-up analysis and then on a local sign restriction argument. We construct a suitable set of concentration points for a blowing-up sequence of solutions of \eqref{eq:zero} and we show that there are only finitely many such points and that they are isolated by proving a Weyl vanishing result at concentration points. We then conclude by a global argument, using a Pohozaev identity around each point in combination with the positive mass assumption. This strategy of proof is now well-established: it was pioneered in \cite{LiZhu, DruetYlowdim, Marques, LiZhang1, LiZhang2, KhuMaSc} for the Yamabe equation and then applied when $k=2,3$ in \cite{LiXiong, GongKimWei}. In our case, however, the main difficulty is the lack of an explicit expression for $P_g$: this is a major obstacle to adapt this strategy of proof to the general polyharmonic case, in particular since proving the Weyl vanishing result typically requires to prove the coercivity of an \emph{explicit} quadratic form arising from Pohozaev identity. We overcome this by decomposing $P_g$ using Juhl's recursive formulae \cite{JuhlGJMS}: we crucially use them to obtain an expansion of $P_g$ at sixth order in conformal normal coordinates around any point in $M$, where fourth-order terms determine the Pohozaev quadratic form and fifth-order ones are antisymmetric. This is done in Proposition \ref{GJMS.exp0} below. The fourth-order terms that we obtain in this way are however still too cumbersome to be directly dealt with, due to their rising complexity as $k$ grows. We overcome this by observing that we actually \emph{do not need} to estimate these terms: we prove indeed that the Pohozaev quadratic form is equal to the formal differentiation of the energy of a single bubbling profile with respect to its concentration parameter, as shown in Proposition \ref{GJMS.exp2} below (see also \eqref{GJMS.exp2.5}). This is one of the key observations in our work and it is what allows our strategy to work. The energy of a single positive bubbling profile was not known until recently and it was first computed in \cite{MazumdarVetois}, a work we strongly rely on. As a byproduct of our expansion of $P_g$ we describe the asymptotic behavior of the Green's function $G_g$ around any point (see Proposition \ref{expansion.Green.local} below). The purely analytical side of our proof closely follows the approach for the polyharmonic case that was laid out when $k=2$ in \cite{LiXiong}, in particular when it comes to the integral formulation of \eqref{eq:zero} to obtain Harnack-type inequalities. We extend the results of \cite{LiXiong} to the polyharmonic setting of an arbitrary $k < \frac{n}{2}$ and establish results that will be used in future compactness results for \eqref{eq:zero}, including a general Pohozaev identity in Appendix \ref{sec.pohozaev}. 

 Theorem \ref{theo.main} covers dimensions up to $2k+5$ since we were only able to use Juhl's formulae to expand $P_g$ at sixth order in conformal normal coordinates: as such it should be understood as the analogue for an arbitrary $k < \frac{n}{2}$ of the results \cite{Marques} ($k=1$) and \cite{LiXiong} (for $k=2$). Improved expansions for $P_g$ in conformal normal coordinates would, in principle, yield symmetric estimates of any order as was done in \cite{KhuMaSc, GongKimWei}. In the case of an arbitrary $k < \frac{n}{2}$ the main obstacle to generalising our approach to dimensions higher than $2k+5$ lies for the moment in the difficulty of obtaining an explicit expression of the lower order operators arising in Juhl's formulae (see \eqref{Pr1Step1Eq1:bis} below). Nevertheless, Theorem \ref{theo.main} is the first compactness result for \eqref{eq:zero} for an arbitrary $k < \frac{n}{2}$ and it shows that Juhl's recursive formulae provide a promising strategy towards establishing the full compactness of \eqref{eq:zero} for an arbitrary $k$. It is not yet clear what the threshold dimension for compactness would be for a $4 \le k < \frac{n}{2}$ but in \cite{GongKimWei} the authors conjecture it to be $2k+20$.

\medskip

Compactness-type results have been obtained, in recent years, for a wealth of different problems. For the fractional Yamabe problem we refer to Kim-Musso-Wei \cite{KimMussoWeiYF}, Jin-Li-Xiong \cite{JinLiXiong1} and Qing-Raske \cite{QingRaske} and the references therein. For a blow-up analysis for GJMS-type operators we refer to Robert \cite{RobGJMS, RobPoly1,RobPoly2} and Premoselli \cite{Premoselli13}. For sign-changing solutions of Yamabe-type equations, see Premoselli-Robert \cite{PremoselliRobert} and Premoselli-V\'etois \cite{PremoselliVetois2, PremoselliVetois3}. Recently, eigenvalue-optimisation problems for the GJMS operators have been considered as a generalisation of the classical Yamabe problem for these operators: we refer to Ammann-Humbert \cite{AmmannHumbert}, Humbert-P\'etrides-Premoselli \cite{HumbertPetridesPremoselli} and Premoselli-V\'etois \cite{PremoselliVetois4}.

\medskip

The paper is organised as follows. In Section \ref{sec.prelims} we recall the relevant background and fix our notations. In Section \ref{sec.local.analysis} we first introduce the setting of this paper and next construct a family of suitable blow-up points around which we perform an asymptotic analysis. The crucial symmetry estimates are proved in Section \ref{sec.order.two}, and we obtain improved pointwise estimates around a blow-up point in Proposition \ref{sym.es}.  In Section \ref{sec.estimate.weyl} we obtain estimates on the Weyl curvature around a concentration point using the symmetry estimates and the Pohozaev identity. In Section \ref{sec.final.compactness} we show that concentration points are isolated and conclude the proof of Theorem \ref{theo.main}. The Appendix contains technical results that are used throughout the paper. In Appendix \ref{sec.pohozaev} we extend the Pohozaev identity to polyharmonic operators. The improved expansion of $P_{g}$ in conformal normal coordinates is given in Appendix \ref{sec.GJMS.expan}. The expansion of the Green's function in conformal normal coordinates is given in Appendix  \ref{sec.green.expan}. We conclude the paper with a Giraud-type lemma in Appendix \ref{sec.giraud}. 
\medskip

\subsection*{Acknowledgements:} 
S. M. gratefully acknowledges the support from the MATRICS grant MTR/2022/000447 of the Science and Engineering Research Board (currently ANRF) of India. Part of this work was carried out during S.M.'s visits to Universit\'e Libre de Bruxelles (ULB), and S.M. is grateful for the support and hospitality provided by ULB. B. P. was supported by the Fonds Th\'elam, an ARC Avanc\'e 2020 grant and an EoS FNRS grant.
\smallskip

\section{Preliminaries and Notations}\label{sec.prelims}

We first recall the definition of conformal normal coordinates. We denote by $\Scal_{g}$, $\Ricci_{g}$, $\Weyl_{g}$ respectively the scalar curvature and the Ricci and Weyl curvature tensors of $\(M,g\)$. Fix $N>2$ large.  Following Lee-Parker \cite{LeeParker} (see also Cao \cite{Cao}, G\"{u}nther \cite {Gunther}), there exists $\Lambda\in C^{\infty}(M\times M)$ such that, defining $\Lambda_{\xi}:=\Lambda(\xi,\cdot)$, we have for all $\xi \in M$
\begin{align}\label{conf.1}
\Lambda_{\xi}>0, ~\Lambda_{\xi}(\xi)=1~ \quad \hbox{and}~ \quad   \nabla \Lambda_{\xi}(\xi)=0,
\end{align}
and that the conformal metrics $g_{\xi}:=\Lambda_{\xi}^{\frac{4}{n-2k}}g$ satisfies 
\begin{equation}\label{conf.2}
\det g_{\xi}\(x\)=1+\bigO(|x|^{N}) 
\end{equation}
around $0$ in geodesic normal coordinates given by the the exponential map $\exp_{\xi}^{g_{\xi}}$ at $\xi$ with respect to the metric $g_{\xi}$. Moreover (see \cite{LeeParker}),
\begin{align}\label{conf.3}
&\Scal_{g_{\xi}}\(\xi\)=0,~\nabla \Scal_{g_{\xi}}\(\xi\)=0,~\Ricci_{g_{\xi}}\(\xi\)=0, \notag \\
&~\Delta_{g_{\xi}} \Scal_{g_{\xi}}\(\xi\)=\frac{1}{6}\left|\Weyl_{g_{\xi}}\(\xi\)\right|_g^2, ~\Sym\nabla\Ricci_{g_{\xi}}\(\xi\)=0, \notag \\
& ~\Sym\((\Ricci_{g_{\xi}})_{ab;cd}\(\xi\)+\frac{2}{9}(\Weyl_{g_{\xi}})_{eabf}\(\xi\)\tensor{\Weyl}{^e_{cd}^f}\(\xi\)\)=0.
\end{align}
\noindent Throughout the paper we will let $2^{*}_{k}=2n/\(n-2k\)$ and $\Delta_{0}$ will denote the non-negative Euclidean Laplacian: $\Delta_0 = - \sum_{i=1}^n \partial_i^2$. For $x \in \R^n$ we define 
\begin{align}\label{bubble1}
U\(x\):={\big(1+\mathfrak{c}_{n,k}^{-1}\left|x\right|^2\big)^{- \frac{n-2k}{2}}}\quad, \hbox{ where }\mathfrak{c}_{n,k}=\[\prod \limits_{j=-k}^{k-1}\(n+2j\)\]^{1/k}.
\end{align}
By the classification result in Wei-Xu \cite[Theorem 1.3]{WeiXu} $U$ is, up to translations and rescalings, the unique $C^{2k}(\R^{n})$ positive solution of 
\begin{equation} \label{eq.bubble1}
\Delta_0^k\,U=U^{2^*_k-1}\quad\text{in }\R^n. 
\end{equation}
It is in particular the unique solution of \eqref{eq.bubble1} that satisfies $0 < U(x) \le 1$ for every $x \in \R^n$. The fundamental solution of $\Delta_0^k$ in $\R^n$ centered at $0$ is given by $G_0(x) = b_{n,k} |x|^{2k-n}$, where 
\begin{equation} \label{def.bnk}
 b_{n,k}^{-1} =2^{k-1}(k-1)! \prod_{i=1}^k (n-2i) \omega_{n-1} 
 \end{equation}
and $\omega_{n-1}$ is the area of the standard sphere $\mathbb{S}^{n-1}$. A simple argument using a representation formula for \eqref{eq.bubble1} (see e.g. Premoselli \cite[Lemma 2.1]{Premoselli13}) thus shows that 
\begin{equation} \label{eq.constants.bubble}
\mathfrak{c}_{n,k}^{\frac{n-2k}{2}} = b_{n,k} \int_{\R^n} U^{2^{*}_{k}-1}\, dx.
\end{equation}
Let $\xi \in M$ be fixed. In this paper we will need precise expansions of $P_{\exp_\xi^*g} - \Delta_0^k$. Since the leading order term in the expansion of $P_g$ is $\Delta_g^k$, a first rough estimate, using Cartan's expansion of $g$, is as follows: for any smooth function $u$ in $\R^n$ and any $x \in \R^n$,
 \begin{equation}  \label{expansion.Pg}
\begin{aligned}
P_{\exp_\xi^*g} u (x) & = \Delta_0^k u(x) +\bigO \big (|x|^2 |\nabla^{2k} u(x)|_{g} \big ) \\
& + \bigO \big( |x| |\nabla^{2k-1} u(x)|_{g} \big) + \bigO\Big(\sum_{\ell=0}^{2k-2}  |\nabla^{\ell} u(x)|_{g}\Big),
\end{aligned}
\end{equation}
where the constants in the $\bigO(\cdot)$ terms are independent of $\xi, x$. A similar expansion was also obtained in Robert \cite{RobPoly1}. In the case where $g$ is the conformal metric $g_\xi$ of \eqref{conf.1}, \eqref{conf.2}, \eqref{conf.3} and $u$ is a radial function, we prove much more precise expansions in Appendix \ref{sec.GJMS.expan}: we refer to Proposition \ref{GJMS.exp1} below. 
\smallskip

\noindent
Throughout this paper, $C$ will denote a generic positive constant that depends on $n,k$ and possibly $(M,g)$.  $|a| \lesssim b$ will equivalently denote $a= O(b)$.
\smallskip

\section{Local analysis around a blow-up point}\label{sec.local.analysis}

We introduce the setting considered in this paper. We let $f\in C^{\infty}(M)$, $f>0$ be fixed. Throughout this paper we will consider a sequence of positive functions $(u_{\alpha})_{\alpha} \in C^{\infty}(M)$ satisfying :
\begin{equation}\label{eq:one}
P_{g}u_{\alpha}=f^{p_{\alpha}-2^{*}_{k}}~u_{\alpha}^{p_{\alpha}-1}~\hbox{ in } M,
\end{equation}
where $p_{\alpha}\leq2^{*}_{k}$ for all $\alpha$ and $\lim \limits_{\alpha\to+\infty}p_{\alpha}=2^{*}_{k}$. If $(u_{\alpha})_{\alpha}$ is uniformly bounded in $L^\infty(M)$ it converges, up to a subsequence, to a positive smooth solution $u$ of $P_gu = u^{2^{*}_{k}-1}$ in $M$. We thus investigate the case where $(u_{\alpha})_{\alpha}$ does not have an a priori $L^\infty(M)$ bound and we assume that it blows-up as $\alpha \to + \infty$, that is 
\begin{equation} \label{blowup}
\Vert \ua \Vert_{L^\infty(M)} \to + \infty \quad \text{ as } \alpha \to + \infty. 
\end{equation}
We follow the strategy of proof in \cite{DruetYlowdim, Marques, LiZhang1, KhuMaSc, LiXiong, GongKimWei} for the Yamabe and Paneitz equations and we perform, in Sections \ref{sec.local.analysis} and \ref{sec.order.two}, an asymptotic analysis of $(u_{\alpha})_{\alpha}$ around its concentration points. In our analysis, we adapt the arguments in \cite{DruetYlowdim, Marques, LiZhang1, KhuMaSc, LiXiong, GongKimWei} to a general polyharmonic setting of order $k \ge 1$. In general, no explicit expression of $P_g$ is available and this is the main obstacle to adapting the proofs of the Weyl vanishing conjecture in \cite{KhuMaSc} (when $k=1$) and \cite{GongKimWei} (when $k=2$) up to the maximal dimension where stability holds. For a general $k\ge3$ we are nevertheless able to adapt the analysis of \cite{Marques} to show the vanishing of the Weyl tensor at a concentration point, and this requires a precise expansion of $P_g$ to first-order in conformal normal coordinates (see Proposition \ref{GJMS.exp1} below). We begin with the following result, which is inspired from  \cite[Proposition 7.1]{LiXiong} and constructs a suitable family of critical points of $(\ua)_\alpha$:

\begin{proposition} \label{weak.es}
Let $(\ua)_\alpha$ be a sequence of positive solutions of \eqref{eq:one} satisfying \eqref{blowup}. Assume that \eqref{positivity} is satisfied. There exist $N_\alpha \ge 1$ points $(x_{1,\alpha} , \dotsc, x_{N_\alpha, \alpha})$ of $M$ satisfying, up to a subsequence, 
\begin{enumerate}
\item $\nabla \ua (x_{i, \alpha}) = 0 $ for $1 \le i \le N_\alpha$, 
\item $d_g \left( x_{i, \alpha}, x_{j, \alpha} \right)^{\frac{2k}{p_\alpha-2}} u_\alpha(x_{i,\alpha}) \ge 1$ for all $1 \le i \neq j \le N_\alpha$, and
\item there exists a positive constant $C$ independent of $\alpha$ such that
\begin{equation} \label{contptsconc}
 \Big(  \min_{1 \le i \le N_\alpha} d_g \left( x_{i,\alpha}, x \right) \Big)^{\frac{2k}{p_\alpha-2}}  \ua(x) \le C 
 \end{equation}
for any $x \in M$.
\end{enumerate}
\end{proposition}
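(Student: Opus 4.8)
The plan is to construct the points $(x_{i,\alpha})_i$ by a greedy selection procedure, extracting local maxima of a suitably renormalized version of $\ua$. The quantity to consider is $w_\alpha(x) := \big( \min_{1 \le i \le N} d_g(x_{i,\alpha}, x) \big)^{\frac{2k}{p_\alpha - 2}} \ua(x)$ with the convention that an empty minimum gives the whole diameter of $M$, so that we want to select finitely many points so that $w_\alpha$ stays uniformly bounded. Concretely, I would first let $x_{1,\alpha}$ be a point where $\ua$ achieves its maximum on $M$; then $\nabla \ua(x_{1,\alpha}) = 0$ automatically. Having chosen $x_{1,\alpha}, \dotsc, x_{j,\alpha}$, I would look at $\sup_M \big(\min_{1 \le i \le j} d_g(x_{i,\alpha}, x)\big)^{\frac{2k}{p_\alpha-2}} \ua(x)$; if it exceeds a fixed large constant $C_0$ (to be chosen depending only on $n,k,g$), I would select $x_{j+1,\alpha}$ to be a point where this supremum is attained. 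This process must terminate after finitely many steps $N_\alpha$ — this is the heart of the argument — and when it terminates, property (3) holds by construction with $C = C_0$.

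Two facts need to be checked along the way. First, that each selected $x_{j+1,\alpha}$ can be taken to be a \emph{critical point} of $\ua$: this follows because $x_{j+1,\alpha}$ is an interior maximum of the function $x \mapsto \big(\min_{i \le j} d_g(x_{i,\alpha},x)\big)^{\frac{2k}{p_\alpha-2}} \ua(x)$, and on the (open, dense) set where $x \mapsto \min_i d_g(x_{i,\alpha}, x)$ is smooth and positive, first-order optimality combined with the fact that $d_g(x_{i,\alpha}, x_{j+1,\alpha}) \ge$ a definite fraction of the injectivity radius (since $w_\alpha(x_{j+1,\alpha}) > C_0$ forces the new point to be far from the old ones, once $\ua(x_{j+1,\alpha})$ is controlled) gives $\nabla \ua(x_{j+1,\alpha}) = 0$; a small perturbation/limiting argument handles the non-smooth locus. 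Property (2), namely $d_g(x_{i,\alpha}, x_{j,\alpha})^{\frac{2k}{p_\alpha - 2}} \ua(x_{i,\alpha}) \ge 1$ for $i \ne j$, is built into the selection: when we choose $x_{j,\alpha}$ with $j > i$ we have $\big(\min_{\ell < j} d_g(x_{\ell,\alpha}, x_{j,\alpha})\big)^{\frac{2k}{p_\alpha-2}}\ua(x_{j,\alpha}) > C_0 \ge 1$, and by symmetry and the maximality of earlier-chosen points one deduces the two-sided bound; here one should be a little careful and perhaps first prove $d_g(x_{i,\alpha},x_{j,\alpha})^{\frac{2k}{p_\alpha-2}}\ua(x_{j,\alpha}) \ge 1$ and then exchange roles using that $x_{i,\alpha}$ was chosen earlier.

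The main obstacle — and the only place where analysis beyond bookkeeping enters — is proving that $N_\alpha$ is finite, and moreover bounded independently of $\alpha$ (which is what makes the constant $C$ in \eqref{contptsconc} uniform). The standard mechanism is an $\varepsilon$-regularity / energy-quantization argument: around each newly selected concentration point one performs the rescaling $\hat u_{j,\alpha}(y) = \mu_{j,\alpha}^{\frac{2k}{p_\alpha-2}} \ua\big(\exp_{x_{j,\alpha}}(\mu_{j,\alpha} y)\big)$ with $\mu_{j,\alpha} = \ua(x_{j,\alpha})^{-\frac{p_\alpha-2}{2k}}$, and shows — using the rough expansion \eqref{expansion.Pg} of $P_g$, elliptic estimates for the polyharmonic operator, and the classification result of Wei-Xu for \eqref{eq.bubble1} — that $\hat u_{j,\alpha}$ converges (in $C^{2k}_{\mathrm{loc}}$ of a ball whose radius is governed by property (2), which guarantees the rescaled points do not collapse) to the standard bubble $U$. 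Each such bubble carries a fixed amount of energy $\int_{\R^n} U^{2^*_k}\,dx > 0$, and since by property (2) the bubbles are mutually "separated" at scale $1$ after rescaling, their energies add up inside a fixed-energy quantity bounded by $\sup_\alpha \int_M f^{p_\alpha - 2^*_k} \ua^{p_\alpha}\,dx$ (which is finite — here one invokes coercivity of $P_g$ from \eqref{positivity} together with the Sobolev embedding and a Pohozaev- or energy-type a priori bound on the total energy of blowing-up solutions). This forces $N_\alpha \le C(n,k,g)$. The delicate points to get right are: the uniform radius of convergence for the rescaled profiles (needs property (2) and a Harnack-type inequality in the integral formulation, as in \cite{LiXiong}), the fact that distinct rescaled profiles do not interact (again property (2)), and the a priori energy bound that closes the counting. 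Assuming \eqref{positivity} throughout, these are exactly the ingredients set up in the preliminaries, so the proof is a polyharmonic adaptation of \cite[Proposition 7.1]{LiXiong}.
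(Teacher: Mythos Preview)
Your proposal has two genuine gaps.

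\textbf{First gap: the selected points need not be critical points of $u_\alpha$.} You take $x_{j+1,\alpha}$ to be a maximum of the weighted function $w_\alpha(x)=\big(\min_{i\le j}d_g(x_{i,\alpha},x)\big)^{\frac{2k}{p_\alpha-2}}u_\alpha(x)$, and claim this forces $\nabla u_\alpha(x_{j+1,\alpha})=0$. But first-order optimality at a smooth interior maximum of $D^a u_\alpha$ only gives $\nabla u_\alpha = -a\,u_\alpha\,D^{-1}\nabla D$, which does not vanish. Your attempted fix --- that $w_\alpha(x_{j+1,\alpha})>C_0$ forces $x_{j+1,\alpha}$ to be ``far'' from the earlier points so that the $\nabla D$ term is negligible --- does not work: the separation guaranteed by (2) is only of order $u_\alpha(x_{j+1,\alpha})^{-(p_\alpha-2)/2k}$, exactly the scale at which $D^{-1}\nabla D$ is of size $u_\alpha(x_{j+1,\alpha})^{(p_\alpha-2)/2k}$, so the right-hand side above is comparable to $u_\alpha$ itself, not small.

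\textbf{Second gap: the energy-quantization step is both unnecessary and unavailable.} You argue that $N_\alpha$ must be uniformly bounded in $\alpha$ because each bubble carries fixed energy and the total energy $\int_M u_\alpha^{p_\alpha}\,dv_g$ is uniformly bounded. But no such a priori energy bound is assumed here (it is in fact one of the \emph{outputs} of the compactness analysis later in the paper, not an input). Fortunately, the proposition does not require a uniform bound on $N_\alpha$: for each fixed $\alpha$, $u_\alpha$ is smooth hence bounded, so property (2) forces the selected points to be separated by at least $\|u_\alpha\|_\infty^{-(p_\alpha-2)/2k}>0$, and $N_\alpha$ is finite by compactness of $M$.

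The paper avoids both issues. It first invokes Hebey's Lemma~6.6 to produce critical points satisfying (1), (2), and the control \eqref{contptsconc} restricted to \emph{critical points} of $u_\alpha$. It then upgrades this to all $x\in M$ by contradiction: if the weighted function blows up along some $y_\alpha$, one rescales around $y_\alpha$, uses \eqref{positivity} and Wei--Xu to get $C^{2k}_{\mathrm{loc}}$ convergence to the bubble $U$, and then exploits the \emph{non-degeneracy} of the maximum of $U$ at the origin to locate a genuine critical point $z_\alpha$ of $u_\alpha$ with $d_g(y_\alpha,z_\alpha)=o(\nu_\alpha)$ at which the weighted function also blows up --- contradicting the control already established at critical points. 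This last step (producing a nearby critical point of $u_\alpha$ via non-degeneracy of the limit profile) is precisely the idea missing from your argument.
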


\begin{proof}
First, an adaptation of Lemma $6.6$ in Hebey~\cite{HebeyZLAM} shows that for any $\alpha$ there exist $N_\alpha \ge 1$ critical points $x_{1,\alpha}, \dotsc, x_{N_\alpha, \alpha}$ of $\ua$ that satisfy the following: for any $1 \le i \neq j \le N_\alpha$, one has
$$ d_g(x_{i, \alpha},x_{j,\alpha})^{\frac{2k}{p_\alpha-2}} u_\alpha(x_{i,\alpha}) \ge 1 $$
and for any critical point $x$ of $\ua$, one has 
\begin{equation}  \label{controleptscrit}
\Big(  \min_{1 \le i \le N_\alpha} d_g \left( x_{i,\alpha}, x \right) \Big)^{\frac{2k}{p_\alpha-2}}  u_\alpha(x) \le 1.
\end{equation}
We prove \eqref{contptsconc} by contradiction: we let, up to a subsequence, $\ya \in M$ be such that
\begin{equation} \label{conc1}
\begin{aligned}
 \Big( & \min_{1 \le i \le N_\alpha} d_g \left( x_{i,\alpha}, \ya \right) \Big)^{\frac{2k}{p_\alpha-2}}  u_\alpha(\ya) \\
 &= \max_{y \in M } \Big(  \min_{1 \le i \le N_\alpha} d_g \left( x_{i,\alpha},  y\right) \Big)^{\frac{2k}{p_\alpha-2}}  \ua(y)  \longrightarrow + \infty
\end{aligned}
\end{equation}
  as $\alpha \to + \infty$. Letting $\nu_\alpha:= u_\alpha(\ya)^{-(p_\alpha-2)/2k}$, \eqref{conc1} shows that
  \begin{equation} \label{conc2}
\frac{1}{\nu_\alpha} \left( \min_{1 \le i \le N_\alpha} d_g \left( x_{i,\alpha}, \ya \right)  \right) \to + \infty \textrm{ as } \alpha \to + \infty .
  \end{equation}
For $0 < \delta < \frac12 i_g(M)$ and $x \in B\(0,\delta/\nu_k\)$ we define
$$ \hua(x):= \nu_\alpha^{\frac{2k}{p_\alpha-2}}\ua \( \exp_{\ya}^g (\nu_\alpha x) \). $$
Here $B\(0,\delta/\nu_k\)$ denotes the Euclidean ball of centre $0$ and radius $\delta/\nu_k$. Using \eqref{conc1} and \eqref{conc2} we have $\hua(0) = 1$ and, for $R >0$,
$$ |\hua(x)| \le 1 + \smallo(1)\quad\textrm{for any } x \in B(0,R).$$
Since $\ua$ satisfies \eqref{eq:one} it is easily seen that $\hua$ satisfies $P_{g_\alpha} \hua = \hat{f}_\alpha^{p_\alpha - 2_k^*} \hua^{p_\alpha-1}$ in $B(0,\delta/\nu_k)$, where we have let $\hat{f}_\alpha = f\big(   \exp_{\ya}^g (\nu_\alpha \cdot )\big)$ and $g_\alpha =  (\exp_{\ya}^g )^*g(\nu_\alpha \cdot)$. The sequence $(g_\alpha)_\alpha$ strongly converges to the euclidean metric in $C^{2k}_{loc}(\R^n)$, so that by standard elliptic theory $\hua$ converges in $C^{2k}_{loc}(\mathbb{R}^n)$ towards a function $\hat{u}_0$  which satisfies $0 \le \hat{u}_0 \le 1$, $\hat{u}_0(0) = 1$ and  solves 
$$ \Delta_0^k  \hat{u}_0 = \hat{u}_0^{2_k^*-1}\quad\text{in }\R^n.$$
Let $x \in B(0,R)$ be fixed. Using assumption \eqref{positivity}, a representation formula for $\ua$ in $M$ at the point $ \exp_{\ya}^g(\nu_\alpha x)$ and a simple change of variables yield
$$ \hua(x) \ge \int_{B(0,R)} \hat{G}_{\alpha}(x,\cdot) \hat{f}_\alpha^{p_\alpha-2_k^*} \hua^{p_\alpha-1} dv_{g_\alpha}, $$
where we have let  $\hat{G}_{\alpha}(x,y) = \nu_\alpha^{n-2k} G_g \big(  \exp_{\ya}^g(\nu_\alpha x),  \exp_{\ya}^g(\nu_\alpha y) \big)$. Expansion \eqref{expansion.Green} below shows that $\hat{G}_\alpha(x,y) \to b_{n,k} |x-y|^{2k-n} $ pointwise in $\R^n \backslash \{x\}$ as $\alpha \to + \infty$, where $b_{n,k}$ is given by \eqref{def.bnk}. Passing to the limit as $\alpha \to + \infty$ with Fatou's lemma then shows that 
\begin{equation} \label{positivity.u0}
 \hat{u}_0(x) \ge \int_{B(0,R)} \frac{b_{n,k}}{|x-y|^{n-2k}} \hat{u}_0(y)^{2_k^*-1} \, dy ,
 \end{equation}
and hence $\hat{u}_0(x) >0$ since $\hat{u}_0 \ge 0$ and $\hat{u}_0(0) = 1$. Thus $\hat{u}_0 >0$ in $\R^n$ and by the classification result of~\cite{WeiXu} we have $ \hat{u}_0 = U$, where $U$ is given by \eqref{bubble1}. The origin is then a non-degenerate critical point of $\hat{u}_0$, which implies in particular that for $\alpha$ large enough $\ua$ possesses a critical point $\za \in M$, with $d_g(\ya, \za) = \smallo(\nu_\alpha)$ and $\nu_\alpha^{2k/(p_\alpha-2)} \ua(\za) = 1 + \smallo(1)$ as $\alpha\to\infty$. But then
$$ \Big(  \min_{1 \le i \le N_\alpha} d_g \left( x_{i,\alpha}, \za \right) \Big)^{\frac{2k}{p_\alpha-2}}  \ua(\za) \longrightarrow + \infty $$
as $\alpha\to\infty$ by \eqref{conc2}, which is in contradiction with \eqref{controleptscrit}. 
\end{proof}

Let $(\ua)_\alpha$ a sequence of positive solutions of \eqref{eq:one}. Throughout the rest of this section we consider sequences $(\xa)_\alpha$ and $(\rhoa)_\alpha$ of points in $M$ and of positive numbers satisfying
\begin{equation} \label{cond.xarhoa}
\begin{aligned}
& \nabla \ua (\xa) = 0 \\
& d_g(\xa, x)^{\frac{2k}{p_\alpha-2}} \ua(x) \le C \quad \text{ for all } x \in B_g(\xa, 8 \rhoa) \\
\end{aligned}
\end{equation}
for all $\alpha \ge 1$, where $C >0$ is a fixed constant independent of $\alpha$. Assumptions \eqref{cond.xarhoa} are for instance satisfied by the family $(x_{i, \alpha})_{1 \le i \le N_\alpha}$ of points constructed in Proposition \ref{weak.es} when we choose $\rhoa = \frac{1}{16} \min_{1 \le i \neq j \le N_\alpha} d_g(x_{i,\alpha}, x_{j,\alpha})$. For $\alpha \ge 1$ we let $\Lambda_{\xa} = \Lambda(\xa, \cdot)$ be the Lee-Parker conformal factor given by \eqref{conf.1}, \eqref{conf.2} and \eqref{conf.3}, and we let 
\begin{equation} \label{eq.defva}
v_{\alpha} =\Lambda^{-1}_{x_{\alpha}}u_{\alpha}.
\end{equation} The conformal invariance of $P_g$ then shows that $(v_{\alpha})_{\alpha}$ is a sequence of smooth positive functions in $M$ satisfiying:  
\begin{equation}\label{eq:bis:0}
P_{g_{\alpha}}v_{\alpha}=f_{\alpha}^{p_{\alpha}-2^{*}_{k}}v_{\alpha}^{p_{\alpha}-1},
\end{equation} 
where we have let $f_{\alpha}=f\Lambda_{x_{\alpha}}$ and, following \eqref{conf.1}, $g_\alpha = \Lambda_{\xa}^{\frac{4}{n-2k}} g$. If we let $x_0 = \lim_{\alpha \to + \infty} \xa \in M$, where the limit is taken up to a subsequence, it is easily seen that $g_{\xa}$ strongly converges to $g_{x_0}$ as $\alpha \to + \infty$.  Property \eqref{conf.1} together with \eqref{cond.xarhoa} ensure that $\va$ satisfies 
\begin{equation} \label{cond.xarhoa.va}
\begin{aligned}
& \va(\xa) = \ua(\xa), \quad \nabla \va (\xa) = 0, \\
& d_{g_\alpha}(\xa, x)^{\frac{2k}{p_\alpha-2}} \va(x) \le C \quad \text{ for all } x \in B_{g_\alpha}(\xa, 8 \rhoa), \\
\end{aligned}
\end{equation}
where  $d_{g_{\alpha}}$ is the geodesic distance with respect to $g_{\alpha}$. We will assume that $(\xa)_\alpha$ and $(\rhoa)_\alpha$ are chosen in addition to satisfy 
\begin{equation} \label{cond.xarhoa.va2}
 \rhoa^{\frac{2k}{p_\alpha-2}} \max_{B_{g_\alpha}(\xa, 4 \rhoa)} \va \to + \infty \quad \text{ as } \alpha \to + \infty,
\end{equation}
which implies in particular that $(\va)_\alpha$ blows-up as $\alpha \to + \infty$ since $M$ is compact. Throughout this section we assume that \eqref{cond.xarhoa.va} and \eqref{cond.xarhoa.va2} hold.  We let 
\begin{equation} \label{def.mua}
 \ma = \va(\xa)^{- \frac{2k}{p_\alpha-2}}= \ua(\xa)^{- \frac{2k}{p_\alpha-2}}. 
 \end{equation}
The following result is standard:

\begin{lemma} \label{local.1}
Let $(\va)_\alpha$ be a sequence of positive solutions of \eqref{eq:bis:0} and let $(\xa)_\alpha$ and $(\rhoa)_\alpha$ satisfy \eqref{cond.xarhoa.va} and \eqref{cond.xarhoa.va2}. Assume that \eqref{positivity} holds. As $\alpha \to + \infty$ one has  $\mu_\alpha \to 0$ and 
$$ \mu_\alpha^{\frac{2k}{p_\alpha-2}} \va \big( \exp_{\xa}^{g_{\alpha}} (\ma \cdot )\big) \longrightarrow U $$ 
in $C^{2k}_{loc}(\mathbb{R}^n)$, where $U$ is given by \eqref{bubble1}. 
\end{lemma}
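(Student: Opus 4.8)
The plan is to follow the scheme already used in the proof of Proposition~\ref{weak.es}. The point to be careful about is that the weak pointwise control in~\eqref{cond.xarhoa.va} degenerates at $\xa$ itself, so one cannot rescale directly at $\xa$; instead I would first pass to a point $\ya$ realising the maximum of $\va$ over $\overline{B_{g_\alpha}(\xa,4\rhoa)}$ and set $\tilde\mu_\alpha:=\va(\ya)^{-\frac{2k}{p_\alpha-2}}$. Assumption~\eqref{cond.xarhoa.va2} forces $\rhoa^{\frac{2k}{p_\alpha-2}}\va(\ya)\to+\infty$, hence $\va(\ya)\to+\infty$ and $\tilde\mu_\alpha\to0$ with $\tilde\mu_\alpha=\smallo(\rhoa)$; comparing this with the weak bound of~\eqref{cond.xarhoa.va} evaluated at $\ya$ gives $d_{g_\alpha}(\xa,\ya)^{\frac{2k}{p_\alpha-2}}\va(\ya)\le C$, so that $d_{g_\alpha}(\xa,\ya)=\bigO(\tilde\mu_\alpha)$ and $d_{g_\alpha}(\xa,\ya)=\smallo(\rhoa)$. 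In particular the maximum is attained in the interior for $\alpha$ large, since on the sphere $d_{g_\alpha}(\xa,\cdot)=4\rhoa$ the product $d_{g_\alpha}(\xa,\cdot)^{\frac{2k}{p_\alpha-2}}\va$ would tend to $+\infty$ against~\eqref{cond.xarhoa.va}.

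Next I would perform the blow-up at $\ya$ at scale $\tilde\mu_\alpha$: set $\tva(x):=\tilde\mu_\alpha^{\frac{2k}{p_\alpha-2}}\va\big(\exp_{\ya}^{g_\alpha}(\tilde\mu_\alpha x)\big)$. For $x$ in a fixed ball $B(0,R)$, the point $\exp_{\ya}^{g_\alpha}(\tilde\mu_\alpha x)$ lies in $B_{g_\alpha}(\xa,4\rhoa)$ for $\alpha$ large, because $d_{g_\alpha}\big(\ya,\exp_{\ya}^{g_\alpha}(\tilde\mu_\alpha x)\big)=\bigO(\tilde\mu_\alpha)$ while $\tilde\mu_\alpha=\smallo(\rhoa)$ and $d_{g_\alpha}(\xa,\ya)=\smallo(\rhoa)$; hence by the choice of $\ya$ one has $\tva\le1$ on $B(0,R)$, together with $\tva(0)=1$. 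It then follows from~\eqref{eq:bis:0} that $\tva$ solves a rescaled equation $P_{\tilde g_\alpha}\tva=\tilde f_\alpha^{\,p_\alpha-2^{*}_{k}}\tva^{\,p_\alpha-1}$ on $B(0,R)$, where $\tilde g_\alpha$ (the appropriately rescaled pull-back of $g_\alpha$) converges to the Euclidean metric in $C^{2k}_{loc}(\R^n)$; standard elliptic theory then yields, along a subsequence, $\tva\to\hat u_0$ in $C^{2k}_{loc}(\R^n)$ with $0\le\hat u_0\le1$, $\hat u_0(0)=1$ and $\Delta_0^k\hat u_0=\hat u_0^{2^{*}_{k}-1}$ in $\R^n$. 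Exactly as in Proposition~\ref{weak.es}, a representation formula for $\va$ combined with~\eqref{positivity} and the expansion~\eqref{expansion.Green} of $G_g$ gives, through Fatou's lemma, the lower bound~\eqref{positivity.u0}, so $\hat u_0>0$; the classification theorem of Wei--Xu~\cite{WeiXu} then forces $\hat u_0=U$, with $U$ as in~\eqref{bubble1}, and the convergence holds along the full sequence by uniqueness of the limit.

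Finally I would transfer the conclusion from $\ya$ to $\xa$. Write $\xa=\exp_{\ya}^{g_\alpha}(\tilde\mu_\alpha\za)$ with $\za\in\R^n$; since $d_{g_\alpha}(\xa,\ya)=\bigO(\tilde\mu_\alpha)$, the $\za$ stay in a fixed ball, and since $\nabla\va(\xa)=0$ the $C^{2k}_{loc}$ convergence shows that any limit point of $(\za)_\alpha$ is a critical point of $U$, hence equal to $0$ because $U$ has the origin as its unique and non-degenerate critical point. Therefore $d_{g_\alpha}(\xa,\ya)=\smallo(\tilde\mu_\alpha)$ and $\va(\xa)/\va(\ya)=\tva(\za)\to U(0)=1$, whence $\ma/\tilde\mu_\alpha\to1$ and in particular $\ma\to0$; comparing the rescaling at $\xa$ at scale $\ma$ with the one at $\ya$ at scale $\tilde\mu_\alpha$ — which differ only by a translation of size $\smallo(1)$ in $\R^n$ and a multiplicative factor tending to $1$ — yields $\ma^{\frac{2k}{p_\alpha-2}}\va\big(\exp_{\xa}^{g_\alpha}(\ma\cdot)\big)\to U$ in $C^{2k}_{loc}(\R^n)$. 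The only step requiring genuine care beyond the argument of Proposition~\ref{weak.es} is the first one: since the weak estimate is useless at $\xa$, one must route the whole analysis through the maximum point $\ya$, control $d_{g_\alpha}(\xa,\ya)$ relative to the blow-up scale $\tilde\mu_\alpha$, and only afterwards exploit $\nabla\va(\xa)=0$ to collapse $\ya$ onto $\xa$; the rest is a routine repetition of the blow-up analysis carried out above.
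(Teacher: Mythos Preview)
Your proof is correct and follows essentially the same strategy as the paper: pick the maximum point $\ya$ of $\va$ on $\overline{B_{g_\alpha}(\xa,4\rhoa)}$, blow up at the corresponding scale, use \eqref{positivity} and the Green's function expansion to force positivity of the limit, apply the Wei--Xu classification, and then use $\nabla\va(\xa)=0$ to identify the centre. The only cosmetic difference is that the paper centres the exponential chart at $\xa$ from the outset (still using the scale $\na=\va(\ya)^{-(p_\alpha-2)/2k}$), obtains $\hat v_0=U(\cdot-\hat y_0)$, and then observes that $0$ is a critical point of $\hat v_0$ to conclude $\hat y_0=0$ and $\na/\ma\to1$; you instead centre at $\ya$, get $\hat u_0=U$ directly, and transfer to $\xa$ afterwards via the change of charts. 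Both routes are standard and equivalent.
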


\begin{proof}
Let $\ya \in B_{g_\alpha}(\xa,4 \rhoa)$ be such that  
 $$ \va(\ya)  = \max_{B_{g_\alpha}(\xa, 4 \rhoa)} \va \longrightarrow + \infty $$
as $\alpha \to + \infty$. Let $\na:=  \va (\ya)^{- (p_\alpha-2)/2k}$. By \eqref{cond.xarhoa.va2} one has $\frac{\rhoa}{\na} \to + \infty$ as $\alpha \to + \infty$. For any $x \in B\(0,\rhoa/\na\)$ define  
$$ \hva(x):= \na^{\frac{2k}{p_\alpha-2}} \va \big( \exp_{\xa}^{g_\alpha}(\na x )\big). $$
It is easily seen that $\hva$ satisfies $P_{\hga} \hva = \hat{f}_\alpha^{p_\alpha - 2_k^*} \hva^{p_\alpha-1}$ in $B(0,\rhoa/\na)$, where we have let $\hat{f}_\alpha = f_\alpha \big(   \exp_{\ya}^{\ga} (\na \cdot )\big)$ and $\hga =  (\exp_{\ya}^{g_\alpha} )^*g(\na \cdot)$. Condition \eqref{cond.xarhoa.va} ensures that $d_{\ga}(\xa, \ya) = O(\na)$, so that  $\hat{y}_0 = \lim_{\alpha \to + \infty} \frac{1}{\na} (\exp_{\xa}^{\ga})^{-1}(\ya)$ exists up to a subsequence. The sequence $(\hga)_\alpha$ strongly converges to the euclidean metric in $C^{2k}_{loc}(\R^n)$, so that by standard elliptic theory $\hva$ converges in $C^{2k}_{loc}(\mathbb{R}^n)$ towards a function $\hat{v}_0$  which satisfies $0 \le \hat{v}_0 \le 1$, $\hat{v}_0(\hat{y}_0) = 1$, and solves 
$$ \Delta_0^k  \hat{v}_0 = \hat{v}_0^{2_k^*-1}\quad\text{in }\R^n,$$
Arguing as in the proof of \eqref{positivity.u0} it is easily shown that $\hat{v}_0$ is positive in $\R^n$, and by the classification result of~\cite{WeiXu} we thus have $\hat{v}_0 = U(x - \hat{y}_0)$. By \eqref{cond.xarhoa.va} $0$ is a critical point of $\hat{v}_0$ and we thus have $\hat{y}_0 = 0$, which implies that $\frac{\na}{\ma} = \hva(0)^{(p_\alpha-2)/2k}\to 1$ as $\alpha \to \infty$, where $\ma$ is as in \eqref{def.mua}, which concludes the proof of Lemma~\ref{local.1}. 
\end{proof}
The next result is a version of the maximum principle adapted to \eqref{eq:bis:0}:

\begin{lemma} \label{local.2}
Let $(\va)_\alpha$ be a sequence of positive solutions of \eqref{eq:bis:0} and let $(\xa)_\alpha$ and $(\rhoa)_\alpha$ satisfy \eqref{cond.xarhoa.va} and \eqref{cond.xarhoa.va2}. Assume that \eqref{positivity} holds. There exists a positive constant $C >0$ such that the following holds: for any sequence $(s_\alpha)_\alpha$ of positive numbers satisfying $0 < s_\alpha \le \rhoa$, we have, for $\alpha \ge 1$, 
$$ \sum_{\ell = 0}^{2k} s_\alpha^{\ell} \Vert \nabla^{\ell} \va \Vert_{L^\infty(\Omega_\alpha)} \le C \min_{\Omega_\alpha} \va, $$
where we have let $\Omega_\alpha = B_{\ga}(x_{\alpha,}3 s_\alpha) \backslash B_{\ga}(x_{\alpha},s_\alpha/3)$.  
\end{lemma}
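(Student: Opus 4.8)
\medskip

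\noindent
The statement is a Harnack-type inequality on dyadic annuli around the blow-up point $\xa$, saying that $\va$ together with its derivatives up to order $2k$ is controlled, after the natural scaling by powers of $s_\alpha$, by $\min_{\Omega_\alpha}\va$. The plan is to argue by contradiction and rescaling, exploiting the blow-up structure already extracted in Lemma \ref{local.1}, the weak pointwise bound in \eqref{cond.xarhoa.va}, and the positivity-preserving property \eqref{positivity} through the Green's representation formula for $P_{\ga}$.

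\medskip

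\noindent
First I would reduce to the derivative-free Harnack inequality $\max_{\Omega_\alpha}\va \le C \min_{\Omega_\alpha}\va$: once this is known, the bound on $\sum_{\ell=1}^{2k} s_\alpha^\ell \Vert \nabla^\ell \va\Vert_{L^\infty(\Omega_\alpha)}$ follows from standard interior elliptic estimates for $P_{\ga}$ applied on the rescaled annulus. Indeed, setting $\tilde v_\alpha(y) = s_\alpha^{\frac{2k}{p_\alpha-2}}\va(\exp_{\xa}^{\ga}(s_\alpha y))$ on the fixed annulus $\{1/3 < |y| < 3\}$, the function $\tilde v_\alpha$ solves an equation of the form $P_{\tilde g_\alpha}\tilde v_\alpha = \hat f_\alpha^{p_\alpha - 2^*_k}\tilde v_\alpha^{p_\alpha-1}$ with $\tilde g_\alpha \to$ euclidean metric in $C^{2k}_{\loc}$; by \eqref{cond.xarhoa.va} one has $\tilde v_\alpha \le C$ on a slightly larger annulus, hence the right-hand side is bounded in $L^\infty$ there, and $W^{2k,q}$ then $C^{2k-1,\theta}$ elliptic estimates give $\Vert \tilde v_\alpha\Vert_{C^{2k}} \le C(1 + \max \tilde v_\alpha)$ on $\{1/3 \le |y| \le 3\}$, which rescales back to the claimed derivative bounds once $\max_{\Omega_\alpha}\va \ge c > 0$ (and if $\max_{\Omega_\alpha}\va$ stays bounded the whole statement is trivial after noting $\min_{\Omega_\alpha}\va$ cannot be too small, again by the representation formula and positivity of the Green's function — this is where \eqref{positivity} enters to bound $\va$ from below).

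\medskip

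\noindent
For the core Harnack inequality $\max_{\Omega_\alpha}\va \le C\min_{\Omega_\alpha}\va$ I would suppose it fails: up to a subsequence there are points $a_\alpha, b_\alpha \in \Omega_\alpha$ with $\va(a_\alpha)/\va(b_\alpha) \to +\infty$. Pick $y_\alpha \in \overline{\Omega_\alpha}$ realizing $\max_{\Omega_\alpha}\va$, set $t_\alpha = d_{\ga}(\xa,y_\alpha) \in [s_\alpha/3, 3s_\alpha]$ and $\lambda_\alpha = \va(y_\alpha)^{-(p_\alpha-2)/2k}$. From \eqref{cond.xarhoa.va} applied at $y_\alpha$ one gets $\lambda_\alpha \lesssim t_\alpha$, and I claim in fact $\lambda_\alpha/t_\alpha \to 0$: otherwise $\va(y_\alpha) \lesssim t_\alpha^{-2k/(p_\alpha-2)} \sim s_\alpha^{-2k/(p_\alpha-2)}$, which after comparing with \eqref{cond.xarhoa.va2} and Lemma \ref{local.1} would force $\max_{\Omega_\alpha}\va$ and $\min_{\Omega_\alpha}\va$ to be comparable (both governed by the tail of the standard bubble $U$ at scale $s_\alpha/\ma$), contradicting the blow-up of the ratio. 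Granting $\lambda_\alpha/t_\alpha \to 0$, rescale around $y_\alpha$: $w_\alpha(z) = \lambda_\alpha^{\frac{2k}{p_\alpha-2}}\va(\exp_{y_\alpha}^{\ga}(\lambda_\alpha z))$ solves $P_{g'_\alpha} w_\alpha = O(w_\alpha^{p_\alpha-1})$ with $g'_\alpha \to$ euclidean, $0 \le w_\alpha \le 1 + o(1)$ on balls (by \eqref{cond.xarhoa.va}, since $\lambda_\alpha = o(t_\alpha)$ keeps us away from $\xa$), and $w_\alpha(0) = 1$; by elliptic theory $w_\alpha \to w_0$ in $C^{2k}_{\loc}$ with $\Delta_0^k w_0 = w_0^{2^*_k-1}$, $0 \le w_0 \le 1$, $w_0(0)=1$. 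Exactly as in the proof of \eqref{positivity.u0} — using the representation formula with the Green's function of $P_g$, the expansion \eqref{expansion.Green} of $G_g$, and Fatou — one gets $w_0 > 0$ on $\R^n$, hence $w_0 = U(\cdot - z_0)$ for some $z_0$ by \cite{WeiXu}. In particular $w_0$ is bounded below by a positive constant on any fixed ball, i.e.\ on the scale $\lambda_\alpha$ around $y_\alpha$, $\va$ does not drop. But $b_\alpha$ lies at distance $O(s_\alpha) = O(t_\alpha) \gg \lambda_\alpha$ from $y_\alpha$, so the drop $\va(a_\alpha)/\va(b_\alpha) \to \infty$ must occur over scales $\gg \lambda_\alpha$: to reach a contradiction I iterate/continue the bubble — more precisely, a standard chaining argument (a Harnack chain of balls of radius comparable to the local scale, each obtained as above via the positivity of $G_g$ and the limiting equation) propagates the lower bound $\va \gtrsim c\, \va(y_\alpha)$ along $\Omega_\alpha$, which is connected, down to $b_\alpha$; this contradicts $\va(b_\alpha) = o(\va(y_\alpha))$.

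\medskip

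\noindent
\textbf{Main obstacle.} The delicate point is the dichotomy argument establishing $\lambda_\alpha / t_\alpha \to 0$ and, more importantly, converting the pointwise convergence $w_\alpha \to U(\cdot - z_0)$ into a genuine \emph{uniform} Harnack comparison across the whole annulus $\Omega_\alpha$ — i.e.\ ruling out that $\va$ could, within a single fixed annulus, look like a bubble concentrating much faster than $s_\alpha$ whose tail is not yet stabilized. This is handled by the Harnack chain, but setting it up cleanly in the polyharmonic setting requires the positivity-preserving hypothesis \eqref{positivity} (so that the representation formula yields genuine lower bounds, there being no pointwise maximum principle for $P_g$ when $k \ge 2$) together with the Green's function asymptotics \eqref{expansion.Green}; without \eqref{positivity} the argument breaks down entirely, which is consistent with the statement assuming it.
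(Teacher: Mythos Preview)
Your contradiction/rescaling approach is genuinely different from the paper's, but it contains a concrete error that breaks the argument as written.

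\medskip

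\textbf{The error.} You claim that \eqref{cond.xarhoa.va} applied at $y_\alpha$ gives $\lambda_\alpha \lesssim t_\alpha$. It gives the opposite. From $t_\alpha^{\frac{2k}{p_\alpha-2}}\va(y_\alpha)\le C$ and $\lambda_\alpha=\va(y_\alpha)^{-(p_\alpha-2)/2k}$ one gets $(t_\alpha/\lambda_\alpha)^{2k/(p_\alpha-2)}\le C$, hence $t_\alpha\lesssim\lambda_\alpha$. So $\lambda_\alpha/t_\alpha$ is bounded \emph{below}, and your claimed dichotomy ``$\lambda_\alpha/t_\alpha\to0$'' can never hold. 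The subsequent paragraph, which rescales by $\lambda_\alpha\ll t_\alpha$ so that the annulus recedes to infinity and then invokes a Harnack chain to bridge the gap, is therefore built on a false premise. Your justification of the dichotomy is also circular: the alternative ``$\va(y_\alpha)\lesssim s_\alpha^{-2k/(p_\alpha-2)}$'' is \emph{always} true by \eqref{cond.xarhoa.va}, and appealing to Lemma~\ref{local.1} to say $\va$ is ``governed by the tail of $U$'' on $\Omega_\alpha$ is exactly the kind of global control the lemma you are proving is meant to provide --- Lemma~\ref{local.1} only gives $C^{2k}_{\loc}$ convergence at scale $\ma$, which says nothing on annuli with $s_\alpha\gg\ma$.

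\medskip

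\textbf{What the paper does instead.} The paper bypasses any blow-up dichotomy and works directly with the Green's representation
\[
\va(x)=\int_{B_{\ga}(\xa,4s_\alpha)\setminus B_{\ga}(\xa,s_\alpha/4)}G_{\ga}(x,\cdot)V_\alpha\,\va\,dv_{\ga}+h_{1,\alpha}(x)+h_{2,\alpha}(x),
\]
where $V_\alpha=f_\alpha^{p_\alpha-2^*_k}\va^{p_\alpha-2}$ satisfies $|V_\alpha|\le Cs_\alpha^{-2k}$ on the annulus by \eqref{cond.xarhoa.va}, and $h_{1,\alpha},h_{2,\alpha}$ are the integrals over the far exterior and the inner ball. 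For those two pieces the Green's function bounds \eqref{bounds.Green} immediately give $\sum_\ell s_\alpha^\ell|\nabla^\ell h_{i,\alpha}(x)|\le C\,h_{i,\alpha}(y)$ for any $x,y\in\Omega_\alpha$. The Harnack inequality for $\va$ itself then follows by invoking a maximum principle for integral equations (Jin--Li--Xiong \cite[Proposition~2.3]{JinLiXiong2}; see also \cite[Lemma~4.1, Propositions~A.1--A.2]{LiXiong}), which is the polyharmonic replacement for the pointwise maximum principle. This is both shorter and more robust than a chain argument: it uses \eqref{positivity} only through the positivity and two-sided bounds on $G_{\ga}$, and delivers the derivative control simultaneously rather than via a separate elliptic-estimate step.
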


\begin{proof}
As before we let $G_{\ga}$ be the Green's function of $P_{\ga}$ in $M$. It is positive by \eqref{positivity} and has a uniform lower bound in $M$ by \eqref{bounds.Green}. Let $x \in \Omega_\alpha =  B_{\ga}(x_{\alpha,}3 s_\alpha) \backslash B_{\ga}(x_{\alpha},  s_\alpha/3)$. Since $\va$ satisfies \eqref{eq:bis:0} a representation formula for $\va$ in $M$ writes as
$$ \va(x) = \int_{ B_{\ga}(x_{\alpha}, 4 s_\alpha) \backslash B_{\ga}(x_{\alpha}, s_\alpha/4)} G_{\ga}(x, \cdot) f_\alpha^{p_\alpha-2^*_k} \va^{p_\alpha-1} dv_{\ga} + h_{1,\alpha}(x) + h_{2,\alpha}(x), $$
where we have let 
$$\begin{aligned}
 h_{1,\alpha}(x) & = \int_{M \backslash B_{\ga}(x_{\alpha}, 4 s_\alpha)} G_{\ga}(x, \cdot) f_\alpha^{p_\alpha-2^*_k} \va^{p_\alpha-1} dv_{\ga} \quad \text{ and } \\
 h_{2,\alpha}(x) & = \int_{B_{\ga}(x_{\alpha},s_\alpha/4)} G_{\ga}(x, \cdot) f_\alpha^{p_\alpha-2^*_k} \va^{p_\alpha-1} dv_{\ga} 
 \end{aligned} $$ 
 Let $x,y \in \Omega_\alpha$. It is easily seen that there exists $C >0$ such that  
 $$ d_{\ga}(x,z) \ge \frac{1}{C} d_{\ga}(y, z) \quad \text{ for any } z \in M \backslash  B_{\ga}(x_{\alpha}, 4 s_\alpha)$$
 and 
 $$ d_{\ga}(x,z) \ge \frac{1}{C} d_{\ga}(y, z)  \quad \text{ for any } z \in  B_{\ga}(x_{\alpha},  s_\alpha/4).$$
Differentiating under the integral and using \eqref{bounds.Green} below then shows that 
\begin{equation} \label{local.2.1}
 \sum_{\ell = 0}^{2k} s_\alpha^{\ell} \big| \nabla^{\ell} h_{i,\alpha}(x) \big|_{\ga} \le C h_{i,\alpha}(y) 
 \end{equation}
for some $C >0$ independent of $x,y$ and $\alpha$ and for $i=1,2$. Independently,  the representation formula above can be rewritten as 
$$ \va(x) = \int_{ B_{\ga}(x_{\alpha},4 s_\alpha) \backslash B_{\ga}(x_{\alpha},s_\alpha/4)} G_{\ga}(x, \cdot) V_\alpha \va dv_{\ga} + h_{1,\alpha}(x) + h_{2,\alpha}(x), $$
where $V_\alpha: = f_\alpha^{p_\alpha-2^*_k} \va^{p_\alpha-2}$. Using \eqref{cond.xarhoa.va}, $V_\alpha$ satisfies 
$$|V_\alpha(x)| \le C s_\alpha^{-2k} \quad \text{ for any } \quad x \in B_{\ga}(x_{\alpha}, 4 s_\alpha) \backslash B_{\ga}(x_{\alpha}, s_\alpha/4). $$ 
The conclusion of Lemma \ref{local.2} now follows from \eqref{local.2.1} and from the maximum principle for integral equations of \cite[Proposition 2.3]{JinLiXiong2} (see also \cite[Lemma 4.1, Proposition A.1, Proposition A.2]{LiXiong} for the fourth-order case). 
\end{proof}

We define in the following, for $x \in M$,
\begin{equation}\label{bubble2}
B_{\alpha}(x)=\frac{\mu_{\alpha}^{n-2k-\frac{2k}{p_{\alpha}-2}}}{\left(\mu^{2}_{\alpha}+\mathfrak{c}^{-1}_{n,k}~d_{g_{\alpha}}(x,x_{\alpha})^{2}\right)^{\frac{n-2k}{2}}}
\end{equation}
where $\mathfrak{c}^{-1}_{n,k}$ is as in \eqref{bubble1} and $\ma$ is given by \eqref{def.mua}. Let $0< \ve < 1$ be fixed. We define the radius of influence of the concentration point $(\xa)_\alpha$ as follows:  
\begin{equation} \label{def.ra}
r_\alpha = \sup \Big \{ r \in (0, \rhoa),  |\va - B_\alpha| \le \ve \Ba \Big \}.
\end{equation}
Lemma \ref{local.1} shows that $\frac{\ra}{\ma} \to + \infty$ as $\alpha \to + \infty$. A simple application of Lemma \ref{local.2} shows that there exists $C >0$ such that, for all $x \in B_{\ga}(\xa, 4 \rhoa)$,
\begin{equation} \label{local.est1}
\sum_{\ell=0}^{2k} \big( \ma + d_{\ga}(\xa, x) \big)^{\ell} |\nabla^{\ell} \va(x)|_{\ga} \le C \Ba(x) 
\end{equation}
holds. The following is the main result of this section: 

\begin{proposition} \label{local.3}
Let $(\va)_\alpha$ be a sequence of positive solutions of \eqref{eq:bis:0} and let $(\xa)_\alpha$ and $(\rhoa)_\alpha$ satisfy \eqref{cond.xarhoa.va} and \eqref{cond.xarhoa.va2}. Assume that \eqref{positivity} holds. We let $\ra$ be defined by \eqref{def.ra} and we assume that $\ra \to 0$ as $\alpha \to + \infty$. 

Let, for $x \in B(0,2) \backslash \{0\}$, $\tva(x) = \ma^{2k + \frac{2k}{p_\alpha-2} - n} \ra^{n-2k} \va \big( \exp_{\xa}^{\ga}(\ra x) \big)$. Then there exists $H \in C^{2k}(B(0,2))$ satisfying $H \ge 0$ and  $\Delta_0^k H = 0$ in $B(0,2)$ such that 
$$ \tva \to \frac{\mathfrak{c}_{n,k}^{\frac{n-2k}{2}}}{|x|^{n-2k}} + H \quad \text{ in } C^{2k}_{loc}(B(0,2) \backslash \{0\}) $$
up to a subsequence as $\alpha \to + \infty$.  If in addition $\ra < \rhoa$ for all $\alpha \ge 1$ then $H(0) >0$. 
\end{proposition}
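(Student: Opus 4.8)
The plan is to rescale \eqref{eq:bis:0} at the scale $\ra$ around $\xa$, to split the rescaled solution through the Green's representation attached to $P_{\ga}$ into a \emph{bubble part} carrying the singularity $|x|^{2k-n}$ and a nonnegative \emph{outer part} producing $H$, and to use the positivity of $G_{\ga}$ from \eqref{positivity} throughout. The core difficulty will be the precise identification of the bubble part.

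\medskip
\noindent\emph{Step 1: rescaling and a priori bounds.} Set $c_\alpha:=\ma^{2k+\frac{2k}{p_\alpha-2}-n}\ra^{n-2k}$, so that $\tva=c_\alpha\,\va\circ\exp_{\xa}^{\ga}(\ra\,\cdot)$, and $\tga:=(\exp_{\xa}^{\ga})^{*}\ga(\ra\,\cdot)$, which converges to the Euclidean metric in $C^{2k}_{\loc}(\R^n)$. By the conformal covariance of the GJMS operator and the scaling $P_{cg}=c^{-k}P_g$ for constants $c>0$, $\tva$ solves $P_{\tga}\tva=\eps_\alpha\,\tilde f_\alpha^{\,p_\alpha-2^{*}_{k}}\,\tva^{\,p_\alpha-1}$ in $B(0,2)$, with $\tilde f_\alpha:=f_\alpha\circ\exp_{\xa}^{\ga}(\ra\,\cdot)$ and $\eps_\alpha:=c_\alpha^{\,2-p_\alpha}\ra^{2k}=(\ma/\ra)^{(n-2k)(p_\alpha-2)-2k}$; since $\ra/\ma\to+\infty$ by Lemma \ref{local.1} and $(n-2k)(p_\alpha-2)-2k\to2k>0$, one has $\eps_\alpha\to0$. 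Rescaling \eqref{local.est1} gives, for $|x|\le2$,
\begin{equation*}
\sum_{\ell=0}^{2k}\Big(\tfrac{\ma}{\ra}+|x|\Big)^{\ell}|\nabla^{\ell}\tva(x)|\ \le\ C\Big(\big(\tfrac{\ma}{\ra}\big)^{2}+\mathfrak{c}_{n,k}^{-1}|x|^{2}\Big)^{-\frac{n-2k}{2}},
\end{equation*}
so $(\tva)_\alpha$ is bounded in $C^{2k}_{\loc}(B(0,2)\setminus\{0\})$; with $\eps_\alpha\to0$ and interior elliptic estimates, up to a subsequence $\tva\to\tilde v_0$ in $C^{2k}_{\loc}(B(0,2)\setminus\{0\})$, where $\tilde v_0\ge0$, $\Delta_0^{k}\tilde v_0=0$ in $B(0,2)\setminus\{0\}$ and $0\le\tilde v_0(x)\le C|x|^{2k-n}$.

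\medskip
\noindent\emph{Step 2: decomposition and identification of the bubble part.} Fix a small $\delta\in(0,\tfrac12)$. By \eqref{positivity}, $P_{\ga}$ is invertible with positive Green's function $G_{\ga}$ and $\va(z)=\int_{M}G_{\ga}(z,\cdot)f_\alpha^{p_\alpha-2^{*}_{k}}\va^{p_\alpha-1}\,dv_{\ga}$. Evaluating at $z=\exp_{\xa}^{\ga}(\ra x)$, multiplying by $c_\alpha$ and splitting $M$ into $B_{\ga}(\xa,\delta\ra)$ and its complement, we write $\tva=\Phi_{\alpha,\delta}+\Psi_{\alpha,\delta}$, where
\begin{equation*}
\Psi_{\alpha,\delta}(x)=c_\alpha\int_{M\setminus B_{\ga}(\xa,\delta\ra)}G_{\ga}\big(\exp_{\xa}^{\ga}(\ra x),\cdot\big)f_\alpha^{p_\alpha-2^{*}_{k}}\va^{p_\alpha-1}\,dv_{\ga}\ >\ 0 .
\end{equation*}
Changing variables to the $\ma$-scale in $\Phi_{\alpha,\delta}$ and using Lemma \ref{local.1} (together with the bound $\ma^{\frac{2k}{p_\alpha-2}}\va\circ\exp_{\xa}^{\ga}(\ma\,\cdot)\le CU$ from \eqref{local.est1}), the near-diagonal expansion \eqref{expansion.Green} of $G_{\ga}$, and the identity $b_{n,k}\int_{\R^n}U^{2^{*}_{k}-1}=\mathfrak{c}_{n,k}^{\frac{n-2k}{2}}$ of \eqref{eq.constants.bubble}, one shows that $\Phi_{\alpha,\delta}\to\mathfrak{c}_{n,k}^{\frac{n-2k}{2}}|x|^{2k-n}$ in $C^{2k}_{\loc}(B(0,2)\setminus\{0\})$, for every such fixed $\delta$. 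Consequently $\Psi_{\alpha,\delta}\to H:=\tilde v_0-\mathfrak{c}_{n,k}^{\frac{n-2k}{2}}|x|^{2k-n}$ in $C^{2k}_{\loc}(B(0,2)\setminus\{0\})$, and $H\ge0$ there as a locally uniform limit of nonnegative functions.

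\medskip
\noindent\emph{Step 3: Harnack for the outer part, polyharmonicity of $H$, and $H(0)>0$.} Since the density defining $\Psi_{\alpha,\delta}$ is supported in $\{d_{\ga}(\xa,\cdot)\ge\delta\ra\}$, for $x,x'\in B(0,\delta/2)$ the points $\exp_{\xa}^{\ga}(\ra x),\exp_{\xa}^{\ga}(\ra x')$ lie in $B_{\ga}(\xa,\delta\ra/2)$, hence $d_{\ga}(\exp_{\xa}^{\ga}(\ra x),\cdot)$ and $d_{\ga}(\exp_{\xa}^{\ga}(\ra x'),\cdot)$ are comparable to $d_{\ga}(\xa,\cdot)$ on that support; the two-sided bounds \eqref{bounds.Green} on $G_{\ga}$ then give $G_{\ga}(\exp_{\xa}^{\ga}(\ra x),\cdot)\le C\,G_{\ga}(\exp_{\xa}^{\ga}(\ra x'),\cdot)$ there, whence the Harnack-type inequality $\Psi_{\alpha,\delta}(x)\le C\,\Psi_{\alpha,\delta}(x')$ for all $x,x'\in B(0,\delta/2)$, with $C$ independent of $\alpha$. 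Taking $x'\ne0$ fixed and using Step 2, $(\Psi_{\alpha,\delta})_\alpha$ is bounded on $B(0,\delta/2)$; as $P_{\tga}\Psi_{\alpha,\delta}=0$ in $B(0,\delta)$ and $P_{\tga}\to\Delta_0^{k}$, interior elliptic estimates make $(\Psi_{\alpha,\delta})_\alpha$ bounded in $C^{2k}(B(0,\delta/3))$, so $\Psi_{\alpha,\delta}\to H$ in $C^{2k}_{\loc}(B(0,\delta/3))$ and $H\in C^{2k}(B(0,2))$ is polyharmonic; this proves the convergence $\tva\to\mathfrak{c}_{n,k}^{\frac{n-2k}{2}}|x|^{2k-n}+H$. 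Assume now $\ra<\rhoa$. By maximality in \eqref{def.ra} there is $z_\alpha$ with $d_{\ga}(\xa,z_\alpha)=\ra$ and $|\va(z_\alpha)-\Ba(z_\alpha)|=\ve\Ba(z_\alpha)$; since $c_\alpha\Ba\circ\exp_{\xa}^{\ga}(\ra\,\cdot)\to\mathfrak{c}_{n,k}^{\frac{n-2k}{2}}|x|^{2k-n}$ uniformly near $|x|=1$, passing to the limit along a further subsequence yields $z_0$ with $|z_0|=1$ and $|H(z_0)|=\ve\mathfrak{c}_{n,k}^{\frac{n-2k}{2}}$, hence $H(z_0)=\ve\mathfrak{c}_{n,k}^{\frac{n-2k}{2}}>0$ (as $H\ge0$) and $H\not\equiv0$. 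Letting $\alpha\to+\infty$ in the Harnack inequality gives $\max_{\overline{B(0,\delta/4)}}H\le C\min_{\overline{B(0,\delta/4)}}H$; if $H(0)$ were $0$ this would force $H\equiv0$ on $B(0,\delta/4)$, contradicting $H\not\equiv0$ and the real-analyticity of solutions of $\Delta_0^{k}H=0$. Therefore $H(0)>0$.

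\medskip
\noindent\emph{Expected main obstacle.} The delicate step is the identification in Step 2: proving that $\Phi_{\alpha,\delta}$ converges to \emph{exactly} $\mathfrak{c}_{n,k}^{\frac{n-2k}{2}}|x|^{2k-n}$ requires matching the concentration scale $\ma$ of Lemma \ref{local.1} with the leading term $b_{n,k}d^{2k-n}$ of \eqref{expansion.Green}, a precise cancellation of the powers of $\ma$ and $\ra$ (which is exactly what the normalisation of $\tva$ produces), a uniform control of the tail of the rescaled bubble so that dominated convergence applies for the varying exponents $p_\alpha$ (using $p_\alpha\to2^{*}_{k}>\tfrac{n}{n-2k}$), and the evaluation of the total mass through \eqref{eq.constants.bubble}.
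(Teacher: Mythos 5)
Your proposal is correct and is built on the same central mechanism as the paper's proof: the Green's representation formula attached to $P_{\ga}$ split into a concentrating near part and a positive outer (tail) part, the mass computation $b_{n,k}\int_{\R^n}U^{2^*_k-1}=\mathfrak{c}_{n,k}^{(n-2k)/2}$ from \eqref{eq.constants.bubble}, and a Harnack inequality for the tail deduced from the two-sided Green's function bounds \eqref{bounds.Green}. The difference is where you locate the identification of the singular part: the paper first proves a lower bound $\va\ge(1-\ve_\alpha)\Ba$ (its \eqref{local.3.1}), then identifies $\tilde v_0$ by showing $\Delta_0^k\tilde v_0=\mathfrak{c}_{n,k}^{(n-2k)/2}b_{n,k}^{-1}\delta_0$ distributionally via test functions and elliptic regularity, and only afterwards introduces the Green's decomposition for the Harnack and sign arguments. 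You instead use the decomposition $\tva=\Phi_{\alpha,\delta}+\Psi_{\alpha,\delta}$ from the start, getting $H\ge0$ directly from the positivity of $\Psi_{\alpha,\delta}$ and the form of $\tilde v_0$ from $\Phi_{\alpha,\delta}\to\mathfrak{c}_{n,k}^{(n-2k)/2}|x|^{2k-n}$, which is essentially the paper's computation \eqref{local.3.7}. Both routes are valid; the paper's distributional step is cleaner for establishing $\tilde v_0\in |x|^{2k-n}\cdot\mathfrak{c}_{n,k}^{(n-2k)/2}+C^{2k}$ because the regularity of $H$ falls out of local elliptic theory applied to $\tilde v_0-G_0$, whereas in your version it is cleaner to claim only $C^0_{\loc}$ convergence of $\Phi_{\alpha,\delta}$ away from $0$ (which suffices: then $\Psi_{\alpha,\delta}=\tva-\Phi_{\alpha,\delta}$ is bounded, satisfies $P_{\tga}\Psi_{\alpha,\delta}=0$ in $B(0,\delta)$, and the $C^{2k}$ convergence of $\Psi_{\alpha,\delta}$, hence of $\Phi_{\alpha,\delta}$, follows a posteriori from interior elliptic estimates) — proving the $C^{2k}$ convergence of $\Phi_{\alpha,\delta}$ directly by differentiating under the integral would cost more effort near the moving singularity of $G_{\ga}$. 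A small further point: for $H(0)>0$ the paper obtains a Harnack inequality $\max_{\overline{B(0,1)}}H\le C\min_{\overline{B(0,1)}}H$ uniformly in $\delta$ by letting $\delta\to0$, after which $H(z_0)>0$ with $|z_0|=1$ gives the conclusion directly; you only have Harnack on $B(0,\delta/4)$ for a fixed $\delta$, so you need the extra step via unique continuation for polyharmonic functions, which is fine but one more ingredient. Finally, your exponent $\eps_\alpha=(\ma/\ra)^{(n-2k)(p_\alpha-2)-2k}$ in Step 1 is the correct one (it reduces to $(\ma/\ra)^{2k}$ at $p_\alpha=2^*_k$), and the rest of the scaling bookkeeping checks out.
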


\begin{proof}
We assume throughout this proof that $\ra \to 0$ as $\alpha \to + \infty$. As a first observation, we claim that there is a sequence $(\ve_\alpha)_{\alpha}$ of positive real numbers that goes to $0$ as $\alpha \to + \infty$ such that, for any $x \in M$, 
\begin{equation} \label{local.3.1}
\va(x) \ge (1 - \ve_\alpha) \Ba(x). 
\end{equation}
Indeed, let $(\ya)_\alpha$ be a sequence of points in $M$. If $d_{\ga}(\ya, \xa) = \bigO(\ma)$, \eqref{local.3.1} follows from Lemma~\ref{local.1}. We may thus assume that $\frac{d_{\ga}(\ya,\xa)}{\ma} \to + \infty$ as $\alpha \to + \infty$. We let $\ya = \exp_{\xa}^{\ga}(\ma \hat{y}_\alpha)$, for some $\hat{y}_\alpha \in B(0, \frac{\ra}{\ma})$, with $|\hat{y}_\alpha| \to + \infty$ as $\alpha \to + \infty$. We write again a representation formula for $\va$ in $M$: using \eqref{positivity} we obtain that 
$$ \va(\ya) \ge \int_{B_{\ga}(\xa, \ra)} G_{\ga}(\ya,\cdot) f_\alpha^{p_\alpha-2^*_k} \va^{p_\alpha-1} dv_{\ga}. $$
Using \eqref{expansion.Green} below, and since $\ra \to 0$, it is easily seen that, for a fixed $x \in B(0,\frac{\ra}{\ma})$, 
$$ b_{n,k}^{-1} d_{\ga}(\xa, \ya)^{n-2k} G_{\ga} \big(\ya, \exp_{\xa}^{\ga}(\ma x) \big) \to 1$$
as $\alpha \to + \infty$. With Lemma~\ref{local.1}, Fatou's lemma then shows that 
$$ \va(\ya) \ge (1+\smallo(1))  d_{\ga}(\xa, \ya)^{2k-n} \ma^{n-2k-\frac{2k}{p_\alpha-2}} b_{n,k} \int_{\R^n} U^{2^*_k-1} \, dx, $$
where $U$ is as in \eqref{bubble1}. Using \eqref{eq.constants.bubble} and \eqref{bubble2} concludes the proof of \eqref{local.3.1}. 

 We now consider $\tva$ as in the statement of Proposition~\ref{local.3}. By \eqref{local.est1} we have 
\begin{equation} \label{local.3.2}
 |\nabla^{\ell} \tva(x)|_{\xi} \le C_\ell \Big( \frac{\ma}{\ra} + |x| \Big)^{2k - \ell -n } \quad \text{ for all } x \in B(0,2) \backslash \{0\}. 
 \end{equation}
By \eqref{eq:bis:0} it is easily seen that $\tva$ satisfies 
$$P_{\tga} \tva = \left( \frac{\ma}{\ra} \right)^{(n-2k)(p_\alpha-2)-2}\tilde{f}_\alpha^{p_\alpha - 2_k^*} \tva^{p_\alpha-1} \quad \text{ in } B(0,2) \backslash \{0\},$$
 where we have let $\tilde{f}_\alpha = f_\alpha \big(   \exp_{\xa}^{\ga} (\ra \cdot )\big)$ and $\tga =  (\exp_{\xa}^{g_\alpha} )^*g(\ra \cdot)$. Since $\ra \to 0$, $\tga \to \xi$ in $C^{p}(B(0,2))$ for all $p \ge 1$, so that by \eqref{local.3.2} and standard elliptic theory $\tva \to \tilde{v}_0$ in $C^{2k}_{loc}(B(0,2) \backslash \{0\})$ where $\tilde{v}_0$ satisfies $ \Delta_0^k \tilde{v}_0  = 0$ in $B(0,2) \backslash \{0\}$ and $ |\nabla^{\ell} \tilde{v}_0(x)|_{\xi} \le C_\ell |x|^{2k-\ell-n}$, so that in particular $\tilde{v}_0 \in W^{2k-1,1}(B(0,2))$. We claim that $\tilde{v}_0$ satisfies 
 \begin{equation}  \label{local.3.3}
   \Delta_0^k \tilde{v}_0 = \frac{\mathfrak{c}_{n,k}^{\frac{n-2k}{2}}}{b_{n,k}} \delta_0 \quad \text{ in } B(0,2) 
 \end{equation}
 in the distributional sense, where $\mathfrak{c}_{n,k}$ is as in \eqref{bubble1} and $b_{n,k}$ is as in \eqref{def.bnk}. To prove this, we let $\vp \in C^\infty_c(B(0,2))$ and define, for $x \in B_{\ga}(\xa, 2\ra)$, $\vp_\alpha(x) = \vp \big( \frac{1}{\ra} (\exp_{\xa}^{\ga})^{-1}(x) \big)$. Since $\vp_\alpha$ is supported in $B_{\ga}(\xa, 2 \ra)$ we get, using Lemma~\ref{local.1}, \eqref{local.3.2} and dominated convergence, that
 \begin{equation} \label{local.3.4}
  \int_{M} f_\alpha^{p_\alpha-2^*_k} \va^{p_\alpha-1} \vp_\alpha v_{\ga} = (1+\smallo(1))\ma^{n-2k-\frac{2k}{p_\alpha-2}} \vp(0) \int_{\R^n} U^{2^*_k-1}\, dx  
 \end{equation}
 as $\alpha \to + \infty$. Independently, \eqref{expansion.Pg} and \eqref{local.est1} show that, for any $x \in B(0, 2\ra) $, 
 \begin{equation} \label{label.tardif.1}
  P_{\ga} \va \big( \exp_{\xa}^{\ga}(x) \big) = \Delta_0^k \tilde{v}_\alpha + \bigO \big( \ma^{n-2k-\frac{2k}{p_\alpha-2}} \big( \ma + |x| \big)^{2  - n} \big), 
  \end{equation}
   where we have let $\tilde{v}_\alpha(x) = \va \big( \exp_{\xa}^{\ga}( x) \big)$. Direct computations using also \eqref{conf.2} then give 
 $$ \begin{aligned}
 \int_M \vp_\alpha P_{\ga} \va dv_{\ga} & = \int_{B(0,2\ra)} \Delta_0^k \vp  \bar{v}_\alpha \, dx + \bigO\big( \ra^2 \ma^{n-2k - \frac{2k}{p_\alpha-2}} \big) \\
 & = \ma^{n-2k - \frac{2k}{p_\alpha-2}} \int_{B(0,2)} \tva \Delta_0^k \vp \, dx + \smallo \big( \ma^{n-2k - \frac{2k}{p_\alpha-2}}\big) \\
& = \ma^{n-2k - \frac{2k}{p_\alpha-2}} \int_{B(0,2)} \tilde{v}_0 \Delta_0^k \vp \, dx + \smallo \big( \ma^{n-2k - \frac{2k}{p_\alpha-2}}\big),
 \end{aligned} $$
where we used again that $\ra \to 0$ as $\alpha \to + \infty$ and where the last line follows from \eqref{local.est1} and dominated convergence. Combining the latter with \eqref{local.3.4} shows that $\tilde{v}_0$ satisfies $\Delta_0^k \tilde{v}_0 = \Big( \int_{\R^n} U^{2^*_k-1}\, dx  \Big) \delta_0$ in $B(0,2)$ in the distributional sense, and \eqref{local.3.3} finally follows from \eqref{eq.constants.bubble}. Using \eqref{local.3.3}, simple regularity arguments then show that 
 \begin{equation} \label{local.3.41}
  \tilde{v}_0(x) = \frac{\mathfrak{c}_{n,k}^{\frac{n-2k}{2}}}{|x|^{n-2k}} + H 
  \end{equation} for every $x \in B(0,2) \backslash \{0\}$, where $H \in C^{2k}_{loc}(B(0,2))$ satisfies $\Delta_0^k H = 0$. That $H$ is nonnegative follows from \eqref{local.3.1}: by \eqref{bubble2} we indeed have, for a fixed $x \in B(0,2) \backslash \{0\}$, 
 $$ \tva(x) \ge (1- \ve_\alpha) \( \frac{\ma^2}{\ra^2} + \mathfrak{c}_{n,k}^{-1} |x|^2 \)^{- \frac{n-2k}{2}}, $$
and passing this expression to the limit as $\alpha \to +\infty$ gives $ \tilde{v}_0(x) \ge \mathfrak{c}_{n,k}^{\frac{n-2k}{2}} |x|^{2k-n}$, which implies that $H \ge 0$ in $B(0,2)$.  We now prove that $H$ still satisfies the maximum principle. Precisely, we claim that there exists $C>0$ such that
\begin{equation} \label{local.3.5}
\max_{\overline{B(0,1)}} H \le C \min_{\overline{B(0,1)}} H.
\end{equation}
To prove \eqref{local.3.5} we let $0 < \delta < 1$ be fixed, $x \in B(0,1) \backslash B(0,\delta)$ and we let $\ya = \exp_{\xa}^{\ga}(\ra x) \in B_{\ga}(\xa, \ra) \backslash B_{\ga}(\xa, \delta \ra)$. A representation formula shows that 
\begin{equation} \label{local.3.6} 
\va(\ya) =  \int_{ B_{\ga}(2 \ra)} G_{\ga}(\ya, \cdot) f_\alpha^{p_\alpha-2^*_k} \va^{p_\alpha-1} dv_{\ga} + H_{\alpha}(\ya), 
\end{equation}
where we have let 
$$H_\alpha(\ya) = \int_{ M \backslash B_{\ga}(2 \ra)} G_{\ga}(\ya, \cdot) f_\alpha^{p_\alpha-2^*_k} \va^{p_\alpha-1} dv_{\ga}. $$ 
By \eqref{positivity} and  \eqref{bounds.Green}, arguing as in the proof of Lemma~\ref{local.2}, there exists $C >0$ which does not depend on $\delta$ such that, for any $\alpha \ge 1$, 
\begin{equation} \label{local.3.61}
 \max_{\overline{B_{\ga}(\xa, \ra)}} H_\alpha \le C  \min_{\overline{B_{\ga}(\xa, \ra)}} H_\alpha. 
 \end{equation}
Since $\ra \to 0$, straightforward computations using Lemma~\ref{local.1}, \eqref{local.est1}, \eqref{expansion.Green} and dominated convergence show that 
\begin{equation} \label{local.3.7}
 \begin{aligned}
\int_{B_{\ga}(2 \ra)} &G_{\ga}(\ya, \cdot) f_\alpha^{p_\alpha-2^*_k} \va^{p_\alpha-1} dv_{\ga} \\
& = \Ba(\ya) + o \big( \Ba(\ya) \big)\\
& = (1+\smallo(1)) \frac{\mu_{\alpha}^{n-2k-\frac{2k}{p_{\alpha}-2}}\mathfrak{c}_{n,k}^{\frac{n-2k}{2}}}{\ra^{n-2k}|x|^{n-2k}}
\end{aligned} 
\end{equation}
as $\alpha \to + \infty$ (see for instance the arguments in the proof of Hebey \cite[Proposition 6.1]{HebeyZLAM}). Going back to \eqref{local.3.6} together with \eqref{local.3.7} shows that 
$$ \tva(x) = (1+\smallo(1)) \frac{\mathfrak{c}_{n,k}^{\frac{n-2k}{2}}}{|x|^{n-2k}} + \ma^{\frac{2k}{p_\alpha-2} - n} \ra^{n-2k} H_\alpha \big( \exp_{\xa}^{\ga}(\ra x) \big). $$
Passing to the limit as $\alpha \to + \infty$ then shows, thanks to \eqref{local.3.41}, that 
$$\ma^{\frac{2k}{p_\alpha-2} - n} \ra^{n-2k} H_\alpha \big( \exp_{\xa}^{\ga}(\ra x) \big) \to H(x) $$
pointwise in $B(0,1) \backslash B(0, \delta)$ as $\alpha \to + \infty$. Passing \eqref{local.3.61} to the limit shows that for any $x,y \in B(0,1) \backslash B(0,\delta)$ we have $ H(x) \le C H(y) $ where $C$ is independent of $\delta$. Letting $\delta \to 0$ finally proves \eqref{local.3.5}.

We finally assume that, up to a subsequence, $\ra < \rhoa$. By definition of $\ra$ in \eqref{def.ra} and by \eqref{local.3.1} there thus exists $\ya \in \partial B_{\ga}(\xa, \ra)$ such that $\va(\ya) = (1+\ve) \Ba(\ya)$. Let $z_\alpha = \frac{1}{\ra}( \exp_{\xa}^{\ga})^{-1}(\ya)$, and let $z_0 = \lim_{\alpha \to + \infty} z_\alpha$. Passing to the limit we obtain that $|z_0| = 1$ and that $\tilde{v}_0(z_0) = (1+\ve) \mathfrak{c}_{n,k}^{\frac{n-2k}{2}}$. By \eqref{local.3.41} we thus have $H(z_0) = \ve   \mathfrak{c}_{n,k}^{\frac{n-2k}{2}} >0$, and \eqref{local.3.5} implies that $H(0) >0$. This concludes the proof of Proposition~\ref{local.3}.
\end{proof}
\smallskip

\section{2nd order pointwise estimates} \label{sec.order.two}

We work in the same setting than Section \ref{sec.local.analysis}. We let $(\ua)_\alpha$ be a sequence of positive solutions of \eqref{eq:one} that blows-up as in \eqref{blowup}. We let $(\xa)_\alpha$ and $(\rhoa)_\alpha$ be sequences satisfying \eqref{cond.xarhoa}. As in Section \ref{sec.local.analysis} we define the metrics $g_{\alpha}=\Lambda_{x_{\alpha}}^{\frac{4}{n-2k}}g$, which satisfy \eqref{conf.2} and \eqref{conf.3}, and we define $\va$ as in \eqref{eq.defva}, which is a positive solution of \eqref{eq:bis:0} and satisfies \eqref{cond.xarhoa.va}. Throughout this section we will also assume that $(\rhoa)_\alpha$ satisfies \eqref{cond.xarhoa.va2}. In this section we obtain improved estimates on $v_{\alpha}-B_{\alpha}$ around $\xa$, where $\Ba$ is as in \eqref{bubble2}. Our main result in this section is as follows:

\begin{proposition}\label{sym.es}
Let $(\va)_\alpha$ be a sequence of positive solutions of \eqref{eq:bis:0} and let $(\xa)_\alpha$ and $(\rhoa)_\alpha$ satisfy \eqref{cond.xarhoa.va} and \eqref{cond.xarhoa.va2}. Assume that \eqref{positivity} holds and let $\ra$ be defined by \eqref{def.ra}.
We have for $\ell=0,1,\ldots,2k-1$ and $x\in B_{g_{\alpha}}(x_{\alpha}, 2r_{\alpha})$
\begin{align}\label{sym.esII}
&\left|\nabla^{\ell}(v_{\alpha}-B_{\alpha})(x)\right|\lesssim~\frac{\mu_{\alpha}^{n-2k-\frac{2k}{p_{\alpha}-2}}}{r_{\alpha}^{n-2k}}\(\mu_{\alpha}+d_{g_{\alpha}}(x,x_{\alpha})\)^{-\ell}\notag\\
&\qquad+\left\{\begin{aligned}&\(\mu_{\alpha}+d_{g_{\alpha}}(x,x_{\alpha})\)^{4-\ell}\ln\(1+\frac{r_{\alpha}}{\mu_{\alpha}}\)B_{\alpha}(x)&&\text{if }n=2k+4\\&\(\mu_{\alpha}+d_{g_{\alpha}}(x,x_{\alpha})\)^{4-\ell}B_{\alpha}(x)&&\text{if }n>2k+4.\end{aligned}\right.
\end{align}
For $n=2k+4$ the above estimate can be improved to give for all $x\in B_{g_{\alpha}}(x_{\alpha}, 2r_{\alpha})$ and  $\ell=0,1,\ldots,2k-1$, 
\begin{align}\label{sym.esIIb}
\left|\nabla^{\ell}(v_{\alpha}-B_{\alpha})(x)\right|\lesssim&~\frac{\mu_{\alpha}^{n-2k-\frac{2k}{p_{\alpha}-2}}}{r_{\alpha}^{n-2k}}\(\mu_{\alpha}+d_{g_{\alpha}}(x,x_{\alpha})\)^{-\ell}\notag\\
&+\(r_{\alpha}+\mu_{\alpha}\ln\(\frac{1}{\mu_{\alpha}}\)\)\frac{\mu_{\alpha}^{4-\frac{2k}{p_{\alpha}-2}}}{\(\mu_{\alpha}+d_{g_{\alpha}}(x,x_{\alpha})\)^{1+\ell}}.
\end{align}
\end{proposition}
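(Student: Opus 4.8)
The plan is to prove \eqref{sym.esII} and its refinements by the now-standard bootstrap procedure on the rescaled difference $w_\alpha := v_\alpha - B_\alpha$, comparing the error term produced by the equation against a suitable set of test barrier functions and iterating the resulting pointwise bound. First I would set up the integral representation: using \eqref{positivity} and a representation formula for $v_\alpha$ in $M$ against the Green's function $G_{g_\alpha}$ of $P_{g_\alpha}$, together with the known representation $B_\alpha(x) = \int_{B_{g_\alpha}(\xa,\ra)} b_{n,k} d_{g_\alpha}(x,\cdot)^{2k-n} \ma^{-2k}\big(\text{bubble nonlinearity}\big) \, dv + (\text{harmonic remainder})$ that one extracts from Proposition~\ref{local.3} and \eqref{eq.constants.bubble}, I would write $w_\alpha$ as an integral of $G_{g_\alpha}$ against the difference of the nonlinearities plus a lower-order term coming from $P_{g_\alpha} - \Delta_0^k$ acting on $B_\alpha$. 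The key input here is the rough expansion \eqref{expansion.Pg} (or rather its consequence \eqref{label.tardif.1}): it shows $P_{g_\alpha} B_\alpha - \Delta_0^k B_\alpha = P_{g_\alpha} B_\alpha - B_\alpha^{p_\alpha-1}\cdot(\text{const})$ is controlled, in $B_{g_\alpha}(\xa,2\ra)$, by terms of the form $d^{2-n}\ma^{n-2k-\frac{2k}{p_\alpha-2}}$ or, more precisely, by $\big(\ma + d_{g_\alpha}(x,\xa)\big)^{2}$ times $B_\alpha$-like weights, with the Weyl-free conformal normal coordinate conditions \eqref{conf.2}, \eqref{conf.3} guaranteeing that the genuinely quadratic-in-$d$ terms appear (no linear term, and the scalar curvature vanishes to the relevant order at $\xa$).

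Next I would run the iteration. Start from the crude bound \eqref{local.est1}, $|v_\alpha|, |B_\alpha| \lesssim B_\alpha$, which gives a first control on $|w_\alpha|$ by, say, $\varepsilon B_\alpha$ in $B_{g_\alpha}(\xa,\ra)$ plus the boundary contribution $\frac{\ma^{n-2k-\frac{2k}{p_\alpha-2}}}{\ra^{n-2k}}$ coming from the "far" part of the representation formula (this is exactly the first line on the right-hand side of \eqref{sym.esII}). Plugging this back into the integral, using the Giraud-type estimates of Appendix~\ref{sec.giraud} to evaluate $\int G_{g_\alpha}(x,\cdot) \big(\ma + d\big)^{-a} dv$, one gains two powers of the scale at each step: schematically $\int d^{2k-n}_{g_\alpha}(x,\cdot)\,(\ma+d)^{-a}\, dv \lesssim (\ma+d(x,\xa))^{2k-a}$ as long as $a < n$, which converts an "$s$-improvement" of $w_\alpha$ into an "$(s+2)$-improvement". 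The dimensional restriction is what bounds the number of useful iterations: after finitely many steps one reaches the exponent $4-\ell$ in front of $B_\alpha$, and the borderline integral $\int d^{2k-n} (\ma+d)^{-n}$ produces the logarithm $\ln(1 + \ra/\ma)$ precisely in the critical dimension $n = 2k+4$ (for $n > 2k+4$ the integral converges and no log appears), which is \eqref{sym.esII}. To pass from $\ell = 0$ to $\ell = 1, \dots, 2k-1$ I would differentiate under the integral sign, using the gradient bounds \eqref{bounds.Green} on $G_{g_\alpha}$ and its derivatives, which replaces each weight $(\ma+d)^{-a}$ by $(\ma+d)^{-a-\ell}$; the upper cutoff $\ell \le 2k-1$ is what keeps $G_{g_\alpha}$ and its derivatives integrable after differentiation.

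The improvement \eqref{sym.esIIb} in dimension $n = 2k+4$ is obtained by a second, sharper pass. Once \eqref{sym.esII} is known, the dominant part of the error $P_{g_\alpha}B_\alpha - (\text{nonlinearity})$ in the representation formula is not a generic $d^2$-term but, after expanding the $n=2k+4$ GJMS operator $P_{g_\alpha}$ at first order in conformal normal coordinates (Proposition~\ref{GJMS.exp1}), an essentially \emph{radial} quantity proportional to $|\Weyl_{g_\alpha}(\xa)|^2$ times explicit radial profiles in $d/\ma$; integrating this precise profile against $G_{g_\alpha}$ gives, instead of $d^{4}\ln(1+\ra/\ma)B_\alpha$, the sharper quantity $\big(\ra + \ma\ln(1/\ma)\big)\,\ma^{4-\frac{2k}{p_\alpha-2}}(\ma+d)^{-1}$ — the $\ma\ln(1/\ma)$ being the trace of the critical-dimension logarithm now localised at the bubble scale, and the $\ra$ coming from the boundary layer at $\partial B_{g_\alpha}(\xa,\ra)$. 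I would obtain this by splitting the domain of integration into $d \lesssim \ma$, $\ma \lesssim d \lesssim \ra$, and $d \gtrsim \ra$, estimating each piece with the Giraud lemma, and exploiting the oddness/antisymmetry of the intermediate-order terms in the expansion of $P_{g_\alpha}$ (which integrate to zero against the radial profile, as recorded in \eqref{conf.3}) so that only the genuinely quadratic, $\Weyl$-controlled term survives.

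The main obstacle I expect is bookkeeping the error terms in the representation formula with enough precision — in particular making sure that the pieces of $P_{g_\alpha}B_\alpha - \Delta_0^k B_\alpha$ beyond the leading $\Weyl$-term, i.e. the ones involving $\nabla^j B_\alpha$ for $1 \le j \le 2k-1$ and the non-radial parts of the metric expansion, are all absorbed into the already-claimed right-hand side of \eqref{sym.esII}, \eqref{sym.esIIb} rather than generating a new, larger contribution. Concretely this means that at the last iteration step, where the integral becomes borderline, one must verify that every one of these subleading terms either decays strictly faster (so the Giraud integral converges and contributes at the order of the first line of \eqref{sym.esII}) or is odd and cancels; the non-explicit nature of $P_g$ for general $k$ makes this the delicate point, and it is handled precisely by the conformal normal coordinate identities \eqref{conf.2}--\eqref{conf.3} together with the refined first-order expansion of $P_{g_\alpha}$ from Appendix~\ref{sec.GJMS.expan}.
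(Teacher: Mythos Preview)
Your proposal has two genuine gaps that prevent the argument from closing.

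\textbf{First, the forcing term is estimated at the wrong order.} You invoke the rough expansion \eqref{expansion.Pg} (via \eqref{label.tardif.1}) to control $P_{g_\alpha}B_\alpha - B_\alpha^{2^*_k-1}$ by $(\ma+d)^{2}$-type weights, and then claim the iteration ``gains two powers at each step'' until the exponent $4$ is reached. This does not work: the forcing term in the equation for $w_\alpha$ is fixed, and integrating the Green's function against an $O((\ma+d)^{2-n})$ source gives at best $(\ma+d)^{2k+2-n}$, i.e.\ a two-order improvement, not four. The exponent $4$ in \eqref{sym.esII} comes from the fact that $(P_{g_\alpha}-\Delta_0^k)B_\alpha$ is actually $O((\ma+d)^{4-n})$ in conformal normal coordinates, because $P_{g_\alpha}-\Delta_0^k$ acting on \emph{radial} functions vanishes to fourth order at $\xa$ --- this is Proposition~\ref{GJMS.exp0}/\ref{GJMS.exp1}, and the paper stresses (see the paragraph after the statement of Proposition~\ref{sym.es}) that this is the essential geometric input. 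Your ``genuinely quadratic-in-$d$'' reading of \eqref{conf.3} is too pessimistic: the identities $\Ricci_{g_\xi}(\xi)=0$, $\Sym\nabla\Ricci_{g_\xi}(\xi)=0$ kill the quadratic and cubic terms on radial functions, and one must use this from the outset.

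\textbf{Second, the bootstrap does not close without an independent smallness argument.} After writing the representation formula, the linear-in-$w_\alpha$ term $(p_\alpha-1)U^{p_\alpha-2}w_\alpha$ produces, upon integration, a contribution of size $\|w_\alpha\|_{L^\infty}\cdot U$ (plus lower order); iterating improves the decay profile but the prefactor $\|w_\alpha\|_{L^\infty}$ remains. Starting from $|w_\alpha|\le \varepsilon B_\alpha$ with a fixed $\varepsilon$ is not enough. The paper handles this by a separate contradiction argument (its Step~3): assuming $\|w_\alpha\|_{L^\infty}$ dominates the other error terms, one rescales, passes to a limit satisfying the linearised equation $\Delta_0^k \tilde w_\infty = (2^*_k-1)U^{2^*_k-2}\tilde w_\infty$, invokes the kernel classification (elements are combinations of $\frac{n-2k}{2}U+x\cdot\nabla U$ and $\partial_i U$), and uses $w_\alpha(0)=\nabla w_\alpha(0)=0$ to force $\tilde w_\infty\equiv 0$, a contradiction. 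An analogous contradiction argument (Step~4) is needed to control $2^*_k-p_\alpha$, which also enters the equation for $w_\alpha$ through the term $(U^{p_\alpha-1}-U^{2^*_k-1})$; you do not mention this quantity at all. Both steps are essential and are absent from your plan.

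Finally, your route to the refinement \eqref{sym.esIIb} is more elaborate than necessary: once \eqref{sym.esII} is in hand, the paper simply feeds the already-obtained pointwise bound on $w_\alpha$ back into the Giraud integral for the linear term (its Step~5) and reads off \eqref{sym.esIIb} directly --- no new antisymmetry cancellation or Weyl-profile computation is needed at that stage.
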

\noindent In \eqref{sym.esII} we gain four orders of smallness in $x$. This is related to the fact that $P_{g_{\alpha}}-\Delta_{0}^{k}$ acting on radial functions vanishes up to $4$th order, and has been known for the cases $k=1$ and $k=2$ since Marques \cite{Marques} and Li-Xiong \cite{LiXiong}. When $k \ge 3$ this is also true and we prove it in Proposition \ref{GJMS.exp0} below. We obtain a local expansion of $P_{g_{\alpha}}-\Delta_{0}^{k}$ to fourth-order even though $P_{g_\alpha}$ is not explicit, and we rely on symmetry arguments and on the energy expansions in Mazumdar-V\'etois \cite{MazumdarVetois} in order to simplify the expansions; in particular we do not need to compute the constants in the expansion of $P_{g_{\alpha}}-\Delta_{0}^{k}$.

\begin{proof}
We divide the proof into several steps, successively improving the precision of error estimates. 
\smallskip

\noindent
{\textbf{Step 1:}} We start by estimating the closeness of $p_{\alpha}$ to $2^{*}_{k}$. 
\begin{claim}\label{sym.es4}
We have
\begin{equation*}
2^{*}_{k}-p_{\alpha}=\bigO(\mu_{\alpha}^{2})+\bigO\(\(\frac{\mu_{\alpha}}{r_{\alpha}}\)^{n-2k}\)\quad\hbox{as}~\alpha\to+\infty.
\end{equation*}
\end{claim}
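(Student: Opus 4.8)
The plan is to derive the claim from a Pohozaev identity applied to $v_\alpha$ on the geodesic ball $\Omega_\alpha := B_{g_\alpha}(x_\alpha, r_\alpha)$, read in the conformal normal coordinates given by $\exp_{x_\alpha}^{g_\alpha}$. Pairing \eqref{eq:bis:0} with $X v_\alpha + \tfrac{n-2k}{2}v_\alpha$, where $X = x^i\partial_i$ is the dilation vector field, and invoking the polyharmonic Pohozaev identity of Appendix~\ref{sec.pohozaev}, one is led to an identity of the schematic form
\begin{equation*}
\Big(\frac{n}{p_\alpha}-\frac{n-2k}{2}\Big)\int_{\Omega_\alpha} f_\alpha^{p_\alpha-2^*_k}v_\alpha^{p_\alpha}\,dv_{g_\alpha} = \mathcal{B}_\alpha + \mathcal{R}_\alpha,
\end{equation*}
where $\mathcal{B}_\alpha$ gathers the boundary integrals on $\partial\Omega_\alpha$, each bilinear in the derivatives $\nabla^\ell v_\alpha$ with $0\le\ell\le 2k$, and $\mathcal{R}_\alpha$ collects the interior remainder produced by $P_{g_\alpha}-\Delta_0^k$, by $\det g_\alpha-1$ and $\divergence_{g_\alpha}X - n$, and by $X\cdot\nabla(f_\alpha^{p_\alpha-2^*_k})$. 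Since $\frac{n}{p_\alpha}-\frac{n-2k}{2} = \frac{n(2^*_k-p_\alpha)}{p_\alpha\,2^*_k}\ge c(2^*_k-p_\alpha)$ for a constant $c>0$, and since $v_\alpha\gtrsim\mu_\alpha^{-\frac{2k}{p_\alpha-2}}$ on $B_{g_\alpha}(x_\alpha,\mu_\alpha)$ by Lemma~\ref{local.1} while $\min_M f>0$, so that $\int_{\Omega_\alpha}f_\alpha^{p_\alpha-2^*_k}v_\alpha^{p_\alpha}\,dv_{g_\alpha}\gtrsim\mu_\alpha^{\,n-\frac{2kp_\alpha}{p_\alpha-2}}$, it suffices to bound $|\mathcal{B}_\alpha|+|\mathcal{R}_\alpha|\lesssim\mu_\alpha^{\,n-2k+2-\frac{4k}{p_\alpha-2}}+\mu_\alpha^{\,2(n-2k)-\frac{4k}{p_\alpha-2}}r_\alpha^{2k-n}$: dividing by $\mu_\alpha^{\,n-\frac{2kp_\alpha}{p_\alpha-2}}$ and simplifying the $\mu_\alpha$-exponents then produces exactly $2^*_k-p_\alpha\lesssim\mu_\alpha^2+(\mu_\alpha/r_\alpha)^{n-2k}$.

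The boundary term is the easy part: on $\partial\Omega_\alpha$ we have $r_\alpha\gg\mu_\alpha$ by Lemma~\ref{local.1}, so \eqref{local.est1} yields $|\nabla^\ell v_\alpha|_{g_\alpha}\lesssim r_\alpha^{-\ell}B_\alpha\lesssim r_\alpha^{-\ell}\mu_\alpha^{\,n-2k-\frac{2k}{p_\alpha-2}}r_\alpha^{2k-n}$ there, whence $|\mathcal{B}_\alpha|\lesssim r_\alpha^{n-2k}\big(\sup_{\partial\Omega_\alpha}B_\alpha\big)^2 = \mu_\alpha^{\,2(n-2k)-\frac{4k}{p_\alpha-2}}r_\alpha^{2k-n}$. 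For $\mathcal{R}_\alpha$ the difficulty is that $P_{g_\alpha}$ has no closed form; here I would rely on the rough expansion \eqref{expansion.Pg}, which combined with \eqref{local.est1} gives $|(P_{g_\alpha}-\Delta_0^k)v_\alpha(x)|\lesssim(\mu_\alpha+|x|)^{2-2k}B_\alpha(x)$, and on the conformal normal coordinate identities \eqref{conf.2}--\eqref{conf.3} ($\Scal_{g_\alpha}$, $\Ricci_{g_\alpha}$ vanish at $x_\alpha$ and $\det g_\alpha = 1+\bigO(|x|^N)$), which make the $\det g_\alpha$ and $\divergence_{g_\alpha}X$ contributions negligible and give $|X\cdot\nabla(f_\alpha^{p_\alpha-2^*_k})|\lesssim(2^*_k-p_\alpha)|x|$. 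Pairing these against $|Xv_\alpha+\tfrac{n-2k}{2}v_\alpha|\lesssim B_\alpha$ and integrating, one obtains $|\mathcal{R}_\alpha|\lesssim\int_{B(0,r_\alpha)}(\mu_\alpha+|x|)^{2-2k}B_\alpha^2\,dx + (2^*_k-p_\alpha)\int_{B(0,r_\alpha)}|x|\,B_\alpha^{p_\alpha}\,dx$. The second integral is $\bigO\big((2^*_k-p_\alpha)\mu_\alpha^{\,1+n-\frac{2kp_\alpha}{p_\alpha-2}}\big)$, so after division by $\mu_\alpha^{\,n-\frac{2kp_\alpha}{p_\alpha-2}}$ it is $\smallo(2^*_k-p_\alpha)$ and is absorbed into the left-hand side; the first equals $\mu_\alpha^{\,2(n-2k)-\frac{4k}{p_\alpha-2}}\int_0^{r_\alpha}(\mu_\alpha+\rho)^{2+2k-2n}\rho^{n-1}\,d\rho$, which for $n\ge 2k+3$ is $\bigO\big(\mu_\alpha^{\,n-2k+2-\frac{4k}{p_\alpha-2}}\big)$ and in the remaining admissible dimensions is still controlled by the right-hand side above, using $r_\alpha\le\rho_\alpha<\tfrac12 i_g(M)$ and, in the borderline case $n=2k+2$, the fact that $P_{g_\alpha}-\Delta_0^k$ acting on radial profiles such as $B_\alpha$ vanishes to fourth order in conformal normal coordinates (Proposition~\ref{GJMS.exp0}). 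Collecting all bounds and dividing proves the claim.

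I expect the estimate of $\mathcal{R}_\alpha$ to be the only genuinely delicate point: because $P_{g_\alpha}$ is not explicit, the Pohozaev interior remainder cannot be written out, and one must extract just enough of its structure — the size estimate \eqref{expansion.Pg}, the pointwise bounds \eqref{local.est1}, and the conformal normal coordinate cancellations of Proposition~\ref{GJMS.exp0} — to beat both $\mu_\alpha^2$ and $(\mu_\alpha/r_\alpha)^{n-2k}$ uniformly over the admissible range of dimensions. Once this bookkeeping is in place, the rest is a routine computation.
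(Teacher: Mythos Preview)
Your proposal is correct and follows essentially the same route as the paper: apply the Euclidean Pohozaev identity of Appendix~\ref{sec.pohozaev} to $\tilde v_\alpha=v_\alpha\circ\exp_{x_\alpha}^{g_\alpha}$ on $B(0,r_\alpha)$, bound the boundary functional $\mathcal P_k(r_\alpha;\tilde v_\alpha)$ by \eqref{local.est1}, control the interior defect $\Delta_0^k\tilde v_\alpha-P_{g_\alpha}\tilde v_\alpha$ through the rough expansion \eqref{expansion.Pg} (this is exactly \eqref{label.tardif.1}), and read off $2^*_k-p_\alpha$ from the coefficient $\tfrac{n-2k}{2}-\tfrac{n}{p_\alpha}$ via dominated convergence. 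One small caveat: your patch for the borderline dimension $n=2k+2$ by invoking Proposition~\ref{GJMS.exp0} is not quite legitimate, since that proposition applies to \emph{radial} functions and $\tilde v_\alpha$ is not radial; the paper does not single out this case either, and Claim~\ref{sym.es4} is in any event only exploited downstream for $n\ge 2k+4$.
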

\begin{proof}
Let $\widetilde{v}_{\alpha}:=v_{\alpha}\circ\exp_{x_{\alpha}}^{g_{\alpha}}$. The Pohozaev identity \eqref{poho.id0} gives 
\begin{align*}
&\int_{B(0,r_{\alpha})}\(\frac{n-2k}{2}\widetilde{v}_{\alpha}+x^{i}\partial_{i}\widetilde{v}_{\alpha}\)\(\Delta_{0}^{k}\widetilde{v}_{\alpha}-\tilde{f}_{\alpha}^{p_{\alpha}-2^{*}_{k}}\,\widetilde{v}_{\alpha}^{\,p-1}\)\,dx=\mathcal{P}_{k}(r_{\alpha};\widetilde{v}_{\alpha})\,-\notag\\
&\(\frac{n-2k}{2}-\frac{n}{p_{\alpha}}\)\int_{B(0,r_{\alpha})}\tilde{f}_{\alpha}^{p_{\alpha}-2^{*}_{k}}\,\widetilde{v}_{\alpha}^{\,p_{\alpha}}\,dx+\frac{1}{p_{\alpha}}\int_{B(0,r_{\alpha})}x^{i}\partial_{i}\tilde{f}_{\alpha}^{p_{\alpha}-2^{*}_{k}}\,\widetilde{v}_{\alpha}^{\,p_{\alpha}}\,dx\notag\\
&~-\frac{r_{\alpha}}{p_{\alpha}}\int_{\partial B(0,r_{\alpha})}\tilde{f}_{\alpha}^{p_{\alpha}-2^{*}_{k}}\,\widetilde{v}_{\alpha}^{\,p_{\alpha}}\,d\sigma,
\end{align*}
where we have let $\tilde{f}_\alpha = f_\alpha \circ \circ\exp_{x_{\alpha}}^{g_{\alpha}}$. Here $\mathcal{P}_{k}(r_{\alpha};\widetilde{v}_{\alpha})$ denotes boundary terms whose expression depends on the parity of $k$ and which are defined in \eqref{poho.id01}. With \eqref{local.est1}, these boundary terms satisfy 
\begin{equation} \label{est.boundary.terms} \begin{aligned}
|\mathcal{P}_{k}(r_{\alpha};\widetilde{v}_{\alpha})| 
\lesssim\mu_{\alpha}^{n-2k-\frac{4k}{p_{\alpha}-2}}\(\frac{\mu_{\alpha}}{r_{\alpha}}\)^{n-2k}.
\end{aligned}
\end{equation}
Using \eqref{local.est1} and \eqref{label.tardif.1} we have
\begin{align*}
&\int_{B(0,r_{\alpha})}\(\frac{n-2k}{2}\widetilde{v}_{\alpha}+x^{i}\partial_{i}\widetilde{v}_{\alpha}\)\(\Delta_{0}^{k}\,\widetilde{v}_{\alpha}-f_{\alpha}^{p_{\alpha}-2^{*}_{k}}\,\widetilde{v}_{\alpha}^{\,p-1}\)dx\\
&=\bigO\(\mu_{\alpha}^{n-2k-\frac{4k}{p_{\alpha}-2}}\(\frac{\mu_{\alpha}}{r_{\alpha}}\)^{n-2k}\)+\bigO\(\mu_{\alpha}^{2+n-2k-\frac{4k}{p_{\alpha}-2}}\).
\end{align*}
Next, using again \eqref{local.est1} and the dominated convergence theorem we obtain
\begin{align*}
&\(\frac{n-2k}{2}-\frac{n}{p_{\alpha}}\)\int_{B(0,r_{\alpha})}f_{\alpha}^{\,p_{\alpha}-2^{*}_{k}}\,\widetilde{v}_{\alpha}^{\,p_{\alpha}}\,dx+\frac{1}{p_{\alpha}}\int_{B(0,r_{\alpha})}x^{i}\partial_{i}f_{\alpha}^{p_{\alpha}-2^{*}_{k}}\,\widetilde{v}_{\alpha}^{\,p_{\alpha}}\,dx\notag\\
&~-\frac{r_{\alpha}}{p_{\alpha}}\int_{\partial B(0,r_{\alpha})}f_{\alpha}^{p_{\alpha}-2^{*}_{k}}\,\widetilde{v}_{\alpha}^{\,p_{\alpha}}\,d\sigma = \smallo\(\mu_{\alpha}^{n-2k-\frac{4k}{p_{\alpha}-2}}\(\frac{\mu_{\alpha}}{r_{\alpha}}\)^{n-2k}\) \\
& +\frac{(n-2k)^{2}}{4n}\[(p_{\alpha}-2^{*}_{k})\|U\|^{2^{*}_{k}}_{L^{2^{*}_{k}}(\R^{n})}+\smallo(2^{*}_{k}-p_{\alpha})\]\mu_{\alpha}^{n-2k-\frac{4k}{p_{\alpha}-2}}\\
\end{align*}
Thus combining the bounds on all the terms we obtain \eqref{sym.es4}.

\end{proof}

\noindent
{\textbf{Step 2:}} We obtain refined second-oder estimates on $\va$. We proceed as in \cite{Marques} and we will work at the level of rescaled quantities. Let
\begin{align*}
\widehat{v}_{\alpha}(x):=\mu_{\alpha}^{\frac{2k}{p_{\alpha}-2}} v_{\alpha}\(\exp_{x_{\alpha}}^{g_{\alpha}}(\mu_{\alpha}x)\),\,w_{\alpha}(x):=\widehat{v}_{\alpha}(x)-U(x) 
\end{align*}
for  $x\in B\(0,2\mu_{\alpha}^{-1}r_{\alpha}\)$, where $U$ is as in \eqref{bubble1}. 
\begin{claim}
For any $x \in B(0, 2 \ma^{-1} \ra)$ and for any $0 \le \ell \le 2k-1$ we have  
\begin{equation} \label{sym.es15}
\begin{aligned}
\left|\nabla^{\ell}w_{\alpha}(x)\right|& \lesssim\(\frac{\mu_{\alpha}}{r_{\alpha}}\)^{n-2k+\ell} +  \(\frac{\mu_{\alpha}}{r_{\alpha}}\)^{n-2k}\(1+|x|\)^{-\ell}\\
&+\[(2^{*}_{k}-p_{\alpha})+\|w_{\alpha}\|_{L^{\infty}(B(0,2\mu^{-1}_{\alpha}r_{\alpha}))}\] \(1+|x|\)^{2k-n-\ell} \\
&~+\left\{\begin{aligned}&\mu_{\alpha}^{4}\ln\(1+\frac{r_{\alpha}}{\mu_{\alpha}}\)\(1+|x|\)^{-\ell}&&\text{if }n=2k+4\\&\mu_{\alpha}^{4}\(1+|x|\)^{2k+4-n-\ell}&&\text{if }n>2k+4.\end{aligned}\right.
\end{aligned}
\end{equation}
\end{claim}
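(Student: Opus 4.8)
The plan is to adapt the strategy of Marques \cite{Marques} (case $k=1$) and Li-Xiong \cite{LiXiong} (case $k=2$), with the refined expansion of $P_{g_\alpha}$ in conformal normal coordinates of Proposition \ref{GJMS.exp0} replacing the explicit computations with the conformal Laplacian and the Paneitz operator. Write $\hga=(\exp_{\xa}^{\ga})^{*}\ga(\mu_\alpha\,\cdot\,)$ and $\hat f_\alpha=f_\alpha\circ\exp_{\xa}^{\ga}(\mu_\alpha\,\cdot\,)$; then $\hva$ solves $P_{\hga}\hva=\hat f_\alpha^{\,p_\alpha-2^{*}_{k}}\hva^{\,p_\alpha-1}$ in $B(0,2\mu_\alpha^{-1}\ra)$ and $(\hga)_\alpha\to\xi$ in $C^{p}_{\loc}(\R^n)$ for every $p$. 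Since $U$ solves \eqref{eq.bubble1}, the function $w_\alpha=\hva-U$ satisfies
\begin{equation*}
\Delta_0^k w_\alpha=\hat f_\alpha^{\,p_\alpha-2^{*}_{k}}\hva^{\,p_\alpha-1}-U^{2^{*}_{k}-1}-(P_{\hga}-\Delta_0^k)\hva\quad\text{in }B(0,2\mu_\alpha^{-1}\ra).
\end{equation*}
Rather than inverting $\Delta_0^k$ directly, I would start from the representation formula $\va(y)=\int_M G_{\ga}(y,z)f_\alpha^{\,p_\alpha-2^{*}_{k}}\va^{\,p_\alpha-1}\,dv_{\ga}$ (valid by \eqref{positivity}), rescale it, split the integral at $B_{\ga}(\xa,\ra)$, and compare with the identity $U(x)=\int_{\R^n}b_{n,k}|x-y|^{2k-n}U^{2^{*}_{k}-1}\,dy$ coming from \eqref{eq.bubble1}. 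Using \eqref{expansion.Green} and \eqref{conf.2}, this produces
\begin{equation*}
w_\alpha(x)=\int_{B(0,\mu_\alpha^{-1}\ra)}b_{n,k}|x-y|^{2k-n}\big(\hat f_\alpha^{\,p_\alpha-2^{*}_{k}}\hva^{\,p_\alpha-1}-U^{2^{*}_{k}-1}\big)(y)\,dy+T_\alpha(x)+E_\alpha(x),
\end{equation*}
where $T_\alpha$ gathers the tail over $M\setminus B_{\ga}(\xa,\ra)$ together with $-\int_{\R^n\setminus B(0,\mu_\alpha^{-1}\ra)}b_{n,k}|x-y|^{2k-n}U^{2^{*}_{k}-1}$, and $E_\alpha$ gathers the geometric corrections coming from the expansion of $G_{\ga}$, from the volume element, and from $(P_{\hga}-\Delta_0^k)\hva$.

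The next step is a pointwise estimate of each piece. From \eqref{local.est1}, after rescaling, one has $0\le\hva(x)\lesssim U(x)$ and $|\nabla^\ell\hva(x)|\lesssim(1+|x|)^{2k-n-\ell}$ for $0\le\ell\le2k$ (the order $2k$ derivatives by interior elliptic regularity). Combined with $|\hat f_\alpha^{\,p_\alpha-2^{*}_{k}}-1|\lesssim2^{*}_{k}-p_\alpha$ (valid since $\hat f_\alpha$ is uniformly bounded above and below on $B(0,2\mu_\alpha^{-1}\ra)$), the elementary inequality $|a^{\,p-1}-b^{\,p-1}|\lesssim(a^{\,p-2}+b^{\,p-2})|a-b|$, and $|U^{\,p_\alpha-1}-U^{2^{*}_{k}-1}|\lesssim(2^{*}_{k}-p_\alpha)U^{2^{*}_{k}-1}(1+|\ln U|)$, this gives
\begin{equation*}
\big|\hat f_\alpha^{\,p_\alpha-2^{*}_{k}}\hva^{\,p_\alpha-1}-U^{2^{*}_{k}-1}\big|\lesssim U^{2^{*}_{k}-2}|w_\alpha|+(2^{*}_{k}-p_\alpha)U^{2^{*}_{k}-1}(1+|\ln U|).
\end{equation*}
For the geometric error I would use Proposition \ref{GJMS.exp0}: since $\ga=g_{\xa}$ is the conformal normal coordinates metric, the curvature of $\hga$ and its first covariant derivative vanish at the origin, and Proposition \ref{GJMS.exp0} asserts precisely that $P_{\hga}-\Delta_0^k$ acting on a radial function vanishes to fourth order; applied to the bubble $U$ it yields, for $x\in B(0,2\mu_\alpha^{-1}\ra)$, a bound of the form $|(P_{\hga}-\Delta_0^k)U(x)|\lesssim\mu_\alpha^4(1+|x|)^{2k+4-n}$ when $n>2k+4$, with an extra factor $\ln(1+\ra/\mu_\alpha)$ when $n=2k+4$. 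The contribution of $w_\alpha$ to $(P_{\hga}-\Delta_0^k)\hva$, and the remaining terms of $E_\alpha$, are controlled crudely via \eqref{expansion.Pg}, \eqref{expansion.Green} and \eqref{local.est1} and, once integrated, are of lower order, hence absorbed into the $\|w_\alpha\|_{L^\infty}$ term. Finally, the tail $T_\alpha$ is controlled by \eqref{local.est1}, \eqref{local.3.1} and the bounds \eqref{bounds.Green} on $G_{\ga}$, and it is precisely $T_\alpha$ and its derivatives that account for the $(\mu_\alpha/\ra)^{n-2k+\ell}$ and $(\mu_\alpha/\ra)^{n-2k}(1+|x|)^{-\ell}$ terms of \eqref{sym.es15}.

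Substituting these bounds into the representation formula and estimating the convolutions with the Giraud-type lemma of Appendix \ref{sec.giraud} then yields the case $\ell=0$: the term $\int|x-y|^{2k-n}U^{2^{*}_{k}-2}|w_\alpha|$ is $\lesssim\|w_\alpha\|_{L^\infty}(1+|x|)^{2k-n}$, the term $\int|x-y|^{2k-n}(2^{*}_{k}-p_\alpha)U^{2^{*}_{k}-1}(1+|\ln U|)$ is $\lesssim(2^{*}_{k}-p_\alpha)(1+|x|)^{2k-n}$, and the term $\int_{B(0,\mu_\alpha^{-1}\ra)}|x-y|^{2k-n}\mu_\alpha^4(1+|y|)^{2k+4-n}$ is $\lesssim\mu_\alpha^4(1+|x|)^{2k+4-n}$ when $n>2k+4$, the truncation at $B(0,\mu_\alpha^{-1}\ra)$ producing the logarithm $\ln(1+\ra/\mu_\alpha)$ in the critical dimension $n=2k+4$. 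For $\ell=1,\dots,2k-1$, I would either differentiate the kernel $|x-y|^{2k-n}$ under the integral sign, or apply interior elliptic estimates for $\Delta_0^k w_\alpha$ on the balls $B\big(x,\tfrac12(1+|x|)\big)$ together with the $\ell=0$ bound and the pointwise control of the right-hand side; for bounded $|x|$ the estimate is immediate from the $C^{2k}_{\loc}$ convergence $w_\alpha\to0$ of Lemma \ref{local.1}.

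The main obstacle --- and the only genuinely new point compared to \cite{Marques,LiXiong} --- is the sharp control of the geometric error $(P_{\hga}-\Delta_0^k)U$: one must gain the full four orders of smallness needed to reach $\mu_\alpha^4$, rather than the $\mu_\alpha^2$ provided by the crude expansion \eqref{expansion.Pg}, and, in the absence of an explicit formula for $P_g$, this is exactly what Proposition \ref{GJMS.exp0} --- built on Juhl's recursive formulae --- delivers. A secondary technical point is the polyharmonic ($k\ge2$) representation and Harnack machinery needed for the tail $T_\alpha$, which is however already available through the integral maximum principle of \cite{JinLiXiong2,LiXiong} used throughout Section \ref{sec.local.analysis}. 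Note also that $\|w_\alpha\|_{L^\infty(B(0,2\mu_\alpha^{-1}\ra))}$ legitimately appears on the right-hand side of \eqref{sym.es15}: it is not absorbed at this stage but will be improved separately.
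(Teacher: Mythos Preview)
Your overall strategy is the same as the paper's and the identification of Proposition~\ref{GJMS.exp0} as the key new ingredient is correct, but there are two concrete slips that would make the argument fail as written. First, the pointwise geometric error is $|(P_{\hga}-\Delta_0^k)U(x)|\lesssim\mu_\alpha^{4}(1+|x|)^{4-n}$, not $\mu_\alpha^{4}(1+|x|)^{2k+4-n}$; the exponent $2k+4-n$ and the logarithm in the critical dimension appear only \emph{after} convolution with $|x-y|^{2k-n}$ via Lemma~\ref{lemma.Giraud} (this is exactly \eqref{sym.es9} and \eqref{def.I} in the paper). Second, and more seriously, the one--step estimate you write for the linearised term,
\[
\int_{B(0,\mu_\alpha^{-1}\ra)}|x-y|^{2k-n}U^{2^{*}_{k}-2}|w_\alpha|\,dy\ \lesssim\ \|w_\alpha\|_{L^{\infty}}(1+|x|)^{2k-n},
\]
is false in general: since $U^{2^{*}_{k}-2}\sim(1+|y|)^{-4k}$, Lemma~\ref{lemma.Giraud} with $p=2k$, $q=n-4k$ only gives the decay $(1+|x|)^{2k-n}$ when $n<4k$; for $n\ge 4k$ it yields at best $(1+|x|)^{-2k}$ (or a logarithm when $n=4k$). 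The paper handles this by the standard bootstrap of Marques: bound $|w_\alpha|$ by $\|w_\alpha\|_{L^{\infty}}$ to get \eqref{sym.es12}, plug the resulting inequality for $w_\alpha$ back into \eqref{sym.es13}, and iterate finitely many times, each step gaining $2k$ extra powers of decay, until one reaches $(1+|x|)^{2k-n}$ as in \eqref{sym.es14}. Without this iteration the $\|w_\alpha\|_{L^{\infty}}(1+|x|)^{2k-n-\ell}$ term in \eqref{sym.es15} cannot be obtained for $n\ge 4k$, which is precisely the range needed in the third case of Theorem~\ref{theo.main}.

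A minor organisational remark: you describe $E_\alpha$ as containing both the error from the expansion of $G_{\ga}$ and the term $(P_{\hga}-\Delta_0^k)\hva$, but these come from two different representation formulae. The cleanest route---and the one the paper takes---is to write the equation \eqref{sym.es8} for $P_{\hga}w_\alpha$, so that the geometric error is exactly $U^{2^{*}_{k}-1}-P_{\hga}U$, and then invert with the rescaled Green's function $\widehat G_\alpha$ using only the upper bound $\widehat G_\alpha(x,y)\lesssim|x-y|^{2k-n}$; no refined expansion of $G_{\ga}$ is needed at this step.
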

\begin{proof}[Proof of \eqref{sym.es15}] Since $\ga$ satisfies \eqref{conf.1}, \eqref{conf.2} and \eqref{conf.3} we may now use the expansion of $P_{g_\alpha}$ in conformal normal coordinates given in \eqref{GJMS.exp1} below. We obtain
\begin{equation}\label{sym.es5}
\begin{aligned}
P_{g_{\alpha}}B_\alpha(x) - B_{\alpha}(x)^{2^*_k-1}&  = \bigO\(\frac{\mu_{\alpha}^{n-2k-\frac{2k}{p_{\alpha}-2}}}{\(\mu_{\alpha}+d_{g_{\alpha}}(x,x_{\alpha})\)^{n-4}}\).
\end{aligned}
\end{equation}
From Lemma~\ref{local.1} it follows that $w_{\alpha}\to0$ in $C^{2k}_{loc}(\R^{n})$ as $\alpha\to+\infty$. And from \eqref{local.est1} we have, for $0 \le \ell \le 2k-1$
\begin{align}\label{sym.es7}
|\nabla^{\ell} w_{\alpha}(y)|\lesssim \frac{\mu_{\alpha}^{n-2k}}{r_{\alpha}^{n-2k+\ell}} \quad\hbox{ for }y \in B(0,2\mu^{-1}_{\alpha}r_{\alpha})\setminus B(0,\mu^{-1}_{\alpha}r_{\alpha}).
\end{align}

\noindent
For $x \in B(0, 2 \ma^{-1} \ra)$ we define $\widehat{g}_{\alpha}(x):=\(\exp_{x_{\alpha}}^{g_{\alpha}}\)^{*}g_{\alpha}(\mu_{\alpha}x)$ and $\widehat{f}_{\alpha}(x):=f_{\alpha}\(\exp_{x_{\alpha}}^{g_{\alpha}}(\mu_{\alpha}x)\)$. By equation \eqref{eq:bis:0} it is easily seen that $\hva$ satisfies 
\begin{align}\label{sym.es8}
P_{\widehat{g}_{\alpha}}w_{\alpha}=&\(\widehat{f}_{\alpha}^{~p_{\alpha}-2^{*}_{k}}-1\)\widehat{v}_{\alpha}^{~p_{\alpha}-1}+\(\widehat{v}_{\alpha}^{~p_{\alpha}-1}-U^{p_{\alpha}-1}\)+\(U^{p_{\alpha}-1}-U^{2^{*}_{k}-1}\)\notag\\
&+\(U^{2^{*}_{k}-1}-P_{\widehat{g}_{\alpha}}U\).
\end{align}
We control each of the terms in the brackets. First, since $\widehat{v}_{\alpha}\lesssim U$ for $x\in B(0,2\mu_{\alpha}^{-1}r_{\alpha})$ uniformly by \eqref{local.est1}, we have 
$$ \begin{aligned}
\(\widehat{f}_{\alpha}^{~p_{\alpha}-2^{*}_{k}}-1\)\widehat{v}_{\alpha}^{~p_{\alpha}-1} + U^{p_{\alpha}-1}-U^{2^{*}_{k}-1}
&~= \bigO\((2^{*}_{k}-p_{\alpha})U^{2^{*}_{k}-1}|\ln U|\).
\end{aligned} $$ 
Next, for $\alpha\gg1$ we have for some fixed $0<\theta<2^{*}_{k}-2$
\begin{align*}
\widehat{v}_{\alpha}^{~p_{\alpha}-1}-U^{p_{\alpha}-1}
&=\(p_{\alpha}-1\)U^{p_{\alpha}-2}w_{\alpha}+\bigO\(U^{p_{\alpha}-2-\theta}|w_{\alpha}|^{1+\theta}\).
\end{align*}
This is uniform in $B(0,2\mu_{\alpha})$  since $|w_{\alpha}(x)|\lesssim U(x)$ in  $B(0,2\mu_{\alpha}^{-1}r_{\alpha})$ by \eqref{local.est1}. A change of variables in  \eqref{sym.es5} gives 
\begin{align}\label{sym.es9}
\left|P_{\widehat{g}_{\alpha}}U(x)-U(x)^{2^{*}_{k}-1}\right|= \bigO \left( \mu_{\alpha}^{4}\(1+|x|\)^{4-n}\right).
\end{align}
Collecting all the terms we have obtained that
\begin{align}\label{sym.es10}
P_{\widehat{g}_{\alpha}}w_{\alpha}
=&\(p_{\alpha}-1\)U^{p_{\alpha}-2}w_{\alpha} +\bigO\(U^{p_{\alpha}-2-\theta}|w_{\alpha}|^{1+\theta}\) \notag \\
&+\bigO\((2^{*}_{k}-p_{\alpha})U^{2^{*}_{k}-1}|\ln U|\)+\bigO\( \mu_{\alpha}^{4}\(1+|x|\)^{4-n}\).
\end{align}
Thus we have in particular
\begin{equation}\label{sym.es11}
|P_{\widehat{g}_{\alpha}}w_{\alpha}|\lesssim U^{p_{\alpha}-2}|w_{\alpha}|+(2^{*}_{k}-p_{\alpha})U^{2^{*}_{k}-1}|\ln U|+ \mu_{\alpha}^{4}\(1+|x|\)^{4-n}.
\end{equation}
A simple change of variables shows that 
$$\widehat{G}_{\alpha}(x,y):=\mu_{\alpha}^{n-2k}G_{\alpha}\(\exp_{x_{\alpha}}^{g_{\alpha}}(\mu_{\alpha}x),\exp_{x_{\alpha}}^{g_{\alpha}}(\mu_{\alpha}y)\)$$
is the fundamental solution for $P_{\widehat{g}_{\alpha}}$ in  $B(0,2\mu_{\alpha}^{-1}r_{\alpha})$ and satisfies $\widehat{G}_{\alpha}(x,y)\lesssim |y-x|^{2k-n}$ by \eqref{bounds.Green} below. We can then write a representation formula for $w_{\alpha}$ in  $B(0,2\mu_{\alpha}^{-1}r_{\alpha})$. Let $y_{\alpha}\in B(0,\mu_{\alpha}^{-1}r_{\alpha})$ be a sequence of points. Using \eqref{sym.es7} and \eqref{sym.es11} we get
\begin{align}\label{sym.es19}
|w_{\alpha}(y_{\alpha})|&\lesssim\int_{B(0,2\mu_{\alpha}^{-1}r_{\alpha})}|y_{\alpha}-y|^{2k-n} U(y)^{p_{\alpha}-2}|w_{\alpha}(y)| \, dy\notag\\
&~+(2^{*}_{k}-p_{\alpha})\int_{B(0,2\mu_{\alpha}^{-1}r_{\alpha})}|y_{\alpha}-y|^{2k-n}U(y)^{2^{*}_{k}-1}|\ln U(y)|\,dy\notag\\
&~+\mu_{\alpha}^{4}\int_{B(0,2\mu_{\alpha}^{-1}r_{\alpha})}|y_{\alpha}-y|^{2k-n}\(1+|y|\)^{4-n}\,dy+\(\frac{\mu_{\alpha}}{r_{\alpha}}\)^{n-2k}.
\end{align}
Using Lemma~\ref{lemma.Giraud} below we obtain that 
\begin{align}
\int_{B(0,2\mu_{\alpha}^{-1}r_{\alpha})}|y_{\alpha}-y|^{2k-n}\(U(y)^{2^{*}_{k}-1}|\ln U(y)|\)\,dy\lesssim U(y_{\alpha})
\end{align}
and that 
\begin{align}
\mu_{\alpha}^{4}\int_{B(0,2\mu_{\alpha}^{-1}r_{\alpha})}|y_{\alpha}-y|^{2k-n}\(1+|y|\)^{4-n}\,dy\lesssim\mathcal{I}_{\alpha}(y_{\alpha}),
\end{align}
where we have let 
\begin{align}\label{def.I}
\mathcal{I}_{\alpha}(x):=\left\{\begin{aligned}&~\mu_{\alpha}^{n-2k}r_{\alpha}^{2k+4-n}&&\text{if }n<2k+4\\&\mu_{\alpha}^{4}\ln\(1+\frac{r_{\alpha}}{\mu_{\alpha}}\)&&\text{if }n=2k+4\\&\mu_{\alpha}^{4}\(1+|x|\)^{2k+4-n}&&\text{if }n>2k+4 \end{aligned}\right. .
\end{align}
Thus, \eqref{sym.es19} becomes 
\begin{equation} \label{sym.es13}
\begin{aligned}
|w_{\alpha}(y_{\alpha})|\lesssim&\,\int_{B(0,2\mu_{\alpha}^{-1}r_{\alpha})}|y_{\alpha}-y|^{2k-n} U^{p_{\alpha}-2}|w_{\alpha}|\,dy \\
&~+(2^{*}_{k}-p_{\alpha})U(y_{\alpha}) +\mathcal{I}_{\alpha}(y_{\alpha})+\(\frac{\mu_{\alpha}}{r_{\alpha}}\)^{n-2k}.
\end{aligned}
\end{equation}
To estimate the integral in \eqref{sym.es13} we naively bound $w_\alpha$ by its $L^\infty$ norm and use again Lemma~\ref{lemma.Giraud} below: we get 
\begin{equation} \label{sym.es12}
\begin{aligned}
&\int_{B(0,2\mu_{\alpha}^{-1}r_{\alpha})}|y_{\alpha}-y|^{2k-n} U(y)^{p_{\alpha}-2}|w_{\alpha}(y)|\,dy\\
&\lesssim\|w_{\alpha}\|_{L^{\infty}(B(0,2\mu^{-1}_{\alpha}r_{\alpha}))}\(\(1+|y_{\alpha}|\)^{2k-(p_{\alpha}-2)(n-2k)}+U(y_{\alpha})\).\\
\end{aligned}
\end{equation}
Using \eqref{sym.es12} in \eqref{sym.es13} gives that for any sequence of points $y_{\alpha}\in B(0,2\mu_{\alpha}^{-1}r_{\alpha})$
\begin{align}
|w_{\alpha}(y_{\alpha})|\lesssim&~\|w_{\alpha}\|_{L^{\infty}(B(0,2\mu^{-1}_{\alpha}r_{\alpha}))}\(\(1+|y_{\alpha}|\)^{2k-(p_{\alpha}-2)(n-2k)}+B_{0}(y_{\alpha})\)\notag\\
&+(2^{*}_{k}-p_{\alpha})U(y_{\alpha})+\mathcal{I}_{\alpha}(y_{\alpha})+\(\frac{\mu_{\alpha}}{r_{\alpha}}\)^{n-2k}.
\end{align}
We now use this new estimate to compute again the integral term in \eqref{sym.es13} and improve its precision. After a finite number of iterations we obtain that 
\begin{align}\label{sym.es14}
|w_{\alpha}(y)|\lesssim&~\[(2^{*}_{k}-p_{\alpha})+\|w_{\alpha}\|_{L^{\infty}(B(0,2\mu^{-1}_{\alpha}r_{\alpha}))}\]U(y)+\mathcal{I}_{\alpha}(y),\notag\\
&+\(\frac{\mu_{\alpha}}{r_{\alpha}}\)^{n-2k}\quad\hbox{ for all } y \in B(0,2\mu_{\alpha}^{-1}r_{\alpha}).
\end{align}
This proves \eqref{sym.es15} when $\ell = 0$. Differentiating the representation formula, using \eqref{sym.es11} and \eqref{sym.es14} finally yields \eqref{sym.es15} for all $0 \le \ell \le 2k-1$. 
\end{proof}

\smallskip

\noindent
{\textbf{Step 3:}} Coming back to the definition \eqref{def.I},  we note that $\mathcal{I}_{\alpha}(y)=\mathcal{I}_{\alpha}(|y|)\leq \mathcal{I}_{\alpha}(1)$ when $|y| \ge 1$. Our next result shows that $\|w_{\alpha}\|_{L^{\infty}(B(0,2\mu^{-1}_{\alpha}r_{\alpha}))}$ is controlled by the value of $\mathcal{I}_{\alpha}$ at the scale $1$:
\begin{claim}
We have 
\begin{align}\label{sym.es16}
\|w_{\alpha}\|_{L^{\infty}(B(0,2\mu^{-1}_{\alpha}r_{\alpha}))}\lesssim&\(\frac{\mu_{\alpha}}{r_{\alpha}}\)^{n-2k}+(2^{*}_{k}-p_{\alpha})\notag\\
&+\left\{\begin{aligned}&\mu_{\alpha}^{4}\ln\(1+\frac{r_{\alpha}}{\mu_{\alpha}}\)&&\text{if }n=2k+4\\&\mu_{\alpha}^{4}&&\text{if }n>2k+4.\end{aligned}\right.
\end{align}

\end{claim}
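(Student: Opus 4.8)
The plan is to run a rescaling argument by contradiction, in the spirit of \cite{Marques, LiXiong}. Write $\Lambda_\alpha$ for the right-hand side of \eqref{sym.es16} and suppose that, along a subsequence, $\|w_\alpha\|_{L^\infty(B(0,2\mu_\alpha^{-1}r_\alpha))}/\Lambda_\alpha\to+\infty$. Set $\bar w_\alpha:=w_\alpha/\|w_\alpha\|_{L^\infty(B(0,2\mu_\alpha^{-1}r_\alpha))}$, so that $\|\bar w_\alpha\|_{L^\infty}=1$. The first step is to turn \eqref{sym.es14} into a uniform pointwise bound on $\bar w_\alpha$: since $\mathcal{I}_\alpha$ is non-increasing in $|y|$, so that $\mathcal{I}_\alpha(y)\lesssim\mathcal{I}_\alpha(1)\lesssim\Lambda_\alpha$ for every $y\in B(0,2\mu_\alpha^{-1}r_\alpha)$ (cf. the opening remark of Step 3), and since $2^{*}_{k}-p_\alpha\lesssim\Lambda_\alpha$ and $(\mu_\alpha/r_\alpha)^{n-2k}\lesssim\Lambda_\alpha$, dividing \eqref{sym.es14} by $\|w_\alpha\|_{L^\infty}$ gives $|\bar w_\alpha(y)|\lesssim U(y)+\smallo(1)$ uniformly in $y$. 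Moreover, by \eqref{sym.es7} we have $|\bar w_\alpha|=\smallo(1)$ on the annulus $B(0,2\mu_\alpha^{-1}r_\alpha)\setminus B(0,\mu_\alpha^{-1}r_\alpha)$, so the value $\|\bar w_\alpha\|_{L^\infty}=1$ is attained at some point $z_\alpha$ with $U(z_\alpha)\gtrsim1$, hence $|z_\alpha|\le C$; up to a subsequence $z_\alpha\to z_0\in\R^n$.

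Next I would pass to the limit in the linear equation satisfied by $\bar w_\alpha$. Dividing \eqref{sym.es10} by $\|w_\alpha\|_{L^\infty}$ and using $\|w_\alpha\|_{L^\infty}\gg\mu_\alpha^4$ and $\|w_\alpha\|_{L^\infty}\gg 2^{*}_{k}-p_\alpha$ (both following from $\|w_\alpha\|_{L^\infty}/\Lambda_\alpha\to+\infty$), together with $w_\alpha\to0$ in $C^{2k}_{\loc}(\R^n)$ (Step 2), $p_\alpha\to2^{*}_{k}$ and $\widehat{g}_\alpha\to\xi$ in $C^{2k}_{\loc}(\R^n)$, standard elliptic theory yields that $\bar w_\alpha\to\bar w_0$ in $C^{2k}_{\loc}(\R^n)$, where $\bar w_0$ solves the linearised equation $\Delta_0^k\bar w_0=(2^{*}_{k}-1)U^{2^{*}_{k}-2}\bar w_0$ in $\R^n$, satisfies $|\bar w_0(y)|\lesssim U(y)$, and obeys $\bar w_0(z_0)\neq0$; in particular $\bar w_0\not\equiv0$.

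I would then extract two constraints on $\bar w_0$ from the normalisation of $v_\alpha$ at $x_\alpha$. Since $\nabla v_\alpha(x_\alpha)=0$ (see \eqref{cond.xarhoa.va}) and $\nabla U(0)=0$ as $U$ is radial, we get $\nabla\widehat{v}_\alpha(0)=0$, hence $\nabla w_\alpha(0)=0$ and $\nabla\bar w_0(0)=0$; and since $\widehat{v}_\alpha(0)=\mu_\alpha^{2k/(p_\alpha-2)}v_\alpha(x_\alpha)=1=U(0)$ by \eqref{def.mua}, we get $w_\alpha(0)=0$ and $\bar w_0(0)=0$. By the non-degeneracy of the standard bubble $U$ for $\Delta_0^k$, the space of solutions of the linearised equation decaying like $U$ is spanned by $\partial_1U,\dots,\partial_nU$ and $x\cdot\nabla U+\tfrac{n-2k}{2}U$, so $\bar w_0=\sum_{i=1}^n c_i\partial_iU+c_0\big(x\cdot\nabla U+\tfrac{n-2k}{2}U\big)$. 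As $U$ is radial, $x\cdot\nabla U+\tfrac{n-2k}{2}U$ has vanishing gradient at $0$, while the Hessian of $U$ at $0$ is a nonzero multiple of the identity; hence $\nabla\bar w_0(0)=0$ forces $c_1=\dots=c_n=0$, and then $\bar w_0(0)=\tfrac{n-2k}{2}c_0U(0)=0$ forces $c_0=0$. Thus $\bar w_0\equiv0$, contradicting $\bar w_0(z_0)\neq0$, which proves \eqref{sym.es16}.

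Apart from the bookkeeping of which remainders in \eqref{sym.es10} and \eqref{sym.es14} are $\smallo(\|w_\alpha\|_{L^\infty})$, the genuinely delicate point is the appeal to the non-degeneracy of the bubble $U$ for the polyharmonic operator $\Delta_0^k$: this is the one structural input specific to the critical equation \eqref{eq.bubble1}, and it is precisely what makes the two pointwise constraints at the origin sufficient to force $\bar w_0$ to vanish.
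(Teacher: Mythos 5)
Your proposal is correct and follows essentially the same contradiction argument as the paper: normalize $w_\alpha$ by its $L^\infty$ norm, pass to a $C^{2k}_{\loc}$ limit $\bar w_0$ solving the linearised equation $\Delta_0^k\bar w_0=(2^*_k-1)U^{2^*_k-2}\bar w_0$ with bubble-like decay, invoke the classification of such decaying solutions (Lemma~5.1 of Li--Xiong, i.e.\ non-degeneracy of $U$ for $\Delta_0^k$), and use $w_\alpha(0)=0$ and $\nabla w_\alpha(0)=0$ to kill the kernel, contradicting $\|\bar w_\alpha\|_{L^\infty}=1$. The only cosmetic difference is that you locate the maximizing point $z_\alpha$ and establish its boundedness before passing to the limit in the equation, whereas the paper does so afterwards via the pointwise decay of $\widetilde w_\alpha$; both routes yield the same contradiction.
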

\begin{proof}
We proceed by contradiction and assume that up to a subsequence 
\begin{align}
&\(\frac{\mu_{\alpha}}{r_{\alpha}}\)^{n-2k}+(2^{*}_{k}-p_{\alpha})+\left\{\begin{aligned}&\mu_{\alpha}^{4}\ln\(1+\frac{r_{\alpha}}{\mu_{\alpha}}\)&&\text{if }n=2k+4\\&\mu_{\alpha}^{4}&&\text{if }n>2k+4\end{aligned}\right.\notag\\
&~=\smallo\(\|w_{\alpha}\|_{L^{\infty}(B(0,2\mu^{-1}_{\alpha}r_{\alpha}))}\).
\end{align}
Let $\widetilde{w}_{\alpha}(x)=\|w_{\alpha}\|_{L^{\infty}(B(0,2\mu^{-1}_{\alpha}r_{\alpha}))}^{-1}w_{\alpha}(x)$ for $x\in B(0,2\mu_{\alpha}^{-1}r_{\alpha})$. It follows from \eqref{sym.es14} that
\begin{equation} \label{label.tardif.2}
|\widetilde{w}_{\alpha}(x)|\lesssim U(x)+\smallo(1)\quad\hbox{for } x\in B(0,2\mu_{\alpha}^{-1}r_{\alpha}) \text{ fixed. }
\end{equation}
Using equation \eqref{sym.es10} it follows that $\widetilde{w}_{\alpha}$ satisfies:
$$ \begin{aligned}
P_{\widehat{g}_{\alpha}}\widetilde{w}_{\alpha}
=& \(p_{\alpha}-1\)U^{p_{\alpha}-2}\widetilde{w}_{\alpha} +\bigO\( U^{p_{\alpha}-2-\theta}|\widetilde{w}_{\alpha}| |w_{\alpha}|^{\theta}\) \\
& +\smallo\(U^{2^{*}_{k}-1}|\ln U|\)+\smallo\(\(1+|x|\)^{4-n}\).
\end{aligned} $$
By standard elliptic theory it follows that $\widetilde{w}_{\alpha}\to\widetilde{w}_{\infty}$ in $C^{2k}_{loc}(\R^{n})$ as $\alpha\to+\infty$. From Lemma~\ref{local.1} we know that $w_{\alpha}\to0$ in $C^{2k}_{loc}(\R^{n})$ as $\alpha\to+\infty$, which then shows that  $\widetilde{w}_{\infty}$ satisfies the linearized equation:
\begin{align}
\Delta_{0}^{k}\widetilde{w}_{\infty}=\(2^{*}_{k}-1\)U^{2^{*}_{k}-2}\widetilde{w}_{\infty}\quad\hbox{ in } \R^{n}.
\end{align}
Passing \eqref{sym.es15} to the limit as $\alpha \to + \infty$ shows that 
\begin{align*}
\left|\nabla^{\ell}\widetilde{w}_{\infty}(x)\right|\lesssim(1+|x|)^{2k-n-\ell}\quad\hbox{ for all $0 \le \ell \le 2k-1$ and $x\in\R^{n}$}.
\end{align*}
We may then apply lemma $5.1$ of \cite{LiXiong}, which shows the existence of $a_{0}\in\R$ and $(a_{1},\ldots,a_{n})\in \R^{n}$ such that
\begin{align}
\widetilde{w}_{\infty}(z)=a_{0}\(\frac{n-2k}{2}U+\langle x, \nabla U \rangle\)+\sum\limits_{i=1}^{n}a_{i}\partial_{i} U.
\end{align}
By definition of $B_{\alpha}$ and $w_{\alpha}$, for all $\alpha>1$ we have $\widetilde{w}_{\alpha}(0)=0$ and $\nabla \widetilde{w}_{\alpha}(0)=0$. This implies  $\widetilde{w}_{\infty}(0)=0$ and $\nabla \widetilde{w}_{\infty}(0)=0$,  which gives  $a_{i}=0$ for all $i=0,1, \ldots, n$. Thus $\widetilde{w}_{\infty}\equiv0$ in $\R^{n}$.  Independently, and by definition, we have $w_{\alpha}\not\equiv0$ for all $\alpha\geq1$. Let $z_{\alpha}\in B(0,2\mu_{\alpha}^{-1}r_{\alpha})$ be such that $\left|w_{\alpha}(z_{\alpha})\right|=\|w_{\alpha}\|_{L^{\infty}(B(0,2\mu^{-1}_{\alpha}r_{\alpha}))}$. Then we have by \eqref{label.tardif.2}
\begin{align*}
1=\left|\widetilde{w}_{\alpha}(z_{\alpha})\right|\lesssim(1+|z_{\alpha}|)^{2k-n}+\smallo(1).
\end{align*}
Therefore $z_{\alpha}=\bigO\(1\)$ as $\alpha\to+\infty$, and we may let $z_{\infty}\in \R^{n}$ be such that $z_{\alpha}\to z_{\infty}$ as $\alpha\to+\infty$. This gives us that  $|\widetilde{w}_{\infty}(z_{\infty})|=1$ and hence $\widetilde{w}_{\infty}\not\equiv0$. This is a contradiction and \eqref{sym.es16} follows. 
\end{proof}
\smallskip

\noindent
{\textbf{Step 4:}} we may now obtain optimal estimates on $2^*_k - p_\alpha$: 
\begin{claim}
We have
\begin{align}\label{sym.es18}
&2^{*}_{k}-p_{\alpha}\lesssim\(\frac{\mu_{\alpha}}{r_{\alpha}}\)^{n-2k}+\left\{\begin{aligned}&~\mu_{\alpha}^{4}\ln\(1+\frac{r_{\alpha}}{\mu_{\alpha}}\)&&\text{if }n=2k+4\\&\mu_{\alpha}^{4}&&\text{if }n>2k+4.\end{aligned}\right.\notag\\
\end{align}
\end{claim}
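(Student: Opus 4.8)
The plan is to re-run the Pohozaev argument of Step~1 (Claim~\ref{sym.es4}), now feeding in the refined estimates \eqref{sym.es15}--\eqref{sym.es16} from Steps 2--3 and, crucially, the sharp fourth-order expansion of $P_{g_\alpha}-\Delta_0^k$ in conformal normal coordinates provided by Proposition~\ref{GJMS.exp0} (equivalently \ref{GJMS.exp1}). I would apply the Pohozaev identity \eqref{poho.id0} on $B(0,r_\alpha)$ to $\widetilde{v}_\alpha=v_\alpha\circ\exp_{x_\alpha}^{g_\alpha}$, exactly as in Step~1. The boundary terms $\mathcal{P}_k(r_\alpha;\widetilde{v}_\alpha)$ are estimated as before via \eqref{local.est1} and \eqref{est.boundary.terms}, and the sum of the $\widetilde{f}_\alpha$-dependent terms is treated as in Step~1: by dominated convergence and Lemma~\ref{local.1} it equals $\tfrac{(n-2k)^2}{4n}\,\|U\|_{L^{2^*_k}(\R^n)}^{2^*_k}\,(2^*_k-p_\alpha)\,\mu_\alpha^{n-2k-\frac{4k}{p_\alpha-2}}(1+\smallo(1))$ modulo $\smallo\big(\mu_\alpha^{n-2k-\frac{4k}{p_\alpha-2}}(\mu_\alpha/r_\alpha)^{n-2k}\big)$. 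The only genuinely new task is to replace the crude bound $\bigO\big(\mu_\alpha^{2+n-2k-\frac{4k}{p_\alpha-2}}\big)$ used in Step~1 for the remaining integral $\int_{B(0,r_\alpha)}\big(\tfrac{n-2k}{2}\widetilde{v}_\alpha+x^i\partial_i\widetilde{v}_\alpha\big)\big(\Delta_0^k\widetilde{v}_\alpha-\widetilde{f}_\alpha^{p_\alpha-2^*_k}\widetilde{v}_\alpha^{p_\alpha-1}\big)\,dx$ by the sharper $\bigO\big(\mu_\alpha^{n-2k-\frac{4k}{p_\alpha-2}}\mathcal{I}_\alpha(1)\big)$ up to terms that are $\smallo\big((2^*_k-p_\alpha)+(\mu_\alpha/r_\alpha)^{n-2k}\big)\,\mu_\alpha^{n-2k-\frac{4k}{p_\alpha-2}}$.

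To this end I would rescale by $\mu_\alpha$ and, using \eqref{eq:bis:0}, rewrite the integral above as $\mu_\alpha^{n-2k-\frac{4k}{p_\alpha-2}}\int_{B(0,r_\alpha/\mu_\alpha)}\big(\tfrac{n-2k}{2}\widehat{v}_\alpha+z^i\partial_i\widehat{v}_\alpha\big)\big(\Delta_0^k-P_{\widehat{g}_\alpha}\big)\widehat{v}_\alpha\,dz$, and split $\widehat{v}_\alpha=U+w_\alpha$. For the radial part one has $\big(\Delta_0^k-P_{\widehat{g}_\alpha}\big)U=U^{2^*_k-1}-P_{\widehat{g}_\alpha}U=\bigO\big(\mu_\alpha^4(1+|z|)^{4-n}\big)$ by \eqref{sym.es9}; it is important to argue in the rescaled picture, where $U$ solves $\Delta_0^kU=U^{2^*_k-1}$ \emph{exactly}, rather than to compare $\Delta_0^k(B_\alpha\circ\exp_{x_\alpha}^{g_\alpha})$ directly with $B_\alpha^{2^*_k-1}\circ\exp_{x_\alpha}^{g_\alpha}$, since these differ by $\bigO\big((2^*_k-p_\alpha)\ln(1/\mu_\alpha)\big)$ times a bubble (because $p_\alpha\neq 2^*_k$) and such a logarithmic loss would spoil the estimate. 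Since $|\tfrac{n-2k}{2}\widehat{v}_\alpha+z^i\partial_i\widehat{v}_\alpha|\lesssim(1+|z|)^{2k-n}$ by \eqref{local.est1}, integrating $\mu_\alpha^4(1+|z|)^{2k+4-2n}$ over $B(0,r_\alpha/\mu_\alpha)$ gives exactly $\bigO(\mathcal{I}_\alpha(1))$, namely $\bigO(\mu_\alpha^4)$ if $n>2k+4$ and $\bigO\big(\mu_\alpha^4\ln(1+r_\alpha/\mu_\alpha)\big)$ if $n=2k+4$. For the remainder $w_\alpha$ I would use the rough expansion \eqref{expansion.Pg}, the fact that in conformal normal coordinates the lower-order coefficients of $P_{\widehat{g}_\alpha}$ are $\bigO\big(\mu_\alpha^2(1+|z|)^2\big)$, and the pointwise bounds \eqref{sym.es15} (extended to $\ell=2k$ through interior elliptic estimates applied to \eqref{sym.es11}, or via one integration by parts in $z$), to get $\big|(\Delta_0^k-P_{\widehat{g}_\alpha})w_\alpha(z)\big|\lesssim \mu_\alpha^2(1+|z|)^{2-n}\big[(2^*_k-p_\alpha)+\|w_\alpha\|_{L^\infty}+\mathcal{I}_\alpha(1)+(\mu_\alpha/r_\alpha)^{n-2k}\big]$; since $n\ge 2k+4$ the corresponding integral converges, so this piece contributes $\bigO\big(\mu_\alpha^2\big[(2^*_k-p_\alpha)+\|w_\alpha\|_{L^\infty}+\mathcal{I}_\alpha(1)+(\mu_\alpha/r_\alpha)^{n-2k}\big]\big)$, which after bounding $\|w_\alpha\|_{L^\infty}$ by \eqref{sym.es16} is $\smallo\big((2^*_k-p_\alpha)+(\mu_\alpha/r_\alpha)^{n-2k}+\mathcal{I}_\alpha(1)\big)$.

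Collecting these ingredients in the Pohozaev identity, dividing by $\mu_\alpha^{n-2k-\frac{4k}{p_\alpha-2}}$ and absorbing the $\smallo(2^*_k-p_\alpha)$ contribution into the left-hand side gives $\tfrac{(n-2k)^2}{4n}\,\|U\|_{L^{2^*_k}(\R^n)}^{2^*_k}\,(2^*_k-p_\alpha)\,(1+\smallo(1))\lesssim(\mu_\alpha/r_\alpha)^{n-2k}+\mathcal{I}_\alpha(1)$, which for $n\ge 2k+4$ is precisely \eqref{sym.es18}. The main obstacle is the sharp $\bigO(\mu_\alpha^4)$ cancellation in $(\Delta_0^k-P_{\widehat{g}_\alpha})U$: obtaining it for the non-explicit GJMS operator is exactly what the fourth-order (and antisymmetric fifth-order) structure of Proposition~\ref{GJMS.exp0} is for, and it has to be extracted without the logarithmic losses that the mismatch $p_\alpha\neq 2^*_k$ would otherwise introduce. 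The rest is routine bookkeeping: checking that the non-radial remainder and the subleading $\widetilde{f}_\alpha$-terms only produce contributions that are either $\smallo(2^*_k-p_\alpha)$, hence absorbable, or $\bigO\big((\mu_\alpha/r_\alpha)^{n-2k}+\mathcal{I}_\alpha(1)\big)$.
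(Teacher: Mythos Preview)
Your approach is correct but \emph{different} from the paper's. The paper does not re-run the Pohozaev identity at this stage; instead it argues by contradiction, exactly parallel to Step~3. Assuming $2^{*}_{k}-p_{\alpha}$ dominates the right-hand side, one rescales $\widetilde{w}_{\alpha}=(2^{*}_{k}-p_{\alpha})^{-1}w_{\alpha}$, passes to the limit using \eqref{sym.es10} to obtain a solution $\widetilde{w}_{\infty}$ of the inhomogeneous linearised equation $\Delta_{0}^{k}\widetilde{w}_{\infty}=(2^{*}_{k}-1)U^{2^{*}_{k}-2}\widetilde{w}_{\infty}+(\ln f(x_{\infty})+\ln U)U^{2^{*}_{k}-1}$, and then tests against $\mathcal{Z}_{0}=\tfrac{n-2k}{2}U+\langle x,\nabla U\rangle$. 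This forces $\int_{\R^{n}}U^{2^{*}_{k}-1}(\ln U)\,\mathcal{Z}_{0}\,dx=0$, which is shown by an explicit radial computation to be strictly negative. The contradiction yields \eqref{sym.es18} without touching Pohozaev or the structure of $P_{\widehat{g}_{\alpha}}-\Delta_{0}^{k}$ beyond \eqref{sym.es10}.

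Your route works and in fact anticipates the decomposition $I_{\alpha}+II_{\alpha}+III_{\alpha}$ used later in Lemma~\ref{weyl.es}; what it buys is that one sees already here why the fourth-order cancellation \eqref{sym.es9} is the driving mechanism. Two small caveats. First, your stated pointwise bound $|(\Delta_{0}^{k}-P_{\widehat{g}_{\alpha}})w_{\alpha}(z)|\lesssim\mu_{\alpha}^{2}(1+|z|)^{2-n}[\cdots]$ is only correct for the pieces of $w_{\alpha}$ in \eqref{sym.es15} that carry $U$-like decay $(1+|z|)^{2k-n-\ell}$; the $(\mu_{\alpha}/r_{\alpha})^{n-2k}$ piece (and, for $n=2k+4$, the $\mu_{\alpha}^{4}\ln$ piece) only decays like $(1+|z|)^{-\ell}$, so $(\Delta_{0}^{k}-P_{\widehat{g}_{\alpha}})$ applied to it is $\lesssim\mu_{\alpha}^{2}(1+|z|)^{2-2k}(\mu_{\alpha}/r_{\alpha})^{n-2k}$ and its integral against $(1+|z|)^{2k-n}$ over $B(0,r_{\alpha}/\mu_{\alpha})$ diverges like $(r_{\alpha}/\mu_{\alpha})^{2}$, yielding $r_{\alpha}^{2}(\mu_{\alpha}/r_{\alpha})^{n-2k}$. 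This is still $\bigO((\mu_{\alpha}/r_{\alpha})^{n-2k})$ since $r_{\alpha}\le\rho_{\alpha}$ is bounded, so the conclusion survives, but the convergence claim as written is inaccurate for that piece. Second, you need \eqref{sym.es15} at $\ell=2k$; the elliptic-estimate or integration-by-parts justification you sketch is fine, but it is an extra step the paper's argument avoids entirely. The paper's linearisation proof is shorter, requires no derivative bookkeeping on $P_{\widehat{g}_{\alpha}}-\Delta_{0}^{k}$, and mirrors Step~3 in structure.
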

\begin{proof}
Combining \eqref{sym.es16} with \eqref{sym.es15} we have thus far obtained that
\begin{equation} \label{sym.es17}
\begin{aligned}
|w_{\alpha}(x)| & \lesssim\(\frac{\mu_{\alpha}}{r_{\alpha}}\)^{n-2k}+(2^{*}_{k}-p_{\alpha})U(x) \\
&+\left\{\begin{aligned}&\mu_{\alpha}^{4}\ln\(1+\frac{r_{\alpha}}{\mu_{\alpha}}\)&&\text{if }n=2k+4\\&\mu_{\alpha}^{4}\(1+|x|\)^{2k+4-n}&&\text{if }n>2k+4.\end{aligned}\right.\\
\end{aligned}
\end{equation}
for all $x\in B(0, 2\mu_{\alpha}^{-1}r_{\alpha})$. We proceed as in the proof of  \eqref{sym.es16} and assume up to a subsequence that
\begin{align*}
2^{*}_{k}-p_{\alpha}\gg\(\frac{\mu_{\alpha}}{r_{\alpha}}\)^{n-2k}+\left\{\begin{aligned}&\mu_{\alpha}^{4}\ln\(1+\frac{r_{\alpha}}{\mu_{\alpha}}\)&&\text{if }n=2k+4\\&\mu_{\alpha}^{4}&&\text{if }n>2k+4.\end{aligned}\right.
\end{align*}
Let $\widetilde{w}_{\alpha}(x)=\(2^{*}_{k}-p_{\alpha}\)^{-1}w_{\alpha}(x)$ for $x\in B(0,2\mu_{\alpha}^{-1}r_{\alpha})$. It then follows from \eqref{sym.es17} that $|\widetilde{w}_{\alpha}(x)|\lesssim U(x)+\smallo(1)$ for $x\in B(0,2\mu_{\alpha}^{-1}r_{\alpha})$, and \eqref{sym.es10} gives that $\widetilde{w}_{\alpha}$ satisfies:
\begin{align*}
P_{\widehat{g}_{\alpha}}\widetilde{w}_{\alpha}=&  \(p_{\alpha}-1\)U^{p_{\alpha}-2}\widetilde{w}_{\alpha} +\(\ln\widehat{f}_{\alpha}+\ln U\)U^{2^{*}_{k}-1}+\smallo(1).
\end{align*}
By standard elliptic theory it follows that $\widetilde{w}_{\alpha}\to\widetilde{w}_{\infty}$ in $C^{2k}_{loc}(\R^{n})$ as $\alpha\to+\infty$ where 
\begin{align} \label{label.tardif.3}
\Delta_{0}^{k}\widetilde{w}_{\infty}=\(2^{*}_{k}-1\)U^{2^{*}_{k}-2}\widetilde{w}_{\infty}+\(\ln f(x_{\infty})+\ln U\) U^{2^{*}_{k}-1}\quad\hbox{ in } \R^{n},
\end{align}
with $x_{\infty}:=\lim \limits_{\alpha\to+\infty}x_{\alpha}\in M$. Since $\(\ln f(x_{\infty})+\ln U\) U^{2^{*}_{k}-1}$ is non-constant it follows that $\widetilde{w}_{\infty}\not\equiv0$. Using \eqref{sym.es15} we again have  
\begin{align*}
\left|\nabla^{\ell}\widetilde{w}_{\infty}(x)\right|\lesssim(1+|x|)^{2k-n-\ell}\quad\hbox{ for all $\ell\geq1$ and $x\in\R^{n}$}.
\end{align*}
Let $\ds{\mathcal{Z}_{0}:=\frac{n-2k}{2}U+\langle x, \nabla U\rangle}$. Then $\Delta_{0}^{k}\mathcal{Z}_{0}=\(2^{*}_{k}-1\)U^{2^{*}_{k}-2}\mathcal{Z}_{0}$, which we multiply by $\widetilde{w}_{\infty}$ and integrate by parts. This gives with \eqref{label.tardif.3}
\begin{align} \label{label.tardif.4}
0=\ln f(x_{\infty})\int_{\R^{n}}U^{2^{*}_{k}-1}\mathcal{Z}_{0}\,dx+\int_{\R^{n}} U^{2^{*}_{k}-1}\ln U\,\mathcal{Z}_{0}\,dx.
\end{align}
Denote $\tilde{U}_{\lambda}:=\lambda^{\frac{2k-n}{2}}U\(x/\lambda\)$. We have $U=\tilde{U}_{1}$ and $\ds{\mathcal{Z}_{0}=-\frac{d}{d\lambda}\tilde{U}_{\lambda}\mid_{\lambda=1}}$. Differentiating $\ds \int_{\R^{n}}\tilde{U}_{\lambda}^{2^{*}_{k}}\,dx\equiv\hbox{const.}$ with respect to $\lambda$ at $\lambda = 1$ gives $\ds\int_{\R^{n}}U^{2^{*}_{k}-1}\mathcal{Z}_{0}\,dx=0$.  But this implies  with \eqref{label.tardif.4} that $\ds\int_{\R^{n}} U^{2^{*}_{k}-1}\ln U\,\mathcal{Z}_{0}\,dx=0$, which is a contradiction since 
\begin{align*}
&\int_{\R^{n}} U^{2^{*}_{k}-1}\ln U~\mathcal{Z}_{0}\,dx= \frac{n-2k}{2}\int_{\R^{n}} U^{2^{*}_{k}-1}\ln U \frac{1-\mathfrak{c}_{n,k}^{-1}\left|x\right|^2}{\(1+\mathfrak{c}_{n,k}^{-1}\left|x\right|^2\)^{\frac{n-2k}{2}+1}}\,dx\notag\\
&=\(\frac{n-2k}{2}\)^{\frac{n-2k}{2}}\omega_{n-1}\int_{0}^{+\infty}\frac{\mathfrak{c}_{n,k}^{-1}r^{2}-1}{\(1+\mathfrak{c}_{n,k}^{-1}r^2\)^{n+1}} \ln\(1+\mathfrak{c}_{n,k}^{-1}r^2\)r^{n-1}~dr<0,
\end{align*}
and where the strict inequality follows from  \cite[Equation $(5.13)$]{Marques}.
\end{proof}

\noindent
{\textbf{Step 5:}} Combining the estimates \eqref{sym.es15},
\eqref{sym.es16} and \eqref{sym.es18} shows that, for any $0 \le \ell \le 2k-1$ and for any  $x\in B(0, 2\mu_{\alpha}^{-1}r_{\alpha})$, 
\begin{align}\label{sym.es19b}
\left|\nabla^{\ell} w_{\alpha}(x)\right|&\lesssim~
\(\frac{\mu_{\alpha}}{r_{\alpha}}\)^{n-2k}\(1+|x|\)^{-\ell}\notag\\
&+\left\{\begin{aligned}&\mu_{\alpha}^{4}\ln\(1+\frac{r_{\alpha}}{\mu_{\alpha}}\)\(1+|x|\)^{-\ell}&&\text{if }n=2k+4\\&\mu_{\alpha}^{4}\(1+|x|\)^{2k+4-n-\ell}&&\text{if }n>2k+4.\end{aligned}\right. 
\end{align}
holds. Scaling back in the original variables proves \eqref{sym.esII}. We thus only have to prove estimate \eqref{sym.esIIb} when $n=2k+4$. Keeping the notations of Step 2 and using \eqref{sym.es19b} we now obtain that 
\begin{align*}
&\int_{B(0,2\mu_{\alpha}^{-1}r_{\alpha})}|y_{\alpha}-y|^{-4} B_{0}^{p_{\alpha}-2}|w_{\alpha}|\,dy 
&\lesssim\(\frac{\mu_{\alpha}}{r_{\alpha}}\)^{4}+\mu_{\alpha}^{4}\ln\(1+\frac{r_{\alpha}}{\mu_{\alpha}}\)\frac{1}{1+|y_{\alpha}|}.
\end{align*}
Plugging the latter in \eqref{sym.es13} and using \eqref{def.I} and \eqref{sym.es16} shows that
for all $y\in B(0,2\mu_{\alpha}^{-1}r_{\alpha})$
\begin{align}
|w_{\alpha}(y)|\lesssim&\(\frac{\mu_{\alpha}}{r_{\alpha}}\)^{4}+\(r_{\alpha}+\mu_{\alpha}\ln\(1+\frac{r_{\alpha}}{\mu_{\alpha}}\)\)\frac{\mu_{\alpha}^{3}}{1+|y|}.
\end{align}
The estimates on the derivatives follow similarly, using again \eqref{sym.es19b}. Scaling back to the original variables proves \eqref{sym.esIIb}. This completes the proof of proposition~\ref{sym.es}.
\end{proof}

\begin{remark}
In Proposition \ref{sym.es} we only prove symmetric estimates at first-order, which are analogous to the estimates obtained in \cite{Marques}. This is because in Proposition \ref{GJMS.exp0} below we only obtain a first-order expansion of $P_{(\exp_{\xi}^{g_\xi})^*g_{\xi}} - \Delta_0^k$ around any point $\xi \in M$ (where $g_{\xi}$ satisfies \eqref{conf.1}, \eqref{conf.2}, \eqref{conf.3}). In principle, if an explicit expansion of $P_{(\exp_{\xi}^{g_\xi})^*g_{\xi}} - \Delta_0^k$ in terms of the Taylor expansion of $g_{\xi}$ at $\xi$ at any order was available, we could obtain symmetric estimates of any order in the spirit of those obtained in \cite{KhuMaSc} (when $k=1$) and \cite{GongKimWei} (when $k=2,3$), since the strategy of proof remains essentially unchanged. An expansion of $P_{(\exp_{\xi}^{g_\xi})^*g_{\xi}} - \Delta_0^k$ to any order would however require to know a complete explicit expression of $P_g$, which is not available when $k \ge 4$: this is to this day the main obstacle to improving the estimates of Proposition \ref{sym.es} for any $k$ with this approach. 
\end{remark}

\noindent
\begin{remark}\label{sym.es2k}
The pointwise estimates of Proposition \ref{sym.es} remain true for derivatives of order $\ell \ge 2k$. This follows from standard elliptic theory and the expressions one obtains are the same as in \eqref{sym.esII}. 
\end{remark}
\smallskip

\section{Estimates on the Weyl Curvature at a concentration point}\label{sec.estimate.weyl}

We keep in this section the notations of Section \ref{sec.order.two}. The next result estimates the value of the Weyl curvature at a concentration point of a blowing-up sequence:
\begin{lemma}\label{weyl.es}
Let $(\va)_\alpha$ be a sequence of positive solutions of \eqref{eq:bis:0} and let $(\xa)_\alpha$ and $(\rhoa)_\alpha$ satisfy \eqref{cond.xarhoa.va} and \eqref{cond.xarhoa.va2}. Assume that \eqref{positivity} holds and let $\ra$ be defined by \eqref{def.ra}. We have
\begin{equation} \label{eq.estimate.Weyl}
\begin{aligned}
|\Weyl_{g}(x_{\alpha})|_g^{2}\lesssim\smallo(1)+\left\{\begin{aligned}&\(\ln\frac{1}{\mu_{\alpha}}\)^{-1}r_{\alpha}^{-4}&&\text{if }n=2k+4\\&\mu_{\alpha}^{n-2k-4}r_{\alpha}^{2k-n}&&\text{if }n>2k+4\end{aligned}\right.
\end{aligned}
\end{equation}
as $\alpha \to + \infty$. As a consequence, if $n \ge 2k+4$ and $\min_M | \Weyl_{g}| >0$, then $\ra \to 0$ as $\alpha \to + \infty$. 
\end{lemma}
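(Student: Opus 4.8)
The plan is to derive \eqref{eq.estimate.Weyl} from the Pohozaev identity of Appendix~\ref{sec.pohozaev}, along the lines of Steps~1 and~4 of the proof of Proposition~\ref{sym.es}, but now \emph{keeping}, rather than discarding, the contribution of the Weyl curvature at $\xa$. Set $\widetilde{v}_\alpha:=\va\circ\exp_{\xa}^{\ga}$. Read in conformal normal coordinates at $\xa$, equation \eqref{eq:bis:0} gives $\Delta_0^k\widetilde{v}_\alpha-\tilde{f}_\alpha^{p_\alpha-2^{*}_{k}}\widetilde{v}_\alpha^{p_\alpha-1}=-(P_{\ga}-\Delta_0^k)\widetilde{v}_\alpha$ (cf.\ \eqref{label.tardif.1}), so that the identity \eqref{poho.id0}--\eqref{poho.id01} applied on $B(0,\ra)$ becomes, up to sign,
$$ \int_{B(0,\ra)}\Big(\tfrac{n-2k}{2}\widetilde{v}_\alpha+x^i\partial_i\widetilde{v}_\alpha\Big)(P_{\ga}-\Delta_0^k)\widetilde{v}_\alpha\,dx=\mathcal{P}_k(\ra;\widetilde{v}_\alpha)+\mathcal{E}_\alpha, $$
where $\mathcal{P}_k(\ra;\widetilde{v}_\alpha)$ are the boundary terms of \eqref{poho.id01} and $\mathcal{E}_\alpha$ collects the terms produced by $2^{*}_{k}-p_\alpha\ne0$. (We may assume $\ra$ smaller than the radius of validity of conformal normal coordinates around $\xa$ and of Proposition~\ref{sym.es}; otherwise the same argument on a fixed small ball already forces $|\Weyl_g(\xa)|_g^2\to0$, which a fortiori gives \eqref{eq.estimate.Weyl}.)

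The right-hand side is easily controlled: by \eqref{est.boundary.terms} one has $|\mathcal{P}_k(\ra;\widetilde{v}_\alpha)|\lesssim\ma^{n-2k-\frac{4k}{p_\alpha-2}}\big(\tfrac{\ma}{\ra}\big)^{n-2k}$, and $|\mathcal{E}_\alpha|$ follows by inserting the sharp bound \eqref{sym.es18} on $2^{*}_{k}-p_\alpha$ and the pointwise bound \eqref{local.est1} into the lower-order Pohozaev terms; after the normalisation below, the whole right-hand side is $\smallo(1)$ plus a quantity bounded by the right-hand side of \eqref{eq.estimate.Weyl}.

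The heart of the matter is the left-hand side. Writing $\widetilde{v}_\alpha=\Ba+w_\alpha$ --- with $\Ba$ as in \eqref{bubble2}, radial in these coordinates --- and inserting the conformal normal coordinate expansion of $P_{\ga}-\Delta_0^k$ provided by Proposition~\ref{GJMS.exp0} --- obtained from Juhl's recursion: it vanishes to fourth order on radial functions and its fifth-order part drops out of the quadratic form by antisymmetry --- the sharp pointwise estimates \eqref{sym.esII}--\eqref{sym.esIIb} of Proposition~\ref{sym.es} on $w_\alpha$ show that every term involving $w_\alpha$ is of lower order than the leading one, namely the Pohozaev quadratic form $\int_{B(0,\ra)}\big(\tfrac{n-2k}{2}\Ba+x\cdot\nabla\Ba\big)(P_{\ga}-\Delta_0^k)\Ba\,dx$ evaluated on the single bubble. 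At this point I would invoke the key identity of Proposition~\ref{GJMS.exp2} (see also \eqref{GJMS.exp2.5}): this quadratic form equals, up to lower-order terms, the formal derivative with respect to the concentration parameter of the (local) energy of a single bubbling profile on $B(0,\ra)$, whose asymptotic expansion was obtained in Mazumdar--V\'etois~\cite{MazumdarVetois}. By that expansion the leading term is, up to a nonzero explicit constant $c_{n,k}$ whose value and sign are irrelevant, $|\Weyl_g(\xa)|_g^2$ times $\ma^{n-2k-\frac{4k}{p_\alpha-2}}\ma^4$ if $n>2k+4$, resp.\ $\ma^{n-2k-\frac{4k}{p_\alpha-2}}\ma^4\ln\big(\tfrac{\ra}{\ma}\big)$ if $n=2k+4$; here we used $|\Weyl_{\ga}(\xa)|_{\ga}=|\Weyl_g(\xa)|_g$ by conformal invariance and \eqref{conf.1}, and the appearance of $|\Weyl_g(\xa)|_g^2$ ultimately comes from the conformal normal coordinate identities \eqref{conf.3}. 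The whole point of this detour is that the fourth-order coefficients of $P_{\ga}$ --- intractable for large $k$ --- are never computed; this identification is what I expect to be the main difficulty of the proof, the rest being careful bookkeeping of error terms via Proposition~\ref{sym.es}, \eqref{sym.es18} and \eqref{est.boundary.terms}.

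Putting the two sides together and dividing by $\ma^{n-2k-\frac{4k}{p_\alpha-2}+4}$ --- and, when $n=2k+4$, also by $\ln\big(\tfrac{\ra}{\ma}\big)$ --- yields \eqref{eq.estimate.Weyl}, the $\smallo(1)$ absorbing the normalised error terms. For the final assertion: up to a subsequence $\xa\to x_0\in M$, so $|\Weyl_g(\xa)|_g^2\to|\Weyl_g(x_0)|_g^2\ge\min_M|\Weyl_g|_g^2>0$ by continuity of $\Weyl_g$; inserting this in \eqref{eq.estimate.Weyl} forces $\big(\ln\tfrac1{\ma}\big)^{-1}\ra^{-4}\gtrsim1$ when $n=2k+4$, resp.\ $\ma^{n-2k-4}\ra^{2k-n}\gtrsim1$ when $n>2k+4$, for $\alpha$ large; since $\ma\to0$ by Lemma~\ref{local.1} --- and $n-2k-4\ge1$ in the second case --- both force $\ra\to0$.
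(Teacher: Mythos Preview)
Your proposal is correct and follows the same strategy as the paper: apply the Pohozaev identity, decompose $\widetilde{v}_\alpha=\Ba+w_\alpha$, control the cross and error terms involving $w_\alpha$ via the symmetric estimates of Proposition~\ref{sym.es}, and identify the leading term --- the Pohozaev quadratic form on the bubble --- with the $\mu$-derivative of the bubble energy as in \eqref{GJMS.exp2.5}, then read off the Weyl contribution from the Mazumdar--V\'etois expansion. One minor caveat: you invoke Proposition~\ref{GJMS.exp2}, which is stated only for $2k+1\le n\le 2k+5$, whereas Lemma~\ref{weyl.es} must cover all $n\ge 2k+4$; the paper accordingly redoes the differentiation argument directly (see \eqref{weyl.es23}--\eqref{weyl.es6}) rather than quoting the proposition, but as you correctly note the underlying identity \eqref{GJMS.exp2.5} and the Mazumdar--V\'etois expansion are not dimension-restricted, so your description of the mechanism is right.
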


\begin{proof}
Fix $\delta>0$. We proceed as in the proof of \eqref{sym.es4} and write $\widetilde{v}_{\alpha}:=v_{\alpha}\circ\exp_{x_{\alpha}}^{g_{\alpha}}$ and $\tga = (\exp_{\xa}^{\ga})^* \ga$. The Pohozaev identity \eqref{poho.id0} for $\widetilde{v}_{\alpha}$ in $B(0,\delta r_{\alpha})$ gives:
\begin{align}\label{weyl.es1}
&\mathcal{P}_{k}(\delta r_{\alpha};\widetilde{v}_{\alpha})=\int_{B(0,\delta r_{\alpha})}\(\frac{n-2k}{2}\widetilde{v}_{\alpha}+x^{i}\partial_{i}\widetilde{v}_{\alpha}\)\(\Delta_{0}^{k}\widetilde{v}_{\alpha}-\tilde{f}_{\alpha}^{p_{\alpha}-2^{*}_{k}}\,\widetilde{v}_{\alpha}^{\,p_\alpha-1}\)\,dx\,+\notag\\
&\(\frac{n-2k}{2}-\frac{n}{p_{\alpha}}\)\int_{B(0,\delta r_{\alpha})}\tilde{f}_{\alpha}^{p_{\alpha}-2^{*}_{k}}\,\widetilde{v}_{\alpha}^{\,p_{\alpha}}\,dx-\frac{1}{p_{\alpha}}\int_{B(0,\delta r_{\alpha})}x^{i}\partial_{i}\tilde{f}_{\alpha}^{p_{\alpha}-2^{*}_{k}}\,\widetilde{v}_{\alpha}^{\,p_{\alpha}}\,dx\notag\\
&~+\frac{\delta r_{\alpha}}{p_{\alpha}}\int_{\partial B(0,\delta r_{\alpha})}\tilde{f}_{\alpha}^{p_{\alpha}-2^{*}_{k}}\,\widetilde{v}_{\alpha}^{\,p_{\alpha}}\,d\sigma.
\end{align}
Here $\mathcal{P}_{k}(\delta r_{\alpha};\widetilde{v}_{\alpha})$ denotes the boundary terms given by \eqref{poho.id01} below. Using again  \eqref{local.est1} and the dominated convergence theorem, as in the proof of \eqref{sym.es4}, we have
\begin{align}\label{weyl.es2}
&\(\frac{n-2k}{2}-\frac{n}{p_{\alpha}}\)\int_{B(0,\delta r_{\alpha})}\tilde{f}_{\alpha}^{\,p_{\alpha}-2^{*}_{k}}\,\widetilde{v}_{\alpha}^{\,p_{\alpha}}\,dx-\frac{1}{p_{\alpha}}\int_{B(0,\delta r_{\alpha})}x^{i}\partial_{i}\tilde{f}_{\alpha}^{p_{\alpha}-2^{*}_{k}}\,\widetilde{v}_{\alpha}^{\,p_{\alpha}}\,dx\notag\\
&~+\frac{r_{\alpha}}{p_{\alpha}}\int_{\partial B(0,\delta r_{\alpha})}\tilde{f}_{\alpha}^{p_{\alpha}-2^{*}_{k}}\,\widetilde{v}_{\alpha}^{\,p_{\alpha}}\,d\sigma = \bigO\(\mu^{n-2k-\frac{4k}{p_{\alpha}-2}}\(\frac{\mu_{\alpha}}{\delta r_{\alpha}}\)^{n-1}\) \\
&+\frac{(n-2k)^{2}}{4n}(p_{\alpha}-2^{*}_{k})\|U\|^{2^{*}_{k}}_{L^{2^{*}_{k}}(\R^{n})}\mu_{\alpha}^{n-2k-\frac{4k}{p_{\alpha}-2}}(1+\smallo(1)).\notag
\end{align}
Independently, arguing as in \eqref{est.boundary.terms} we obtain
\begin{equation} \label{weyl.es22}
\left|\mathcal{P}_{k}(\delta r_{\alpha};\widetilde{v}_{\alpha})\right|\lesssim\mu^{n-2k-\frac{4k}{p_{\alpha}-2}}\(\dfrac{\mu_{\alpha}}{\delta r_{\alpha}}\)^{n-2k}.
\end{equation}
We now estimate the remaining term in \eqref{weyl.es1}. By bilinearity, and since $P_{\tilde{g}_\alpha} \tilde{v}_\alpha = \tilde{f}_\alpha^{p_\alpha - 2^*_k} \tilde{v}_\alpha^{p_\alpha-1}$, we have
\begin{equation} \label{integrals.123}
\begin{aligned}
\int_{B(0,\delta r_{\alpha})}\(\frac{n-2k}{2}\widetilde{v}_{\alpha}+x^{i}\partial_{i}\widetilde{v}_{\alpha}\)( \tilde{f}_\alpha^{p_\alpha - 2^*_k} \tilde{v}_\alpha^{p_\alpha-1}-\Delta_{0}^{k}\widetilde{v}_{\alpha})\,dx \\
= I_{\alpha}+II_{\alpha}+ III_\alpha,
\end{aligned}
\end{equation}
where we have let 
\begin{align*}
I_\alpha &=\int_{B(0,\delta r_{\alpha})}\(\frac{n-2k}{2}\tilde{B}_{\alpha}+x^{i}\partial_{i}\tilde{B}_{\alpha}\)(P_{\tga}\tilde{B}_{\alpha}-\Delta_{0}^{k}\tilde{B}_{\alpha})\,dx,\\
II_\alpha &= \int_{B(0,\delta r_{\alpha})}\(\frac{n-2k}{2}\widetilde{v}_{\alpha}+x^{i}\partial_{i}\widetilde{v}_{\alpha}\)\(P_{\tga}(\,\widetilde{v}_{\alpha}-\tilde{B}_{\alpha})-\Delta_{0}^{k}(\,\widetilde{v}_{\alpha}-\tilde{B}_{\alpha})\)\,dx,\\
III_\alpha&= \int_{B(0,\delta r_{\alpha})}\(\frac{n-2k}{2}(\,\widetilde{v}_{\alpha}- \tilde{B}_{\alpha})+x^{i}\partial_{i}(\,\widetilde{v}_{\alpha}-\tilde{B}_{\alpha})\)(P_{\tga} \tilde{B}_{\alpha}-\Delta_{0}^{k}\tilde{B}_{\alpha})\,dx.
\end{align*}
where we have let 
\begin{equation*} 
\tilde{B}_\alpha(x) := \Ba \circ \exp_{x_{\alpha}}^{g_{\alpha}}(x) = \frac{\mu_{\alpha}^{n-2k-\frac{2k}{p_{\alpha}-2}}}{\left(\mu^{2}_{\alpha}+\mathfrak{c}^{-1}_{n,k}~|x|^2\right)^{\frac{n-2k}{2}}} 
\end{equation*}
 and $\Ba$ is as in \eqref{bubble2}. We first estimate the integrals  $II_{\alpha}$ and $III_{\alpha}$.  It follows from \eqref{local.est1} that $\tva\lesssim \tilde{B}_{\alpha}$ and $\langle x, \nabla\,\widetilde{v}_{\alpha}\rangle\lesssim \tilde{B}_{\alpha}$ in $B(0,r_{\alpha})$. Using \eqref{expansion.Pg} and Proposition \ref{sym.es}, direct computations give 
 \begin{equation}\label{weyl.es3}
\begin{aligned}
| II_{\alpha}|
&\lesssim\mu_{\alpha}^{n-2k-\frac{4k}{p_{\alpha}-2}}\delta^{2}r_{\alpha}^{2}\(\frac{\mu_{\alpha}}{r_{\alpha}}\)^{n-2k} \\
&+\left\{\begin{aligned}&\mu_{\alpha}^{4-\frac{4k}{p_{\alpha}-2}}\delta r_{\alpha}\mu_{\alpha}^{4}&&\text{if }n=2k+4\\&\mu_{\alpha}^{n-2k-\frac{4k}{p_{\alpha}-2}}\delta r_{\alpha}\mu_{\alpha}^{5}&&\text{if }n>2k+4\end{aligned}\right. .
\end{aligned}
\end{equation}
Next, using \eqref{sym.es5} together with Proposition \ref{sym.es} we have
\begin{equation} \label{weyl.es4}
\begin{aligned}
|III_{\alpha}|
&\lesssim\mu_{\alpha}^{n-2k-\frac{4k}{p_{\alpha}-2}}\delta^{4}r_{\alpha}^{4}\(\frac{\mu_{\alpha}}{r_{\alpha}}\)^{n-2k} \\
&+\left\{\begin{aligned}&\mu_{\alpha}^{4-\frac{4k}{p_{\alpha}-2}}\delta^{3}r_{\alpha}^{3}\mu_{\alpha}^{4}&&\text{if }n=2k+4\\&\mu_{\alpha}^{n-2k-\frac{4k}{p_{\alpha}-2}}\delta r_{\alpha}\mu_{\alpha}^{5}&&\text{if }n>2k+4 \end{aligned}\right. . 
\end{aligned}
\end{equation}
We now estimate $I_\alpha$. For $x \in M$ we define
$$ W_\alpha\(x\) = \chi\(\frac{d_{\ga}\(\xa, x\)}{\delta}\) U_{\xa, \ma}\(x\) ,$$ 
where $U_{\xi, \mu}\(x\)$ is as in \eqref{bubble3} below and  where  $\chi:[0,+\infty)\to[0,1]$ is a smooth cutoff function such that $\chi\equiv1$ in $\[0,\iota_{0}/2\]$ and $\chi\equiv0$ in $\[\iota_{0},+\infty\)$, with $\iota_{0}:=\inf\limits_{\xi\in M}\iota(M,g_{\xi})$. Taking $\rho_{\alpha}<\iota_{0}/4$ if necessary, we can assume that $\chi\equiv1$ in $B_{g_{\alpha}}(x_{\alpha},2r_{\alpha})$, so that  $W_{\alpha}=\mu\,^{\frac{2k}{p_{\alpha}-2}-\frac{n-2k}{2}}B_{\alpha}$ in $B_{g_{\alpha}}(x_{\alpha},2r_{\alpha})$. We let $\tilde{W}_\alpha = W_\alpha \circ \exp_{x_{\alpha}}^{g_{\alpha}}$, so that 
$$ \tilde{W}_\alpha(x) = \chi \( \frac{|x|}{\delta} \) \frac{\ma^{\frac{n-2k}{2}}}{\( \ma^2 + \mathfrak{c}_{n,k}^{-1} |x|^2 \)^{\frac{n-2k}{2}}} \quad \text{ for } x \in \mathbb{R}^n.$$ 
We thus have 
\begin{equation} \label{weyl.es51}
I_\alpha = \ma^{n-2k -\frac{4k}{p_{\alpha}-2} } \int_{B(0,\delta r_{\alpha})}\(\frac{n-2k}{2}\widetilde{W}_{\alpha}+x^{i}\partial_{i}\widetilde{W}_{\alpha}\)(P_{\tga}\widetilde{W}_{\alpha}-\Delta_{0}^{k}\,\widetilde{W}_{\alpha})\,dx.
\end{equation}
We now estimate the integral in the right-hand side of \eqref{weyl.es51}. It is easily seen that 
$$\mu_{\alpha}\frac{\partial}{\partial \ma} \widetilde{W}_{\alpha}(x)=-\(\dfrac{n-2k}{2}\,\widetilde{W}_{\alpha}+x^{i}\partial_{i}\widetilde{W}_{\alpha}\)$$
for any $x \in B(0, \iota_0)$. Since $U$ satisfies \eqref{eq.bubble1} we have 
$$ \Delta_0^k \widetilde{W}_{\alpha} = \widetilde{W}_{\alpha}^{2^*_k-1} + \bigO \( \delta^{-2k} \ma^{\frac{n-2k}{2}} \mathds{1}_{\frac{\iota_0}{2} \delta \le |x| \le \iota_0 \delta} \).$$
Independently, we have $\int_{B(0, \iota_0)} \widetilde{W}_{\alpha}^{2^*_k} \, dx =\|U\|^{2^{*}_{k}}_{L^{2^{*}_{k}}(\R^{n})} + \bigO(\ma^n)$ and this equality can be differentiated in $\ma$. Combining the latter two equations we obtain  
$$ \begin{aligned} \int_{B(0, \iota_0)} \(\dfrac{n-2k}{2}\,\widetilde{W}_{\alpha}+x^{i}\partial_{i}\widetilde{W}_{\alpha}\) \Delta_0^k \widetilde{W}_{\alpha} \, dx = \bigO \( \delta \ma^{n-2k} \),  \\
\end {aligned} $$ 
As a consequence, using the self-adjointness of $P_{\ga}$ and \eqref{conf.2} we have 
\begin{equation} \label{weyl.es23}
\begin{aligned}
&- \frac{n-2k}{4}\mu_{\alpha}\frac{d}{d \ma}\(\int_M W_\alpha P_{\ga} W_\alpha dv_{\ga} \)  \\
&= \int_{B(0, \iota_0)} \(\frac{n-2k}{2}\widetilde{W}_{\alpha}+x^{i}\partial_{i}\widetilde{W}_{\alpha}\)(P_{\tga}\widetilde{W}_{\alpha}-\Delta_{0}^{k}\,\widetilde{W}_{\alpha})\,dx + \bigO \( \delta \ma^{n-2k} \) \\
& =  \int_{B(0,\delta r_{\alpha})}\(\frac{n-2k}{2}\widetilde{W}_{\alpha}+x^{i}\partial_{i}\widetilde{W}_{\alpha}\)(P_{\tga}\widetilde{W}_{\alpha}-\Delta_{0}^{k}\,\widetilde{W}_{\alpha})\,dx + \bigO \( \delta \ma^{n-2k} \)  \\
& +\left\{\begin{aligned}&\bigO\(r_{\alpha}\(\frac{\mu_{\alpha}}{r_{\alpha}}\)^{n-2k}\)&&\text{if }n\leq2k+4\\&\bigO\(\mu_{\alpha}^{4}\(\frac{\mu_{\alpha}}{\delta r_{\alpha}}\)^{n-2-4k}\)&&\text{if }n>2k+4.\end{aligned} \right. .
\end{aligned} 
\end{equation}
Let now, for $u\in C^{2k}(M), u\not\equiv0$:
\begin{align*}
I_{k,g}(u):=\frac{\displaystyle{\int_{M}u\,P_{g}u\,dv_{g}}}{\(\displaystyle{\int_{M}|u|^{\,2^{*}_{k}}\,dv_{g}}\)^{\frac{n-2k}{n}}}. %\quad \hbox{ for } .
\end{align*}
One has (see \cite{MazumdarVetois}) 
\begin{align*}
\int_{M}W_{\alpha}P_{g_{\alpha}}W_{\alpha}\,dv_{g_{\alpha}}&=\(\int_{M}W_{\alpha}^{\,2^{*}_{k}}\,dv_{g_{\alpha}}\)^{\frac{n-2k}{n}}I_{k,g_{\alpha}}(W_{\alpha})\\
&=\(\|U\|^{2^{*}_{k}}_{L^{2^{*}_{k}}(\R^{n})}+\bigO(\mu_\alpha^{n})\)I_{k,g_{\alpha}}(W_{\alpha}),
\end{align*}
and all the expressions are $C^{1}$ w.r.t to the parameter $\mu_\alpha$. Together with \eqref{weyl.es23} we thus obtain
\begin{equation} \label{weyl.es53}
\begin{aligned}
&\int_{B(0,\delta r_{\alpha})}\(\frac{n-2k}{2}\widetilde{W}_{\alpha}+x^{i}\partial_{i}\widetilde{W}_{\alpha}\)(P_{\tga}-\Delta_{0}^{k})\widetilde{W}_{\alpha}\,dx\\
&=- \frac{n-2k}{4}\|U\|^{2^{*}_{k}}_{L^{2^{*}_{k}}(\R^{n})}\mu_{\alpha}\frac{d}{d\mu_{\alpha}}I_{k,g_{\alpha}}(U_{\alpha})+\bigO(\delta \mu_{\alpha}^{n-2k})\\
&\qquad+\left\{\begin{aligned}&\bigO\( r_{\alpha}\(\frac{\mu_{\alpha}}{r_{\alpha}}\)^{n-2k}\)&&\text{if }n\leq2k+4\\&\bigO\(\mu_{\alpha}^{4}\(\frac{\mu_{\alpha}}{\delta r_{\alpha}}\)^{n-2k-4}\)&&\text{if }n>2k+4 \end{aligned}\right. .
\end{aligned}
\end{equation}
The following expansion of $I_{k,\ga}(W_\alpha)$ was recently proven in \cite{MazumdarVetois}: we have 
\begin{align*}
I_{k,g_{\alpha}}(W_{\alpha})=&\Vert U \Vert_{L^{2^*_k}(\R^n)}^{\frac{4k}{n-2k}}\\
&-\,\mathcal{C}(n,k)\times\left\{\begin{aligned}&\bigO(\mu_{\alpha}^{n-2k})&&\text{if }n<2k+4\\&|\Weyl_{g}(x_{\alpha})|_g^{2}\mu_{\alpha}^{4}\ln(1/\mu_{\alpha})+\bigO(\mu_{\alpha}^{4})&&\text{if }n=2k+4\\&|\Weyl_{g}(x_{\alpha})|_g^{2}\mu_{\alpha}^{4}+\smallo(\mu_{\alpha}^{4})&&\text{if }n>2k+4,\end{aligned}\right.
\end{align*}
for some positive constant $\mathcal{C}(n,k)$. Combining the latter with \eqref{weyl.es51} and \eqref{weyl.es53} finally shows that there exists a positive constant $C = C(n,k)$ such that  
\begin{equation} \label{weyl.es6}
\begin{aligned}
I_\alpha &=\ma^{n-2k -\frac{4k}{p_{\alpha}-2} } C \left\{\begin{aligned}&\bigO(\mu_{\alpha}^{n-2k})&&\text{if }n<2k+4\\&|\Weyl_{g}(x_{\alpha})|_g^{2}\mu_{\alpha}^{4}\ln(1/\mu_{\alpha})+\bigO(\mu_{\alpha}^{4})&&\text{if }n=2k+4\\&|\Weyl_{g}(x_{\alpha})|_g^{2}\mu_{\alpha}^{4}+\bigO(\mu_{\alpha}^{5})&&\text{if }n>2k+4.\end{aligned}\right. \\
&+\left\{\begin{aligned}&\bigO\(\mu_{\alpha}^{n-2k-\frac{4k}{p_{\alpha}-2}} r_{\alpha}\(\frac{\mu_{\alpha}}{r_{\alpha}}\)^{n-2k}\)&&\text{if }n\leq2k+4\\&\bigO\(\mu_{\alpha}^{n-2k-\frac{4k}{p_{\alpha}-2}}\mu_{\alpha}^{4}\(\frac{\mu_{\alpha}}{\delta r_{\alpha}}\)^{n-2k-4}\)&&\text{if }n>2k+4.\end{aligned}\right. . 
\end{aligned}
\end{equation}
Combining \eqref{weyl.es2},  \eqref{weyl.es22}, \eqref{integrals.123}, \eqref{weyl.es3}, \eqref{weyl.es4} and \eqref{weyl.es6} in \eqref{weyl.es1}, and using that $p_\alpha \le 2^*_k$ proves \eqref{eq.estimate.Weyl}. That $\ra \to 0$ if $\min_M|\Weyl_g|_g >0$ easily follows from \eqref{eq.estimate.Weyl}. 
\end{proof}
An important consequence of the Pohozaev identity and of \eqref{eq.estimate.Weyl} is the following lemma which shows that, under the assumptions of Theorem~\ref{theo.main}, the radius of influence $\ra$ (see \eqref{def.ra}) of a concentration point $\xa$ satisfying \eqref{cond.xarhoa.va} and \eqref{cond.xarhoa.va2} is equal to $\rhoa$ (provided $\ra$ goes to $0$ in small dimensions). This will be the key ingredient in showing that concentration points are isolated (see Proposition~\ref{prop.isolated.points} in the next section). 

\begin{lemma} \label{radius.influence}
Let $(\va)_\alpha$ be a sequence of positive solutions of \eqref{eq:bis:0} and let $(\xa)_\alpha$ and $(\rhoa)_\alpha$ satisfy \eqref{cond.xarhoa.va} and \eqref{cond.xarhoa.va2}. We assume that \eqref{positivity} holds and let $\ra$ be defined by \eqref{def.ra}. Assume that
\begin{itemize}
\item either  $2k+1\leq n\leq2k+5$ and $r_{\alpha}\to0$ as $\alpha\to+\infty$
\item or $n \ge 2k+4$ and $\min_M | \Weyl_{g}| >0$ 
\end{itemize}
Then $\rhoa \to 0$ as $\alpha \to + \infty$ and $r_{\alpha}=\rho_{\alpha}$ up to a subsequence. 
\end{lemma}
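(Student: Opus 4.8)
The plan is to argue by contradiction, using Proposition~\ref{local.3} to produce a nontrivial polyharmonic correction of $\va$ on the scale $\ra$ and then Pohozaev's identity to derive a sign restriction, in the spirit of Marques~\cite{Marques} and Li--Xiong~\cite{LiXiong}. Suppose that, up to a subsequence, $\ra < \rhoa$ for every $\alpha$. First I note that in both cases of the statement one has $\ra \to 0$ as $\alpha \to +\infty$: this is the hypothesis in the first case, and it follows from the last assertion of Lemma~\ref{weyl.es} in the second. Thus Proposition~\ref{local.3} applies, and since moreover $\ra < \rhoa$ it provides, along a further subsequence, a function $H \in C^{2k}(B(0,2))$ with $H \ge 0$, $\Delta_0^k H = 0$ in $B(0,2)$ and, crucially, $H(0) > 0$, such that the rescaled functions $\tva$ of Proposition~\ref{local.3} converge to $\mathfrak{c}_{n,k}^{\frac{n-2k}{2}}|x|^{2k-n} + H$ in $C^{2k}_{\loc}(B(0,2)\setminus\{0\})$.

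Next I apply the Pohozaev identity~\eqref{poho.id0} to $\widetilde{v}_\alpha = \va\circ\exp_{\xa}^{\ga}$ on the ball $B(0,\delta\ra)$, for a fixed $\delta \in (0,2)$, exactly as in the proof of Lemma~\ref{weyl.es}. The new input is that the boundary term $\mathcal{P}_k(\delta\ra;\widetilde{v}_\alpha)$ now has a precise leading order. Since $\widetilde{v}_\alpha(x) = \ma^{n-2k-\frac{2k}{p_\alpha-2}}\ra^{2k-n}\tva(x/\ra)$, a change of variables in the (quadratic) Pohozaev boundary functional gives $\mathcal{P}_k(\delta\ra;\widetilde{v}_\alpha) = \ma^{2(n-2k)-\frac{4k}{p_\alpha-2}}\ra^{2k-n}\mathcal{P}_k(\delta;\tva)$; passing to the limit using the $C^{2k}$ convergence on $\partial B(0,\delta)$ and the Pohozaev mass--extraction identity for $\Delta_0^k$ (which, applied to $\mathfrak{c}_{n,k}^{\frac{n-2k}{2}}|x|^{2k-n} + H$, isolates the term $c_{n,k}\,\mathfrak{c}_{n,k}^{\frac{n-2k}{2}}H(0)$ for a constant $c_{n,k} > 0$; see Appendix~\ref{sec.pohozaev}, and \cite{Marques, LiXiong} for $k = 1, 2$), I obtain $\mathcal{P}_k(\delta\ra;\widetilde{v}_\alpha) = c_{n,k}\,\mathfrak{c}_{n,k}^{\frac{n-2k}{2}}H(0)\,\ma^{n-2k-\frac{4k}{p_\alpha-2}}(\ma/\ra)^{n-2k}(1+\smallo(1))$. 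In particular, since $H(0) > 0$, the left-hand side of~\eqref{poho.id0} is positive and bounded below in absolute value, up to a constant, by $\ma^{n-2k-\frac{4k}{p_\alpha-2}}(\ma/\ra)^{n-2k}$.

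It then remains to show that the right-hand side of~\eqref{poho.id0} is of strictly smaller order. Decomposing the volume term as in~\eqref{integrals.123} along $\widetilde{v}_\alpha = \tilde{B}_\alpha + (\widetilde{v}_\alpha - \tilde{B}_\alpha)$, this right-hand side equals $-(I_\alpha + II_\alpha + III_\alpha) + \frac{(n-2k)^2}{4n}(p_\alpha - 2^*_k)\|U\|_{L^{2^*_k}(\R^n)}^{2^*_k}\ma^{n-2k-\frac{4k}{p_\alpha-2}}(1+\smallo(1))$ plus boundary terms from $f$ of order $\smallo\bigl(\ma^{n-2k-\frac{4k}{p_\alpha-2}}(\ma/\ra)^{n-2k}\bigr)$. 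The exponent term is nonpositive since $p_\alpha \le 2^*_k$; the terms $II_\alpha$ and $III_\alpha$ are, by Proposition~\ref{sym.es}, of order $\smallo\bigl(\ma^{n-2k-\frac{4k}{p_\alpha-2}}(\ma/\ra)^{n-2k}\bigr)$ when $2k+1 \le n \le 2k+5$; and $I_\alpha$, in the form computed in~\eqref{weyl.es6} from the energy expansion of Mazumdar--V\'etois~\cite{MazumdarVetois}, is nonnegative up to an error which, after invoking the Weyl estimate~\eqref{eq.estimate.Weyl} and the bound~\eqref{sym.es18} on $2^*_k - p_\alpha$, is likewise $\smallo\bigl(\ma^{n-2k-\frac{4k}{p_\alpha-2}}(\ma/\ra)^{n-2k}\bigr)$ in this dimension range (the same holds when $n \ge 2k+4$ and $\min_M|\Weyl_g| > 0$). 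Hence the right-hand side of~\eqref{poho.id0} is $\le \smallo\bigl(\ma^{n-2k-\frac{4k}{p_\alpha-2}}(\ma/\ra)^{n-2k}\bigr)$, contradicting the lower bound just obtained for $\mathcal{P}_k(\delta\ra;\widetilde{v}_\alpha)$. This contradiction shows that $\ra < \rhoa$ cannot hold along a subsequence, i.e. $\ra = \rhoa$ for all large $\alpha$; and since $\ra \to 0$ we conclude that $\rhoa = \ra \to 0$, which in turn justifies the subsequence extractions performed above.

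The step I expect to be the main obstacle is the last one: one must verify that the Weyl contribution $I_\alpha$ has the correct sign (this is where the energy expansion of~\cite{MazumdarVetois} is essential) and that, together with the nonpositive exponent term and the remaining errors, it cannot balance the positive contribution of $H(0)$ to $\mathcal{P}_k(\delta\ra;\widetilde{v}_\alpha)$ --- equivalently, that every error term is genuinely negligible against the scale $\ma^{n-2k-\frac{4k}{p_\alpha-2}}(\ma/\ra)^{n-2k}$ throughout the range $2k+1 \le n \le 2k+5$. This is precisely where one uses that~\eqref{eq.estimate.Weyl} is sharp enough, i.e. that one is in the dimension range where the blow-up analysis of Lemma~\ref{weyl.es} is effective. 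A secondary technical point is to establish, for $k \ge 3$, the polyharmonic Pohozaev mass--extraction identity with the explicit positive constant $c_{n,k}$, starting from the general identity of Appendix~\ref{sec.pohozaev}.
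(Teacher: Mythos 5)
Your proof is correct and follows essentially the same strategy as the paper's: in both cases $\ra\to 0$ (via Lemma~\ref{weyl.es} in the second), one applies the Pohozaev identity on $B(0,\delta\ra)$, shows the volume terms and $I_\alpha$ error contributions are $\smallo\bigl(\ma^{n-2k-\frac{4k}{p_\alpha-2}}(\ma/\ra)^{n-2k}\bigr)$ using \eqref{weyl.es2}--\eqref{weyl.es4}, \eqref{weyl.es6}, the nonpositivity of the $(p_\alpha-2^*_k)$ term, and (for case two) the Weyl estimate \eqref{eq.estimate.Weyl}, then compares with the singular limit to constrain the sign of $H(0)$. The paper organizes this without assuming $\ra<\rhoa$: it derives $\mathcal{P}_k(\delta;\tilde u)\le 0$ for every fixed $\delta$, lets $\delta\to 0$ with Lemma~\ref{sing+har} to get $\Lambda H(0)\le 0$, and then uses the contrapositive of the last assertion in Proposition~\ref{local.3}; you instead assume $\ra<\rhoa$, invoke $H(0)>0$, and contradict it, which is logically equivalent.

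One small imprecision in your write-up: after passing $\alpha\to+\infty$ at a fixed $\delta$, the boundary term equals $\mathcal{P}_k(\delta;\tilde u)\cdot\ma^{n-2k-\frac{4k}{p_\alpha-2}}(\ma/\ra)^{n-2k}(1+\smallo(1))$, \emph{not} $c_{n,k}\mathfrak{c}_{n,k}^{\frac{n-2k}{2}}H(0)$ times that scale; the gap between $\mathcal{P}_k(\delta;\tilde u)$ and its $\delta\to 0$ limit $\Theta(n,k)\mathfrak{c}_{n,k}^{\frac{n-2k}{2}}H(0)$ is $\bigO(\delta)$, not $\smallo(1)$ as $\alpha\to+\infty$. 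This is harmless here because $H(0)>0$ lets you fix $\delta$ small enough that $\mathcal{P}_k(\delta;\tilde u)>0$ (or, as in the paper, pass $\delta\to 0$ \emph{after} $\alpha\to+\infty$), but it is worth making the order of limits explicit.
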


\begin{proof}
By Lemma~\ref{weyl.es} we have $\ra \to 0$ as $\alpha \to + \infty$ if $n \ge 2k+4$ and $\min_M |\Weyl_g|_g >0$. Thus we may assume that $\ra = \smallo(1)$ as $\alpha \to + \infty$ for every $n \ge 2k+1$. Let $\delta >0$ be fixed. We use again the Pohozaev identity \eqref{weyl.es1}. Since $\ra \to 0$, since $p_\alpha \le 2^*_k$ and since $n \le 2k+5$,  estimates \eqref{weyl.es2}, \eqref{weyl.es3}, \eqref{weyl.es4} and \eqref{weyl.es6}  show that 
\begin{equation} \label{lem.poho.1} \mathcal{P}_{k}(\delta r_{\alpha};\widetilde{v}_{\alpha}) \le \smallo \( \mu_{\alpha}^{n-2k-\frac{4k}{p_{\alpha}-2}} \( \frac{\ma}{\ra} \)^{n-2k} \) 
\end{equation}
as $\alpha \to + \infty$. Using Proposition~\ref{local.3} we have 
\begin{equation} \label{lem.poho.2}
 \mathcal{P}_{k}(\delta r_{\alpha};\widetilde{v}_{\alpha})=\mu_{\alpha}^{n-2k-\frac{4k}{p_{\alpha}-2}}\(\frac{\mu_{\alpha}}{r_{\alpha}}\)^{n-2k}\Big[ \mathcal{P}_{k}(\delta ;\widetilde{u}) + o(1) \Big],
 \end{equation}
where $\widetilde{u} =\frac{\mathfrak{c}_{n,k}^{\frac{n-2k}{2}}}{|x|^{n-2k}} + H$ and $H$ satisfies $\Delta_0^k H = 0$ in $B(0,2)$. Recall that the expression of $ \mathcal{P}_{k}(\delta ;\widetilde{u}) $ is given by Proposition~\ref{prop.poho.appendix} below. Combining \eqref{lem.poho.1} and \eqref{lem.poho.2} and passing to the limit as $\alpha \to +\infty$ thus shows that $ \mathcal{P}_{k}(\delta ;\widetilde{u}) \le 0$ for all $\delta >0$. Using Lemma~\ref{sing+har} below we get $ \lim_{\delta \to 0} \mathcal{P}_{k}(\delta ;\widetilde{u})  = \Lambda H(0)$ for some $\Lambda =\Lambda(n,k) >0$, which implies that $H(0) \le 0$. Proposition~\ref{local.3} then shows that up to a subsequence $\ra = \rhoa$ holds. 
\end{proof}

Coming back to the definition of radius of influence $r_{\alpha}$ in \eqref{def.ra}, we can apply a diagonal argument when $\varepsilon$ is replaced by a sequence that goes to $0$ as $\alpha \to + \infty$. We have hence obtained the following proposition: 

\begin{proposition} \label{prop.order.1}
Let $(\va)_\alpha$ be a sequence of positive solutions of \eqref{eq:bis:0} and let $(\xa)_\alpha$ and $(\rhoa)_\alpha$ satisfy \eqref{cond.xarhoa.va} and \eqref{cond.xarhoa.va2}. Assume that \eqref{positivity} holds and let $\ra$ be defined by \eqref{def.ra}. Assume 
\begin{itemize}
\item either that 
$r_{\alpha}\to0$ as $\alpha\to+\infty$ and $2k+1\leq n\leq2k+5$, or
\item
$n\ge2k+4$ and $\min \limits_{M}|\Weyl_{g}|>0$.
\end{itemize}
Then $r_{\alpha}=\rho_{\alpha}$ up to a subsequence, $\rhoa \to 0$ as $\alpha\to+\infty$, and
\begin{align}
\max\limits_{B_{g}(x_{\alpha},\rho_{\alpha})}\left|\frac{v_{\alpha}}{B_{\alpha}}-1\right|=\smallo(1)~\hbox{ as }\alpha\to+\infty
\end{align}
where $\Ba$ is as in \eqref{bubble2}.
\end{proposition}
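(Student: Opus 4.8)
The plan is to derive the proposition from Lemma~\ref{radius.influence} by letting the threshold $\varepsilon$ appearing in the definition~\eqref{def.ra} of the radius of influence shrink to zero along a diagonal subsequence. To make the dependence on the threshold explicit, I would temporarily write $r_\alpha(\varepsilon)$ for the radius defined by~\eqref{def.ra} with parameter $\varepsilon\in(0,1)$, and record the two elementary facts that $r_\alpha(\varepsilon')\le r_\alpha(\varepsilon)$ whenever $\varepsilon'\le\varepsilon$, and that, by continuity, $|v_\alpha-B_\alpha|\le\varepsilon B_\alpha$ holds on the whole ball $B_{g_\alpha}(x_\alpha,r_\alpha(\varepsilon))$. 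Note that invoking Lemma~\ref{radius.influence} once only gives $|v_\alpha/B_\alpha-1|\le\varepsilon$ on $B_{g_\alpha}(x_\alpha,\rho_\alpha)$ with $\varepsilon$ fixed, so a genuine shrinking of the threshold is needed to upgrade this to an $\smallo(1)$ estimate.

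Fix then a sequence $\varepsilon_j\downarrow 0$ with $\varepsilon_1$ at most the threshold $\varepsilon$ of~\eqref{def.ra}, and iterate Lemma~\ref{radius.influence}. Its hypotheses are preserved along the construction: the Weyl condition in the second case involves neither the threshold nor the subsequence, whereas in the first case, by monotonicity and because $\rho_\alpha\to0$, along any subsequence on which $r_\alpha(\varepsilon_{j-1})=\rho_\alpha$ one has $r_\alpha(\varepsilon_j)\le r_\alpha(\varepsilon_{j-1})=\rho_\alpha\to0$; moreover subsequence extraction trivially preserves \eqref{cond.xarhoa.va} and \eqref{cond.xarhoa.va2}. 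Hence applying Lemma~\ref{radius.influence} with threshold $\varepsilon_1$ produces $\rho_\alpha\to0$ together with an infinite set $S_1\subseteq\N$ with $r_\alpha(\varepsilon_1)=\rho_\alpha$ for $\alpha\in S_1$; applying it again along $S_1$ with threshold $\varepsilon_2$ gives an infinite $S_2\subseteq S_1$ with $r_\alpha(\varepsilon_2)=\rho_\alpha$ for $\alpha\in S_2$; and so on. A standard diagonal extraction from the nested family $S_1\supseteq S_2\supseteq\cdots$ then yields an infinite set $S$ and a sequence $\varepsilon_\alpha\to0$ (equal to $\varepsilon_j$ on a suitable block of $S$) such that $r_\alpha(\varepsilon_\alpha)=\rho_\alpha$ for every $\alpha\in S$.

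Along $S$ the three conclusions now follow. First, $\rho_\alpha\to0$ was obtained above. Next, since $r_\alpha(\varepsilon_\alpha)=\rho_\alpha$ and $\varepsilon_\alpha\le\varepsilon$ for $\alpha$ large, monotonicity gives $\rho_\alpha=r_\alpha(\varepsilon_\alpha)\le r_\alpha(\varepsilon)\le\rho_\alpha$, i.e. $r_\alpha=\rho_\alpha$ up to a subsequence. Finally, the equality $r_\alpha(\varepsilon_\alpha)=\rho_\alpha$ together with~\eqref{def.ra} gives $|v_\alpha-B_\alpha|\le\varepsilon_\alpha B_\alpha$ on $B_{g_\alpha}(x_\alpha,\rho_\alpha)$, so that
$$ \max_{B_{g_\alpha}(x_\alpha,\rho_\alpha)}\left|\frac{v_\alpha}{B_\alpha}-1\right|\le\varepsilon_\alpha=\smallo(1), $$
which is the asserted estimate once one observes that, as $\rho_\alpha\to0$, the balls $B_{g_\alpha}(x_\alpha,\rho_\alpha)$ and $B_g(x_\alpha,\rho_\alpha)$ are interchangeable up to the harmless comparison of the two metrics. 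The one point that genuinely requires care is the bookkeeping above, namely checking that the hypotheses of Lemma~\ref{radius.influence} survive every reduction of the threshold and every passage to a subsequence; once this is in place, the diagonal argument itself is routine.
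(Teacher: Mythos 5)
Your proof is correct and follows exactly the same route as the paper: the paper simply says ``apply a diagonal argument when $\varepsilon$ is replaced by a sequence tending to $0$'' after Lemma~\ref{radius.influence}, and you have supplied the careful bookkeeping (monotonicity of $r_\alpha(\varepsilon)$, preservation of the hypotheses under subsequence extraction, nested sets $S_j$, diagonal choice) that this phrase compresses.
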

\smallskip

\section{The Compactness result}\label{sec.final.compactness}

We prove in this Theorem \ref{theo.main}. As a first step, we prove that concentration points of a blowing-up sequence of positive solutions of \eqref{eq:one} are isolated. If $(\ua)_\alpha$ is a sequence of positive solutions of \eqref{eq:one} satisfying \eqref{blowup} we let $(x_{1,\alpha} , \dotsc, x_{N_\alpha, \alpha})$ be the points constructed in Proposition~\ref{weak.es} and let 
\begin{equation} \label{def.da}
d_\alpha = \frac{1}{16} \min_{1 \le i \neq j \le N_\alpha} d_g(x_{i,\alpha}, x_{j,\alpha}).
\end{equation}

\begin{proposition} \label{prop.isolated.points}
Let $(\ua)_\alpha$ be a sequence of positive solutions of \eqref{eq:one} satisfying \eqref{blowup} and assume that \eqref{positivity} is satisfied. Assume that either $2k+1 \le n \le 2k+5$ or $n \ge 2k+4$ and $\min_M |\Weyl_g|_g >0$. Let $\da$ be as in \eqref{def.da}.  Then, up to a subsequence, $d_\alpha \to d >0$ as $\alpha \to + \infty$. 
\end{proposition}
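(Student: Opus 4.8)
The plan is to argue by contradiction: suppose that, along a subsequence, $d_\alpha \to 0$. Let $(x_{1,\alpha},\dots,x_{N_\alpha,\alpha})$ be the points of Proposition~\ref{weak.es}; relabelling, let $x_{1,\alpha}$ and $x_{2,\alpha}$ realise the minimal distance, so that $d_g(x_{1,\alpha},x_{2,\alpha})=16d_\alpha$. Property~(2) of Proposition~\ref{weak.es} gives $u_\alpha(x_{i,\alpha})\ge(16d_\alpha)^{-2k/(p_\alpha-2)}\to+\infty$ for $i=1,2$, so both points are genuine blow-up points; set $\mu_{i,\alpha}:=u_\alpha(x_{i,\alpha})^{-(p_\alpha-2)/2k}\to0$, and note $\mu_{i,\alpha}\le Cd_\alpha$ by the same inequality. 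I relabel so that $\mu_{1,\alpha}=\min(\mu_{1,\alpha},\mu_{2,\alpha})$, put $x_\alpha:=x_{1,\alpha}$, $\rho_\alpha:=d_\alpha$, and let $v_\alpha=\Lambda_{x_\alpha}^{-1}u_\alpha$ as in \eqref{eq.defva}. Since the points are $16d_\alpha$-separated, every point of $B_g(x_\alpha,8\rho_\alpha)$ has $x_\alpha$ as its nearest point among $(x_{i,\alpha})_i$, so \eqref{contptsconc} shows that $(x_\alpha)_\alpha,(\rho_\alpha)_\alpha$ satisfy \eqref{cond.xarhoa}, hence $(v_\alpha)_\alpha$ satisfies \eqref{cond.xarhoa.va}. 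I then pass to a further subsequence and distinguish two cases: Case~A, $\mu_{1,\alpha}/d_\alpha\to0$; and Case~B, $\mu_{1,\alpha}$ comparable to $d_\alpha$ (hence so is $\mu_{2,\alpha}$, since $\mu_{1,\alpha}$ is the smaller scale).

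In Case~A one has $\rho_\alpha^{2k/(p_\alpha-2)}v_\alpha(x_\alpha)=(d_\alpha/\mu_{1,\alpha})^{2k/(p_\alpha-2)}\to+\infty$, so \eqref{cond.xarhoa.va2} holds as well, and $r_\alpha\le\rho_\alpha=d_\alpha\to0$; thus the hypotheses of Lemma~\ref{radius.influence} are met, and it gives $r_\alpha=\rho_\alpha=d_\alpha$ up to a subsequence, while its proof (via the Pohozaev estimates \eqref{weyl.es2}--\eqref{weyl.es6} and Lemma~\ref{sing+har}) also yields $H(0)\le0$, where $H$ is the nonnegative $\Delta_0^k$-harmonic function of Proposition~\ref{local.3} with $\tva\to\mathfrak{c}_{n,k}^{(n-2k)/2}|x|^{2k-n}+H$ in $C^{2k}_{loc}(B(0,2)\setminus\{0\})$. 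On the other hand I claim that $H>0$ on $B(0,2)$, which is the desired contradiction. This follows by keeping, in the representation formula for $v_\alpha$ underlying Proposition~\ref{local.3}, the contribution of a small ball around $x_{2,\alpha}$: on that ball $v_\alpha^{p_\alpha-1}$ integrates to $\gtrsim\mu_{2,\alpha}^{n-2k-2k/(p_\alpha-2)}$ (because $u_\alpha(x_{2,\alpha})\to+\infty$), and since $d_g(\cdot,x_{2,\alpha})$ is comparable to $d_\alpha=r_\alpha$ uniformly on $B_g(x_\alpha,2r_\alpha)$, the Green's function expansion \eqref{expansion.Green} gives $v_\alpha(x)\gtrsim d_\alpha^{2k-n}\mu_{2,\alpha}^{n-2k-2k/(p_\alpha-2)}$ there. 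Feeding this into the definition of $\tva$ and using $r_\alpha=d_\alpha$, the powers of $\mu_{1,\alpha}$ and $\mu_{2,\alpha}$ recombine into $(\mu_{2,\alpha}/\mu_{1,\alpha})^{(n-2k)/2}(1+o(1))$, so $H(x)\gtrsim\liminf_\alpha(\mu_{2,\alpha}/\mu_{1,\alpha})^{(n-2k)/2}$ on $B(0,2)$; this $\liminf$ is $\ge1$ since $\mu_{1,\alpha}\le\mu_{2,\alpha}$ (and if it were $+\infty$ we would already have the absurdity $\tilde v_0\equiv+\infty$). In particular $H(0)>0$, contradicting $H(0)\le0$.

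In Case~B the single-bubble framework of Sections~\ref{sec.local.analysis}--\ref{sec.order.two} does not apply to $x_\alpha$ — no choice of $\rho_\alpha$ can satisfy both \eqref{cond.xarhoa} and \eqref{cond.xarhoa.va2}, since here the concentration scale $\mu_\alpha$ is comparable to $\rho_\alpha$ — so I argue directly at the scale $d_\alpha$. First, running the selection procedure of Proposition~\ref{weak.es} once more rules out concentration at a scale $\nu_\alpha\ll d_\alpha$ inside any Voronoi cell $B_g(x_{i,\alpha},8d_\alpha)$: the top of such a profile would be a critical point and, by \eqref{controleptscrit}, within $\nu_\alpha$ of some $(x_{j,\alpha})$, which by the $16d_\alpha$-separation must be $x_{i,\alpha}$ itself, forcing $\nu_\alpha\sim\mu_{i,\alpha}$, a contradiction; hence $\max_{B_g(x_{i,\alpha},8d_\alpha)}v_\alpha\lesssim\mu_{i,\alpha}^{-2k/(p_\alpha-2)}$. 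Then, rescaling $v_\alpha$ by $d_\alpha$ around $x_{1,\alpha}$ and using \eqref{contptsconc}, the $16d_\alpha$-separation (so that only finitely many rescaled blow-up points lie in any fixed ball) and standard elliptic estimates, a diagonal argument — iterated if finer profiles are present — produces in the limit a bounded positive entire solution $V$ of $\Delta_0^k V=V^{2^*_k-1}$ in $\R^n$, with $V(0)=\lim(d_\alpha/\mu_{1,\alpha})^{2k/(p_\alpha-2)}\in(0,\infty)$ and admitting two distinct critical points, the origin and the limit of the rescaled position of $x_{2,\alpha}$ at distance $16$ (each a limit of critical points of $v_\alpha$). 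This contradicts the classification of Wei--Xu~\cite{WeiXu}, by which $V$ is a single bubble and has exactly one critical point. Therefore $d_\alpha\not\to0$; since $d_\alpha$ is bounded above by the diameter of $(M,g)$, up to a subsequence $d_\alpha\to d>0$.

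The main difficulty is the sharp lower bound $H>0$ in Case~A: it hinges on carefully tracking the $\mu_{1,\alpha}$- and $\mu_{2,\alpha}$-powers generated by the Green's function tail of the second bubble and checking that they collapse to the harmless ratio $(\mu_{2,\alpha}/\mu_{1,\alpha})^{(n-2k)/2}$, which is precisely where the choice of $x_\alpha$ as the faster-concentrating point of the pair is exploited. The second delicate point is Case~B, which falls outside the analysis of the previous sections and requires both the a priori exclusion of finer local concentration and a clean treatment of the possibly multi-scale rescaled limit before the Wei--Xu classification can be invoked.
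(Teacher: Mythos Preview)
Your overall architecture mirrors the paper's: assume $d_\alpha\to0$, split into a ``fine concentration'' case and a ``bounded rescaling'' case, and derive a contradiction in each. Your Case~A is essentially the paper's Case~two argument, routed through the function $H$ of Proposition~\ref{local.3} rather than through the direct representation-formula computation \eqref{repfinale}; modulo the sketchy justification of the mass lower bound $\int v_\alpha^{p_\alpha-1}\gtrsim\mu_{2,\alpha}^{n-2k-2k/(p_\alpha-2)}$ (which needs more than ``$u_\alpha(x_{2,\alpha})\to+\infty$'': one must rescale at scale $\mu_{2,\alpha}$ around $x_{2,\alpha}$ and invoke positivity of the limit as in \eqref{positivity.u0}), this part is sound.

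The genuine gap is in Case~B. Your dichotomy is based on $\mu_{1,\alpha}=\min(\mu_{1,\alpha},\mu_{2,\alpha})$ over the \emph{pair} realising the minimal distance, so Case~B only tells you $\mu_{1,\alpha},\mu_{2,\alpha}\sim d_\alpha$. But there may well be a third point $x_{3,\alpha}$ with $d_g(x_{1,\alpha},x_{3,\alpha})=O(d_\alpha)$ and $\mu_{3,\alpha}/d_\alpha\to0$, and then the $d_\alpha$-rescaled function $w_\alpha$ blows up at $\check x_3$, so you cannot pass to an entire limit $V$ and invoke Wei--Xu. Your ``running the selection procedure once more'' does not close this: even granting that any fine-scale peak in cell $i$ sits within $O(\nu_\alpha)$ of $x_{i,\alpha}$ and hence $\nu_\alpha\sim\mu_{i,\alpha}$, this yields $\mu_{i,\alpha}\ll d_\alpha$, which is precisely what you are trying to exclude, not a contradiction. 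The paper fills this hole by a uniformity step (its Claim~\ref{claim.unique.nature}): if \emph{any} nearby point $x_{i_0,\alpha}$ has $d_\alpha^{2k/(p_\alpha-2)}\max_{B_g(x_{i_0,\alpha},4d_\alpha)}u_\alpha$ bounded, then a positivity/representation argument forces $u_\alpha\gtrsim d_\alpha^{-2k/(p_\alpha-2)}$ on the whole $d_\alpha$-scale region, which via \eqref{eq.case.two} rules out Case~two at every other nearby point. This is exactly the missing ingredient that makes your rescaled limit globally bounded.
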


\begin{proof}
We proceed by contradiction and assume that, up to a subsequence, $\da \to 0$ as $\alpha \to + \infty$. This clearly implies that $N_\alpha \ge 2$. We assume that the concentration points are ordered so that
\begin{equation} \label{order}
 d_g(x_{1,\alpha}, x_{2,\alpha}) \le \dotsc \le d_g(x_{1,\alpha}, x_{N_\alpha,\alpha}). 
 \end{equation}
A simple remark, that follows from the definition of $\da$, is that for any $i \in \{1, \dots, N_\alpha\}$, the sequences $(x_{i,\alpha})_\alpha$ and $(\da)_\alpha$ satisfy \eqref{cond.xarhoa} with $\rho_\alpha = d_\alpha$.  Let $R \ge 1$ and define, for any $\alpha$, $N_{\alpha,R}$ by
$$ 1 \le i \le N_{\alpha,R} \iff d_g(x_{1,\alpha}, x_{i,\alpha}) \le R \da, $$
which is well-defined in view of \eqref{order}. Clearly $N_{\alpha,R} \ge 2$ for any $R \ge 16$ and by definition of $\da$, for a fixed $R$, $N_{\alpha,R}$ is uniformly bounded in $\alpha$. In what follows, we will  fix $R \ge 16$ and, up to a subsequence, we will therefore assume that $N_{\alpha,R}$ is constant and equal to $N_R \ge 2$. For any $1 \le i \le N_R$ fixed, two alternatives can occur, up to a subsequence, as $\alpha \to + \infty$:
\begin{equation} \label{alternative}
\begin{aligned}
\textrm{ either } \quad & \da^{\frac{2k}{p_\alpha-2}} \max_{B_g(x_{i,\alpha}, 4 \da)} \ua \le C & \textrm{ (Case one)} \\
\textrm{ or } \quad  & \da^{\frac{2k}{p_\alpha-2}} \max_{B_g(x_{i,\alpha}, 4 \da)} \ua\longrightarrow + \infty & \textrm{ (Case two)} \\
\end{aligned}
\end{equation}
 as $\alpha \to + \infty$, where $C >0$ is independent of $\alpha$. If case two in \eqref{alternative} occurs at $x_{i,\alpha}$ then the function $v_{i,\alpha}:= \Lambda_{x_{i,\alpha}}^{-1} \ua$ satisfies \eqref{cond.xarhoa.va} and \eqref{cond.xarhoa.va2}, and the analysis of Sections~\ref{sec.local.analysis},~\ref{sec.order.two} and ~\ref{sec.estimate.weyl} apply. Since $\da \to 0$, in particular, Proposition~\ref{prop.order.1} applies and, together with \eqref{conf.1}, shows that
\begin{equation} \label{convi}
\left| \left| \frac{\ua}{B_{i,\alpha}} - 1 \right| \right|_{L^\infty(B_g(x_{i,\alpha}, \da))} = \smallo(1) 
\end{equation}
as $\alpha \to + \infty$ where we have let 
$$ B_{i,\alpha}:= \mu_{i,\alpha}^{n-2k-\frac{2k}{p_\alpha-2}} \( \mu_{i,\alpha}^2 + \mathfrak{c}_{n,k}^{-1} d_g(x_{i,\alpha},x)^2\)^{-\frac{n-2k}{2}} $$ 
and $\mu_{i,\alpha}:= \ua(x_{i,\alpha})^{- (p_\alpha-2)/2k}$. An obvious consequence of \eqref{convi} is that 
\begin{equation} \label{eq.case.two}
\Vert \da^{\frac{2k}{p_\alpha-2}} v_{i,\alpha} \Vert_{L^\infty(B_g(x_{i,\alpha}, \da) \backslash B_g(x_{i,\alpha}, \frac12\da))} = \smallo(1)  \quad \text{ as } \alpha \to + \infty. 
\end{equation}

\begin{claim}\label{claim.unique.nature}
Assume that, for some $i_0 \in \{1, \dots, N_R \}$, $x_{i_0,\alpha}$ satisfies the first case in \eqref{alternative}. Then every other $x_{i,\alpha}$, $i \in \{1, \dots, N_R \} \backslash \{ i_0 \}$, also satisfies the first case in \eqref{alternative}.
\end{claim}

 \begin{proof}
Let $i_0 \in \{1, \dots, N_R \}$ for which Case one in \eqref{alternative} holds. Then 
\begin{equation} \label{minori}
\da^{\frac{2k}{p_\alpha-2}} \min_{B_g(x_{i,\alpha}, 2 \da)} \ua \ge C_{i_0} 
\end{equation}
for some positive constant $C_{i_0} > 0$ independent of $\alpha$. Indeed, define 
$$\check{u}_\alpha(x):=  \da^{\frac{2k}{p_\alpha-2}}\ua\big( \exp_{x_{i_0,\alpha}}^g (\da x) \big)$$
 for all $x \in B(0, 4)$. Since $\da \to 0$, the assumption that case one holds shows that $\check{u}_\alpha$ converges towards some nonnegative function $\check{u}_0$ in $C^{2k}_{loc}(B(0,4))$ that satifies $\Delta_0^k \check{u}_0 = \check{u}_0^{2^*_k-1}$ in $B(0,4)$. Property $(2)$ in Proposition \ref{weak.es} together with the definition of $N_R$, ensure that $\check{u}(0) >0$. Arguing as in the proof of \eqref{positivity.u0} we then obtain that $\check{u}_0 >0$ in $B(0,2)$, from which \eqref{minori} follows. Let $i \in \{1, \dots, N_R \}$, $ i \not = i_0$, and let $(z_{\alpha})_{\alpha}$ be a sequence of points in $B_g(x_{i,k}, 2R d_k)$. We write a representation formula for $\ua$ in $M$ at $z_k$. We have 
\begin{equation}  \label{minor2}
\begin{aligned}
u_k(z_k) & \ge \int_{B_g(x_{i_0,k}, 4 d_k)} G_g(z_k,\cdot)f^{p_\alpha-2^*_k} \ua^{p_\alpha-1}  dv_g
 \ge \frac{1}{C} d_k^{-\frac{2k}{p_\alpha-2}} 
\end{aligned}
\end{equation}
for some positive constant $C$ independent of $\alpha$, where the last line follows from \eqref{minori} and \eqref{bounds.Green}. Thus \eqref{minor2} implies that $\liminf_{\alpha \to + \infty} \min_{B_g(x_{i,\alpha}, \da)} \da^{\frac{2k}{p_\alpha-2}} \ua >0 $ and \eqref{eq.case.two} then shows that case two of \eqref{alternative} cannot be satisfied at $x_{i,\alpha}$, and hence that $x_{i,\alpha}$ also satisfies case one. 
\end{proof}
 
Claim~\ref{claim.unique.nature} shows in particular that, for any $R \ge 16$, either all the concentration points $x_{i,\alpha}$, $1 \le i \le N_R$ satisfy case one in \eqref{alternative} or they all satisfy case two. We first assume that all the $i \in \{1, \dots, N_R \}$ satisfy case one in \eqref{alternative}. Using \eqref{contptsconc} we get that the function
$$w_\alpha(x):= \da^{\frac{2k}{p_\alpha-2}} \ua \big( \exp_{x_{1,\alpha} }^g(\da x) \big), $$
defined for $x \in B\(0, i_g(M)/2 \da\)$, is locally bounded. By \eqref{eq:one} and standard elliptic theory it converges in $C^{2k}_{loc}(\mathbb{R}^n)$, towards a nonnegative solution $w_0$ of $\Delta_0^k w_0 = w_0^{2^*_k-1}$ in $\R^n$. Also, $w_0(0) \ge 1$ by Proposition~\ref{weak.es}, so that arguing as in the proof of \eqref{positivity.u0} shows that $w_0 >0$ in $\R^n$. By construction $0$ and 
$$\check{x}_2: = \lim_{k \to + \infty} \frac{1}{d_k} \exp_{x_{1,k}}^{-1}(x_{2,k})$$
 are distinct critical points of $w_0$, and this contradicts the result of~\cite{WeiXu}.

 Hence, for all $R \ge 16$ fixed, all the points $x_{i,\alpha}$, $1 \le i \le N_R$ satisfy case two in \eqref{alternative}. Let $(z_\alpha)_\alpha$ be a sequence of points in $B_g(x_{1,\alpha}, 2\da)$. A representation formula for $\ua$ gives, with \eqref{convi},
 \begin{equation}  \label{repfinale}
 \begin{aligned}
\ua(z_\alpha) &\ge \big(1 + \smallo(1) \big) \int_{B_g(x_{1,\alpha}, \da)} G_g(z_\alpha,y)B_{1,\alpha}^{p_\alpha-1} dv_g \\
&+  \big(1 + \smallo(1) \big) \int_{B_g(x_{2,\alpha}, \da)} G_g(z_\alpha,y)B_{2,\alpha}^{p_\alpha-1} dv_g  \\
& \ge  \big(1 + \smallo(1)   \big) \Big( B_{1,\alpha}(z_\alpha)  +  B_{2,\alpha}(z_\alpha) \big)
\end{aligned}
\end{equation}
as $\alpha \to + \infty$, where the last line follows from Fatou's lemma, \eqref{expansion.Green} below and the assumption $d_\alpha = \smallo(1)$ as $\alpha \to + \infty$. Choose first $z_\alpha$ so that that $d_g(x_{1,\alpha},z_\alpha) = \frac14d_\alpha $, so that $d_g(x_{2,\alpha}, z_{\alpha}) \ge \frac34 d_\alpha $. Using again \eqref{convi} to estimate the left-hand side of \eqref{repfinale} gives $B_{2,\alpha}(z_\alpha) \le \smallo(B_{1,\alpha}(z_\alpha))$ and hence
\begin{equation*} 
 \( \frac{\mu_{2,\alpha}}{\mu_{1,\alpha}} \)^{n-2k-\frac{2k}{p_\alpha-2}} = \smallo \(1\)
\end{equation*}
as $\alpha \to + \infty$. Choose now $z_\alpha$ so that $d_g(x_{2,\alpha},z_\alpha)=\frac14 d_\alpha $, so that $d_g(x_{1,\alpha}, z_{\alpha}) \ge \frac34d_\alpha $. Using again \eqref{convi} to estimate the left-hand side of \eqref{repfinale} similarly gives 
\begin{equation*}
 \( \frac{\mu_{1,\alpha}}{\mu_{2,\alpha}} \)^{n-2k-\frac{2k}{p_\alpha-2}} = \smallo \(1\)
 \end{equation*}
This is a contradiction and concludes the proof of Proposition \ref{prop.isolated.points}.
\end{proof}

We are now in  position to prove our main result:

\begin{proof}[Proof of Theorem~\ref{theo.main}]
We assume that \eqref{positivity} and \eqref{positive:mass} are satisfied, i.e. that $G_g$ is positive and has positive mass at every point of $M$. Let $(\ua)_\alpha$ be a sequence of positive solutions of \eqref{eq:one}. We proceed by contradiction and assume that $u_\alpha$ satisfies \eqref{blowup}.  Let $(x_{1,\alpha} , \dotsc, x_{N_\alpha, \alpha})$ be the concentration points of $\ua$ given by Proposition~\ref{weak.es} and let $\da$ be given by \eqref{def.da}. By Proposition~\ref{prop.isolated.points}, up to passing to a subsequence, we can assume that $N_\alpha = N$ and that $d_\alpha \ge d >0$ for all $\alpha$. Let, for all $\alpha \ge 1$, $ \rhoa = d$. For each $1 \le i \le N$ the sequences $v_{x_i,\alpha} = \Lambda_{x_{i,\alpha}}^{-1} \ua$,  $(x_{i,\alpha})_\alpha$ and $(\rhoa)_\alpha$ satisfy  \eqref{cond.xarhoa.va}. The assumption \eqref{blowup} ensures that there exists $1 \le i \le N$ such that $(v_{x_{i,\alpha}})_\alpha, (x_{i,\alpha})_{\alpha}, (\rhoa)_\alpha$ also satisfy \eqref{cond.xarhoa.va2}. Up to reducing $N$ if needed we may thus assume that $(v_{x_{i,\alpha}})_\alpha, (x_{i,\alpha})_{\alpha}, (\rhoa)_\alpha$ satisfy  \eqref{cond.xarhoa.va} and \eqref{cond.xarhoa.va2} for any $1 \le i \le N$.

\smallskip 

We first assume that $n \ge 2k+4$ and $\min_M |\Weyl_g|_g >0$. Proposition~\ref{prop.order.1} then applies and shows that $\rhoa \to 0$ as $\alpha \to + \infty$, a contradiction with $\rho_\alpha = d >0$. 

\smallskip

We thus assume from now on that either $2k+1 \le n \le 2k+3$ or $2k+4 \le n \le 2k+5$ and $\min_M |\Weyl_g|_g = 0$. For any $1 \le i \le N$ we will let $r_{i,\alpha}$ be defined by \eqref{def.ra} and, following \eqref{def.mua}, we will let 
$$ \mu_{i, \alpha} = \ua(x_{i, \alpha})^{- \frac{2k}{p_\alpha-2}}.$$
 First, Proposition~\ref{prop.order.1} again applies at each $1 \le i \le N$, and since $\rhoa = d >0$ it shows that $r_{i,\alpha} \ge r_0>0$ for any $\alpha \ge 1$ and $1 \le i \le N$. Up to renumbering the points and passing to a subsequence we may assume that 
$$ \mu_{1,\alpha} = \max_{1 \le i \le N} \mu_{i,\alpha} \quad \text{ for all } \alpha \ge 1.$$
A consequence of $\liminf_{\alpha \to + \infty} r_{i,\alpha} >0$ for all $1 \le i \le N$ is that, by \eqref{sym.es18}, we have 
\begin{equation} \label{eq.finale.0}
2^{*}_{k} - p_\alpha = \bigO(\min_{1 \le i \le N}\mu_{i,\alpha}).
\end{equation}
With the latter,  \eqref{local.est1} shows that for every $1 \le i \le N$ we have  
$$\ua(x) \lesssim \mu_{1,\alpha}^{\frac{n-2k}{2}} d_g(x_{i,\alpha}, x)^{2k-n} \quad \text{ for } \quad x \in B_g(x_{i,\alpha}, d),$$
 and the Harnack inequality of Lemma \ref{local.2} then shows that $\ua(x) \lesssim \mu_{1, \alpha}^{\frac{n-2k}{2}}$ in $M \backslash \bigcup_{i=1}^N B_g(x_{i,\alpha}, \frac{d}{2})$. A straightforward adaptation of the arguments in the proof of Proposition \ref{local.3} then shows that 
 $$  \mu_{1, \alpha}^{-\frac{n-2k}{2}}\ua \to G_g(x_1, \cdot) + \sum_{i=2}^N a_i G_g(x_i, \cdot) + b \quad \text{ in } C^{2s}_{loc}( M \backslash \{x_1, \dots, x_N\}), $$
 where $x_i = \lim_{\alpha \to + \infty} x_{i,\alpha}$, $G_g$ is the Green's function for $P_g$, $0 \le a_i \le 1$ for $2 \le i \le N$ are nonnegative constants and $b \in C^{2k}(M)$ satisfies $P_g b = 0$. Assumption \eqref{positivity} then implies $b \equiv 0$, so that
\begin{equation} \label{eq.finale.1}
 \mu_{1, \alpha}^{-\frac{n-2k}{2}}\ua(x) \to G_g(x_1, \cdot) + h(x) 
\end{equation}
in $C^{2k}_{loc}( M \backslash \{x_1, \dots, x_N\})$ as $\alpha \to + \infty$, where $h$ satisfies $h(x_1) \ge 0$ and  $P_g h = 0$ in $M \backslash \{x_2, \dots, x_N\}$. Following \eqref{eq.defva} we now consider $v_{1,\alpha} =\Lambda^{-1}_{x_{1,\alpha}}u_{\alpha}$. First, Lemma~\ref{weyl.es} applies and shows, since $r_{1,\alpha} \ge r_0 >0$, that 
\begin{equation} \label{weyl.vanishing}
|\Weyl_{g}(x_{1,\alpha})|_g^{2}\lesssim\smallo(1)+\left\{\begin{aligned}&\(\ln\frac{1}{\mu_{1,\alpha}}\)^{-1}&&\text{if }n=2k+4\\&\mu_{1,\alpha} &&\text{if }n=2k+5\end{aligned}\right. .
\end{equation}
Let $\delta >0$ be fixed. Since $r_{1, \alpha} \ge r_0 >0$ we may apply the Pohozaev inequality \eqref{weyl.es1} to $\widetilde{v}_{1, \alpha} = v_{1,\alpha} \circ \exp_{x_{1,\alpha}}^{g_{x_{1,\alpha}}}$ in $B(0, \delta)$, provided $\delta$ is small enough. By \eqref{eq.finale.0} we have $\ma^{\frac{4k}{p_\alpha-2} + 2k-n} = 1+\smallo(1)$ as $\alpha \to + \infty$. The estimates \eqref{weyl.es2}, \eqref{integrals.123}, \eqref{weyl.es3}, \eqref{weyl.es4} then show, since $p_\alpha \le 2^*_k$, that 
\begin{equation} \label{conclusion.1}
\begin{aligned}
&\(1+ \smallo(1) \)  \mathcal{P}_{k}(\delta;\widetilde{v}_{1,\alpha})  + \bigO \(\delta \mu_{1,\alpha}^{n-2k} \)\\
& \le \int_{B(0,\delta)}\(\frac{n-2k}{2}\widetilde{U}_{\alpha}+x^{i}\partial_{i}\widetilde{U}_{\alpha}\)(\Delta_{0}^{k}\,\widetilde{U}_{\alpha} - P_{\tga}\widetilde{U}_{\alpha})\,dx,
\end{aligned} 
\end{equation}
 where we have let 
$$  \tilde{U}_\alpha(x) = \frac{\mu_{1,\alpha}^{\frac{n-2k}{2}}}{\left(\mu_{1,\alpha}^{2}+\mathfrak{c}^{-1}_{n,k}~|x|^2\right)^{\frac{n-2k}{2}}} \quad \text{ for } x \in \R^n.$$
We claim that the following holds as $\alpha \to + \infty$:
\begin{equation}\label{conclusion.2}
\begin{aligned}
\int_{B(0,\delta)}\(\frac{n-2k}{2}\widetilde{U}_{\alpha}+x^{i}\partial_{i}\widetilde{U}_{\alpha}\)(\Delta_{0}^{k}\,\widetilde{U}_{\alpha} - P_{\tga}\widetilde{U}_{\alpha})\,dx  \le \bigO\( \delta \mu_{1,\alpha}^{n-2k} \),
\end{aligned} 
\end{equation}
where the constant in the $\bigO(\cdot)$ term is independent of $\alpha$ and $\delta$. First, if $2k+1 \le n \le 2k+3$, \eqref{conclusion.2} immediately follows from Proposition \ref{GJMS.exp2} below. If now $2k+4 \le n \le 2k+5$, Proposition \ref{GJMS.exp2} below and \eqref{weyl.vanishing} show that 
$$
\begin{aligned}
\int_{B(0,\delta)}& \(\frac{n-2k}{2}\widetilde{U}_{\alpha}+x^{i}\partial_{i}\widetilde{U}_{\alpha}\)(\Delta_{0}^{k}\,\widetilde{U}_{\alpha} - P_{\tga}\widetilde{U}_{\alpha})\,dx \\
& \le \bigO\( \delta \mu_{1,\alpha}^{n-2k} \) +C |\Weyl_g(x_{1,\alpha})|_g^2 \times  \left\{\begin{aligned}&\mu_{1,\alpha}^4 &&\text{if }n=2k+4\\&\mu_{1,\alpha}^5 &&\text{if }n=2k+5.\end{aligned}\right\} \\
&\le \bigO\( \delta \mu_{1,\alpha}^{n-2k} \) + \smallo(\mu_{1,\alpha}^{n-2k} )
\end{aligned} 
$$
as $\alpha \to + \infty$, which proves \eqref{conclusion.2}.  Combining  \eqref{conclusion.1} and \eqref{conclusion.2} we finally obtain 
\begin{equation} \label{conclusion.3}
\begin{aligned}
&\mu_{1,\alpha}^{- (n-2k)} \mathcal{P}_{k}(\delta;\widetilde{v}_{1,\alpha})  \le \bigO(\delta) 
\end{aligned} 
\end{equation}
where the constant in the $\bigO(\cdot)$ term is independent of $\alpha$ and $\delta$. Independently, and by \eqref{eq.finale.1}, we have 
$$ \mu_{1,\alpha}^{- (n-2k)} \mathcal{P}_{k}(\delta;\widetilde{v}_{1,\alpha}) \to  \mathcal{P}_{k}(\delta ; \Lambda) \quad \text{ as } \alpha \to + \infty,$$
where, by the conformal invariance \eqref{conf.inv.Pg} of $P_g$, we have
$$\Lambda(y) = G_{g_{x_1}} \big(x_1, \exp_{x_1}^{g_{x_1}} (y) \big) + \tilde{h}(y) \quad \text{ for } y \in B(0, \delta),  $$
where $\tilde{h}(0) \ge 0$. Combining the latter with \eqref{conclusion.3} then yields 
\begin{equation} \label{conclusion.4}
\limsup_{\delta \to 0}  \mathcal{P}_{k}(\delta ; \Lambda) \le 0. 
\end{equation}
The boundary integral $\mathcal{P}_{k}(\delta ; \Lambda) $ is computed by applying Proposition~\ref{masse.Green} below: we obtain that
\begin{equation} \label{conclusion.5}
 \lim_{\delta \to 0}  \mathcal{P}_{k}(\delta ; \Lambda) = c_{n,k} \big(A_{x_1} + \tilde{h}(0) \big) \ge c_{n,k} A_{x_1} >0, 
 \end{equation}
where $c_{n,k}$ is a positive numerical constant and where $A_{x_1}$ is the mass of the Green's function $G_{g_{x_1}}$ in conformal normal coordinates at $x_1$ (see Proposition \ref{expansion.Green.local} below). We used the positive mass assumption \eqref{positive:mass} for the strict inequality in \eqref{conclusion.5}. This contradicts \eqref{conclusion.4}.
Thus \eqref{blowup} cannot happen, and every sequence of solutions $(u_\alpha)_\alpha$ of \eqref{eq:one} is uniformly bounded in $L^\infty(M)$ as $\alpha \to + \infty$. Theorem~\ref{theo.main} thus follows from standard elliptic theory. 
\end{proof}

\begin{remark}
If we only assume \eqref{positivity} and do not assume that $G_g$ has positive mass at every point Proposition \ref{prop.isolated.points} still applies and shows that any blowing-up sequence of solutions of \eqref{eq:one} is a priori bounded in $H^k(M)$ and blows-up at distinct isolated and simple points. This is the analogue of Theorem 1.2 of \cite{LiXiong} when $2k+1\le n\le 2k+5$ for an arbitrary $k<\frac{n}{2}$. The positive mass assumption is indeed only used in the proof of the final step of Theorem \ref{theo.main}. \end{remark}

\smallskip

\appendix

\section{Pohozaev identity for $\Delta_0^{k}$}\label{sec.pohozaev}

We extend the Pohozaev identity in Proposition 2.2 of \cite{LiXiong} to all integers $k\geq1$. Recall that $\Delta_{0}:=-\sum\limits_{i=1}^{n}\partial^2_{ii}$ is the Euclidean Laplacian. We let 
$$\Delta_{0}^{k/2}u:=\left\{\begin{aligned}&\Delta_{0}^{\frac{k}{2}}u&&\text{if $k$ is even},\\&\nabla\Delta_{0}^{\frac{k-1}{2}}u&&\text{if $k$ is odd}.\end{aligned}\right.$$

\begin{proposition} \label{prop.poho.appendix}
Let $0\le s< r$ and $p\geq2$. Let $u\in C^{2k}(\overline{B(0,r)})$ and $f\in C^{1}(\overline{B(0,r)})$ and denote $\mathcal{E}(u):=\Delta_{0}^{k}u-f|u|^{p-2}u$. Then we have
\begin{equation} \label{poho.id0}
\begin{aligned}
&\mathcal{P}_{k}(r;u) - \mathcal{P}_{k}(s;u)=\int_{B(0,r) \backslash B(0,s)}\(\frac{n-2k}{2}u+x^{i}\partial_{i}u\)\mathcal{E}(u)\,dx \\
&~+\(\frac{n-2k}{2}-\frac{n}{p}\)\int_{B(0,r) \backslash B(0,s)}f\,|u|^{p}\,dx-\frac{1}{p}\int_{B(0,r) \backslash B(0,s)}x^{i}\partial_{i}f\,|u|^{p}\,dx\\
&~+\frac{r}{p}\int_{\partial B(0,r)}f|u|^{p}\,d\sigma-\frac{s}{p}\int_{\partial B(0,s)}f|u|^{p}\,d\sigma.
\end{aligned}
\end{equation}
Here $\mathcal{P}_{k}(r;u)$ denotes boundary terms whose expression is given by: 
\begin{equation} \label{poho.id01}
\begin{aligned} 
&\mathcal{P}_{k}(r;u)  = \mathcal{R}_{k}(r;u) \\
&+\sum_{i=0}^{[k/2]-1} \Bigg[\frac{n-2k}{2}\int_{\partial B(0,r)}\bigg(\partial_{\nu}(\Delta_{0}^{i}u)\,\Delta_{0}^{k-i-1}u\,-\Delta_{0}^{i}u~\partial_{\nu}(\Delta_{0}^{k-i-1}u)\bigg)\,d\sigma \\
&+\int_{\partial B(0,r)}\bigg(\partial_{\nu}(\Delta_{0}^{i}(x^{a}\partial_{a}u))\,\Delta_{0}^{k-i-1}u\, -\Delta_{0}^{i}(x^{a}\partial_{a}u)\,\partial_{\nu}(\Delta_{0}^{k-i-1}u)\bigg)\,d\sigma \Bigg]. \\
\end{aligned}
\end{equation}
In \eqref{poho.id01} we have let
\begin{equation*}
\mathcal{R}_{k}(r;u)=  \frac{r}{2}\int_{\partial B(0,r)}(\Delta_{0}^{k/2}u)^{2}d\sigma  \quad  \text{ if } k \text{ is even } \\
\end{equation*}
and 
\begin{equation*} 
\begin{aligned}
  \mathcal{R}_{k}(r;u) & =  \frac{r}{2}\int_{\partial B(0,r)}(\Delta_{0}^{\frac{k+1}{2}}u)(\Delta_{0}^{\frac{k-1}{2}}u)\,d\sigma \\ % + \frac{n-2k}{2} \int_{\partial \Omega} \Delta_{0}^{\frac{k-1}{2}}u \partial_\nu (\Delta_{0}^{\frac{k-1}{2}}u)\,d\sigma   \\
& + \frac{1}{2}\int_{\partial B(0,r)}\bigg(\Delta_{0}^{\frac{k-1}{2}}u\,\partial_{\nu}(x^{a}\partial_{a}(\Delta_{0}^{\frac{k-1}{2}}u))\,-(x^{a}\partial_{a}(\Delta_{0}^{\frac{k-1}{2}}u))\partial_{\nu}(\Delta_{0}^{\frac{k-1}{2}}u)\bigg)\,d\sigma \\
& \quad \quad\quad\quad\quad \quad\quad\quad\quad\quad\quad\quad \quad\quad\quad\quad\quad\quad\quad\quad\text{ if } k \text{ is odd. } 
\end{aligned}  
\end{equation*}
In the previous expressions, $[x]$ denotes the integer part of $x \in \mathbb{R}$. Note that the expression of $\mathcal{R}_{k}(r;u)$ depends on the parity of $k$. 

\end{proposition}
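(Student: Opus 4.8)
The plan is to first reduce \eqref{poho.id0} to the \emph{homogeneous} Pohozaev--Rellich identity
$$\int_{A}\Big(\tfrac{n-2k}{2}u+x^i\partial_i u\Big)\,\Delta_0^k u\,dx=\mathcal{P}_k(r;u)-\mathcal{P}_k(s;u),\qquad A:=B(0,r)\backslash B(0,s),$$
and then recover the full statement from it. Indeed, writing $\mathcal{M}u:=\tfrac{n-2k}{2}u+x^i\partial_i u$ for the Rellich--Pohozaev multiplier, one has $\int_{A}(\mathcal{M}u)\,f|u|^{p-2}u\,dx=\tfrac{n-2k}{2}\int_A f|u|^p\,dx+\tfrac1p\int_A f\,x^i\partial_i(|u|^p)\,dx$, and the divergence theorem applied to the last integral produces exactly $-\tfrac np\int_A f|u|^p\,dx-\tfrac1p\int_A x^i\partial_i f\,|u|^p\,dx+\tfrac rp\int_{\partial B(0,r)}f|u|^p\,d\sigma-\tfrac sp\int_{\partial B(0,s)}f|u|^p\,d\sigma$ (using $x\cdot\nu=r$ on $\partial B(0,r)$ and $x\cdot\nu=-s$ on $\partial B(0,s)$); these are precisely the $f$-dependent terms appearing in \eqref{poho.id0}, so that testing $\mathcal{E}(u)=\Delta_0^k u-f|u|^{p-2}u$ against $\mathcal{M}u$ and adding them back reduces everything to the homogeneous identity. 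Hence the whole content is the homogeneous identity.

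To prove it I would use two elementary facts: Green's second identity for $\Delta_0$ on $A$, namely $\int_A a\,\Delta_0 b\,dx-\int_A b\,\Delta_0 a\,dx=\int_{\partial A}\big(b\,\partial_\nu a-a\,\partial_\nu b\big)\,d\sigma$, and the commutation relation $\Delta_0(x^i\partial_i v)=x^i\partial_i(\Delta_0 v)+2\Delta_0 v$, which iterates to $\Delta_0^j(x^i\partial_i u)=x^i\partial_i(\Delta_0^j u)+2j\,\Delta_0^j u$. Applying Green's identity $[k/2]$ times transfers $[k/2]$ copies of $\Delta_0$ from $\Delta_0^k u$ onto $\mathcal{M}u$; the $i$-th step contributes a boundary pair $\int_{\partial A}\big(\Delta_0^{k-1-i}u\,\partial_\nu(\Delta_0^i(\mathcal{M}u))-\Delta_0^i(\mathcal{M}u)\,\partial_\nu(\Delta_0^{k-1-i}u)\big)\,d\sigma$, which upon writing $\mathcal{M}u=\tfrac{n-2k}{2}u+x^a\partial_a u$ becomes exactly the two boundary sums over $i=0,\dots,[k/2]-1$ in \eqref{poho.id01}. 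At the ``centre'' one is left with $\int_A \Delta_0^{[k/2]}(\mathcal{M}u)\,\Delta_0^{k-[k/2]}u\,dx$; applying the iterated commutation relation to $\Delta_0^{[k/2]}(x^a\partial_a u)$ and setting $w:=\Delta_0^{[k/2]}u$, this reduces, for $k$ even, to $\int_A\big[(\tfrac{n-2k}{2}+2[k/2])\,w+x^a\partial_a w\big]w\,dx$, and for $k$ odd to $\int_A\big[(\tfrac{n-2k}{2}+2[k/2])\,w+x^a\partial_a w\big]\Delta_0 w\,dx$.

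Two things then happen. First, the decisive \emph{bulk cancellation}: for $k$ even one uses $\int_A(x^a\partial_a w)w\,dx=\tfrac12\int_{\partial A}(x\cdot\nu)w^2\,d\sigma-\tfrac n2\int_A w^2\,dx$; for $k$ odd one uses the classical second-order Rellich identity $\int_A(x^a\partial_a w)\Delta_0 w\,dx=\big(1-\tfrac n2\big)\int_A|\nabla w|^2\,dx+\tfrac12\int_{\partial A}(x\cdot\nu)|\nabla w|^2\,d\sigma-\int_{\partial A}(x^a\partial_a w)\partial_\nu w\,d\sigma$ together with $\int_A w\,\Delta_0 w\,dx=\int_A|\nabla w|^2\,dx-\int_{\partial A}w\,\partial_\nu w\,d\sigma$ for the $\tfrac{n-2k}{2}$-term. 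In every case the coefficient of the remaining interior integral equals $\tfrac{n-2k}{2}+2[k/2]-\tfrac n2$ (plus $1$ in the odd case), which vanishes identically — this is the structural reason the constant $\tfrac{n-2k}{2}$ appears in the multiplier. Second, only boundary integrals survive; since all radii enter solely through $\partial A=\partial B(0,r)\cup\partial B(0,s)$ with opposite orientations, they reorganise into $\mathcal{P}_k(r;u)-\mathcal{P}_k(s;u)$. For $k$ odd one performs one further integration by parts on the spheres $\partial B(0,r)$, $\partial B(0,s)$ — where the dilation field satisfies $x^a\partial_a=r\,\partial_\nu$ — to recast the leftover centre terms into the three summands defining $\mathcal{R}_k$.

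The step I expect to be the main obstacle is purely bookkeeping: tracking all boundary contributions through $[k/2]$ successive applications of Green's identity, handling the parity split, and in particular reproducing the odd-$k$ expression of $\mathcal{R}_k$ with its exact coefficients ($\tfrac12$, $\tfrac{n-2k}{2}$) coming from the extra integration by parts at the centre; no idea beyond iterated integration by parts and the commutation relation is needed, but the algebra is delicate. A cleaner alternative, which I would fall back on if the direct scheme becomes unwieldy, is an induction on $k$ with step two: since $\Delta_0(\mathcal{M}_k u)=\mathcal{M}_{k-2}(\Delta_0 u)$ where $\mathcal{M}_\ell v:=\tfrac{n-2\ell}{2}v+x^i\partial_i v$ (which follows directly from the commutation relation, using $\tfrac{n-2k}{2}+2=\tfrac{n-2(k-2)}{2}$), one Green's identity reduces the order-$k$ identity for $u$ to the order-$(k-2)$ identity for $v:=\Delta_0 u$ modulo one explicit boundary correction, with base cases $k=1$ (the classical Pohozaev identity for $\Delta_0$) and $k=2$ (the fourth-order identity, as in \cite{LiXiong}); one then only has to check that $\mathcal{P}_{k-2}(\cdot;\Delta_0 u)$ plus the correction equals $\mathcal{P}_k(\cdot;u)$. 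Finally, when $s=0$ all integrals over $\partial B(0,s)$ and $\mathcal{P}_k(0;u)$ vanish, recovering the Pohozaev identity on the ball $B(0,r)$.
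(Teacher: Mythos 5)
Your proposal is correct and follows essentially the same route as the paper: test $\mathcal{E}(u)$ against the Rellich multiplier $\tfrac{n-2k}{2}u+x^i\partial_i u$, move $[k/2]$ Laplacians across by iterated Green's identity (which is what \eqref{poho.k.1} encodes), use the commutation relation $\Delta_0^j(x^i\partial_i u)=x^i\partial_i\Delta_0^j u+2j\Delta_0^j u$, and close with the first/second-order Rellich identity so the interior integral cancels. The only cosmetic difference is that you invoke a variant form of the second-order Rellich identity whose boundary terms differ from the paper's by one further tangential integration by parts on the spheres, a discrepancy you already flag as part of the bookkeeping in the odd-$k$ case.
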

\begin{proof}
\noindent
Let $\Omega\subset\R^{n}$ denote a bounded smooth domain. We follow the arguments in Gazzola-Grunau-Sweers \cite[Chapter 7]{GazzolaGrunauSweers} but without imposing boundary conditions in $\partial \Omega$. We use the convention that repeated indices are summed over, so that $x^i \partial_i = \sum_{i=1}^n x^i \partial_i$. We will need two preliminary computations. First, repeated integration by parts show that for any $u,v \in C^{2k}(\overline{\Omega})$,
\begin{equation} \label{poho.k.1}
\begin{aligned}
& \int_{\Omega}u\Delta_{0}^{k}v~dx=\int_{\Omega}(\Delta_{0}^{k/2}u,\Delta_{0}^{k/2}v)\,dx -\underbrace{\int_{\partial \Omega}\Delta_{0}^{\[\frac{k}{2}\]}u~\partial_{\nu}\Delta_{0}^{\[\frac{k}{2}\]}v\,d\sigma}_{\text{if $k$ is odd, $0$ otherwise}} \\
&+\sum_{i=0}^{\[k/2\]-1}\int_{\partial\Omega}\bigg(\partial_{\nu}\Delta_{0}^{i}u\,\Delta_{0}^{k-1-i}v\,-\,\Delta_{0}^{i}u~\partial_{\nu}\Delta_{0}^{k-1-i}v\bigg)\,d\sigma.
\end{aligned}
\end{equation}
Here $\nu$ denotes the outer unit normal. Direct computations show that $\Delta_0(x^i \partial_i u ) = x^i \partial_i \Delta_0 u + 2 \Delta_0 u$. Iterating gives 
\begin{equation} \label{poho.k.0}
\Delta_{0}^{k}(x^{i}\partial_{i}u)=x^{i}\partial_{i}\Delta_{0}^{k}u+2k\Delta_{0}^{k}u. 
\end{equation}
Using \eqref{poho.k.1} we have
\begin{equation}
\begin{aligned}
& \int_{\Omega}\frac{n-2k}{2}u \Delta_0^k u \,dx  =  \frac{n-2k}{2}\int_{\Omega}|\Delta_{0}^{k/2}u|^{2}\,dx\\
& - \frac{n-2k}{2}\underbrace{\int_{\partial \Omega}\Delta_{0}^{\[\frac{k}{2}\]}u~\partial_{\nu}\Delta_{0}^{\[\frac{k}{2}\]}u\,d\sigma}_{\text{if $k$ is odd, $0$ otherwise}} \\
& + \frac{n-2k}{2} \sum_{i=0}^{[k/2]-1}\int_{\partial \Omega}\bigg(\partial_{\nu}(\Delta_{0}^{i}u)\Delta_{0}^{k-i-1}u\,-\Delta_{0}^{i}u\,\partial_{\nu}(\Delta_{0}^{k-i-1}u)\bigg)\,d\sigma .
\end{aligned} 
\end{equation}
Independently, direct computations give
\begin{align*}
&\int_{\Omega}\(x^{i}\partial_{i}u+\frac{n-2k}{2}u\)f|u|^{p-2}u~dx=\frac{n-2k}{2}\int_{\Omega}f|u|^{p}~dx+\frac{1}{p}\int_{\Omega}f~x^{i}\partial_{i}|u|^{p}\,dx\notag\\
&=\(\frac{n-2k}{2}-\frac{n}{p}\)\int_{\Omega}f|u|^{p}\,dx-\frac{1}{p}\int_{\Omega}x^{i}\partial_{i}f~|u|^{p}\,dx+\frac{1}{p}\int_{\partial\Omega}(x,\nu)f|u|^{p}\,d\sigma.
\end{align*}
Combining the latter computations then gives
\begin{equation} \label{base.du.poho}
\begin{aligned}
& \int_{\Omega}\(x^{i}\partial_{i}u + \frac{n-2k}{2}u\)\mathcal{E}(u)\,dx  = \int_{\Omega} x^{i}\partial_{i}u~\Delta_{0}^{k}u\,dx \\
&  -\(\frac{n-2k}{2}-\frac{n}{p}\)\int_{\Omega}f|u|^{p}\,dx+\frac{1}{p}\int_{\Omega}x^{i}\partial_{i}f~|u|^{p}\,dx-\frac{1}{p}\int_{\partial\Omega}(x,\nu)f|u|^{p}\,d\sigma \\
& + \frac{n-2k}{2}\int_{\Omega}|\Delta_{0}^{k/2}u|^{2}\,dx - \frac{n-2k}{2}\underbrace{\int_{\partial \Omega}\Delta_{0}^{\[\frac{k}{2}\]}u~\partial_{\nu}\Delta_{0}^{\[\frac{k}{2}\]}u\,d\sigma}_{\text{if $k$ is odd, $0$ otherwise}} \\
& + \frac{n-2k}{2} \sum_{i=0}^{[k/2]-1}\int_{\partial \Omega}\bigg(\partial_{\nu}(\Delta_{0}^{i}u)\Delta_{0}^{k-i-1}u\,-\Delta_{0}^{i}u\,\partial_{\nu}(\Delta_{0}^{k-i-1}u)\bigg)\,d\sigma .
\end{aligned} 
\end{equation}

\noindent 
{\bf{Case 1:} $k$ is even.} Using \eqref{poho.k.1} we get
\begin{align*}
&\int_{\Omega} x^{i}\partial_{i}u~\Delta_{0}^{k}u\,dx=\int_{\Omega}\Delta_{0}^{k/2}(x^{i}\partial_{i}u)\,\Delta_{0}^{k/2}u\,dx+\sum\limits_{i=0}^{k/2-1}\int_{\partial\Omega}\bigg(\partial_{\nu}\Delta_{0}^{i}(x^{\ell}\partial_{\ell}u)\,\Delta_{0}^{k-1-i}u\\
&\quad-\Delta_{0}^{i}(x^{\ell}\partial_{\ell}u)\,\partial_{\nu}\Delta_{0}^{k-1-i}u\bigg)\,d\sigma.\\
\end{align*}
The first integral is computed using \eqref{poho.k.0}. We get, integrating by parts,
\begin{align*}
& \int_{\Omega}\Delta_{0}^{k/2}(x^{i}\partial_{i}u)\,\Delta_{0}^{k/2}u\,dx\\
& =k\int_{\Omega}(\Delta_{0}^{k/2}u)^{2}\,dx+\int_{\Omega}x^{i}\partial_{i}(\Delta_{0}^{k/2}u)\,\Delta_{0}^{k/2}u\,dx \\
&=\frac{2k-n}{2}\int_{\Omega}(\Delta_{0}^{k/2}u)^{2}\,dx+\frac{1}{2}\int_{\partial\Omega}(x,\nu)(\Delta_{0}^{k/2}u)^{2}\,d\sigma.\\
\end{align*}
Combining the latter computations with \eqref{base.du.poho} and taking $\Omega=B(0,r) \backslash B(0,s)$, proves \eqref{poho.id01} when $k$ is even. 

\medskip

\noindent 
{\bf{Case 2:} $k$ is odd.} Similarly, using \eqref{poho.k.1}, we get
\begin{equation*}
\begin{aligned}
&\int_{\Omega} x^{i}\partial_{i}u\,\Delta_{0}^{k}u=\int_{\Omega}\Delta_{0}^{\frac{k-1}{2}}(x^{i}\partial_{i}u)\,\Delta_{0}^{\frac{k+1}{2}}u\,dx+\sum_{i=0}^{(k-1)/2-1}\int_{\partial\Omega}\bigg(\partial_{\nu}\Delta_{0}^{i}(x^{\ell}\partial_{\ell}u)\times\\
&\qquad\Delta_{0}^{k-1-i}u-\Delta_{0}^{i}(x^{\ell}\partial_{\ell}u)~\partial_{\nu}\Delta_{0}^{k-1-i}u\bigg)\,d\sigma.\\
\end{aligned}
\end{equation*}
Again, the first integral is computed using \eqref{poho.k.0}. We get, integrating by parts,
\begin{equation*}
\begin{aligned}
& \int_{\Omega}\Delta_{0}^{\frac{k-1}{2}}(x^{i}\partial_{i}u)\,\Delta_{0}^{\frac{k+1}{2}}u\,dx
=\int_{\Omega}x^{i}\partial_{i}(\Delta_{0}^{\frac{k-1}{2}}u)\,\Delta_{0}^{\frac{k+1}{2}}u~dx\\
&+(k-1)\int_{\Omega}|\Delta_{0}^{k/2}u|^{2}\,dx -(k-1)\int_{\partial\Omega}\Delta_{0}^{\frac{k-1}{2}}u\,\partial_{\nu}\Delta_{0}^{\frac{k-1}{2}}u\,d\sigma. 
\end{aligned}
\end{equation*}
We next calculate, integrating by parts, 
\begin{equation} \label{starstar}
\begin{aligned}
&\int_{\Omega}x^{i}\partial_{i}(\Delta_{0}^{\frac{k-1}{2}}u)\,\Delta_{0}^{\frac{k+1}{2}}u~dx\\
& =-n\int_{\Omega}\Delta_{0}^{\frac{k-1}{2}}u\,\Delta_{0}^{\frac{k+1}{2}}u\,dx-\int_{\Omega}x^{i}\partial_{i}(\Delta_{0}^{\frac{k+1}{2}}u)\,\Delta_{0}^{\frac{k-1}{2}}u\,dx\\\
&+\int_{\partial\Omega}(x,\nu)\,\Delta_{0}^{\frac{k-1}{2}}u\,\Delta_{0}^{\frac{k+1}{2}}u\,d\sigma\\
& = -n \int_{\Omega}|\Delta_{0}^{k/2}u|^{2}\,dx + n \int_{\partial \Omega} \Delta_0^{\frac{k-1}{2}}u \partial_\nu \Delta_0^{\frac{k-1}{2}}u d \sigma \\ 
& -\int_{\Omega}x^{i}\partial_{i}(\Delta_{0}^{\frac{k+1}{2}}u)\,\Delta_{0}^{\frac{k-1}{2}}u\,dx +\int_{\partial\Omega}(x,\nu)\,\Delta_{0}^{\frac{k-1}{2}}u\,\Delta_{0}^{\frac{k+1}{2}}u\,d\sigma.\\
\end{aligned}
\end{equation}
Recall that  $x^i \partial_i \Delta_0 h = \Delta_0(x^i \partial_i h) - 2 \Delta_0 h$ for any $h \in C^2(\overline{\Omega})$. Applying this to $h = \Delta_0^{\frac{k-1}{2}}u$ and integrating by parts gives
\begin{align*}
& \int_{\Omega}x^{i}\partial_{i}(\Delta_{0}^{\frac{k+1}{2}}u)\,\Delta_{0}^{\frac{k-1}{2}}u\,dx \\
& = \int_{\Omega}x^{i}\partial_{i}(\Delta_{0}^{\frac{k-1}{2}}u)\,\Delta_{0}^{\frac{k+1}{2}}u\,dx  + \int_{\partial\Omega}\Big[ (x^{i}\partial_{i}\Delta_{0}^{\frac{k-1}{2}}u)\partial_{\nu}\Delta_{0}^{\frac{k-1}{2}}u -\Delta_{0}^{\frac{k-1}{2}}u\,\partial_{\nu}(x^{i}\partial_{i}\Delta_{0}^{\frac{k-1}{2}}u)\Big]\,d\sigma \\
& - 2 \int_{\Omega}|\Delta_{0}^{k/2}u|^{2}\,dx + 2 \int_{\partial \Omega} \Delta_0^{\frac{k-1}{2}}u \partial_\nu \Delta_0^{\frac{k-1}{2}}u d \sigma . 
\end{align*}
Combining the latter with \eqref{starstar} shows that
\begin{align*}
&\int_{\Omega}x^{i}\partial_{i}(\Delta_{0}^{\frac{k-1}{2}}u)\,\Delta_{0}^{\frac{k+1}{2}}u\,dx=-\frac{(n-2)}{2}\int_{\Omega}|\Delta_{0}^{k/2}u|^{2}\,dx+\frac{(n-2)}{2}\int_{\partial\Omega}\Delta_{0}^{\frac{k-1}{2}}u\,\times\\
&~\,\partial_{\nu}\Delta_{0}^{\frac{k-1}{2}}u\,dx+\frac{1}{2}\int_{\partial\Omega}(x,\nu)\,\Delta_{0}^{\frac{k-1}{2}}u\,\Delta_{0}^{\frac{k+1}{2}}u\,d\sigma+\frac{1}{2}\int_{\partial\Omega}\bigg(\Delta_{0}^{\frac{k-1}{2}}u\,\partial_{\nu}(x^{i}\partial_{i}\Delta_{0}^{\frac{k-1}{2}}u)\\
&-(x^{i}\partial_{i}\Delta_{0}^{\frac{k-1}{2}}u)\,\partial_{\nu}\Delta_{0}^{\frac{k-1}{2}}u\bigg)\,d\sigma.
\end{align*}
We have thus shown that 
\begin{align*}
&\int_{\Omega} x^{i}\partial_{i}u~\Delta_{0}^{k}u\,dx=\frac{2k-n}{2}\int_{\Omega}|\Delta_{0}^{k/2}u|^{2}\,dx+\frac{n-2k}{2}\int_{\partial\Omega}\Delta_{0}^{\frac{k-1}{2}}u\,\partial_{\nu}\Delta_{0}^{\frac{k-1}{2}}u\,d\sigma\\
&+\frac{1}{2}\int_{\partial\Omega}(x,\nu)\,\Delta_{0}^{\frac{k-1}{2}}u~\Delta_{0}^{\frac{k+1}{2}}u\,d\sigma+\sum_{i=0}^{({k-1})/2-1}\int_{\partial\Omega}\bigg(\partial_{\nu}\Delta_{0}^{i}(x^{i}\partial_{i}u)\,\Delta_{0}^{k-1-i}u\\
&~-\Delta_{0}^{i}(x^{i}\partial_{i}u)\,\partial_{\nu}\Delta_{0}^{k-1-i}u\bigg)\,d\sigma+\frac{1}{2}\int_{\partial\Omega}\bigg(\Delta_{0}^{\frac{k-1}{2}}u\,\partial_{\nu}(x^{i}\partial_{i}\Delta_{0}^{\frac{k-1}{2}}u)\\
&~-(x^{i}\partial_{i}\Delta_{0}^{\frac{k-1}{2}}u)\,\partial_{\nu}\Delta_{0}^{\frac{k-1}{2}}u\bigg)\,d\sigma.
\end{align*}
Combining with \eqref{base.du.poho} and taking $\Omega=B(0,r) \backslash B(0,s)$, concludes the proof of \eqref{poho.id01} when $k$ is odd. 
\end{proof}

\noindent
The following lemma computes the contribution of the boundary terms in the Pohozaev identity \eqref{poho.id0} for a function having a polyharmonic singularity at the origin: 
\begin{lemma}\label{sing+har}
Let $u:B(0,2)\setminus\{0\}\to\R$ be given by $u(x)=\Lambda|x|^{2k-n}+\mathcal{H}(x)$ for some $\Lambda>0$ and $\mathcal{H}\in C^{2k}\(\overline{B(0,2)}\)$. Then 
\begin{align}
\lim\limits_{r\to0}\mathcal{P}_{k}(r;u)=&\Theta(n,k) \Lambda\,\mathcal{H}(0),
\end{align}
where we have let
\begin{equation} \label{def.Theta}
\Theta(n,k) = \omega_{n-1}2^{k-2}(k-1)!(n-2k)^{2}(n-2k-2)\cdots(n-4)(n-2) >0.
\end{equation}
\end{lemma}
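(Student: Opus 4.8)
The plan is to isolate the singular part of $u$ and to exploit that the boundary functional $\mathcal{P}_{k}(r;\cdot)$ defined in \eqref{poho.id01} is a quadratic form in its argument. Write $u=\Lambda G+\mathcal{H}$ with $G(x):=|x|^{2k-n}$, so that $\Delta_{0}^{k}G=0$ in $\R^{n}\setminus\{0\}$ (up to the constant $b_{n,k}$ of \eqref{def.bnk}, $G$ is the fundamental solution of $\Delta_{0}^{k}$). By bilinearity, $\mathcal{P}_{k}(r;u)=\Lambda^{2}\mathcal{P}_{k}(r;G)+2\Lambda\,\mathcal{Q}_{k}(r;G,\mathcal{H})+\mathcal{P}_{k}(r;\mathcal{H})$, where $\mathcal{Q}_{k}(r;\cdot,\cdot)$ is the symmetric bilinear form polarising $\mathcal{P}_{k}(r;\cdot)$. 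I would then handle the three terms separately.

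For the regular part, $\mathcal{P}_{k}(r;\mathcal{H})\to0$ as $r\to0$: since $\mathcal{H}\in C^{2k}(\overline{B(0,2)})$, every integrand occurring in \eqref{poho.id01} with $u=\mathcal{H}$ is bounded on $\overline{B(0,1)}$, while each integral is over $\partial B(0,r)$, of measure $\omega_{n-1}r^{n-1}\to0$ (recall $n>2k\ge2$), the extra factor $r$ in $\mathcal{R}_{k}$ only helping. For the pure singularity, $\mathcal{P}_{k}(r;G)\equiv0$: applying the Pohozaev identity \eqref{poho.id0} with $f\equiv0$ and $u=G$ on an annulus $B(0,r)\setminus B(0,s)\subset\R^{n}\setminus\{0\}$, where $G$ is smooth and $\Delta_{0}^{k}G=0$, shows that $r\mapsto\mathcal{P}_{k}(r;G)$ is constant on $(0,2)$. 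The explicit formula \eqref{poho.id01} is moreover scale-covariant, $\mathcal{P}_{k}(r;v(\lambda\,\cdot))=\lambda^{2k-n}\mathcal{P}_{k}(\lambda r;v)$; applied to $v=G$, using $G(\lambda x)=\lambda^{2k-n}G(x)$ and the $r$-independence just obtained, this yields $\lambda^{2(2k-n)}\mathcal{P}_{k}(r;G)=\lambda^{2k-n}\mathcal{P}_{k}(r;G)$ for all $\lambda>0$, hence $\mathcal{P}_{k}(r;G)=0$ since $2k\neq n$. (Alternatively one computes $\mathcal{P}_{k}(r;G)$ directly, all boundary terms being radial and homogeneous of the same degree.)

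It remains to show that $2\Lambda\,\mathcal{Q}_{k}(r;G,\mathcal{H})\to\Theta(n,k)\Lambda\mathcal{H}(0)$; this cross term carries the whole limit. On $\partial B(0,r)$ the $G$-factors are fully explicit: $\Delta_{0}^{i}G=c_{i}|x|^{2k-2i-n}$ with $c_{i}=\prod_{j=0}^{i-1}\bigl[-(2k-n-2j)(2k-2-2j)\bigr]$ (so $c_{0}=1$ and $c_{k}=0$), one has $\partial_{\nu}\Delta_{0}^{i}G=c_{i}(2k-2i-n)|x|^{2k-2i-n-1}$ on $\partial B(0,r)$, and by \eqref{poho.k.0}, $\Delta_{0}^{i}(x^{a}\partial_{a}G)=(2k-n)\Delta_{0}^{i}G$. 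For the $\mathcal{H}$-factors I would use $\Delta_{0}^{i}\mathcal{H}(x)=\Delta_{0}^{i}\mathcal{H}(0)+\bigO(|x|)$, $x^{a}\partial_{a}\mathcal{H}(x)=\bigO(|x|)$, and $\int_{\partial B(0,r)}\partial_{\nu}(\Delta_{0}^{i}\mathcal{H})\,d\sigma=-\int_{B(0,r)}\Delta_{0}^{i+1}\mathcal{H}\,dx=\bigO(r^{n})$. Substituting these into every boundary integral of \eqref{poho.id01}, a term-by-term check shows each contributes a power $r^{m}$ with $m>0$, hence vanishes in the limit, \emph{except} the single piece $-\tfrac{n-2k}{2}\int_{\partial B(0,r)}\mathcal{H}\,\partial_{\nu}(\Delta_{0}^{k-1}G)\,d\sigma$ coming from the $i=0$ term in the sum of \eqref{poho.id01} (or, when $k=1$, from $\mathcal{R}_{k}$), which tends to $-\tfrac{n-2k}{2}c_{k-1}(2-n)\omega_{n-1}\mathcal{H}(0)$. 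Since $c_{k-1}=2^{k-1}(k-1)!\,(n-2k)(n-2k+2)\cdots(n-4)$, the constant equals $\tfrac{n-2k}{2}(n-2)\omega_{n-1}c_{k-1}=\Theta(n,k)$ of \eqref{def.Theta} (it also equals $\tfrac{n-2k}{2}b_{n,k}^{-1}$), giving the claim.

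The only genuine difficulty is the bookkeeping in the last step: \eqref{poho.id01} contains many boundary integrals whose exact shape depends on the parity of $k$, and one must verify that the $G$- and $\mathcal{H}$-scalings combine so that all of them but one die, then correctly match the surviving numerical constant with the product in \eqref{def.Theta}. This is elementary but lengthy, and the sign conventions ($\Delta_{0}=-\Delta$, the orientation of the outer normal) must be tracked with care throughout.
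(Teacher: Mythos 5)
Your argument is correct and follows essentially the same route as the paper: you decompose $u=\Lambda G+\mathcal{H}$ with $G(x)=|x|^{2k-n}$, use the quadratic nature of $\mathcal{P}_k(r;\cdot)$ to split into a pure singular part, a cross term, and a pure regular part, kill the last two by $r^{n-1}$-measure and by polyharmonicity respectively, and then observe that only one boundary integral in the cross term survives. The sole genuine variation is in showing $\mathcal{P}_k(r;G)\equiv 0$: after establishing constancy in $r$ via the Pohozaev identity on annuli, you argue by scale-covariance and the degree-$(2k-n)$ homogeneity of $G$ (and the fact that $2k\neq n$), whereas the paper instead uses the bound $|\mathcal{P}_k(r;G)|\lesssim r^{2k-n}$ and sends $r\to\infty$. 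Both are equally elementary and rest on the same Pohozaev constancy observation, so this is a cosmetic rather than a substantive difference. Your identification of the surviving cross term, the parity caveat for $k=1$ where the piece comes from $\mathcal{R}_k$ rather than the $i=0$ summand (and, thanks to $\partial_\nu(x^a\partial_a G)=(2k-n)\partial_\nu G$, contributes the same quantity), and the matching with \eqref{def.Theta} are all consistent with the paper's computation.
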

\begin{proof}
Let $\mathcal{G}=\Lambda|x|^{2k-n}$ and write  $u=\mathcal{G}+\mathcal{H}$. The expression $\mathcal{P}_{k}(r;u)$ given by Proposition \ref{prop.poho.appendix} is a quadratic functional in $u$ and we will denote  by $\Phi_{k,r}(\cdot,\cdot)$ the associated symmetric bilinear form, which satisfies $\Phi_{k,r}(u,u) = \mathcal{P}_{k}(r;u)$. It is defined as follows: 
\begin{equation}\label{bilineaire}
\begin{aligned}
& \Phi_{k,r}(\mathcal{X},\mathcal{Y}) \\
&= \frac{n-2k}{2}\sum_{i=0}^{[k/2]-1} \Bigg[ \int_{\partial B(0,r)}\bigg(\partial_{\nu}(\Delta_{0}^{i}\mathcal{X})\Delta_{0}^{k-i-1}\mathcal{Y}\,-\Delta_{0}^{i}\mathcal{X}~\partial_{\nu}(\Delta_{0}^{k-i-1}\mathcal{Y})\bigg)\,d\sigma \\
& +\int_{\partial B(0,r)}\bigg(\partial_{\nu}(\Delta_{0}^{i}\mathcal{Y})\Delta_{0}^{k-i-1}\mathcal{X}\,-\Delta_{0}^{i}\mathcal{Y}~\partial_{\nu}(\Delta_{0}^{k-i-1}\mathcal{X})\bigg)\,d\sigma \Bigg] \\
&+\sum_{i=0}^{[k/2]-1} \Bigg[ \int_{\partial B(0,r)}\bigg(\partial_{\nu}(\Delta_{0}^{i}(x^{a}\partial_{a}\mathcal{X}))\,\Delta_{0}^{k-i-1}\mathcal{Y}-\Delta_{0}^{i}(x^{a}\partial_{a}\mathcal{X})\,\partial_{\nu}(\Delta_{0}^{k-i-1}\mathcal{Y})\bigg)\,d\sigma \\
& +\int_{\partial B(0,r)}\bigg(\partial_{\nu}(\Delta_{0}^{i}(x^{a}\partial_{a}\mathcal{Y}))\,\Delta_{0}^{k-i-1}\mathcal{X}\,-\Delta_{0}^{i}(x^{a}\partial_{a}\mathcal{Y})\,\partial_{\nu}(\Delta_{0}^{k-i-1}\mathcal{X})\bigg)\,d\sigma \Bigg] \\
& +  \Psi_{k,r}(\mathcal{X},\mathcal{Y}),
\end{aligned}
\end{equation}
where $ \Psi_{k,r}(\mathcal{X},\mathcal{Y})$ is defined as follows: 
\begin{itemize}
\item
If $k$ is even, then 
\begin{align} \label{bilineaire.pair}
&\Psi_{k,r}(\mathcal{X},\mathcal{Y})=r\int_{\partial B(0,r)}\Delta_{0}^{k/2}\mathcal{X}~\Delta_{0}^{k/2}\mathcal{Y}\,d\sigma
\end{align}
\item 
If $k$ is odd, then 
\begin{align} \label{bilineaire.impair}
&\Psi_{k,r}(\mathcal{X},\mathcal{Y})=\frac{r}{2}\int_{\partial B(0,r)}\Delta_{0}^{\frac{k+1}{2}}\mathcal{X}~\Delta_{0}^{\frac{k-1}{2}}\mathcal{Y}\,d\sigma+\frac{r}{2}\int_{\partial B(0,r)}\Delta_{0}^{\frac{k+1}{2}}\mathcal{Y}~\Delta_{0}^{\frac{k-1}{2}}\mathcal{X}\,d\sigma\notag\\
&~+\frac{1}{2}\int_{\partial B(0,r)}\bigg(\Delta_{0}^{\frac{k-1}{2}}\mathcal{X}~\partial_{\nu}(x^{a}\partial_{a}(\Delta_{0}^{\frac{k-1}{2}}\mathcal{Y}))\,-(x^{a}\partial_{a}(\Delta_{0}^{\frac{k-1}{2}}\mathcal{X}))\partial_{\nu}(\Delta_{0}^{\frac{k-1}{2}}\mathcal{Y})\bigg)\,d\sigma\notag\\
&~+\frac{1}{2}\int_{\partial B(0,r)}\bigg(\Delta_{0}^{\frac{k-1}{2}}\mathcal{Y}~\partial_{\nu}(x^{a}\partial_{a}(\Delta_{0}^{\frac{k-1}{2}}\mathcal{X}))\,-(x^{a}\partial_{a}(\Delta_{0}^{\frac{k-1}{2}}\mathcal{Y}))\partial_{\nu}(\Delta_{0}^{\frac{k-1}{2}}\mathcal{X})\bigg)\,d\sigma\notag\\
&~+ \frac{n-2k}{2} \int_{\partial B(0,r)}\bigg( \Delta_0^{\frac{k-1}{2}} \mathcal{X} \partial_\nu \Delta_0^{\frac{k-1}{2}} \mathcal{Y} + \Delta_0^{\frac{k-1}{2}} \mathcal{Y} \partial_\nu \Delta_0^{\frac{k-1}{2}} \mathcal{X} \bigg)\, d\sigma.  
\end{align}
\end{itemize}
By bilinearity we have for all $r>0$
\begin{align*}
\mathcal{P}_{k}(r;u)=\mathcal{P}_{k}(r;\mathcal{G})+2 \Phi_{k,r}(\mathcal{G},\mathcal{H})+\mathcal{P}_{k}(r;\mathcal{H}).
\end{align*}
Since  $\mathcal{H}\in C^{2k}\(\overline{B(0,2)}\)$, and by the definition of $\mathcal{P}_{k}(r,\cdot)$ it follows that
\begin{align*}
\lim \limits_{r\to0}|\mathcal{P}_{k}(r,\mathcal{H})|\lesssim\lim \limits_{r\to0}\(\|\mathcal{H}\|^{2}_{C^{2k}\(\overline{B(0,2)}\)}~r^{n-1}\)=0.
\end{align*}
We now estimate the bilinear term $\Phi_{k,r}(\mathcal{G},\mathcal{H})$. From \eqref{bilineaire}, \eqref{bilineaire.pair} and \eqref{bilineaire.impair} it is easily seen that 
the boundary terms appearing in $\Phi_{k,r}(\mathcal{G},\mathcal{H})$ involve derivatives of $\mathcal{G}$ of order at most $2k-1$. Since $\mathcal{H}\in C^{2k}\(\overline{B(0,2)}\)$, all the integrals involving derivatives of order at most $2k-2$ of $\mathcal{G}$ will be estimated as $\bigO(r)$ as $r \to 0$. Similarly, derivatives of $\mathcal{G}$ of order $2k-1$ multiplied by $x_i$ will be estimated as $\bigO(r)$. We therefore have, thanks to \eqref{bilineaire}, \eqref{bilineaire.pair} and \eqref{bilineaire.impair},
\begin{equation} \label{poho.singular.1}
\Phi_{k,r}(\mathcal{G},\mathcal{H})=-\frac{n-2k}{2}\int_{\partial B(0,r)}\mathcal{H}\,\partial_{\nu}(\Delta_{0}^{k-1}\mathcal{G})\,d\sigma+\bigO(r).
\end{equation}
By direct computations $\Delta_{0}\,r^{\alpha-n}=(\alpha-2)(n-\alpha)r^{\alpha-2-n}$ with $\alpha>0$. In particular $\Delta_{0}^{k-1}\,\mathcal{G}=2^{k-1}(k-1)!(n-2k)(n-2k-2)\cdots(n-4)\,\Lambda r^{2-n}$. Therefore, and by \eqref{poho.singular.1},
$$\Phi_{k,r}(\mathcal{G},\mathcal{H})=\Theta(n,k) \Lambda\mathcal{H}(0)+\bigO(r)$$
as $r \to 0$, where $\Theta(n,k)$ is given by \eqref{def.Theta}. We finally compute the term $\mathcal{P}_{k}(r;\mathcal{G})$. Since $\mathcal{G}$ is polyharmonic in $\R^{n}\setminus\{0\}$, applying the Pohozaev identity \eqref{poho.id0} in $B(0,r)\setminus B(0,\tilde{r})$ with $f\equiv 0$ gives that $r\mapsto\mathcal{P}_{k}(r;\mathcal{G})$ is constant. Moreover  $|\mathcal{P}_{k}(r;\mathcal{G})|\lesssim |r|^{2k-n}$, and then letting $r\to+\infty$ we obtain that $\mathcal{P}_{k}(r;\mathcal{G})\equiv0$ for all $r>0$. This concludes the proof. 
\end{proof}
\smallskip

\section{The GJMS operator in conformal normal coordinates}\label{sec.GJMS.expan}

For $\xi \in M, \mu >0$ we define
\begin{align}\label{bubble3}  
U_{\xi,\mu}\(x\)&:=\frac{\mu^{\frac{n-2k}{2}}}{(\mu^{2}+\mathfrak{c}^{-1}_{n,k}~d_{g_{\xi}}(x,\xi)^{2})^{\frac{n-2k}{2}}}\quad\hbox{ for } x\in M.
\end{align}
This is a rescaling of the Euclidean bubble \eqref{bubble1} centered at $\xi\in M$. 
Here $g_\xi$ is the conformal metric to $g$ whose exponential map defines conformal normal coordinates as in \eqref{conf.2}, \eqref{conf.3}. If $(M,g)$ is Euclidean then $P_{g}=\Delta_{0}^k$ and we have $\Delta_{0}^k\,U_{\xi,\mu} =U_{\xi,\mu}^{2^{*}_{k}-1}$. In this section we estimate $P_{g_\xi} U_{\xi,\mu} - U_{\xi,\mu}^{2^{*}_{k}-1}$ in conformal normal coordinates at $\xi$ for a general metric $g$. Throughout this section, $\xi \in M$ will be fixed and we will let $g_\xi$ be defined as in \eqref{conf.2}. We let 
\begin{align} \label{mathfrakg}
\mathfrak{g} = \big( \exp_{\xi}^{g_\xi} \big)^*g_{\xi} .
\end{align}
Let, for any $x \in \R^n$,
\begin{align}\label{bubble3.1}  
\tilde{U}_\mu(x) = \frac{\mu^{\frac{n-2k}{2}}}{\left(\mu^{2}+\mathfrak{c}^{-1}_{n,k}~|x|^2\right)^{\frac{n-2k}{2}}} = \mu^{- \frac{n-2k}{2}} U \Big( \frac{x}{\mu} \Big)
 \end{align}
  where $U$ is given by \eqref{bubble1}.
By \eqref{bubble3} we have $U_{\xi, \mu} \big(\exp_{\xi}^{g_\xi}(x) \big) = \tilde{U}_\mu(x)$, so that  
$$ \big( P_{g_\xi}U_{\xi, \mu} - U_{\xi, \mu}^{2^*-1} \big) \big( \exp_{\xi}^{g_\xi}(x)\big) = \big( P_{\mathfrak{g}} - \Delta_0^k \big)\tilde{U}_\mu(x). $$
Since $\tilde{U}_\mu$ is radial we need to obtain an expansion of $P_{\mathfrak{g}} u$ for smooth radial functions $u: \R^n \to \R$. 
In this section all norms are measured with respect to $\mathfrak{g}$ and repeated indices appearing in an expression are summed over. If $T$ is a tensor field of rank $\ell \ge 1$, both the notations $ \nabla_aT_{i_1 \dots i_\ell}$ or $T_{i_1 \dots i_\ell;a}$ denote the coordinates of $\nabla T$ where $\nabla$ is the covariant derivative for $g_\xi$, so that $T_{i_1 \dots i_\ell;ab} = \nabla_b \nabla_aT_{i_1 \dots i_\ell}$.

\begin{proposition}\label{GJMS.exp0}
Fix $N$ large enough as in \eqref{conf.2}. Let $u(x) = u(|x|)$ be a smooth radial function in $\R^n$ and let $r  = |x|$. We have, for $x \in \R^n$,
\begin{equation} \label{GJMS.exp0.1}
\begin{aligned}
& \big(P_{\mathfrak{g}}\,u - \Delta_{0}^{k}\,u \big)(x) \\
& = A_{1,u}(r) |\Weyl_{g_{\xi}}(\xi)|_{g_\xi}^2 + A_{2,u}(r)  (\Scal_{g_\xi})_{;ab}(\xi) \frac{x^a x^b}{r^2} +A_{3,u}(r) \Delta_{g_\xi} (\Ricci_{g_\xi})_{ab}(\xi)\frac{x^a x^b}{r^2}  \\
&~+ A_{4,u}(r)  (\Weyl_{g_\xi})_{pabq}(\xi) (\Weyl_{g_{\xi}})_{pcdq}(\xi) \frac{x^a x^b x^c x^d}{r^4}\\
& ~+ \Psi_u(x) 
+ \bigO \big( \sum_{j=0}^{3} r^{4-j}|\nabla^{2k-2-j} u(r)|\big) + 
\bigO \big( \sum_{j=0}^{2k-6}|\nabla^{j} u(r)|\big) \\
& + \bigO \big( r^N |\nabla^{2k-1} u(x)|\big).
\end{aligned} 
\end{equation}
In \eqref{GJMS.exp0.1} the functions $A_{i,u}$, $1 \le i \le 4$, are explicit and given by
\begin{equation*}
\begin{aligned}
A_{1,u}(r) & = C_{1,1} \Delta_0^{k-2}u(r) + C_{1,2} \partial_r^2 \Delta_0^{k-3} u(r) + C_{1,3} \frac{1}{r} \partial_r \Delta_0^{k-3} u(r) \\
&+ C_{1,4}\frac{1}{r} \partial_r^3 \Delta_0^{k-4} u(r) + C_{1,5}\frac{1}{r^{2}} \partial_r^2 \Delta_0^{k-4} u(r) + C_{1,6}\frac{1}{r^{3}} \partial_r \Delta_0^{k-4} u(r), \\
\end{aligned} 
\end{equation*}
\begin{equation*}
\begin{aligned}
A_{2,u}(r) & = C_{2,1} r^2 \Delta_0^{k-1} u(r) + C_{2,2}  r \partial_r \Delta_0^{k-2} u(r) + C_{2,3} \frac{1}{r} \partial_r \Delta_0^{k-3}u (r) \\
& + C_{2,4}\partial_r^2 \Delta_0^{k-3}u(r) + C_{2,5} \partial_r^3 \Delta_0^{k-3}u(r) + C_{2,6} \frac{1}{r} \partial_r^3 \Delta_0^{k-4} u(r) \\
&+ C_{2,7}  \frac{1}{r^2} \partial_r^2 \Delta_0^{k-4} u(r) + C_{2,8} \frac{1}{r^3} \partial_r \Delta_0^{k-4} u(r)+ C_{2,9}  \partial_r^4 \Delta_0^{k-4}u(r),
\end{aligned} 
\end{equation*}
\begin{equation*}
\begin{aligned}
A_{3,u}(r) & = C _{3,1}  \frac{1}{r} \partial_r^3 \Delta_0^{k-4} u(r) + C_{3,2}\frac{1}{r^{2}}\partial_r^2 \Delta_0^{k-4} u(r) + C_{3,3} \frac{1}{r^3} \partial_r \Delta_0^{k-4} u(r) \\
& + C_{3,4} \partial_r^2 \Delta_0^{k-3}u(r) + C_{3,5} \frac{1}{r} \partial_r \Delta_0^{k-3}u (r), \\
\end{aligned} 
\end{equation*}
and 
\begin{equation*}
\begin{aligned}
A_{4,u}(r) & = C_{4,1} r^2 \partial_r^2 \Delta_0^{k-2} u(r)+C_{4,2} r \partial_r \Delta_0^{k-2} u(r) + C_{4,3} r \partial_r^3 \Delta_0^{k-3} u(r)\\
& + C_{4,4} \partial_r^2 \Delta_0^{k-3} u(r) + C_{4,5} \partial_r^4 \Delta_0^{k-4} u(r) + C_{4,6} \frac{1}{r} \partial_r^3 \Delta_0^{k-4} u(r) \\
&+ C_{4,7}\frac{1}{r^2} \partial_r^2 \Delta_0^{k-4} u(r)+ C_{4,8} \frac{1}{r^3} \partial_r \Delta_0^{k-4} u(r), \\
\end{aligned} 
\end{equation*}
where $C_{i,j}$ denotes a numerical constant that only depends on $n$ and $k$. Also, in \eqref{GJMS.exp0.1}, the function $\Psi_u(x)$ has the following form:
\begin{equation} \label{defPsiu}
\begin{aligned}
\Psi_u(x) & = \psi_{2k-2}^{(3)}(x) \Delta_0^{k-1}u(r) + \psi_{2k-2}^{(5)} \frac{1}{r^2} \partial_r^2 \Delta_0^{k-2}u(r) \\
& + \Big(\psi_{2k-3}^{(3)}(x) + \frac{ \psi_{2k-3}^{(5)}(x)}{r^2} \Big) \frac{1}{r} \partial_r \Delta_0^{k-2}u(r)  +  \psi_{2k-4}^{(1)}(x) \Delta_0^{k-2}u(r) \\
& +  \psi_{2k-4}^{(3)}(x)\frac{1}{r^2} \partial_r^2 \Delta_0^{k-3}u(r) +  \psi_{2k-4}^{(5)}(x)\frac{1}{r^4} \partial_r^4 \Delta_0^{k-4}u(r) \\
& + \Big( \psi_{2k-5}^{(1)}(x) + \frac{  \psi_{2k-5}^{(3)}(x)}{r^2} \Big)\frac{1}{r} \partial_r \Delta_0^{k-3}u(r) \\
& +\Big(  \psi_{2k-5}^{(3)}(x) + \frac{\tilde{\psi}_{2k-5}^{(5)}(x)}{r^2} \Big)\frac{1}{r^3} \partial_r^3 \Delta_0^{k-4}u(r) \\
& + \Big( \psi_{2k-6}^{(1)}(x) + \frac{ \psi_{2k-6}^{(3)}(x)}{r^2} \Big)\frac{1}{r^2} \partial_r^2 \Delta_0^{k-4}u(r)\\ 
& + \Big( \psi_{2k-7}^{(1)}(x) + \frac{ \psi_{2k-7}^{(3)}(x)}{r^2} + \frac{ \psi_{2k-7}^{(5)}(x)}{r^4} \Big)\frac{1}{r^3} \partial_r \Delta_0^{k-4}u(r)\\ 
&+ \sum_{p=0}^{2k-6} \sum_{q=0}^{\big[\frac{2k-5}{2}\big]} \frac{\psi_q^{(2k-5-2q)}(x)}{r^{2k-5-2q + p}} \partial_r^{2k-5-p} u(r), \\
\end{aligned} 
\end{equation}
where $\psi_\ell^{(i)}, \tilde{\psi}_\ell^{(i)}$ are homogeneous polynomials of degree $i \ge 1$ in $\R^n$ whose coefficients only depend on the geometry of $g_\xi$. 
\end{proposition}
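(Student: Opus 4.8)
The plan is to combine Juhl's recursive formula for $P_{g}$ with the Cartan expansion of the conformal metric $g_\xi$, and to use the conformal normal coordinate identities \eqref{conf.3} both to discard all contributions of weight at most three and to collapse the weight-four contributions onto the four curvature invariants appearing in \eqref{GJMS.exp0.1}. The first step is to record the output of Juhl's formula \cite{JuhlGJMS} in the form \eqref{Pr1Step1Eq1:bis}: $P_{\mathfrak{g}}$ is a finite linear combination of compositions of second-order operators, each of which is $\Delta_{g_\xi}$ plus curvature operators built naturally (via multiplication and divergence) from the Schouten tensor $\Schouten_{g_\xi}$, the Riemann tensor of $g_\xi$ and their covariant derivatives. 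Since GJMS operators are natural, every coefficient occurring is a universal polynomial expression in the curvature of $g_\xi$, its covariant derivatives, and the Taylor coefficients of $\mathfrak{g}$ at $0$; the only structural features I will use are that the leading term is $\Delta_{\mathfrak{g}}^{k}$, that every other term is a composition of a bounded number of second-order blocks only boundedly many of which differ from $\Delta_{\mathfrak{g}}$ (each deviation a natural curvature operator of positive weight), and that the lowest-weight correction to $\Delta_{\mathfrak{g}}^{k}$ is linear in $\Schouten_{g_\xi}$, which vanishes at $\xi$ by \eqref{conf.3}.

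Next I would Taylor-expand everything at the origin. By Cartan's formula together with \eqref{conf.2} and \eqref{conf.3} one has $\mathfrak{g}_{ij}(x)=\delta_{ij}+\tfrac13 R_{iabj}(\xi)x^ax^b+\bigO(|x|^3)$, where the degree-three and degree-four Taylor coefficients carry no trace part (since $\det\mathfrak{g}=1+\bigO(|x|^N)$, so that $\sqrt{|\mathfrak{g}|}=1+\bigO(|x|^N)$) and the symmetrized covariant derivatives of $\Ricci_{g_\xi}$ at $\xi$ that occur all vanish; moreover $\Schouten_{g_\xi}(\xi)=0$ and $\Sym\nabla\Schouten_{g_\xi}(\xi)=0$, so every curvature operator in the formula contributes only at order $|x|^2$ or higher, its order-$|x|^2$ coefficient being a symmetrization of $\nabla^2\Schouten_{g_\xi}(\xi)$, which by \eqref{conf.3} is a universal combination of $(\Scal_{g_\xi})_{;ab}(\xi)$, $\Delta_{g_\xi}(\Ricci_{g_\xi})_{ab}(\xi)$, Weyl--Weyl contractions, and $|\Weyl_{g_\xi}(\xi)|_{g_\xi}^2\,\delta_{ab}$. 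Substituting into Juhl's formula and expanding all compositions, I would organize $(P_{\mathfrak{g}}-\Delta_0^k)u$ as $\sum_m L_m u$ plus zeroth- and low-order curvature terms, where $L_m$ has order $\le 2k$ and homogeneous polynomial coefficients of degree $m$, up to an $\bigO(|x|^N)$ error from $\sqrt{|\mathfrak{g}|}$; assigning weight $\deg c+(2k-|\beta|)$ to a monomial $c(x)\partial^\beta u$, one checks that weight is additive over compositions so that every correction term has weight $\ge 2$, that the weight-$\ge 6$ terms go into $\bigO(\sum_{j=0}^{3}r^{4-j}|\nabla^{2k-2-j}u|)$, that the terms hitting $u$ with at most $2k-6$ derivatives go into $\bigO(\sum_{j=0}^{2k-6}|\nabla^j u|)$, and that the volume-element remainder goes into $\bigO(r^N|\nabla^{2k-1}u|)$.

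Finally, for radial $u(x)=u(r)$ I would substitute $\partial_i u=u'(r)x_i/r$, $\partial_i\partial_j u=u''(r)x_ix_j/r^2+\tfrac{u'(r)}{r}(\delta_{ij}-x_ix_j/r^2)$, and iterate, contracting every curvature coefficient against the tensors $x_{i_1}\cdots x_{i_p}/r^p$ and Kronecker deltas produced. The identities \eqref{conf.3} kill all weight-two contributions ($R_{iajb}(\xi)x^ax^bx^ix^j=0$ by antisymmetry and $\Ricci_{g_\xi}(\xi)=0$ kills the $\delta$-contraction), all weight-three contributions ($\Sym\nabla\Ricci_{g_\xi}(\xi)=0$ forces $\Ricci_{ab;c}(\xi)x^ax^bx^c=0$), and the trace part of the weight-four contributions, with $\Delta_{g_\xi}\Scal_{g_\xi}(\xi)=\tfrac16|\Weyl_{g_\xi}(\xi)|_{g_\xi}^2$ removing $\Delta\Scal$ entirely; what survives at weight four is precisely a combination of $|\Weyl_{g_\xi}(\xi)|_{g_\xi}^2$, $(\Scal_{g_\xi})_{;ab}(\xi)x^ax^b/r^2$, $\Delta_{g_\xi}(\Ricci_{g_\xi})_{ab}(\xi)x^ax^b/r^2$ and $(\Weyl_{g_\xi})_{pabq}(\Weyl_{g_\xi})_{pcdq}(\xi)x^ax^bx^cx^d/r^4$, each multiplied by a radial differential operator of order $2k-4$ built from $\Delta_0$ and a bounded number of $\partial_r$ and $r^{-1}\partial_r$ factors; enumerating the possibilities gives exactly the operators $A_{1,u},\dots,A_{4,u}$ of \eqref{GJMS.exp0.1} with constants $C_{i,j}$ that we never need to evaluate, while the weight-five remainders — which necessarily carry odd-degree polynomial coefficients — assemble, after the same substitution, into the expression $\Psi_u(x)$ of \eqref{defPsiu}.

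The main obstacle is that the lower-order operators in Juhl's formula \eqref{Pr1Step1Eq1:bis} are not explicit for general $k$, so the expansion must be carried out structurally: one uses only naturality (forcing polynomial curvature coefficients), the composition-of-second-order-blocks shape, the conformal normal coordinate vanishings \eqref{conf.3}, and weight counting to conclude that, \emph{whatever} these operators are, their weight-$\le 4$ contributions on a radial input must be of the stated form. Ruling out weight-$\le 4$ terms of an unexpected shape is the delicate point, and it is exactly where the identities \eqref{conf.3} and the radial simplification do all the work; the remaining steps — verifying that the surviving weight-four pieces collapse to the four invariants and that the weight-five pieces fit the shapes listed in \eqref{defPsiu} — are routine but lengthy enumerations.
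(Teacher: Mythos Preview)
Your overall strategy---Juhl's formula, Cartan expansion of $\mathfrak{g}$, the conformal-normal-coordinate identities \eqref{conf.3}, radiality, and a weight filtration---is the same as the paper's, and your weight bookkeeping is a clean way to organize the error terms. But your framing contains a misconception that makes your argument harder than it needs to be. You say the lower-order operators in \eqref{Pr1Step1Eq1:bis} ``are not explicit for general $k$'' and therefore the expansion must be carried out ``structurally''; in fact the paper quotes, from \cite{MazumdarVetois}, the \emph{explicit} expressions for $J_1,J_2,T_1,T_2,T_3,T_4$ (all universal polynomials in $\Scal$, $\Schouten$, their derivatives, the Bach tensor, etc.), and only the residual $Z^{(2k-5)}u$ is left unspecified---which is harmless since it lands in the $\bigO(\sum_{j\le 2k-6}|\nabla^j u|)$ error. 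With those explicit tensors in hand the paper does not need a naturality argument: it derives concrete commutation identities of the type $\Delta^j(T,\nabla^2 u)=(T,\nabla^2\Delta^j u)-2j(\nabla T,\nabla^3\Delta^{j-1}u)+\cdots$ to push all $\Delta$'s onto $u$, obtaining the fully explicit display \eqref{GJMS.exp0.proof.4}, and only then specializes to radial $u$ and Taylor-expands the curvature coefficients at $\xi$.

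This matters at exactly the step you flag as ``routine but lengthy enumeration.'' The specific shapes of $A_{1,u},\dots,A_{4,u}$ (which powers of $r$, which $\partial_r^m\Delta_0^{k-j}u$) and the precise list of polynomial degrees in $\Psi_u$ are read off from \eqref{GJMS.exp0.proof.4} term by term; a purely structural argument based on weight and naturality would tell you only that \emph{some} radial operator of the right order sits in front of each invariant, not which one. So your approach is not wrong, but the ``enumeration'' you defer is in fact the whole computation, and it is most cleanly done by first writing down the explicit Juhl data and the commutation formulas, exactly as the paper does.
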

\noindent Similar expansions had already been obtained in \cite{Marques} when $k=1$, in \cite{LiXiong, GongKimWei} when $k=2$ and in \cite{ChenHou} when $k=3$, and they used the explicit expression of $P_g$. When $k \ge 3$ an additional term $ \Delta_{g_\xi} (\text{Ric}_{g_\xi})_{ab}(\xi)\frac{x^a x^b}{r^2}$ appears in \eqref{GJMS.exp0.1}.

\begin{proof}
Throughout this proof, for simplicity, we will simply denote $g_{\xi}$ by $g$ and we will omit the dependence in $g_\xi$ on the curvature tensors and on the norms. For instance the Ricci and Weyl tensors of $g_\xi$ will symply be denoted by $\Ricci$ and $\Weyl$. Similarly, the covariant derivative with respect to $g_\xi$ will be simply be denoted $\nabla$, and $\Delta = \nabla^*\nabla$ will denote the B\"ochner laplacian for $g_\xi$ acting on tensors. In coordinates, if $T$ is of rank $\ell$, we have $\Delta T_{i_1 \dots i_\ell} = - \tensor{T}{_{i_1 \dots i_\ell; a}^{a}}$, where we used the convention that a sum over repeated and raised indices indicates contraction with $g_\xi$. All the coordinates in the following will be taken in exponential coordinates $\exp_{\xi}^{g_\xi}$ for $g_\xi$ at $\xi$, so that \eqref{conf.1}, \eqref{conf.2} and \eqref{conf.3} hold. We will let $\Schouten$ be the Schouten tensor of $g_\xi$ defined by
$$\Schouten:=\frac{1}{n-2}\(\Ricci-\frac{\Scal}{2\(n-1\)}\,g\)$$
and $\Bach$ be the Bach tensor whose coordinates are given by
$$\Bach_{ij}:=\Schouten_{ab}\tensor{\Weyl}{_i^a_j^b}+\tensor{\Schouten}{_{ij;a}^a}-\tensor{\Schouten}{_{ia;j}^a}.$$
We let $\(\cdot,\cdot\)$ be the inner product induced by $g_\xi$ on tensors of same rank: that is, $\(S,T\)=S^{i_1\dotsc i_\ell}T_{i_1\dotsc i_\ell}$ for all tensors $S$ and $T$ of rank $\ell\in\N$. Throughout the proof we will use the following notation: if $\ell$ is any integer,  $Z^{(\ell)}$ will denote a smooth linear operator of order less than or equal to $\ell$, possibly tensor-valued, that may change from one line to the other, and which is zero if $\ell \le 0$. 

\medskip
Let $u$ be a smooth function in $\R^n$ (not necessarily radial). The expansions in \cite[Step $2.1$]{MazumdarVetois}, which rely on Juhl's formulae \cite{JuhlGJMS}, show that 
\begin{align}\label{Pr1Step1Eq1:bis}
P_{\mathfrak{g}}u  &=\Delta^k u +k\Delta^{k-1}\(J_1 u\)+k\(k-1\)\Delta^{k-2}\(J_2 u +\(T_1,\nabla u \)+\(T_2,\nabla^2 u \)\)\nonumber\\
&\quad+k\(k-1\)\(k-2\)\Delta^{k-3}\(\(T_3,\nabla^2 u\)+\(\nabla T_2,\nabla^3 u\)\)\allowdisplaybreaks \nonumber\\
&\quad+k\(k-1\)\(k-2\)\(k-3\)\Delta^{k-4}\(T_4,\nabla^4 u\) \nonumber\\
& \quad +Z^{(2k-5)}u + \bigO \big( \sum_{j=0}^{2k-6}|\nabla^{j}u|\big) ,
\end{align}
where we have let 
\begin{align*}
& J_1:=\frac{n-2}{4\(n-1\)}\Scal,\allowdisplaybreaks\\
& J_2:=\frac{1}{6}\(\frac{3n^2-12n-4k+8}{16\(n-1\)^2}\Scal^2-\(k+1\)\(n-4\)\left|\Schouten\right|^2-\frac{3n+2k-4}{4\(n-1\)}\Delta\Scal\), \\
&T_1:=\frac{n-2}{4\(n-1\)}\nabla\Scal-\frac{2}{3}\(k+1\)\delta \Schouten,\allowdisplaybreaks\\
&T_2:=\frac{2}{3}\(k+1\)\Schouten,\allowdisplaybreaks\\
&T_3:=\frac{n-2}{6\(n-1\)}\nabla^2\Scal+\frac{\(k+1\)\(n-2\)}{6\(n-1\)}\Scal\Schouten-\frac{k+1}{3}\(\nabla^* \nabla\Schouten+2\nabla\delta\Schouten+2\Riemann\ast\Schouten\)\\
&\qquad-\frac{2}{15}\(k+1\)\(k+2\)\(3\Schouten^{\#}\Schouten+\frac{\Bach}{n-4}\),\allowdisplaybreaks\\
\end{align*}
and
$$T_4:=\frac{2}{5}\(k+1\)\(\frac{5k+7}{9}\Schouten\otimes \Schouten+\nabla^2\Schouten\),$$
where $\#$ stands for the musical isomorphism with respect to $g$ (i.e. $\Schouten^\#:=g^{-1}\Schouten$), and $\nabla\delta\Schouten$ and $\Riemann\ast\Schouten$ stand for the covariant tensors whose coordinates are given in the exponential chart of $g_\xi$ at $\xi$ by 
\begin{equation}\label{Pr1Step1Eq2:bis}
\begin{aligned} &
\(\nabla\delta\Schouten\)_{ij}:=-\tensor{\Schouten}{_i^a_{;aj}}\text{ and }  \(\Riemann\ast\Schouten\)_{ij}:=\tensor{\Riemann}{_{ia}^a_b}\tensor{\Schouten}{_j^b}+\Riemann_{ibja}\Schouten^{ab},
\end{aligned}
\end{equation}
where $\Riemann$ is the Riemann tensor of $g_\xi$. We use the convention for $\Riemann$ as in the paper of Lee-Parker \cite{LeeParker}. In particular if $T$ is a tensor of rank $\ell \ge 1$ we have 
\begin{equation} \label{commute:der}
 \tensor{T}{_{i_1 \dots i_\ell; pq}} -  \tensor{T}{_{i_1 \dots i_\ell; qp}} = \sum_{s=1}^\ell \tensor{\Riemann}{^{r}_{i_spq}}T_{i_1\dots r \dots i_s}. 
\end{equation}
We obtain \eqref{GJMS.exp0.1} by precisely expanding, when $u$ is radial, the right-hand side of \eqref{Pr1Step1Eq1:bis} to fourth order. The proof relies on rather long computations, and we only sketch them in the following. We first observe that \eqref{commute:der} implies that  
\begin{equation} \label{GJMS.exp0.proof.2}
 \begin{aligned} 
 \Delta \nabla v &= \nabla \Delta v + \Ricci(\cdot, \nabla v), \\
\Delta \nabla^2 v & =  \nabla^2 \Delta v + \Riemann\ast \nabla^2 v + Z^{(1)}v  \quad \text{ and } \\
\Delta \nabla^j v & =  \nabla^j \Delta v + Z^{(j)} v \quad \text{ for } j \ge 3, 
\end{aligned}  
\end{equation}
for any smooth function $v$, where we have let 
$$ (\Riemann\ast \nabla^2 v)_{ij} = \tensor{\Ricci}{_{jp}} \tensor{v}{_{;i}^{p}} +  \tensor{\Ricci}{_{i p}} \tensor{v}{_{;j}^{p}} - 2 \tensor{\Riemann}{^{k}_{i}^{p}_{j}}v_{;kp}. $$
Independently, if $S,T$ are tensors of rank $\ell$ we have 
$$ \Delta (T,W) = (\Delta T, W) + (T, \Delta W) - 2 (\nabla T, \nabla W). $$
Using \eqref{GJMS.exp0.proof.2} we thus have 
$$\Delta \big( T, \Riemann\ast \nabla^2 v \big) = \big(T, \Riemann\ast \nabla^2 \Delta v \big) + Z^{(3)}v. $$
Applying the latter two relations recursively with \eqref{GJMS.exp0.proof.2} shows that for $j \ge 1$
\begin{equation}
\label{GJMS.exp0.proof.3}
 \begin{aligned} 
 \Delta^j \big(T, \nabla u) & =  \big( T, \nabla \Delta^j u \big) -2j \big( \nabla T, \nabla^2 \Delta^{j-1}u \big) + Z^{(2j-1)}u, \\
 \Delta^j \big(T, \nabla^2 u) & =  \big( T, \nabla^2 \Delta^j u \big) -2j \big( \nabla T, \nabla^3 \Delta^{j-1}u \big) \\
 & + 2j(j-1)\big( \nabla^2 T, \nabla^4 \Delta^{j-2}u \big) + j \big(\Delta T, \nabla^2 \Delta^{j-1}u \big) \\
 & + j \big( T, \Riemann \ast \nabla^2 \Delta^{j-1}u\big) + Z^{(2j-1)}u, \\
  \Delta^j \big(T, \nabla^3 u) & =  \big( T, \nabla^3 \Delta^j u \big) -2j \big( \nabla T, \nabla^4 \Delta^{j-1}u \big) + Z^{(2j+1)}u, 
 \end{aligned}  
\end{equation}
where in the previous relations $T$ denotes a tensor of rank $1,2$ or $3$. Let now $v,w$ be smooth functions and $j \ge 1$ be an integer. We claim that
\begin{equation}  \label{GJMS.exp0.proof.1}
\begin{aligned}
\Delta^{j}(vw) & = v \Delta^j w - 2j \big( \nabla v, \nabla \Delta^{j-1} w\big) + j \Delta v \Delta^{j-1}w\\
&  + 2j(j-1) \big( \nabla^2 v, \nabla^2 \Delta^{j-2}w \big) + Z^{(2j-3)} w
\end{aligned}
\end{equation}
holds. If $j=1$, \eqref{GJMS.exp0.proof.1} is simply Leibniz's formula. If $j \ge 1$ we write that 
$$ \Delta^j(vw) = \Delta^{j-1}\big(v \Delta w + w\Delta v - 2 \big( \nabla v, \nabla w \big) \big) $$
and we recursively apply \eqref{GJMS.exp0.proof.3} to conclude. We now apply \eqref{GJMS.exp0.proof.2}, \eqref{GJMS.exp0.proof.3} and \eqref{GJMS.exp0.proof.1} to each term in \eqref{Pr1Step1Eq1:bis}. Combining the latter expansions into \eqref{Pr1Step1Eq1:bis} yields 
\begin{equation} \label{GJMS.exp0.proof.4}
\begin{aligned}
&P_{\mathfrak{g}}u -\Delta^k u \\
&  = k \Big \{ J_1 \Delta^{k-1}u - 2(k-1) \big(\nabla J_1, \nabla \Delta^{k-2}u \big) \\
& + (k-1) \Delta J_1 \Delta^{k-2}u + 2(k-1)(k-2) \big( \nabla^2 J_1, \nabla^2 \Delta^{k-3}u \big)   \Big\} \\
& + k(k-1) J_2 \Delta^{k-2}u + k(k-1) \Big\{\big( T_1, \nabla \Delta^{k-2}u \big) - 2(k-2) \big( \nabla T_1, \nabla^2 \Delta^{k-3}u \big) \Big\} \\
& + k(k-1) \Big\{ \big( T_2, \nabla^2 \Delta^{k-2}u \big) - 2(k-2) \big( \nabla T_2, \nabla^3 \Delta^{k-3}u \big)  \\
    &  +2(k-2)(k-3) \big( \nabla^2 T_2, \nabla^4 \Delta^{k-4}u \big) + (k-2) \big( \Delta T_2, \nabla^2 \Delta^{k-3} \big) \\ 
& + (k-2)\big( T_2, \Riemann\ast \nabla^2 \Delta^{k-3} u  \big) \Big\} + k(k-1)(k-2)\big( T_3, \nabla^2 \Delta^{k-3}u \big) \\
& + k(k-1)(k-2)  \Big\{\big( \nabla T_2, \nabla^3 \Delta^{k-3}u \big) - 2(k-3) \big( \nabla^2 T_2, \nabla^4 \Delta^{k-4}u \big)  \Big\} \\
& k(k-1)(k-2)(k-3)  \big( T_4, \nabla^4 \Delta^{k-4}u \big) + Z^{(2k-5)}u + \bigO \big( \sum_{j=0}^{2k-6}|\nabla^{j}u|\big).
\end{aligned}
\end{equation}
We now assume that $u$ is radial. We prove \eqref{GJMS.exp0.1} by expanding each term in \eqref{GJMS.exp0.proof.4} to the fourth order in $x$. By  \eqref{conf.2} we have, for every $j \ge 1$, 
\begin{equation} \label{DL:radial}
\begin{aligned}
 \Delta^j  u(x) & = \Delta_0^j u(r) + \bigO \big( r^{N'} \sum_{p=0}^{2j-1} |\nabla^{p} u(x)|\big)  
\end{aligned}
 \end{equation}
 for some large integer $N'$, so that without loss of generality we may replace every term $\Delta^j u$ in \eqref{GJMS.exp0.proof.4} by $\Delta_0^j u$, which is a radial function. If now $v$ is any smooth radial function we have
 \begin{equation} \label{GJMS.exp0.proof.5}
\begin{aligned}
v_{;ab}(x) & = v_{,ab}(x) - \partial_d \Gamma_{ab}^c(\xi)\frac{x^c x^d}{r} \partial_r v(r)  + \bigO(r^2 |\partial_r v(r)|),  \\
v_{;abc}(x) & = v_{,abc}(x) - \Gamma_{bc}^d(x) v_{, da}(x) - \Gamma_{ac}^d(x) v_{, de}(x) \\
& - \partial_a \Gamma_{bc}^d(\xi) \frac{x^d}{r} \partial_r v(r) +  \bigO (r^2|\nabla^2 v(x)| + r |\nabla v(x)| ),\\
v_{;abcd}(x) & = v_{,abcd}(x)  - \Gamma_{ad}^e (x) v_{, ebc}(x) - \Gamma_{bd}^e(x) v_{,a ec}(x) \\
& -  \Gamma_{cd}^e(x) v_{,abe}(x)  +  \bigO ( r^2 |\nabla^3 v(x)| +  |\nabla^2 v(x)| + |\nabla v(x)|) , \\
\end{aligned}
\end{equation}
 where we denoted by $v_{,a}$ the covariant derivatives with respect to the euclidean metric. The proof of \eqref{GJMS.exp0.1} now follows from an asymptotic expansion at $x=0$ of all the quantities involved in \eqref{GJMS.exp0.proof.4}: the Christoffel symbols at $\xi$ and the tensors in  \eqref{Pr1Step1Eq1:bis} are expanded at $\xi$ using \eqref{conf.3}, covariant derivatives of $\Delta_0^j $ are expanded by applying \eqref{GJMS.exp0.proof.5} to $v = \Delta_0^j u$ and explicitly computing $v_{,ab}, v_{,abc}$ and $v_{,abcd}$. First derivatives of $\Ricci$ at $\xi$ only appear symmetrised and thus vanish by \eqref{conf.3}, and second derivatives of $\Ricci$ at $\xi$ are traced and yield quadratic terms in the $\Weyl_{g_\xi}$ tensor by \eqref{conf.3}. We omit the details.
\end{proof}

As a consequence of Proposition~\ref{GJMS.exp0} we obtain an expansion of $P_{g_\xi}U_{\xi, \mu} - U_{\xi, \mu}^{2^*-1}$ to fourth-order in conformal normal coordinates at $\xi$: 
\begin{proposition}\label{GJMS.exp1}
Let $\mu >0$, $\tilde{U}_\mu$ be given by \eqref{bubble3.1} and let $\mathfrak{g}$ be given by \eqref{mathfrakg}. We have, for $x \in \R^n$,  
\begin{equation} \label{GJMS.exp1.eq} 
\begin{aligned}
&\mu^{- \frac{n-2k}{2}} \big( P_{\mathfrak{g}} - \Delta_0^k \big)\tilde{U}_\mu(x)\\
&  = \mu^{4-n} F_{1}\Big(\frac{r}{\mu}\Big) |\Weyl_{g}(\xi)|_{g}^2 +\mu^{4-n} F_{2}\Big(\frac{r}{\mu}\Big)  (S_{g_\xi})_{;ab}(\xi) \frac{x^a x^b}{r^2} \\
& ~+\mu^{4-n} F_{3}\Big(\frac{r}{\mu}\Big)  \Delta_{g_\xi} (\Ricci_{g_\xi})_{ab}(\xi)\frac{x^a x^b}{r^2} \\
&~+ \mu^{4-n} F_{4}\Big(\frac{r}{\mu}\Big)  (\Weyl_{g_\xi})_{pabq}(\xi) (\Weyl_{g_{\xi}})_{pcdq}(\xi) \frac{x^a x^b x^c x^d}{r^4} \\
&~+ \mu^{5-n} F_5\Big(\frac{x}{\mu}\Big)  +  \bigO \Big((\mu^2 + \mathfrak{c}^{-1}_{n,k}\,|x|^{2})^{-\frac{n-6}{2}}\Big).
\end{aligned}
\end{equation}
In \eqref{GJMS.exp1.eq} the $F_i$, for $1 \le i \le 4$, are smooth radial functions in $\R$ that satisfy 
$$F_i(r) = c_i r^{4-n} + O(r^{5-n}) \quad \text{ as } r \to + \infty$$
 for some $c_i \in \R$, and $F_5$ can be written as $F_5(x) = \Psi(x) R(r)$, where $\Psi$ is a homogeneous polynomial of degree $5$ and $R$ is a smooth function in $\R^n$ which satisfies
$$ |R(x)| \lesssim (1+|x|)^{-n} \quad \text{ for all } x \in \R^n.$$
Also, in \eqref{GJMS.exp1.eq}, the constants in the $\bigO(\cdot)$ term are uniform in $\mu$ and $\xi$. 
 \end{proposition}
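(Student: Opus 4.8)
The plan is to deduce Proposition~\ref{GJMS.exp1} from Proposition~\ref{GJMS.exp0} by specializing the latter to the smooth radial function $u=\tilde{U}_\mu$ and then carefully tracking the dependence on $\mu$. Since $\tilde{U}_\mu(x)=\mu^{-\frac{n-2k}{2}}U(x/\mu)$ with $U$ given by \eqref{bubble1}, one records first the scaling identity
\begin{equation*}
\partial_r^m\Delta_0^j\tilde{U}_\mu(x)=\mu^{-\frac{n-2k}{2}-2j-m}\,\big(\partial_r^m\Delta_0^jU\big)(x/\mu)\qquad(j,m\ge0),
\end{equation*}
together with the pointwise bound $|\nabla^m\tilde{U}_\mu(x)|\lesssim\mu^{\frac{n-2k}{2}}(\mu+|x|)^{-(n-2k)-m}$, uniformly in $\mu>0$. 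Moreover $\Delta_0^jU$ is explicit: iterating the identity expressing $\Delta_0\big((1+\mathfrak{c}_{n,k}^{-1}|x|^2)^{-b}\big)$ as a constant-coefficient combination of $(1+\mathfrak{c}_{n,k}^{-1}|x|^2)^{-b-1}$ and $(1+\mathfrak{c}_{n,k}^{-1}|x|^2)^{-b-2}$, starting from $U=(1+\mathfrak{c}_{n,k}^{-1}|x|^2)^{-\frac{n-2k}{2}}$, shows that each $\Delta_0^jU$, $0\le j\le k$, is a constant-coefficient combination of the smooth radial functions $(1+\mathfrak{c}_{n,k}^{-1}|x|^2)^{-b}$, $\tfrac{n-2k}{2}+1\le b\le\tfrac{n-2k}{2}+2j$, and that the slowest-decaying surviving exponent makes each individual monomial of the $A_{i,u}$ below decay exactly like $\rho^{4-n}$ after rescaling.

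Next I would plug $u=\tilde{U}_\mu$ into \eqref{GJMS.exp0.1}. Inspecting the explicit formulas for $A_{1,u},\dots,A_{4,u}$, one checks \emph{monomial by monomial} that all the terms of $A_{i,\tilde{U}_\mu}(r)$ carry the same power of $\mu$, so that $\mu^{-\frac{n-2k}{2}}A_{i,\tilde{U}_\mu}(r)=\mu^{4-n}F_i(r/\mu)$, where $F_i$ is the explicit radial function obtained by replacing $u$ by $U$ and rescaling (for instance $F_1(\rho)=C_{1,1}\Delta_0^{k-2}U(\rho)+C_{1,2}\partial_r^2\Delta_0^{k-3}U(\rho)+\dots$), with the convention that any term involving $\Delta_0^m$ with $m<0$ is dropped; this yields the first four terms of \eqref{GJMS.exp1.eq}. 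The fifth-order term arises in the same way from $\Psi_{\tilde{U}_\mu}(x)$ in \eqref{defPsiu}: each summand there is a homogeneous polynomial of \emph{odd} degree times an iterated radial operator applied to $u$, and the substitution $u=\tilde{U}_\mu$ followed by rescaling turns each summand into $\mu^{5-n}$ times a function of $x/\mu$. Since every polynomial factor has odd degree, the resulting $F_5$ is odd and satisfies $|F_5(x)|\lesssim(1+|x|)^{5-n}$; a short further manipulation rewrites it as $F_5(x)=\Psi(x)R(|x|)$ with $\Psi$ homogeneous of degree $5$ and $R$ smooth, $|R(x)|\lesssim(1+|x|)^{-n}$. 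That the $F_i$ and $F_5$ are genuinely smooth at $r=0$ — although the individual monomials of $A_{i,u}$ and $\Psi_u$ need not be — is forced by the smoothness of $(P_{\mathfrak{g}}-\Delta_0^k)\tilde{U}_\mu$.

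Finally, each $\bigO(\cdot)$ remainder in \eqref{GJMS.exp0.1}, evaluated at $u=\tilde{U}_\mu$ and multiplied by $\mu^{-\frac{n-2k}{2}}$, is bounded on bounded sets by $(\mu+|x|)^{6-n}\asymp(\mu^2+\mathfrak{c}_{n,k}^{-1}|x|^2)^{-\frac{n-6}{2}}$, using $r\le\mu+|x|$ and the pointwise bound on $|\nabla^m\tilde{U}_\mu|$ above; the borderline cases are $j=0$ in the first remainder sum of \eqref{GJMS.exp0.1} and $j=2k-6$ in the second, which both saturate the exponent $6-n$, while $r^N|\nabla^{2k-1}\tilde{U}_\mu|$ is negligible for $N$ large. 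The asymptotics $F_i(r)=c_ir^{4-n}+O(r^{5-n})$ as $r\to+\infty$ follow from $U(\rho)=\mathfrak{c}_{n,k}^{\frac{n-2k}{2}}\rho^{-(n-2k)}\big(1+O(\rho^{-2})\big)$ and its radial derivatives, together with the structural fact recorded above; the uniformity of all constants in $\mu$ and $\xi$ is inherited from Proposition~\ref{GJMS.exp0}. No geometric input beyond Proposition~\ref{GJMS.exp0}, which already encapsulates the analysis of Juhl's formulae, is needed, and the main obstacle is purely the bookkeeping: verifying the common power of $\mu$ for each monomial, the cancellation of the spurious singularities at $r=0$, and that the $\mu$-subleading contributions do fall into the stated remainder.
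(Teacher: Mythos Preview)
Your proposal is correct and follows essentially the same approach as the paper: apply Proposition~\ref{GJMS.exp0} to $u=\tilde{U}_\mu$, use the scaling identity $\tilde{U}_\mu(x)=\mu^{-\frac{n-2k}{2}}U(x/\mu)$ to obtain $A_{i,\tilde{U}_\mu}(r)=\mu^{\frac{n-2k}{2}+4-n}A_{i,U}(r/\mu)$ (so that $F_i=A_{i,U}$) and likewise $\Psi_{\tilde{U}_\mu}(x)=\mu^{\frac{n-2k}{2}+5-n}\Psi_U(x/\mu)$, and then estimate the $\bigO(\cdot)$ remainders of \eqref{GJMS.exp0.1} by $(\mu+|x|)^{6-n}$. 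The paper's own proof is a terse paragraph doing precisely this; your additional comments on the monomial-by-monomial scaling check, the smoothness of the $F_i$ at $r=0$, and the large-$r$ asymptotics are reasonable elaborations of points the paper leaves implicit.
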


\begin{proof}
This is an application of Proposition \ref{GJMS.exp0} with $u(x) =\widetilde{U}_\mu(x)$. Using the expression of $\widetilde{U}_\mu$ given by \eqref{bubble3.1} it is easily seen that the $\bigO( \cdot)$ terms in \eqref{GJMS.exp0.1} can be estimated as $\bigO \Big(\mu^{\frac{n-2k}{2}}(\mu^2 + \mathfrak{c}^{-1}_{n,k}\,|x|^{2})^{-\frac{n-6}{2}}\Big)$. Since $\widetilde{U}_\mu(x) = \mu^{- \frac{n-2k}{2}} U \Big( \frac{x}{\mu} \Big)$, we have 
$$A_{i,\widetilde{U}_\mu}(r) = \mu^{-\frac{n-2k}{2} + 4-2k} A_{i,U}(\frac{r}{\mu}) = \mu^{\frac{n-2k}{2} + 4-n} A_{i,U}(\frac{r}{\mu}),$$
 and we may thus let $F_i(r) = A_{i,U}(r)$ for $1 \le i \le 4$. The behavior at infinity of $F_i$ follows from the explicit expression of $U$ and of $A_{i,U}$. Finally, we again have $\Psi_{\widetilde{U}_\mu}(x) = \mu^{\frac{n-2k}{2} + 5-n} \Psi_U(\frac{x}{\mu})$. The expression of $F_5$ then follows from the expression of $\Psi_U$ in \eqref{defPsiu}. 
\end{proof}

With Proposition~\ref{GJMS.exp1} we may now prove a control on Pohozaev's quadratic form that will be crucially used in the final argument in the proof of Theorem~\ref{theo.main}:

\begin{proposition} \label{GJMS.exp2}
Assume that $2k+1 \le n \le 2k+5$. Let $\mu >0$, $\tilde{U}_\mu$ be given by \eqref{bubble3.1} and let $\mathfrak{g}$ be given by \eqref{mathfrakg}. Let $\iota_{0}:=\inf\limits_{\xi\in M}\iota(M,g_{\xi})$ and  $0 < \delta <\frac{\iota_0}{2}$. There exist $C = C(n,k) >0$ such that 
\begin{equation*}
\begin{aligned}
 & \int_{B(0,\delta)}\(\frac{n-2k}{2}\widetilde{U}_{\mu}+x^{i}\partial_{i}\widetilde{U}_{\mu}\)(P_{\mathfrak{g}}\widetilde{U}_{\mu}-\Delta_{0}^{k}\,\widetilde{U}_{\mu})\,dx \\
 & =  C |\Weyl_{g}(\xi)|_g^{2}\times\left\{\begin{aligned}
  &\mu^{4}\ln(1/\mu) + \bigO(\mu^4) &&\text{if }n=2k+4\\&\mu^{4} + \bigO(\mu^5) &&\text{if }n=2k+5\end{aligned}\right \} + \bigO(\delta \mu^{n-2k}), \\
\end{aligned} 
\end{equation*}
where the constants in the $\bigO(\cdot)$ term are independent of $\mu, \xi, \delta$. 
\end{proposition}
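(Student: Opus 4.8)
The plan is to recognise the Pohozaev quadratic form of a single bubble as (a multiple of) the derivative, with respect to the concentration parameter $\mu$, of the energy of that bubble --- the observation already implemented in the proof of Lemma~\ref{weyl.es} --- and then to invoke the energy expansion of \cite{MazumdarVetois}. The advantage of this route, as highlighted in the introduction, is that it requires no knowledge of the numerical constants $C_{i,j}$ appearing in Propositions~\ref{GJMS.exp0}--\ref{GJMS.exp1}. Concretely, I would fix a smooth cutoff $\chi\colon[0,+\infty)\to[0,1]$ with $\chi\equiv1$ on $[0,1]$ and $\chi\equiv0$ on $[2,+\infty)$ and set $W_\mu=\chi\big(d_{g_\xi}(\xi,\cdot)/\delta\big)\,U_{\xi,\mu}$ on $M$, so that $W_\mu$ is supported in $B_{g_\xi}(\xi,2\delta)\subset B_{g_\xi}(\xi,\iota_0)$ and, in the chart $\exp_\xi^{g_\xi}$, the \emph{radial} function $\widetilde W_\mu:=W_\mu\circ\exp_\xi^{g_\xi}=\chi(|x|/\delta)\,\widetilde{U}_\mu$ agrees with $\widetilde{U}_\mu$ on $B(0,\delta)$. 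The target integral over $B(0,\delta)$ then equals the integral of $\big(\tfrac{n-2k}{2}\widetilde W_\mu+x^i\partial_i\widetilde W_\mu\big)\big(P_{\mathfrak{g}}\widetilde W_\mu-\Delta_0^k\widetilde W_\mu\big)$ over the whole support of $\widetilde W_\mu$, the discrepancy being supported on the annulus $\{\delta\le|x|\le2\delta\}$; because $\widetilde W_\mu$ is radial, Proposition~\ref{GJMS.exp0} (equivalently \eqref{GJMS.exp1.eq}) shows that $P_{\mathfrak{g}}\widetilde W_\mu-\Delta_0^k\widetilde W_\mu$ vanishes to fourth order there, and combined with the pointwise bounds on $\widetilde{U}_\mu$ and its derivatives the discrepancy is $\bigO(\mu^{n-2k}\delta^{2k+4-n})$, i.e. $\bigO(\delta\mu^{n-2k})$ when $n\le2k+3$ and absorbed into the claimed $\bigO(\mu^4)$, resp. $\bigO(\mu^5)$, error term when $n=2k+4$, resp. $n=2k+5$. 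The mismatch $dv_{g_\xi}=(1+\bigO(|x|^N))\,dx$ is negligible by \eqref{conf.2}.

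Next I would exploit the scaling identity $\tfrac{n-2k}{2}\widetilde{U}_\mu+x^i\partial_i\widetilde{U}_\mu=-\mu\,\partial_\mu\widetilde{U}_\mu$ --- valid for $\widetilde W_\mu$ up to a term supported on the truncation annulus which is again $\bigO(\delta\mu^{n-2k})$ after integration --- together with the self-adjointness of $P_{g_\xi}$ on $M$, to relate the above to a $\mu$-derivative; the remaining $\Delta_0^k$-piece is handled exactly as in the passage from \eqref{weyl.es51} to \eqref{weyl.es53} in the proof of Lemma~\ref{weyl.es}, using that $\int_{\R^n}\widetilde W_\mu^{2^{*}_{k}}\,dx=\|U\|^{2^{*}_{k}}_{L^{2^{*}_{k}}(\R^{n})}+\bigO(\mu^n)$ with the expansion differentiable in $\mu$. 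This yields
\begin{equation*}
\int_{B(0,\delta)}\Big(\tfrac{n-2k}{2}\widetilde{U}_{\mu}+x^{i}\partial_{i}\widetilde{U}_{\mu}\Big)\big(P_{\mathfrak{g}}\widetilde{U}_{\mu}-\Delta_{0}^{k}\widetilde{U}_{\mu}\big)\,dx=-\,c_{n,k}\,\mu\,\frac{d}{d\mu}\Big(\int_{M}W_\mu\,P_{g_\xi}W_\mu\,dv_{g_\xi}\Big)+\bigO(\delta\mu^{n-2k})
\end{equation*}
for a positive constant $c_{n,k}$.

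Finally I would insert the energy expansion. Writing $\int_{M}W_\mu P_{g_\xi}W_\mu\,dv_{g_\xi}=\big(\int_{M}W_\mu^{2^{*}_{k}}\,dv_{g_\xi}\big)^{\frac{n-2k}{n}}\,I_{k,g_\xi}(W_\mu)$ with $\int_{M}W_\mu^{2^{*}_{k}}\,dv_{g_\xi}=\|U\|^{2^{*}_{k}}_{L^{2^{*}_{k}}(\R^{n})}+\bigO(\mu^n)$, and inserting the expansion of $I_{k,g_\xi}(W_\mu)$ from \cite{MazumdarVetois} already recalled in the proof of Lemma~\ref{weyl.es} --- whose terms are $C^1$ in $\mu$, with correction $\bigO(\delta\mu^{n-2k})$ when $n<2k+4$, equal to $-\mathcal{C}(n,k)\,|\Weyl_{g}(\xi)|_g^{2}\,\mu^{4}\ln(1/\mu)+\bigO(\mu^{4})$ when $n=2k+4$, and to $-\mathcal{C}(n,k)\,|\Weyl_{g}(\xi)|_g^{2}\,\mu^{4}+\bigO(\mu^{5})$ when $n=2k+5$ --- one differentiates in $\mu$ (noting $\tfrac{d}{d\mu}\|U\|_{L^{2^{*}_{k}}(\R^{n})}^{4k/(n-2k)}=0$) and reads off the statement, with $C=C(n,k)>0$ a suitable positive multiple of $\mathcal{C}(n,k)$. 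The uniformity of all error terms in $\xi$ comes from that of \eqref{conf.2}, \eqref{conf.3}, \eqref{expansion.Pg}, Proposition~\ref{GJMS.exp1}, and the expansion of \cite{MazumdarVetois}.

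The step I expect to be the main obstacle is the error bookkeeping in the two displays above: verifying that every truncation and boundary contribution is genuinely $\bigO(\delta\mu^{n-2k})$ --- so that, after one sends $\delta\to0$ in the application of Theorem~\ref{theo.main}, these terms disappear --- and that for $n=2k+4,2k+5$ none of them carries a $\Weyl$-independent term of order $\mu^4$, resp. $\mu^5$; this is precisely where the radiality of $\widetilde W_\mu$, hence the fourth-order vanishing of Proposition~\ref{GJMS.exp0}, and the $C^1$ character of the energy expansion of \cite{MazumdarVetois} are essential. A more computational alternative would be to argue directly from \eqref{GJMS.exp1.eq}: the fifth-order term $F_5$, a homogeneous polynomial of odd degree times a radial factor, integrates to zero against the radial generator $\tfrac{n-2k}{2}\widetilde{U}_\mu+x^i\partial_i\widetilde{U}_\mu$; the remainder $\bigO\big((\mu^2+|x|^2)^{-(n-6)/2}\big)$ integrates, after the substitution $x=\mu y$, to $\bigO(\mu^{n-2k}\delta^{2k+6-n})\subset\bigO(\delta\mu^{n-2k})$ for $n\le2k+5$; and the fourth-order terms carrying $F_1,\dots,F_4$, once the curvature tensors are contracted with $x^ax^b/r^2$ and $x^ax^bx^cx^d/r^4$ and averaged over spheres, collapse by \eqref{conf.3} --- in particular $\Delta_{g_\xi}\Scal_{g_\xi}(\xi)=\tfrac16|\Weyl_{g_\xi}(\xi)|^2$ --- to a multiple of $|\Weyl_g(\xi)|_g^2$ times a radial integral that grows like $(\delta/\mu)^{2k+4-n}$ for $n\le2k+3$, like $\ln(1/\mu)$ for $n=2k+4$, and converges for $n=2k+5$ --- but this route forces one to track the constants $C_{i,j}$, which is exactly what the energy-derivative argument avoids.
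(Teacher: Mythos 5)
Your plan to read off the Pohozaev quadratic form as a $\mu$-derivative of the bubble's energy is the same observation the paper exploits, but as a proof of the \emph{full} Proposition --- including the $\bigO(\delta\mu^{n-2k})$ error with $\delta$-independent constant and the $|\Weyl_{g}(\xi)|_g^2$-factorisation of the inner errors --- it has a gap that you flag as the ``main obstacle'' but do not close. Your truncation discrepancy on the annulus $\{\delta\le|x|\le 2\delta\}$ is $\bigO(\mu^{n-2k}\delta^{2k+4-n})$, which is $\bigO(\delta\mu^{n-2k})$ only for $n\le 2k+3$; for $n=2k+4$ it is $\bigO(\mu^4)$ and for $n=2k+5$ it is $\bigO(\mu^5/\delta)$. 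You cannot simply ``absorb'' these into $C|\Weyl_g(\xi)|_g^2\bigO(\mu^4)$, resp.\ $C|\Weyl_g(\xi)|_g^2\bigO(\mu^5)$, without first showing that they carry a factor of $|\Weyl_g(\xi)|_g^2$, which you do not do. More fundamentally, the error in the Mazumdar--V\'etois expansion \eqref{DL.Mazumdar.Vetois} --- $\bigO(\mu^4)$ for $n=2k+4$, $\smallo(\mu^4)$ for $n=2k+5$, $\bigO(\mu^{n-2k})$ below --- sits \emph{outside} the $|\Weyl_g(\xi)|^2$ factor, so after applying $\mu\,\partial_\mu$ the energy-derivative route alone can only yield $\int=C|\Weyl|^2\mu^4\ln(1/\mu)+\bigO(\mu^4)$ rather than the stated $C|\Weyl|^2\big(\mu^4\ln(1/\mu)+\bigO(\mu^4)\big)+\bigO(\delta\mu^{n-2k})$. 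That weaker form is not enough for the application in the proof of Theorem~\ref{theo.main}: at \eqref{conclusion.2}--\eqref{conclusion.4} the inner error must be multiplied by $|\Weyl_g(x_{1,\alpha})|^2\to0$ so as to become $\smallo(\mu^{n-2k})$ for each fixed $\delta$, and only the outer $\bigO(\delta\mu^{n-2k})$ may survive into the final $\delta\to0$ limit.

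The paper resolves this with a two-step structure that you do not reproduce. It first computes the target integral over $B(0,\delta)$ \emph{directly} from \eqref{GJMS.exp1.eq}, with no cutoff at all: $F_5$ integrates to zero by oddness, the sixth-order remainder $\bigO\big((\mu^2+|x|^2)^{-(n-6)/2}\big)$ integrates to $\bigO(\mu^{n-2k}\delta^{2k+6-n})\subset\bigO(\delta\mu^{n-2k})$ for $n\le2k+5$, and the terms carrying $F_1,\dots,F_4$, after sphere-averaging and invoking \eqref{conf.3}, collapse to $C\,|\Weyl_g(\xi)|_g^2$ times an explicit radial integral with $C=C(n,k)$ \emph{undetermined}. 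This is \eqref{GJMS.exp2.2}, already in the claimed form. Only afterwards is the cutoff $W_{\xi,\mu}$ introduced and $-\tfrac12\mu\,\partial_\mu$ of its energy formed; at that stage a $\delta$-dependent $\bigO(\mu^{n-2k})$ truncation error is harmless because the only thing extracted is the sign $C>0$ from $\mathcal{C}(n,k)>0$. Your ``computational alternative'' is therefore not an alternative one may discard: it is the indispensable first half of the paper's argument, and your objection that it ``forces one to track the constants $C_{i,j}$'' is off the mark --- keeping $C$ undetermined in that step, and fixing only its sign via the energy derivative, is precisely what lets the argument go through without ever computing a $C_{i,j}$.
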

It is implicit in the statement of Proposition \ref{GJMS.exp2} that the term inside the brackets vanishes when $2k+1 \le n \le 2k+3$.

\begin{proof}
We first assume that $2k+1 \le n \le 2k+3$. Proposition~\ref{GJMS.exp1} shows that 
$$\begin{aligned} (P_{\mathfrak{g}}\widetilde{U}_{\mu}-\Delta_{0}^{k}\,\widetilde{U}_{\mu}\big)(x)  = \bigO\(\frac{\mu^{\frac{n-2k}{2}}}{\(\mu+|x|\)^{n-4}}\),
\end{aligned} $$
where the constant in the $\bigO(\cdot)$ term is independent of $\mu, \xi$ and $\delta$. As consequence straightforward computations shows that
\begin{equation*}
\begin{aligned}
 \int_{B(0,\delta)}\(\frac{n-2k}{2}\widetilde{U}_{\mu}+x^{i}\partial_{i}\widetilde{U}_{\mu}\)(P_{\mathfrak{g}}\widetilde{U}_{\mu}-\Delta_{0}^{k}\,\widetilde{U}_{\mu})\,dx =  \bigO( \delta \mu^{n-2k} ).
\end{aligned} 
\end{equation*}
We now assume that $2k+4 \le n \le 2k+5$. We again use Proposition~\ref{GJMS.exp1} to estimate $P_{\mathfrak{g}}\widetilde{U}_{\mu}-\Delta_{0}^{k}\,\widetilde{U}_{\mu}$. First, since $n \le 2k+5$ and $|\frac{n-2k}{2}\widetilde{U}_{\mu}+x^{i}\partial_{i}\widetilde{U}_{\mu}| \lesssim \widetilde{U}_\mu$ in $\R^n$, straightforward computations show that 
$$  \int_{B(0,\delta)}\left| \frac{n-2k}{2}\widetilde{U}_{\mu}+x^{i}\partial_{i}\widetilde{U}_{\mu}\right| \frac{\mu^{\frac{n-2k}{2}}}{(\mu^2 + \mathfrak{c}^{-1}_{n,k}\,|x|^{2})^{\frac{n-6}{2}}} \, dx = \bigO(\delta \mu^{n-2k}). $$ 
We now observe that $\frac{n-2k}{2}\widetilde{U}_{\mu}+x^{i}\partial_{i}\widetilde{U}_{\mu}$ is a radial function since $\widetilde{U}_{\mu}$ is itself radial. As a consequence, and since $F_5$ is odd in $x$ since $\Psi$ is of odd degree, we have 
$$ \int_{B(0,\delta)}\(\frac{n-2k}{2}\widetilde{U}_{\mu}+x^{i}\partial_{i}\widetilde{U}_{\mu}\)\mu^{5-n} F_5\Big(\frac{x}{\mu}\Big) \, dx = 0. $$
We now integrate the remaining  terms in \eqref{GJMS.exp1.eq}. These terms are the product of a radial function and of a homogeneous polynomial of order $2$ or $4$. A simple antisymmetry argument shows that they are traced after integration: we thus obtain, for instance, 
$$ \begin{aligned} 
& \int_{B(0,\delta)}\(\frac{n-2k}{2}\widetilde{U}_{\mu}+x^{i}\partial_{i}\widetilde{U}_{\mu}\) F_{2}\Big(\frac{r}{\mu}\Big)  (S_{g_\xi})_{;ab}(\xi) \frac{x^a x^b}{r^2} \\
& = \frac{1}{n}  \Delta_{g_{\xi}} \Scal_{g_{\xi}}\(\xi\)  \times\left\{\begin{aligned}
 &c_2 \omega_3 \mu^{4}\ln(1/\mu) + \bigO(\mu^4) &&\text{if }n=2k+4\\&I \mu^{4} + \bigO(\mu^5) &&\text{if }n=2k+5\end{aligned}\right \}, 
\end{aligned} $$ 
where $F_2(r) \sim c_2 r^{4-n}$ as $r \to + \infty$ when $n  = 2k+4$ and where we have let $I = \int_{\R^n}\(\frac{n-2k}{2}U +x^{i} \partial_{i} U \)F_2(r)\, dx$ when $n = 2k+5$. The other terms involving $F_3$ and $F_4$ are computed in the same way. Since by  \eqref{conf.3} we have $ \Delta_{g_{\xi}} \Scal_{g_{\xi}}\(\xi\)=\frac{1}{6}|\Weyl_{g}\(\xi\)|_{g}^2$ and since $\Weyl_{g_\xi}$ is totally traceless we obtain in the end that there exists $C\in \R$ such that 
\begin{equation} \label{GJMS.exp2.2}
\begin{aligned}
 & \int_{B(0,\delta)}\(\frac{n-2k}{2}\widetilde{U}_{\mu}+x^{i}\partial_{i}\widetilde{U}_{\mu}\)(P_{\mathfrak{g}}\widetilde{U}_{\mu}-\Delta_{0}^{k}\,\widetilde{U}_{\mu})\,dx =  \bigO(\delta \mu^{n-2k})  \\
 &  + C |\Weyl_{g}(\xi)|_g^{2}\times\left\{\begin{aligned}
 &\mu^{4}\ln(1/\mu) + \bigO(\mu^4) &&\text{if }n=2k+4\\&\mu^{4} + \bigO(\mu^5) &&\text{if }n=2k+5\end{aligned}\right \} , \\
\end{aligned} 
\end{equation}
where as before the constants in the $\bigO(\cdot)$ terms are independent of $\mu, \xi$ and $\delta$. We now prove that $C$ in \eqref{GJMS.exp2.2} is positive. We use the same arguments than in the proof of \eqref{weyl.es6}. We define 
$$\begin{aligned}
 W_{\xi, \mu}\(x\)&  = \chi\(d_{g_\xi}\(\xi, x\)\) U_{\xi, \mu}\(x\) \quad &\text{ for } x \in M, \\
 \widetilde{W}_{\mu}\(x\)&  = \chi\(|x|\) \widetilde{U}_\mu(x)  \quad &\text{ for } x \in \R^n, 
  \end{aligned}$$ 
where $U_{\xi, \mu}\(x\)$ is as in \eqref{bubble3},  and  where  $\chi:[0,+\infty)\to[0,1]$ is a smooth cutoff function such that $\chi\equiv1$ in $\[0,\delta\]$ and $\chi\equiv0$ in $\[2 \delta ,+\infty\)$. We have $W_{\xi, \mu} \big(\exp_{\xi}^{g_\xi}(x) \big) = \tilde{W}_\mu(x)$ for any $x \in \R^n$. Since $  \tilde{W}_\mu =  \tilde{U}_\mu$ in $B(0,\delta)$ straightforward computations show that 
\begin{equation}  \label{GJMS.exp2.3}
\begin{aligned}
 & \int_{\R^n}\(\frac{n-2k}{2}\widetilde{W}_{\mu}+x^{i}\partial_{i}\widetilde{W}_{\mu}\)(P_{\mathfrak{g}}\widetilde{W}_{\mu}-\Delta_{0}^{k}\,\widetilde{W}_{\mu})\,dx \\
& = \int_{B(0,\delta)}\(\frac{n-2k}{2}\widetilde{U}_{\mu}+x^{i}\partial_{i}\widetilde{U}_{\mu}\)(P_{\mathfrak{g}}\widetilde{U}_{\mu}-\Delta_{0}^{k}\,\widetilde{U}_{\mu})\,dx\\
& + \bigO( \mu^{n-2k}). \\
\end{aligned} 
\end{equation} 
We compute independently the first term in \eqref{GJMS.exp2.3}. Observe first that 
$$\mu\frac{\partial}{\partial \mu} \widetilde{W}_{\mu}(x)=-\(\dfrac{n-2k}{2}\,\widetilde{W}_{\mu}(x)+x^{i}\partial_{i}\widetilde{W}_{\mu}(x)\)$$
for any $x \in \R^n$. As a consequence, 
\begin{equation} \label{GJMS.exp2.4} 
\begin{aligned} 
 \int_{\R^n} \(\dfrac{n-2k}{2}\,\widetilde{W}_{\mu}+x^{i}\partial_{i}\widetilde{W}_{\mu}\) \Delta_0^k \widetilde{W}_{\mu} \, dx &= - \mu\frac{\partial}{\partial \mu} \int_{\R^n} \big| \Delta_0^{\frac{k}{2}} \widetilde{W}_{\mu} \big|_{\xi}^2 \, dx \\
& =  \bigO ( \mu^{n-2k} ),  
\end {aligned} 
\end{equation}
where the last line follows from the equality
$$\int_{\R^n}\big| \Delta_0^{\frac{k}{2}} \widetilde{W}_{\mu} \big|_{\xi}^2 \, dx  = \int_{\R^n}\big| \Delta_0^{\frac{k}{2}} \widetilde{U}_{\mu} \big|_{\xi}^2 \, dx + \bigO \( \mu^{n-2k} \) = \int_{\R^n} \big| \Delta_0^{\frac{k}{2}} U \big|_{\xi}^2 + \bigO ( \mu^{n-2k} )$$
where $U$ is as in \eqref{bubble1}, and where the latter expansion can be differentiated in $\mu$. Using \eqref{GJMS.exp2.4} and the self-adjointness of $P_{\mathfrak{g}}$ we can thus write
\begin{equation} \label{GJMS.exp2.5}
\begin{aligned}
& \int_{\R^n}\(\frac{n-2k}{2}\widetilde{W}_{\mu}+x^{i}\partial_{i}\widetilde{W}_{\mu}\)(P_{\mathfrak{g}}\widetilde{W}_{\mu}-\Delta_{0}^{k}\,\widetilde{W}_{\mu})\,dx \\
 & =  \int_{\R^n}\(\frac{n-2k}{2}\widetilde{W}_{\mu}+x^{i}\partial_{i}\widetilde{W}_{\mu}\)P_{\mathfrak{g}}\widetilde{W}_{\mu} \, dx + \bigO ( \mu^{n-2k} ) \\
 & = - \frac{1}{2}\, \mu\frac{d}{d\mu} \( \int_{\R^n} \widetilde{W}_{\mu}P_{\mathfrak{g}}\widetilde{W}_{\mu} \, dx \) +\bigO( \mu^{n-2k}),\\
  & = - \frac{1}{2}\, \mu\frac{d}{d\mu} \( \int_{M} W_{\xi, \mu} P_{g_{\xi}}W_{\xi, \mu} \, dv_{g_\xi} \) +\bigO( \mu^{n-2k}),\\
\end{aligned} 
\end{equation}
where the last line follows from \eqref{conf.2}. We have 
\begin{equation} \label{GJMS.exp2.6}
\begin{aligned}
\int_{M}W_{\xi, \mu} P_{g_{\xi}}W_{\xi, \mu} \, dv_{g_\xi} &=\(\int_{M}W_{\xi, \mu}^{\,2^{*}_{k}}\,dv_{g_{\xi}}\)^{\frac{n-2k}{n}}I_{k,g_{\xi}}(W_{\xi, \mu})\\
&=\(\|U\|^{2^{*}_{k}}_{L^{2^{*}_{k}}(\R^{n})}+\bigO(\mu^{n})\)I_{k,g_{\xi}}(W_{\xi, \mu}),
\end{aligned}
\end{equation}
where we have let, for $u\in C^{2k}(M), u\not\equiv0$:
\begin{align*}
I_{k,g_\xi}(u):=\frac{\displaystyle{\int_{M}u\,P_{g_\xi}u\,dv_{g_\xi}}}{\(\displaystyle{\int_{M}|u|^{\,2^{*}_{k}}\,dv_{g_\xi}}\)^{\frac{n-2k}{n}}}. 
\end{align*}
It was recently proven in \cite{MazumdarVetois} that
\begin{align} \label{DL.Mazumdar.Vetois}
&I_{k,g_{\xi}}(W_{\xi, \mu})=\Vert U \Vert_{L^{2^*_k}(\R^n)}^{\frac{4k}{n-2k}} \notag\\
&~-\,\mathcal{C}(n,k)\times\left\{\begin{aligned}&|\Weyl_{g}(\xi)|^{2}\mu^{4}\ln(1/\mu)+\bigO(\mu^{4})&&\text{if }n=2k+4\\&|\Weyl_{g}(\xi)|^{2}\mu^{4}+\smallo(\mu^{4})&&\text{if }n=2k+5,\end{aligned}\right.
\end{align}
for some positive constant $\mathcal{C}(n,k)$. Differentiating the latter with respect to $\mu$ and combining the latter with \eqref{GJMS.exp2.2}, \eqref{GJMS.exp2.3}, \eqref{GJMS.exp2.5} and \eqref{GJMS.exp2.6} shows that the constant $C$ in  \eqref{GJMS.exp2.2} is positive and concludes the proof of~Proposition \ref{GJMS.exp2}.
 \end{proof}
 
 We remark that in Proposition  \ref{GJMS.exp1} we did not have to compute the exact expression of the terms in \eqref{GJMS.exp1.eq} -- or, equivalently, the exact numerical value of the constants appearing in \eqref{GJMS.exp0.1}. This is because the expansion \eqref{DL.Mazumdar.Vetois} had already been proven in \cite{MazumdarVetois}, and Proposition~\ref{GJMS.exp2} solely follows from an identification of the coefficients in the expansion in powers of $\mu$ as $\mu \to 0$. The important feature of \eqref{GJMS.exp1.eq}, as was already observed in \cite{Marques}, is the antisymmetry of the fifth-order term $R$ which ensures that remainder terms in \eqref{GJMS.exp1.eq} can be computed at the desired precision and yield $\bigO(\delta \mu^{n-2k})$. This observation is crucial in this paper and greatly simplifies the computations in Proposition \ref{GJMS.exp0}.
\smallskip
 
\section{The Green's Function for the GJMS operator}\label{sec.green.expan}

In this section, we prove some properties of the Green's function $G_g$ of $P_g$. Under the assumption \eqref{positivity}, $G_g$ is well-defined. We recall that, for $\xi \in M$, $g_\xi := \Lambda_\xi^{\frac{4}{n-2k}} g$ denotes the conformal metric defined in \eqref{conf.1}. Using \eqref{conf.inv.Pg} it is easily seen that, for any $\xi \in M$ and for any $x \neq y $ in $M$, we have 
$$ G_{g_\xi}(x,y) = \Lambda_\xi(x)^{-1} \Lambda_\xi(y)^{-1} G_g(x,y). $$ 
In the following, if $f$ is a smooth function in $\R^n\backslash \{0\}$ and $p,q$ are nonnegative integers, we use the notation $f = \bigO^{(q)}(r^{p})$ to indicate that $f$ satisfies $|\nabla^\ell f(x)|_g \lesssim r^{p-\ell}$ for all $0 \le \ell \le q$ and $x \neq 0$. As before, we let $r = |x|$. We first state the following global result: 

\begin{proposition}
Assume that \eqref{positivity} holds. Then for any $x \neq y$ in $M$ and any $\xi \in M$, we have
\begin{equation}\label{bounds.Green}
\begin{aligned}
\frac{1}{C} d_g(x,y)^{2k-n} & \le G_{g}(x,y) \le C d_g(x,y)^{2k-n} \quad \text{ and } \\
\big| \nabla^{\ell} G_{g}(x,y) \big|_{g}& \le C_\ell d_g(x,y)^{2k-\ell-n}.\\
\end{aligned} 
\end{equation}
Furthermore we have 
\begin{equation} \label{expansion.Green}
\begin{aligned}
G_{g}\big(\exp_{\xi}^{g}x, \exp_{\xi}^{g}(y) \big) & = \frac{b_{n,k}}{|x-y|^{n-2k}} \Big( 1 + \bigO^{(2k-1)}(|x-y|) \Big) \\
\end{aligned} 
\end{equation}
where $b_{n,k}$ is as in \eqref{def.bnk}. Here $C$, $C_{\ell}$ for $\ell \ge 1$ and the constants in the $\bigO(\cdot )$ term are independent of $x,y$ and $\xi$. 
\end{proposition}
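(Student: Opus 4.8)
The plan is to construct a parametrix for $P_g$ near the diagonal, sharpen it by iteration together with the Giraud-type estimate of Appendix~\ref{sec.giraud}, and then deduce the global bounds \eqref{bounds.Green} from the resulting local expansion, from \eqref{positivity}, and from the compactness of $M$. Fix a smooth cutoff $\eta$ equal to $1$ near $0$ and supported in a small ball. For $\xi\in M$, working in $g$-geodesic normal coordinates at $\xi$, set $\Gamma^{(0)}_\xi(x):=b_{n,k}\,\eta(x)\,|x|^{2k-n}$, with $b_{n,k}$ as in \eqref{def.bnk}. Since $\Delta_0^k\big(b_{n,k}|x|^{2k-n}\big)$ is the Dirac mass at $0$, and since the coefficients of $P_g$ in these coordinates are controlled by the Cartan expansion of $g$ through \eqref{expansion.Pg}, one gets $P_g\Gamma^{(0)}_\xi=\delta_\xi+E^{(0)}_\xi$, where $E^{(0)}_\xi$ is smooth away from $\xi$, supported near $\xi$, and satisfies $|\nabla^\ell E^{(0)}_\xi(x)|_g\lesssim d_g(\xi,x)^{2-n-\ell}$ for every $\ell\ge 0$, uniformly in $\xi$: indeed, applying \eqref{expansion.Pg} to $|x|^{2k-n}$, the most singular resulting term is of size $|x|^{2-n}$.

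Next we iterate: set $\Gamma^{(j)}_\xi(x):=\Gamma^{(j-1)}_\xi(x)-\int_M\Gamma^{(0)}_z(x)\,E^{(j-1)}_\xi(z)\,dv_g(z)$, so that $P_g\Gamma^{(j)}_\xi=\delta_\xi+E^{(j)}_\xi$ with $E^{(j)}_\xi(x)=-\int_M E^{(0)}_z(x)\,E^{(j-1)}_\xi(z)\,dv_g(z)$. Iterating Lemma~\ref{lemma.Giraud}, each error gains two orders of smallness, $|E^{(j)}_\xi(x)|\lesssim d_g(\xi,x)^{2(j+1)-n}$, while the $j$-th correction $\Gamma^{(j)}_\xi-\Gamma^{(j-1)}_\xi$ has size $d_g(\xi,\cdot)^{2k+2j-n}$, with matching control of its first $2k-1$ derivatives obtained from interior elliptic estimates applied to $P_g\big(\Gamma^{(j)}_\xi-\Gamma^{(j-1)}_\xi\big)$ --- which is itself controlled by Lemma~\ref{lemma.Giraud}. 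In the low dimensions just above $2k$ some of these estimates become logarithmic or merely bounded, but all of them remain $\bigO^{(2k-1)}\big(d_g(\xi,\cdot)^{2k+1-n}\big)$. Choosing $j_0$ with $2(j_0+1)>n$, the final error $E^{(j_0)}_\xi$ is Hölder continuous on $M$, supported near $\xi$, with $\|E^{(j_0)}_\xi\|_{C^{0,\alpha}(M)}\lesssim 1$; all these bounds are uniform in $\xi$ by compactness of $M$.

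Since $\mathrm{Ker}(P_g)=\{0\}$ and $P_g$ is self-adjoint and elliptic, it is invertible with $P_g^{-1}\colon C^{0,\alpha}(M)\to C^{2k,\alpha}(M)$ bounded. Put $R_\xi:=-P_g^{-1}\big(E^{(j_0)}_\xi\big)$; then $P_g\big(\Gamma^{(j_0)}_\xi+R_\xi\big)=\delta_\xi$, so $G_g(\cdot,\xi)=\Gamma^{(j_0)}_\xi+R_\xi$ by uniqueness of the Green's function, and $\|R_\xi\|_{C^{2k}(M)}\lesssim 1$ uniformly in $\xi$. Combining the expression of $\Gamma^{(j_0)}_\xi$ with the bound on $R_\xi$, and using $n\ge 2k+1$ to absorb the bounded function $R_\xi$ as well as the corrections into a remainder of order $\bigO^{(2k-1)}(|x|^{2k-n+1})$, we obtain $G_g\big(\exp_\xi^g x,\xi\big)=b_{n,k}|x|^{2k-n}\big(1+\bigO^{(2k-1)}(|x|)\big)$ for $x$ in a fixed small ball. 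Applying this with $\xi$ replaced by $\exp_\xi^g y$, and comparing $d_g\big(\exp_\xi^g x,\exp_\xi^g y\big)$ with $|x-y|$ --- which agree to leading order since $g$ is Euclidean to second order in normal coordinates --- yields \eqref{expansion.Green}. Finally \eqref{bounds.Green} follows: when $d_g(x,y)$ is bounded below, $G_g$ is smooth, positive by \eqref{positivity}, and bounded on a compact set, so the estimates hold trivially there; near the diagonal the two-sided pointwise bound follows from \eqref{expansion.Green} and the comparability of $d_g(x,y)$ with $|x-y|$, the lower bound using $1+\bigO(|x-y|)\ge\tfrac12$ for $d_g(x,y)$ small; and the derivative bounds hold for every order by applying rescaled interior elliptic estimates to $P_gG_g(\cdot,y)=0$ on the ball $B_g\big(x,\tfrac12 d_g(x,y)\big)$, which does not meet $y$, together with the sup bound on $G_g$ just obtained.

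The technical heart of the argument is the parametrix iteration of the first two paragraphs: legitimately applying \eqref{expansion.Pg} to the singular profile $|x|^{2k-n}$, and carefully tracking --- through Lemma~\ref{lemma.Giraud} --- how the singularity exponents of the successive errors and corrections improve, including the control of derivatives and the borderline logarithmic and bounded cases that appear in the dimensions $n$ just above $2k$, all uniformly in the base point $\xi$. A secondary and more routine point is the passage from the one-point expansion, where the pole sits at a single argument, to the two-point formulation of \eqref{expansion.Green}, which requires comparing geodesic and coordinate distances.
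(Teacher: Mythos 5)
Your proposal is correct, and it follows essentially the same parametrix strategy that the paper invokes: starting from the zeroth-order profile $b_{n,k}\,d_g(\xi,\cdot)^{2k-n}$, using \eqref{expansion.Pg} to estimate the initial error as $\bigO(d_g(\xi,\cdot)^{2-n})$, iterating with the Giraud convolution estimate until the remainder is H\"older continuous, and then inverting $P_g$ (via $\mathrm{Ker}\,P_g=\{0\}$) to absorb it. The only difference is that you spell out the iteration and the passage to the two-variable formulation explicitly, whereas the paper outsources this to the references Aubin, Robert (Dirichlet), and Theorem C.1 of Robert (polyharmonic); your endgame for the lower bound and the derivative bounds (compactness of $\{d_g\ge\delta\}$ plus \eqref{positivity} far from the diagonal, and rescaled interior elliptic estimates on $B_g(x,\tfrac12 d_g(x,y))$ near it) matches the paper's one-sentence argument.
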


\begin{proof}
Let, for $x \in \R^n$, $G_0(r) = b_{n,k} r^{2k-n}$, so that $\Delta_0^k G_0 = \delta_0$. Let, for $x \neq y$ in $M$, $H_0(x,y) = b_{n,k} d_g(x,y)^{2k-n}$. It follows from \eqref{expansion.Pg} that 
$$ P_g H_0(x, \cdot) = \delta_x + \bigO( d_g(x,y)^{2-n} ). $$ 
The local expression \eqref{expansion.Green} as well as the upper bounds in \eqref{bounds.Green} now follow from the iterative construction of $G_g$, which can be found e.g. in \cite{Aub, RobDirichlet} for the second-order case. For the polyharmonic case, see Theorem C.1 in \cite{RobPoly1}. The lower-bound in \eqref{bounds.Green} follows from \eqref{expansion.Green} when $d_g(x,y)$ is small enough, and is a consequence of \eqref{positivity} when $d_g(x,y)$ is larger.  
\end{proof}
 
We now prove refined expansions of $G_g$ in conformal normal coordinates: 

\begin{proposition}\label{expansion.Green.local}
Let $\xi \in M$, $x \in \R^n$. 

\begin{itemize}
\item Assume that $2k+1 \le n \le 2k+3$ or that $(M,g)$ is locally conformally flat. There exists $A_\xi \in \R$ such that 
\begin{equation*} 
\begin{aligned}
G_{g_\xi}\big(\xi, \exp_{\xi}^{g_\xi}(x) \big) & = \frac{b_{n,k}}{|x|^{n-2k}} + A_\xi + \bigO^{(2k-1)}(|x|) 
\end{aligned} 
\end{equation*}
as $x \to 0$. 
\item Assume that $\Weyl_{g}(\xi) = 0$ and $2k+4 \le n \le 2k+5$ . There exists $A_\xi \in \R$  and there exist homogeneous polynomials $\psi^{(4)}, \psi^{(5)}$ of respective degrees $4$ and $5$ such that 
\begin{equation*} 
\begin{aligned}
\hspace{1.5cm} G_{g_\xi}\big(\xi, \exp_{\xi}^{g_\xi}(x) \big) & = \frac{b_{n,k}}{|x|^{n-2k}}\Big( 1 + \psi^{(4)}(x) + \psi^{(5)}(x) \Big) + A_\xi + \bigO^{(2k-1)}(|x|) 
\end{aligned} 
\end{equation*}
as $x \to 0$. In addition, we have $\int_{\partial B(0,1)} \psi^{(4)} d \sigma = \int_{\partial B(0,1)} \psi^{(5)} d \sigma = 0$. 
\end{itemize}
As before, the constants in the $\bigO(\cdot )$ terms, are positive constants independent of $x,y$ and $\xi$. 
\end{proposition}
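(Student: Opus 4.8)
The plan is to run the classical iterative parametrix construction for the Green's function in conformal normal coordinates at $\xi$, fed by the fourth-order expansion of $P_{\mathfrak{g}}-\Delta_0^k$ from Proposition~\ref{GJMS.exp0}. Write $\mathfrak{g}=\big(\exp_{\xi}^{g_\xi}\big)^*g_\xi$ and $\mathcal{G}_\xi(x):=G_{g_\xi}\big(\xi,\exp_{\xi}^{g_\xi}(x)\big)$; since $\det\mathfrak{g}(0)=1$ by \eqref{conf.2}, on a small ball $\mathcal{G}_\xi$ solves $P_{\mathfrak{g}}\mathcal{G}_\xi=\delta_0$. Set $\Gamma_0(x)=b_{n,k}|x|^{2k-n}=G_0(x)$, so $\Delta_0^k\Gamma_0=\delta_0$, and define inductively $\Gamma_{j+1}$ as a local solution of $\Delta_0^k\Gamma_{j+1}=-(P_{\mathfrak{g}}-\Delta_0^k)\Gamma_j$ obtained by convolution with $G_0$, choosing the added polyharmonic functions conveniently. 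Each $\Gamma_j$ is a finite sum of products of a radial function by a homogeneous polynomial, so Proposition~\ref{GJMS.exp0} applies at every step. Since $P_{\mathfrak{g}}-\Delta_0^k$ followed by convolution against $G_0$ raises the minimal homogeneity by at least $2$, for $J$ large $\sum_{j=0}^J\Gamma_j$ is a parametrix for $P_{\mathfrak{g}}$ with a $C^{2k}$ error, whence $\mathcal{G}_\xi=\sum_{j=0}^J\Gamma_j+\rho_\xi$ with $\rho_\xi$ of class $C^{2k}$ near $0$ by interior elliptic regularity. The bounds \eqref{bounds.Green}, \eqref{expansion.Green} guarantee that this series captures the singular part of $\mathcal{G}_\xi$, and $A_\xi$ will be $\rho_\xi(0)$ together with all the accumulated $\bigO(1)$ contributions of the $\Gamma_j$'s.

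For the first step, Proposition~\ref{GJMS.exp0} with $u=G_0$, using $\Delta_0^{k-j}G_0=c_j\,|x|^{2k-2j-n}$, gives
\begin{equation} \label{plan.R0}
\big(P_{\mathfrak{g}}-\Delta_0^k\big)G_0 = \mathcal{A}_\xi + \mathcal{B}_\xi + \bigO^{(2k-1)}\!\big(r^{6-n}\big),
\end{equation}
where $\mathcal{A}_\xi$, from the fourth-order block of \eqref{GJMS.exp0.1}, is a finite sum of terms $c\,r^{4-n}Y_\ell(x/r)$ with $\ell\in\{0,2,4\}$ whose coefficients involve $|\Weyl_{g_\xi}(\xi)|_{g_\xi}^2$, $\Weyl_{g_\xi}(\xi)\ast\Weyl_{g_\xi}(\xi)$, $(\Scal_{g_\xi})_{;ab}(\xi)$ and $\Delta_{g_\xi}(\Ricci_{g_\xi})_{ab}(\xi)$, and $\mathcal{B}_\xi=\Psi_{G_0}$ is, by \eqref{defPsiu}, homogeneous of degree $5-n$ and odd under $x\mapsto-x$. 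Solving $\Delta_0^k h=r^{4-n}Y_\ell$ produces a homogeneous solution of degree $4-(n-2k)$, and solving against the odd part produces degree $5-(n-2k)$, unless these exponents are resonant for $\Delta_0^k$ in the corresponding spherical-harmonic sectors. When $(M,g)$ is locally conformally flat, $g_\xi$ is Euclidean near $\xi$ up to an error of arbitrarily high order, so $\mathcal{A}_\xi,\mathcal{B}_\xi$ are negligible and $\mathcal{G}_\xi-\Gamma_0$ solves an elliptic equation with smooth right-hand side near $0$: Taylor's formula with $A_\xi:=(\mathcal{G}_\xi-\Gamma_0)(0)$ gives the claim. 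When $2k+1\le n\le2k+3$, one has $4-(n-2k)\ge1$ and $5-(n-2k)\ge2$, so $\Gamma_1$ contributes only a constant (entering $A_\xi$) plus a $\bigO^{(2k-1)}(|x|)$ term — the only resonances occurring are at homogeneity $\ge2$, producing admissible $r^p\log r$ remainders with $p\ge2$ — while convolving the remainder of \eqref{plan.R0} against $G_0$ gives, since the Giraud exponents satisfy $a+b<n$ with $a=n-2k$, $b=n-6$, a continuous function that adds its value at $0$ to $A_\xi$ and is $\bigO^{(2k-1)}(r^{6-(n-2k)})$ otherwise; finitely many iterations then close the argument.

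The substantial case is $2k+4\le n\le2k+5$ with $\Weyl_g(\xi)=0$. Then $\Weyl_{g_\xi}(\xi)=0$, so the $|\Weyl|^2$ and $\Weyl\ast\Weyl$ pieces of $\mathcal{A}_\xi$ vanish; moreover, by \eqref{conf.3} — in particular $\Delta_{g_\xi}\Scal_{g_\xi}(\xi)=\tfrac16|\Weyl_{g_\xi}(\xi)|_{g_\xi}^2$ and the traced second Bianchi identity — the $Y_0$-components of $(\Scal_{g_\xi})_{;ab}(\xi)\frac{x^a x^b}{r^2}$ and of $\Delta_{g_\xi}(\Ricci_{g_\xi})_{ab}(\xi)\frac{x^a x^b}{r^2}$ also vanish. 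Hence $\mathcal{A}_\xi$ is supported on $\ell\in\{2,4\}$ and $\mathcal{B}_\xi$ is odd, so $\int_{\partial B(0,1)}\mathcal{A}_\xi\,d\sigma=\int_{\partial B(0,1)}\mathcal{B}_\xi\,d\sigma=0$. Solving $\Delta_0^k\Gamma_1=-\mathcal{A}_\xi-\mathcal{B}_\xi$ then gives $\Gamma_1=b_{n,k}|x|^{2k-n}\big(\psi^{(4)}(x)+\psi^{(5)}(x)\big)+(\mathrm{const})+\bigO^{(2k-1)}(|x|)$, where $\psi^{(4)},\psi^{(5)}$ are homogeneous polynomials of the degrees forced by $|x|^{2k-n}\psi^{(i)}$ being homogeneous of degree $i-(n-2k)$, and their vanishing spherical averages are inherited, sector by sector, from those of $\mathcal{A}_\xi$ and $\mathcal{B}_\xi$; a final finite iteration, the Giraud estimate for the remainder, and the collection of all bounded terms into $A_\xi$ then conclude.

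The main obstacle is exactly this last case, and specifically the exclusion of inadmissible logarithmic terms. Two mechanisms could create one: a $Y_0$-resonance of $\mathcal{A}_\xi$ when $n=2k+4$ (target homogeneity $0$), which is precisely why $\Weyl_g(\xi)=0$ must be assumed — it removes the offending $Y_0$-component; and a $Y_1$- or $Y_3$-resonance of $\mathcal{B}_\xi=\Psi_{G_0}$ when $n=2k+4$ (target homogeneity $1$, whose would-be homogeneous solutions are polyharmonic), which forces one to verify that the degree-$1$ and degree-$3$ polynomials appearing in \eqref{defPsiu} have, in conformal normal coordinates, coefficients that vanish at $\xi$ — they are symmetrized covariant derivatives of $\Ricci_{g_\xi}$, or covariant derivatives of $\Scal_{g_\xi}$, all controlled by \eqref{conf.3} — so that $\Psi_{G_0}$ reduces to its genuine degree-$5$ part, governed by $\nabla\Weyl_{g_\xi}(\xi)$. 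Carrying out this bookkeeping — identifying the surviving curvature invariants, checking they are mean-zero, and propagating the $C^{2k-1}$-control of all error terms (obtained by differentiating the expansion of Proposition~\ref{GJMS.exp0}) through the finitely many iteration steps — is the technical core; it is the natural extension to arbitrary $k$ of the Green's function computations of \cite{Marques} ($k=1$) and \cite{LiXiong} ($k=2$).
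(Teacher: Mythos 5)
Your construction follows the same plan as the paper's: use Proposition~\ref{GJMS.exp0} to expand $(P_{\mathfrak{g}}-\Delta_0^k)G_0$, solve $\Delta_0^k$ on homogeneous functions to absorb the singular correction, and bootstrap elliptic regularity. Two things are done differently in the paper. First, no iteration is needed: after subtracting $r^{2k-n}\psi^{(4)}+r^{2k-n}\psi^{(5)}$ in a single step, the remainder $H$ satisfies $P_{\mathfrak{g}}H=\bigO(r^{6-n})$, and since $6-n\ge 1-2k$ for $n\le 2k+5$ interior elliptic regularity already makes $H$ H\"older continuous, after which one local bootstrap yields the $\bigO^{(2k-1)}(r)$ control; the multi-step parametrix you set up is correct but not required in this dimension range. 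Second, the vanishing averages $\int_{\partial B(0,1)}\psi^{(4)}\,d\sigma=\int_{\partial B(0,1)}\psi^{(5)}\,d\sigma=0$ are not extracted sector by sector from a spherical-harmonic decomposition as you propose; the paper chooses $\psi^{(4)},\psi^{(5)}$ first and then proves the vanishing a posteriori, by integrating the identity $(P_{\mathfrak{g}}-\Delta_0^k)G_{g_\xi}=-\Delta_0^k G_{g_\xi}$ over spheres $\partial B(0,r)$, evaluating both sides via the expansion of $G_{g_\xi}$, and matching powers of $r$ with the help of $\int_{\partial B(0,1)}\psi\,d\sigma=\frac{1}{\ell(n+\ell-2)}\int_{\partial B(0,1)}\Delta_0\psi\,d\sigma$ for $\psi\in P_\ell$. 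That divergence-type identity is cleaner than keeping track of individual harmonics.

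The point you single out as the technical core is a real one and is where your sketch remains incomplete. By \eqref{induction.poly.homogenes} with $q=2k-n$ and $\ell=5$, when $n=2k+4$ the $p=0$ factor is $\Delta_{g_0}-1\cdot(n-1)$, which annihilates $H_1$: a homogeneous $\psi^{(5)}\in P_5$ solving $\Delta_0^k(r^{2k-n}\psi^{(5)})=r^{-n}R^{(5)}$ exists only if the $r^4 H_1$-component of $R^{(5)}$ vanishes, and otherwise a term of order $r\log r$ would appear, which is \emph{not} $\bigO^{(2k-1)}(r)$ and would contradict the statement. (Invoking \eqref{inversion.homogenes} does not settle this: its hypothesis $2k-n<q<0$ is strict and does not cover the boundary value $q=2k-n$.) You correctly identify the mechanism that should kill the obstruction --- the conditions \eqref{conf.3} together with $\Weyl_g(\xi)=0$, which force $\Riemann_{g_\xi}(\xi)=\Ricci_{g_\xi}(\xi)=0$, $\Sym\nabla\Ricci_{g_\xi}(\xi)=0$, $\nabla S_{g_\xi}(\xi)=0$ --- but you only assert the cancellation. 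The $H_1$-component is a double trace of the degree-five coefficients coming from \eqref{defPsiu}, which in general involve $\nabla\Weyl_{g_\xi}(\xi)$; exhibiting that the relevant traces vanish is the nontrivial step that your proposal leaves as a claim rather than a proof.
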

If $\xi \in M$ is fixed, we call the constant term $A_\xi$ appearing in Proposition \ref{expansion.Green.local} \emph{the mass} of $G_{g_\xi}$ at $\xi$. Similar expansions were proven in \cite{LeeParker} ($k=1$), \cite{HangYang, GongKimWei, GurskyMalchiodi} ($k=2$) and \cite{ChenHou}($k=3$). For an arbitrary $1 \le k < \frac{n}{2}$, the investigation of the mass function $A_\xi$ when $2k+1 \le n \le 2k+3$ or $(M,g)$ is locally conformally flat was first carried on in \cite{Michel}. When $2k+1 \le n \le 2k+5$, we will say that $P_g$ has positive mass at every point if we have  $A_\xi >0$ for all $ \xi \in M$, that is 
\begin{equation} \label{positive:mass}
\begin{aligned}
\hspace{1.2cm}&\text{For } 2k+1 \le n \le 2k+3:\, A_\xi >0\text{ for all } \xi \in M.\\
& \text{For } 2k+4 \le n \le 2k+5: \, A_\xi >0 \text{ for all } \xi \in M \text{ with}  \Weyl_g(\xi)=0.
\end{aligned}
\end{equation}

\begin{proof}
Let $\xi \in M$ be fixed and let, for $x \in \R^n$, $G_0(r) = b_{n,k} r^{2k-n}$. In the following we use the notations of Proposition~\ref{GJMS.exp1} and we let $\mathfrak{g} = \big( \exp_{\xi}^{g_\xi} \big)^*g_{\xi}$. Let $\iota(M, g_\xi)$ be the injectivity radius of $(M,g_\xi)$. Observe that for a fixed $x \neq 0$,  
$$G_0(r) = b_{n,k}\mathfrak{c}_{n,k}^{- \frac{n-2k}{2}} \lim_{\mu \to 0} \mu^{- \frac{n-2k}{2}} \tilde{U}_{\mu}(x),$$
where $\mathfrak{c}_{n,k}$ is as in \eqref{bubble1} and $\tilde{U}_\mu$ is given by \eqref{bubble3.1}. We may thus compute $ \big(P_{\mathfrak{g}} - \Delta_0^k \big) G_0$ by formally choosing $\mu = 0$ in the right-hand side of \eqref{GJMS.exp1.eq}.

Assume first that $(M,g)$ is locally conformally flat. Then  $P_{g_\xi} = \Delta_0^k$, and $P_{\mathfrak{g}} \big( G_{g_\xi}\big(\xi, \exp_{\xi}^{g_\xi}(\cdot) \big) - G_0 \big) = 0$ in $B(0,  \iota(M, g_\xi))$. Standard elliptic theory then shows that $G_{g_\xi}\big(\xi, \exp_{\xi}^{g_\xi}(\cdot) \big) - G_0$ is smooth in a neighbourhood of the origin and Proposition \ref{expansion.Green.local} follows in this case.
 
Assume now that $2k+1 \le n \le 2k+3$. Then \eqref{GJMS.exp1.eq} shows that  for $r < \iota(M, g_\xi)$,
$$ P_{\mathfrak{g}} \big( G_{g_\xi}\big(\xi, \exp_{\xi}^{g_\xi}(\cdot) \big) - G_0 \big)(x) =  - \big(P_{\mathfrak{g}} - \Delta_0^k \big) G_0(x) =  \bigO( r^{4-n})$$
holds. Since $2k+1 \le n \le 2k+3$, standard elliptic theory shows that $H = G_{g_\xi}\big(\xi, \exp_{\xi}^{g_\xi}(\cdot) \big) - G_0$ is a H\"older-continuous function in $B(0,\iota(M, g_\xi))$. Local elliptic estimates then show that $H - H(0) = \bigO^{(2k-1)}(r)$ and Proposition \ref{expansion.Green.local} follows. This case was already investigated in \cite[Th\'eor\`eme 2.3]{Michel}. 

We now turn to the higher-dimensional case. We first recall some well-known results on homogeneous polynomials in $\R^n$. If $\ell \ge 0$ is an integer we let $P_\ell$ denote the set of homogeneous polynomials of degree $\ell$ in $\R^n$ and by $H_\ell$ the set of harmonic homogeneous polynomials of degree $\ell$ in $\R^n$. It is well-known (see \cite[Section 5]{LeeParker}) that for all $\ell \ge 0$ we have 
$$ P_\ell = \bigoplus_{p=0}^{[\frac{\ell}{2}]} r^{2p} H_{\ell-2p}, $$
that for $\ell \ge 2$, $r^2 \Delta_0 - \lambda: P_{\ell} \to P_{\ell}$ is invertible if and only if $\lambda \not \in \big\{ -  2p(n-2+2\ell-2p), 0 \le p \le [\frac{\ell}{2}]\big\}$, and that the corresponding eigenspaces are given by $r^{2p} H_{\ell-2p}$. In the following we denote by $g_0$ the round metric in $\mathbb{S}^{n-1}$ and we let $\Delta_{g_0} = - \text{div}_{g_0}(\nabla \cdot)$. If $\psi \in P_\ell$ we let $ \tilde{\psi} = \psi_{|\mathbb{S}^{n-1}}$. If $q$ is any real number it is easily seen that 
\begin{equation} \label{poly.homogenes.1}
\begin{aligned}
 r^2\Delta_0 (r^{q} \psi)  & = r^{\ell+q} \big( \Delta_{g_0} \tilde{\psi} - (q+ \ell)(q+\ell+n-2) \tilde{\psi}\big) \\
  & = r^{q} \big( \Delta_{g_0} \psi - (q+ \ell)(q+\ell+n-2) \psi\big) 
  \end{aligned} 
  \end{equation}
  in $\R^n \backslash \{0\}$. For $q=0$ the latter yields $r^2\Delta_0 \psi  = \big( \Delta_{g_0} \psi - \ell(\ell+n-2) \psi\big)$, and thus 
$$ \begin{aligned}
r^2\Delta_0 \big( r^{q} \psi \big) = r^{q} \Big( r^2 \Delta_0 - q(q+2\ell+n-2) \Big) \psi \quad \text{ in } \R^n \backslash \{0\}. 
\end{aligned} $$
If $2-n <q+\ell < 0$ the operator $r^2 \Delta_0 - q(q+2\ell+n-2)$ is invertible on $P_\ell$. As a consequence, for any $\ell \ge 0$ and  $T \in P_{\ell}$, there exists $\psi \in P_{\ell}$ such that $\Delta_0(r^{q} \psi) = r^{q-2} T$. Iterating \eqref{poly.homogenes.1} shows that 
\begin{equation} \label{induction.poly.homogenes}
 \begin{aligned} \Delta_0^k(r^{q} \psi ) & = r^{q+\ell-2k} \prod_{p=0}^{k-1}\Big( \Delta_{g_0} \tilde{\psi} - (q+\ell-2p)(q+\ell-2p+n-2) \tilde{\psi}\Big)  \\
\end{aligned} . 
\end{equation}
If $2k-n < q < 0$ the operator appearing in the previous line is invertible. As a consequence, for any $\ell \ge 0$ and $T \in P_\ell$, 
\begin{equation} \label{inversion.homogenes}
\text{ there exists }  \psi \in P_{\ell} \text{ such that } \Delta_0^k(r^{q} \psi) = r^{q-2k} T \quad \text{ in } \R^n \backslash \{0\}.
\end{equation}
Assume now that $2k+4 \le n\le2k+5$ and $\Weyl_{g}(\xi) = 0$. Expansion \eqref{GJMS.exp1.eq} together with \eqref{defPsiu} shows that 
\begin{equation} \label{GJMS.exp.variante}
\begin{aligned}
\big( P_{\mathfrak{g}} - \Delta_0^k \big) G_0& = c_2  (S_{g_\xi})_{;ab}(\xi) \frac{x^a x^b}{r^{n-2}} + c_3 \Delta_{g_\xi}( \Ricci_{ab}(\xi)) \frac{x^a x^b}{r^{n-2}} \\
& + \frac{R^{(5)}(x)}{r^{n}} + \bigO( r^{6-n})
\end{aligned} 
\end{equation}
for $x \neq 0$, where $R^{(5)} \in P_5$. We let in what follows 
$$ T_1(x) =  r^2 \big[  c_2  (S_{g_\xi})_{;ab}(\xi) + c_3 \Delta_{g_\xi}( \Ricci_{ab}(\xi)) \big] x^a x^b \quad \text{ and } \quad T_2(x) = \psi^{(5)}(x). $$
We apply \eqref{inversion.homogenes} with $q=2k-n$: this yields two homogeneous polynomials $\psi^{(4)} \in P_4$ and $\psi^{(5)} \in P_5$ such that 
$$ \Delta_0^k \big( r^{2k-n}\psi^{(4)} \big) = r^{-n}T_1 \quad \text{ and } \Delta_0^k \big( r^{2k-n} \psi^{(5)} \big) =  r^{-n}T_2. $$
Using \eqref{expansion.Pg} it is easily seen that we have
\begin{equation}\label{inversion.homogenes.2}
\begin{aligned}
 \big( P_{\mathfrak{g}} - \Delta_0^k \big)(r^{2k-n}\psi^{(4)} )  & = \bigO( r^{6-n}) \quad \text{ and } \\
  \big( P_{\mathfrak{g}} - \Delta_0^k \big) (r^{2k-n}\psi^{(5)} )  & = \bigO( r^{7-n}).
 \end{aligned}
 \end{equation}
 Combining \eqref{GJMS.exp.variante} with \eqref{inversion.homogenes.2} finally shows that 
 \begin{equation*} 
 \begin{aligned}
& P_{\mathfrak{g}} \Big( G_{g_\xi}\big(\xi, \exp_{\xi}^{g_\xi}(\cdot) \big) - G_0 - r^{2k-n}\psi^{(4)} - r^{2k-n}\psi^{(5)} \Big) 
 =  \bigO( r^{6-n}). 
 \end{aligned} 
\end{equation*}
When $n \in \{2k+4, 2k+5\}$ standard elliptic theory then shows that 
$$ H: = G_{g_\xi}\big(\xi, \exp_{\xi}^{g_\xi}(\cdot) \big) - G_0 - r^{2k-n}\psi^{(4)} - r^{2k-n}\psi^{(5)}  $$
is H\"older continuous in a neighbourhood of the origin. As before, local elliptic estimates then show that $H - H(0) = \bigO^{(2k-1)}(r)$, which proves the first part of Proposition \ref{expansion.Green.local}.

It remains to prove that $\int_{\mathbb{S}^{n-1}} \psi^{(4)} d \sigma = \int_{\mathbb{S}^{n-1}} \psi^{(5)} d \sigma =0$. First, since $G_{g_\xi}$ solves $P_{g_\xi} G_{g_\xi}(\xi,\cdot) = \delta_\xi$ and $G_0$ solves $\Delta_0^k G_0 = \delta_0$, we have 
\begin{equation} \label{DL.green.homogene.1}
 \begin{aligned}
 & \int_{\partial B(0,r)}\big( P_{\mathfrak{g}} - \Delta_0^k \big) G_{g_\xi}(\xi, \cdot) d \sigma \\
 &  = -  \int_{\partial B(0,r)}\Delta_0^k G_{g_\xi}(\xi, \cdot) d \sigma \\
 & = -  \int_{\partial B(0,r)} \Delta_0^k \big( r^{2k-n} \psi^{(4)} + r^{2k-n} \psi^{(5)} \big) d \sigma + \bigO (r^{n-2k} ) \\
 & = - C(n,k) r^3  \int_{\partial B(0,r)}\psi^{(4)} d \sigma - C(n,k) r^4 \int_{\partial B(0,r)}\psi^{(5)} d \sigma+ \bigO (r^{n-2k} ),
\end{aligned} \end{equation}
for a positive constant $C(n,k)$, where we used the well-known following property for homogeneous polynomials: if $\psi \in P_\ell$ then 
$$\int_{\partial B(0,1)} \psi d \sigma = \frac{1}{\ell (n+\ell-2)}\int_{\partial B(0,1)} \Delta_0 \psi d \sigma.$$
Independently, and using \eqref{GJMS.exp.variante} and \eqref{inversion.homogenes.2}, we have 
\begin{equation} \label{DL.green.homogene.2}
 \begin{aligned}
 & \int_{\partial B(0,r)}\big( P_{\mathfrak{g}} - \Delta_0^k \big) G_{g_\xi}(\xi, \cdot) d \sigma \\
 &  =  \int_{\partial B(0,r)}\big( P_{\mathfrak{g}} - \Delta_0^k \big) G_0 d \sigma + \bigO(r^{5})+ \bigO(r^{n-2k}) \\
 & = \bigO(r^{5}) + \bigO(r^{n-2k}) ,
\end{aligned} \end{equation}
where we used the oddness of $R^{(5)}$ and assumption $\Weyl_g(\xi) = 0$ which ensures that $\int_{\partial B(0,r)} (S_{g_\xi})_{;ab}(\xi)x_a x_b d \sigma = 0$ for every $r >0$. Combining 
\eqref{DL.green.homogene.1} and \eqref{DL.green.homogene.2} shows that 
$$ r^3  \int_{\partial B(0,r)}\psi^{(4)} d \sigma + r^4 \int_{\partial B(0,r)}\psi^{(5)} d \sigma = \bigO(r^{5}) + \bigO(r^{n-2k}). $$
If $n=2k+4$ this proves that $ \int_{\partial B(0,r)}\psi^{(4)} d \sigma = 0$, while if $n = 2k+5$ this proves that $ \int_{\partial B(0,r)}\psi^{(4)} d \sigma =  \int_{\partial B(0,r)}\psi^{(5)} d \sigma=0$. This concludes the proof of Proposition \ref{expansion.Green.local}. 
\end{proof}

\begin{remark}
Let $\psi \in H_2$ be a harmonic homogeneous polynomial of degree $2$. If $\ell \ge 0$ is any integer, direct computations show that $ \Delta_0 \big( r^{-\ell} \psi \big)= \ell(n-\ell+2) \psi$, and iterating the latter shows that 
\begin{equation} \label{Delta_k_T}
\Delta_0^{k} \big( r^{2k+2 - n} \psi  \big) = C  r^{2-n} \psi, 
\end{equation}
for some nonzero constant $C = C(n,k)$. If we assume that $\Weyl_{g_{\xi}}\(\xi\) = 0$, 
$$x \in \R^n \mapsto c_2 (S_{g_\xi})_{;ab}(\xi) x^a x^b + c_3 \Delta_{g_\xi}( \Ricci_{ab}(\xi)) x^a x^b $$
 is harmonic. Using \eqref{Delta_k_T} we may thus choose 
 $$\psi^{(4)}(x) = C' r^2 \Big[ c_2 (S_{g_\xi})_{;ab}(\xi) x^a x^b + c_3 \Delta_{g_\xi}( \Ricci_{ab}(\xi)) x^a x^b\Big] $$ 
 in \eqref{inversion.homogenes.2} for some constant $C'$ when $n \ge 2k+4$. 
\end{remark}

We conclude by estimating the Pohozaev boundary term involving $G_g$:

\begin{proposition}\label{masse.Green}
Let $\xi \in M$ be fixed. Assume that $2k+1 \le n \le 2k+3$ or that $2k+4 \le n \le 2k+5$ and $\Weyl_g(\xi) = 0$. Let, for $x \in \R^n$, $\mathcal{G}(x) = G_{g_\xi}\big(\xi, \exp_{\xi}^{g_\xi}(x) \big)$ and let $h \in C^{2k}(\overline{B(0,1)})$. Let $\mathcal{P}_k(r; \cdot)$ be given by \eqref{poho.id01}. We have 
$$\lim_{r \to 0} \mathcal{P}_k(r; \mathcal{G}+ h) = c_{n,k} \big(A_\xi + h(0) \big) $$
where $c_{n,k}$ is a positive constant depending only on $n$ and $k$ and $A_\xi$ is as in the statement of Proposition \ref{expansion.Green.local}.
\end{proposition}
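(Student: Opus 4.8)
The plan is to follow closely the proof of Lemma~\ref{sing+har}, feeding in the expansions of $G_{g_\xi}$ from Proposition~\ref{expansion.Green.local}. Set $S(x):=b_{n,k}|x|^{2k-n}$. By Proposition~\ref{expansion.Green.local} one has, near the origin, the decomposition $\mathcal{G}+h=S+T_4+T_5+\mathcal{H}$, where $T_4\equiv T_5\equiv 0$ if $2k+1\le n\le 2k+3$, while $T_j:=b_{n,k}|x|^{2k-n}\psi^{(j)}$ (for $j=4,5$) if $2k+4\le n\le 2k+5$ and $\Weyl_g(\xi)=0$, and where $\mathcal{H}:=A_\xi+h+\rho$ with $\rho=\bigO^{(2k-1)}(|x|)$ — the ($\bigO^{(2k-1)}$-regular) analogue of the $C^{2k}$ function appearing in Lemma~\ref{sing+har}. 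In particular $\mathcal{H}$ is continuous at $0$ with $\mathcal{H}(0)=A_\xi+h(0)$, and $|\nabla^\ell\mathcal{H}(x)|\lesssim |x|^{1-\ell}$ for $1\le\ell\le 2k-1$. Since $\mathcal{P}_k(r;\cdot)$ is a quadratic form, I will expand it by bilinearity through the associated symmetric form $\Phi_{k,r}$ of \eqref{bilineaire}; it then suffices to analyse $\mathcal{P}_k(r;S)$, $\mathcal{P}_k(r;\mathcal{H})$, $\Phi_{k,r}(S,\mathcal{H})$, and — when $n\ge 2k+4$ — the extra terms $\mathcal{P}_k(r;T_j)$, $\Phi_{k,r}(T_4,T_5)$, $\Phi_{k,r}(S,T_j)$ and $\Phi_{k,r}(\mathcal{H},T_j)$.

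First, $\mathcal{P}_k(r;S)\equiv 0$: $S$ is polyharmonic on $\mathbb{R}^n\setminus\{0\}$, so applying \eqref{poho.id0} with $f\equiv 0$ shows $r\mapsto\mathcal{P}_k(r;S)$ is constant, and it vanishes on letting $r\to+\infty$ since $|\mathcal{P}_k(r;S)|\lesssim r^{2k-n}$, exactly as in the proof of Lemma~\ref{sing+har}. Next, $\mathcal{P}_k(r;\mathcal{H})=\smallo(1)$: when $k\ge 2$, every boundary integrand in \eqref{poho.id01} involves only derivatives of $\mathcal{H}$ of order at most $2k-1$, so $|\nabla^\ell\mathcal{H}|\lesssim |x|^{1-\ell}$ together with the surface measure $\sim r^{n-1}$ makes each term $\bigO(r^{n-2k})=\smallo(1)$; in the single remaining case $k=1$, where $\mathcal{P}_1(r;\cdot)$ additionally involves the top-order term $\Delta_0(\mathcal{G}+h)$, one controls the resulting boundary integrals using that $\mathcal{G}$ solves $P_{g_\xi}\mathcal{G}=0$ away from $\xi$ together with the conformal normal coordinate identities \eqref{conf.3} (in particular $\Ricci_{g_\xi}(\xi)=0$), which is the classical relation between the Pohozaev boundary term and the mass \cite{SchoenYau}. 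The cross term $\Phi_{k,r}(S,\mathcal{H})$ is where the mass appears: as in Lemma~\ref{sing+har}, all its contributions are $\bigO(r)$ except the one in which $S$ carries the top-order derivative $\partial_\nu(\Delta_0^{k-1}S)$ and $\mathcal{H}$ is undifferentiated, namely $-\tfrac{n-2k}{2}\int_{\partial B(0,r)}\mathcal{H}\,\partial_\nu(\Delta_0^{k-1}S)\,d\sigma$; using $\Delta_0^{k-1}S=2^{k-1}(k-1)!\,(n-2k)(n-2k-2)\cdots(n-4)\,b_{n,k}\,|x|^{2-n}$, the radiality of $\partial_\nu(\Delta_0^{k-1}S)$ on each sphere, and the continuity of $\mathcal{H}$ at $0$, this converges to $c_{n,k}(A_\xi+h(0))$ where $c_{n,k}$ is a positive multiple of $\Theta(n,k)\,b_{n,k}$, with $\Theta(n,k)$ as in \eqref{def.Theta}.

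It remains, when $n\ge 2k+4$, to show that the $T_j$-terms do not contribute in the limit. The key point for $\Phi_{k,r}(S,T_j)$ is that it vanishes \emph{identically}: the hypothesis $\int_{\mathbb{S}^{n-1}}\psi^{(j)}\,d\sigma=0$ forces $\psi^{(j)}$ to have no degree-zero spherical-harmonic component, so $T_j$ is a finite sum of terms of the form (radial power)$\,\times\,$(harmonic homogeneous polynomial of positive degree); this structure is preserved by the operations $\Delta_0$, $\partial_\nu$ and $x^a\partial_a$ entering \eqref{bilineaire}, so every factor built from $T_j$ has zero spherical average on each $\partial B(0,r)$, whereas every factor built from $S$ is radial, whence each such boundary integral is zero. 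The remaining terms $\mathcal{P}_k(r;T_j)$, $\Phi_{k,r}(T_4,T_5)$ and $\Phi_{k,r}(\mathcal{H},T_j)$ are disposed of by direct size estimates: using $|\nabla^\ell T_j|\lesssim |x|^{2k-n+j-\ell}$, $|\nabla^\ell\mathcal{H}|\lesssim \max(1,|x|^{1-\ell})$, and the dimension constraint $n\le 2k+5$ (so that $2k-n+2j\ge 3>0$), each is $\bigO(r)=\smallo(1)$. Collecting all contributions gives $\lim_{r\to 0}\mathcal{P}_k(r;\mathcal{G}+h)=c_{n,k}(A_\xi+h(0))$.

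\emph{The main obstacle} I anticipate is the limited regularity of the remainder $\rho$: it is only $\bigO^{(2k-1)}$ and \emph{not} $C^{2k}$, so Lemma~\ref{sing+har} cannot be invoked verbatim and its proof must be re-run with the sharp bound $|\nabla^\ell\rho|\lesssim |x|^{1-\ell}$; this goes through for every $k\ge 2$ precisely because $\mathcal{P}_k$ then only sees derivatives of order $\le 2k-1$ on the smooth part, the single exceptional case $k=1$ reducing to the classical Schoen identity. A secondary, purely bookkeeping difficulty is identifying which among the many boundary terms making up $\Phi_{k,r}(S,\mathcal{H})$ survive as $r\to 0$ and which are $\bigO(r)$ — this is handled exactly as in the proof of Lemma~\ref{sing+har}, the point being that any term in which $S$ is differentiated fewer than $2k-1$ times, or is multiplied by a factor $x^a$, carries an extra power of $r$.
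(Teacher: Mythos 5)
Your proposal is correct and follows essentially the same route as the paper: expand $\mathcal{G}+h$ via Proposition~\ref{expansion.Green.local}, use bilinearity of $\Phi_{k,r}$, kill the homogeneous corrections $T_4, T_5$ by the zero-spherical-average condition, estimate the low-order remainder by size, and extract the mass from the singularity--constant cross term. The one organizational difference is that the paper groups $G_0+\tilde A_\xi$ together and invokes Lemma~\ref{sing+har} directly (so that the $\mathcal{H}$ in that lemma is the constant $\tilde A_\xi$, trivially $C^{2k}$), which sidesteps the regularity worry you raise about $\rho=\bigO^{(2k-1)}(|x|)$ in the $k=1$ case; your finer decomposition $S+T_4+T_5+\mathcal{H}$ leads to the same computation but forces you to re-run the mechanism of Lemma~\ref{sing+har} with the weaker bound, which is why you flag $k=1$ as exceptional.
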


\begin{proof}
We keep the notation of the proof of Proposition \ref{expansion.Green.local}. By Proposition \ref{expansion.Green.local} we may write, for $x$ small enough,
$$ \mathcal{G}(x)  + h(x)  = G_0(r) + A_\xi + h(0) +  R(x), $$
where, since $h \in C^{2k}(\overline{B(0,1)})$,
$$ R(x) = r^{2k-n} \psi^{(4)} + r^{2k-n} \psi^{(5)} + \bigO^{(2k-1)}(r), $$
and where $\psi^{(4)}$ and $\psi^{(5)}$ are homogeneous polynomials of degree $4$ and $5$ (with the convention that $\psi^{(4)} = 0$ and $\psi^{(5)} = 0$ when $2k+1 \le n \le 2k+3$). We let $\Phi_{k,r}(\cdot, \cdot)$ be the bilinear form associated to $\mathcal{P}_k(r;\cdot)$ and given by \eqref{bilineaire.pair} and \eqref{bilineaire.impair}. Let $\tilde{A}_\xi = A_\xi + h(0)$. By bilinearity, we have, for a fixed value of $r$,
$$\begin{aligned} 
 \mathcal{P}_k(r; \mathcal{G}) = \mathcal{P}_k(r; G_0(r) + \tilde{A}_\xi ) +  \mathcal{P}_k(r; R) + 2 \Phi_{k,r} (G_0(r) + \tilde{A}_\xi, R ).
 \end{aligned} $$
By definition of $R$ we have $|\nabla^{\ell}R(x)\big| \lesssim r^{4+2k-\ell-n}$ for $\ell \ge 0$. As a consequence, \eqref{poho.id01} shows that 
$$  \mathcal{P}_k(r; R)   = \bigO \big(r^{8+2k-n} \big) = \smallo(1) $$
as $r \to 0$, since $n \le 2k+5$. We estimate the bilinear term. First, straightforward computations using \eqref{bilineaire.pair} and \eqref{bilineaire.impair} show that 
$$  \Phi_{k,r} (G_0(r) + \tilde{A}_\xi, R ) =  \Phi_{k,r} (G_0(r) + \tilde{A}_\xi,  r^{2k-n} \psi^{(4)} + r^{2k-n} \psi^{(5)}  ) + \smallo(1) $$
as $r \to 0$. We now claim that the following holds: 
\begin{equation} \label{final.poho.masse}
 \Phi_{k,r} \big(G_0(r) + \tilde{A}_\xi,  r^{2k-n} \psi^{(4)} + r^{2k-n} \psi^{(5)} \big) = 0 
\end{equation}
for every $r >0$. We prove \eqref{final.poho.masse}. By linearity we just need to prove that $ \Phi_{k,r} \big(G_0(r) + \tilde{A}_\xi,  r^{2k-n} \psi^{(i)}\big) = 0$ for $i=4,5$. We prove it for $i=4$ since the proof for $i=5$ is identical and we let for simplicity $R_0 = r^{2k-n} \psi^{(4)}$. Observe first that the function $G_0 +\tilde{A}_\xi$ is radial. As a consequence, the functions $\Delta_0^{i}(G_0 +\tilde{A}_\xi)$, $\partial_\nu \Delta_0^{i}(G_0 +\tilde{A})_\xi$ and $x^a \partial_a (\Delta_0^{i}(G_0 +\tilde{A}_\xi))$ are also radial for any $i \ge 0$. A careful inspection of \eqref{bilineaire.pair} and \eqref{bilineaire.impair} shows that in order to prove \eqref{final.poho.masse} it is enough to prove that for any integer $i \ge 0$ we have
\begin{equation} \label{dernieres.integrales} \begin{aligned} \int_{\partial B(0,r)} \Delta_0^i R_0 d \sigma & =   \int_{\partial B(0,r)} \partial_\nu \Delta_0^i R_0 d \sigma =  \int_{\partial B(0,r)} \partial_\nu \Delta_0^i \big( x^a \partial_a R_0 \big) d \sigma \\
& =\int_{\partial B(0,r)} \Delta_0^i\big( x^a \partial_a  R_0) d \sigma = \int_{\partial B(0,r)}\partial_\nu \big( x^a \partial_a  \Delta^i R_0 \big) d \sigma \\
& =  \int_{\partial B(0,r)} x^a \partial_a \Delta^i  R_0 d \sigma = 0. 
\end{aligned}\end{equation}
These equalities will again follow from simple properties of homogeneous functions. For instance, if $i \ge 0$, using  \eqref{induction.poly.homogenes} we get that $\Delta_0^i R_0$ is a homogeneous function of order $4+2k-n-2i$ and therefore $x^a \partial_a ( \Delta_0^i R_0) = (4+2k-n-2i) \Delta_0^i R_0 $. As a consequence, 
$$ \begin{aligned}
 \int_{\partial B(0,r)} x^a \partial_a \Delta^i  R_0 d \sigma &  =(4+2k-n-2i)  \int_{\partial B(0,r)} \Delta_0^i R_0 d \sigma = 0,
 \end{aligned} $$ 
 where the last equality follows from \eqref{induction.poly.homogenes} and since $\int_{\partial B(0,r)} \psi^{(4)} d \sigma = 0$ by Proposition \ref{expansion.Green.local}. The other integrals in \eqref{dernieres.integrales} are computed in the same way since $\partial_\nu = x^a \partial_a$, and this proves \eqref{final.poho.masse}. With \eqref{final.poho.masse} we have thus proven that 
$$  \mathcal{P}_k(r; \mathcal{G}) =  \mathcal{P}_k(r; G_0(r) + \tilde{A}_\xi ) + \smallo(1)$$
as $r \to 0$, and Proposition \ref{masse.Green} follows from Lemma \ref{sing+har}.
\end{proof}
\smallskip

\section{A technical lemma}\label{sec.giraud}

We state and prove a frequently used integral lemma: 

\begin{lemma}[A global Giraud type lemma] \label{lemma.Giraud}
Let $1\leq p<n$ and $q<n$ be integers. Let $(\rho_\alpha)_\alpha$ be a sequence of positive numbers with $\rho_\alpha \to + \infty$ as $\alpha \to + \infty$. Let $\xi_{\alpha}\in B(0,\rho_{\alpha})$ be a sequence of points. We have
\begin{align}\label{giruad}
\int_{B(0,\rho_{\alpha})}(1+|z|)^{q-n}&|\xi_{\alpha}-z|^{p-n}\,dz\notag\\
&\lesssim\left\{\begin{aligned}&~\rho_{\alpha}^{\,p+q-n} &&\text{if }n<p+q,\\&\ln\(2+\rho_{\alpha}\)&&\text{if }n=p+q,\\&\(1+|\xi_{\alpha}|\)^{p+q-n}&&\text{if }n>p+q\text{ and }q>0,\\
&\(1+|\xi_{\alpha}|\)^{p-n}\ln\(2+|\xi_{\alpha}|\)&&\text{if } q=0,\\
&\(1+|\xi_{\alpha}|\)^{p-n}&&\text{if }q<0.\end{aligned}\right.
\end{align}
\end{lemma}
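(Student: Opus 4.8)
The plan is to prove the Giraud-type estimate \eqref{giruad} by a standard localization of the integral into regions where one of the two factors dominates, together with scaling. First I would set $R := 2 + |\xi_\alpha|$ and split $B(0,\rho_\alpha)$ into three pieces: the region $A_1$ near the singularity $\xi_\alpha$, namely $\{|z - \xi_\alpha| \le \tfrac14 R\}$; the region $A_2$ where $|z|$ is comparable to $|z - \xi_\alpha|$ and both are $\gtrsim R$, namely $\{|z - \xi_\alpha| > \tfrac14 R\} \cap \{|z| \le 2\rho_\alpha\}$ intersected appropriately; and the ``core'' region $A_3 := \{|z| \le \tfrac12 R\}$ where $1 + |z| \approx 1 + |\xi_\alpha|$ is essentially constant and $|z - \xi_\alpha| \approx R$. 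The key elementary observation is that on $A_1$ one has $1 + |z| \approx R$, so the integral there is bounded by $R^{q-n} \int_{|w| \le R/4} |w|^{p-n}\,dw \lesssim R^{q-n} \cdot R^p = R^{p+q-n}$ using $p < n$; on $A_3$ one has $|z - \xi_\alpha| \approx R$, so that piece is bounded by $R^{p-n}\int_{|z| \le R/2}(1+|z|)^{q-n}\,dz$, which is $\lesssim R^{p-n}\cdot R^q = R^{p+q-n}$ if $q > 0$, $\lesssim R^{p-n}\ln(2+R)$ if $q = 0$, and $\lesssim R^{p-n}$ if $q < 0$.

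The remaining region, where both $|z|$ and $|z-\xi_\alpha|$ are $\gtrsim R$ and $\lesssim \rho_\alpha$, is the one that produces the dichotomy in $n$ versus $p+q$. There $|z| \approx |z - \xi_\alpha| \approx |z-\xi_\alpha|$ up to constants (after possibly further splitting into $\{|z| \le 2|\xi_\alpha|\}$, handled as part of $A_1\cup A_3$, and $\{|z| > 2|\xi_\alpha|\}$, where $|z - \xi_\alpha| \approx |z|$), so the integrand is $\approx (1+|z|)^{p+q-2n}$ and one is left to estimate $\int_{R \lesssim |z| \lesssim \rho_\alpha} |z|^{p+q-2n}\,dz \approx \int_R^{\rho_\alpha} t^{p+q-2n+n-1}\,dt = \int_R^{\rho_\alpha} t^{p+q-n-1}\,dt$. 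This is $\lesssim \rho_\alpha^{p+q-n}$ when $p+q > n$, $\lesssim \ln(\rho_\alpha/R) \le \ln(2+\rho_\alpha)$ when $p+q = n$, and $\lesssim R^{p+q-n}$ when $p+q < n$ (the integral converging at infinity). Collecting the bounds from the three (or four) regions and taking the dominant one in each case gives exactly the right-hand side of \eqref{giruad}; note that when $n < p+q$ the $\rho_\alpha^{p+q-n}$ term dominates $R^{p+q-n} \le (1+\rho_\alpha)^{p+q-n}$, and when $n > p+q$ the $R^{p+q-n} = (1+|\xi_\alpha|)^{p+q-n}$ term dominates since the tail integral converges, so no spurious $\rho_\alpha$ dependence survives.

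The only mild subtlety — and the step I expect to require the most care — is the bookkeeping of the overlapping subregions and making sure the triangle inequality comparisons $|z| \approx |z - \xi_\alpha|$ are invoked on the correct sets, so that no factor of $|\xi_\alpha|$ or $\rho_\alpha$ is double-counted or lost; in particular one must handle separately the degenerate case $|\xi_\alpha| \lesssim 1$ (where $R \approx 1$ and $A_1, A_3$ merge), the case $q = 0$ (logarithm from the core), and the borderline $n = p+q$ (logarithm from the annular region). None of these involves anything beyond polar coordinates and the elementary estimates $\int_{|w|\le \tau}|w|^{a-n}\,dw \approx \tau^a$ for $a > 0$ and $\int_\tau^T t^{b-1}\,dt$ for the three sign regimes of $b$, so once the decomposition is fixed the computation is routine. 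I would present the argument region by region with these estimates made explicit, then conclude by comparing the contributions.
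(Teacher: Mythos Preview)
Your approach is correct and is the same region-decomposition idea as the paper's proof, but organized differently. The paper treats the two regimes $n\le p+q$ and $n>p+q$ separately: in the first it decomposes into the three annuli $B(\xi_\alpha,|\xi_\alpha|/2)$, $B(\xi_\alpha,3|\xi_\alpha|/2)\setminus B(\xi_\alpha,|\xi_\alpha|/2)$, and the exterior, while in the second it first rescales $z\mapsto(1+|\xi_\alpha|)z$ and then splits off balls around the (rescaled) origin and around $\xi_\alpha/(1+|\xi_\alpha|)$. Your single three-region decomposition (near $\xi_\alpha$, near $0$, and the far annulus where $|z|\approx|z-\xi_\alpha|$) handles all five cases at once and is arguably cleaner; the price is exactly the bookkeeping you flag, namely the transitional shell $\{R/2<|z|\lesssim|\xi_\alpha|,\ |z-\xi_\alpha|>R/4\}$ where both factors are $\approx R$ to a power and the contribution is $\lesssim R^{p+q-n}$, absorbed into the other pieces. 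Either route reduces to the same elementary radial integrals, so there is no substantive difference in difficulty.
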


\begin{proof}
For $z \in B(0, \rho_\alpha)$ we let $\mathcal{K}(\xi_{\alpha},z):=(1+|z|)^{q-n}|\xi_{\alpha}-z|^{p-n}$. 

Assume first that $n \le p+q$. We decompose the integral in the three regions $B(\xi_{\alpha},|\xi_{\alpha}|/2)$, $B(\xi_{\alpha},3|\xi_{\alpha}|/2)\setminus B(\xi_{\alpha},|\xi_{\alpha}|/2)$ and $B(0,\rho_{\alpha})\setminus B(\xi_{\alpha},3|\xi_{\alpha}|/2)$. First, if $z \in B(\xi_{\alpha},|\xi_{\alpha}|/2)$ we have $|z-\xi_{\alpha}|\leq |\xi_{\alpha}|/2\leq|z|$. Therefore, 
\begin{align*}
&\int_{B(\xi_{\alpha},|\xi_{\alpha}|/2)}\mathcal{K}(\xi_{\alpha},z)\,dz\leq\int_{B(\xi_{\alpha},|\xi_{\alpha}|/2)}\(1+|\xi_{\alpha}-z|\)^{q-n}|\xi_{\alpha}-z|^{p-n}\,dz\\
&\lesssim\int_{0}^{|\xi_{\alpha}|/2}\frac{r^{p-1}}{(1+r)^{n-q}}\,dr\lesssim\left\{\begin{aligned}&|\xi_{\alpha}|^{p+q-n}&&\text{if }n<p+q,\\&\ln\(1+|\xi_{\alpha}|\)&&\text{if }n=p+q.\end{aligned}\right.
\end{align*}
If now $z \in B(\xi_{\alpha},3|\xi_{\alpha}|/2)\setminus B(\xi_{\alpha},|\xi_{\alpha}|/2)$ we have $|z|\leq 3|z-\xi_{\alpha}|$. Therefore 
\begin{align*}
&\int_{B(\xi_{\alpha},3|\xi_{\alpha}|/2)\setminus B(\xi_{\alpha},|\xi_{\alpha}|/2)}\mathcal{K}(\xi_{\alpha},z)\,dz\lesssim\int_{B(0,9|\xi_{\alpha}|/2)}\(1+|z|\)^{q-n}|z|^{p-n}\,dz\\
&\lesssim\int_{0}^{9|\xi_{\alpha}|/2}\frac{r^{p-1}}{(1+r)^{n-q}}dr\lesssim\left\{\begin{aligned}&|\xi_{\alpha}|^{p+q-n}&&\text{if }n<p+q,\\&\ln\(1+|\xi_{\alpha}|\)&&\text{if }n=p+q.\end{aligned}\right.
\end{align*}
If finally $z \in B(0,\rho_{\alpha})\setminus B(\xi_{\alpha},3|\xi_{\alpha}|/2)$ we have $|z-\xi_{\alpha}|\leq 3|z|\leq5 |z-\xi_{\alpha}|$ and therefore
\begin{align*}
&\int_{B(0,\rho_{\alpha})\setminus B(\xi_{\alpha},3|\xi_{\alpha}|/2)}\mathcal{K}(\xi_{\alpha},z)\,dz\lesssim \int_{B(0,\rho_{\alpha})\setminus B(0,|\xi_{\alpha}|/2)}\(1+|z|\)^{q-n}|z|^{p-n}\,dz\\
&\lesssim\int_{|\xi_{\alpha}|/2}^{\rho_{\alpha}}\frac{r^{p-1}}{(1+r)^{n-q}}\,dr\lesssim\left\{\begin{aligned}&\rho_{\alpha}^{~p+q-n}&&\text{if }n<p+q\\&\ln\(1+\rho_{\alpha}\)&&\text{if }n=p+q .\\
\end{aligned}\right.
\end{align*}

Assume now that $n>p+q$. We write this time
\begin{align*}
&\int_{B(0,\rho_{\alpha})}\mathcal{K}(\xi_{\alpha},z)\,dz\\
&=\(1+|\xi_{\alpha}|\)^{p+q-n}\int_{B\(0,\frac{\rho_{\alpha}}{1+|\xi_{\alpha}|}\)}\left|\frac{\xi_{\alpha}}{1+|\xi_{\alpha}|}-z\right|^{p-n}\(\frac{1}{1+|\xi_{\alpha}|}+|z|\)^{q-n}\,dz\notag\\
&\lesssim\(1+|\xi_{\alpha}|\)^{p+q-n}\int_{B\(\frac{\xi_{\alpha}}{1+|\xi_{\alpha}|},\frac{1}{2}\)}\left|\frac{\xi_{\alpha}}{1+|\xi_{\alpha}|}-z\right|^{p-n}\,dz\,+\(1+|\xi_{\alpha}|\)^{p+q-n}\notag\\
&\times\int_{B\(0,1/2\)}\(\frac{1}{1+|\xi_{\alpha}|}+ |z|\)^{q-n}\,dz\,+\(1+|\xi_{\alpha}|\)^{p+q-n}\times\notag\\
&\int_{B\(0,\frac{\rho_{\alpha}}{1+|\xi_{\alpha}|}\)\setminus B(0,1/2)\cup B\(\frac{\xi_{\alpha}}{1+|\xi_{\alpha}|},\frac{1}{2}\)}\left|\frac{\xi_{\alpha}}{1+|\xi_{\alpha}|}-z\right|^{p-n}\(\frac{1}{1+|\xi_{\alpha}|}+|z|\)^{q-n}\,dz\notag\\
&\lesssim\(1+|\xi_{\alpha}|\)^{p+q-n}\(~1+\int_{0}^{1/2}\frac{r^{n-1}}{\(\frac{1}{1+|\xi_{\alpha}|}+ r\)^{n-q}}\,dr+\int_{1/2}^{+\infty}\frac{1}{r^{n-p-q}}\,dr\)\notag\\
&\lesssim\(1+|\xi_{\alpha}|\)^{p+q-n}\(~1+\(\frac{1}{1+|\xi_{\alpha}|}\)^{q}\int_{0}^{\(1+|\xi_{\alpha}|\)/2}\frac{r^{n-1}}{\(1+ r\)^{n-q}}\,dr\)\notag\\
&\lesssim\(1+|\xi_{\alpha}|\)^{p+q-n}+\(1+|\xi_{\alpha}|\)^{p+q-n}\times\left\{\begin{aligned}&~1&&\text{if }q>0\\
&\ln\(2+|\xi_{\alpha}|\)&&\text{if } q=0\\
&\(1+|\xi_{\alpha}|\)^{-q}&&\text{if }q<0\end{aligned}\right.\\
&\lesssim\left\{\begin{aligned}&\(1+|\xi_{\alpha}|\)^{p+q-n}&&\text{if }q>0\\
&\(1+|\xi_{\alpha}|\)^{p-n}\ln\(2+|\xi_{\alpha}|\)&&\text{if } q=0\\
&\(1+|\xi_{\alpha}|\)^{p-n}&&\text{if }q<0.\end{aligned}\right.
\end{align*}
\end{proof}
\smallskip

\bibliographystyle{amsplain}
\bibliography{biblio}

\end{document}